\newtheorem{theorem}{Theorem}[section] 
\newtheorem{lemma}[theorem]{Lemma}     
\newtheorem{proposition}[theorem]{Proposition}
\newtheorem{addendum}[theorem]{Addendum}
\newcommand{\eps}{\varepsilon}
\newcommand{\norm}[1]{\| {#1} \|}
\renewcommand{\leq}{\leqslant}
\renewcommand{\geq}{\geqslant}
\newcommand{\bm}{\boldsymbol}
\newcommand{\A}{\boldsymbol{A}}
\newcommand{\B}{{\boldsymbol{B}}}
\newcommand{\C}{{\boldsymbol{C}}}
\newcommand{\D}{{\boldsymbol{D}}}
\renewcommand{\H}{\boldsymbol{H}}
\newcommand{\I}{\boldsymbol{I}}
\newcommand{\J}{\boldsymbol{J}}
\newcommand{\K}{\boldsymbol{K}}
\newcommand{\Q}{{\boldsymbol{Q}}}
\renewcommand{\c}{\boldsymbol{c}}
\newcommand{\g}{\boldsymbol{g}}
\newcommand{\h}{\boldsymbol{h}}
\renewcommand{\v}{\boldsymbol{v}}
\newcommand{\q}{\boldsymbol{q}}
\newcommand{\x}{\boldsymbol{x}}
\newcommand{\0}{\boldsymbol{0}}
\newcommand{\1}{\boldsymbol{1}}
\newcommand{\phib}{\boldsymbol{\phi}}
\newcommand{\etab}{\boldsymbol{\eta}}
\newcommand{\RR}{\mathbb{R}}
\newcommand{\RV}{\mathit{R}}
\newcommand{\GRV}{\mathit{GRV}}
\newcommand{\dt}{\mathrm{d}t}
\newcommand{\du}{\mathrm{d}u}
\newcommand{\dv}{\mathrm{d}v}
\newcommand{\dx}{\mathrm{d}x}
\newcommand{\dy}{\mathrm{d}y}
\newcommand{\dz}{\mathrm{d}z}
\DeclareMathOperator{\diag}{diag}
\title[Generalised regular variation]
 {Generalised regular variation of arbitrary order} 
\author{Edward Omey and Johan Segers}
\begin{document}
\maketitle

\begin{abstract}
Let $f$ be a measurable, real function defined in a neighbourhood of infinity. The function $f$ is said to be of generalised regular variation if there exist functions $h \not\equiv 0$ and $g > 0$ such that $f(xt) - f(t) = h(x) g(t) + o(g(t))$ as $t \to \infty$ for all $x \in (0, \infty)$. Zooming in on the remainder term $o(g(t))$ leads eventually to a relation of the form $f(xt) - f(t) = h_1(x) g_1(t) + \cdots + h_n(x) g_n(t) + o(g_n(t))$, each $g_i$ being of smaller order than its predecessor $g_{i-1}$. The function $f$ is said to be generalised regularly varying of order $n$ with rate vector $\g = (g_1, \ldots, g_n)'$. Under general assumptions, $\g$ itself must be regularly varying in the sense that $\g(xt) = x^{\B} \g(t) + o(g_n(t))$ for some upper triangular matrix $\B \in \RR^{n \times n}$, and the vector of limit functions $\h = (h_1, \ldots, h_n)$ is of the form $\h(x) = \c \int_1^x u^\B u^{-1} \du$ for some row vector $\c \in \RR^{1 \times n}$. The usual results in the theory of regular variation such as uniform convergence and Potter bounds continue to hold. An interesting special case arises when all the rate functions $g_i$ are slowly varying, yielding $\Pi$-variation of order $n$, the canonical case being that $\B$ is equivalent to a single Jordan block with zero diagonal. The theory is applied to a long list of special functions.

\emph{Key words and phrases.} Complementary error function, complementary Gamma function, Gamma function, Lambert $W$ function, matrix exponential functions, $\Pi$-variation, slow variation.
\end{abstract}

\tableofcontents

\section{Introduction} 
\label{S:intro}

The aim of this paper is to provide an analysis of the asymptotic relation
\begin{equation}
\label{E:aim}
	f(xt) = f(t) + \sum_{i=1}^n h_i(x) g_i(t) + o(|g_n(t)|),
	\qquad x \in (0, \infty), \, t \to \infty.
\end{equation}
Here $f$ and $g_1, \ldots, g_n$ are real-valued functions defined on a neighbourhood of infinity; the functions $g_i$ are eventually of constant, non-zero sign and satisfy $g_i(t) = o(|g_{i-1}(t)|)$ as $t \to \infty$ for $i \in \{2, \ldots, n\}$; and $h_1, \ldots, h_n$ are real-valued functions defined on $(0, \infty)$. The functions $g_1, \ldots, g_n$ are called the \emph{rate functions} and the functions $h_1, \ldots, h_n$ are called the \emph{limit functions}.

The following observations are immediate. First, equation~\eqref{E:aim} subsumes the same equation with $n$ replaced by $k \in \{1, \ldots, n\}$: the relation for $k$ can be seen as the one for $k-1$ plus an explicit remainder term. Second, the limit function $h_1$ is determined by $f$ and $g_1$, and the limit function $h_i$ ($i \in \{2, \ldots, n\}$) is determined by $f$ and $g_1, \ldots, g_i$ and $h_1, \ldots, h_{i-1}$.

Equation~\eqref{E:aim} fits into the theory of regular variation in the following way. Recall that a positive, measurable function $F$ defined in a neighbourhood of infinity is called \emph{regularly varying} if for all $x \in (0, \infty)$ the limit $H(x) = \lim_{t \to \infty} F(xt) / F(t)$ exists in $(0, \infty)$ \citep{Karamata30, Karamata33}. If $H \equiv 1$, then $F$ is called \emph{slowly varying}. Setting $f = \log F$ and $h = \log H$ yields $h(t) = \lim_{t \to \infty} \{f(xt) - f(t)\}$, which is case $n = 1$ of \eqref{E:aim} with $g_1 \equiv 1$. If $F$ is measurable, then necessarily $H(x) = x^c$ for some $c \in \RR$, and thus $h(x) = c \log(x)$. The case $n = 1$ of \eqref{E:aim} with general $g_1$ and $h(x) = c \int_1^x u^{b - 1} \du$ was introduced in \citet{dHL70} and is called \emph{extended} or \emph{generalised regular variation}, the symbol $\Pi$ being usually reserved for the class of functions $f$ for which $h(x) = c \log(x)$ with $c \neq 0$. When studying the rate of convergence for $\Pi$-varying functions, one naturally arrives at the case $n = 2$ in \eqref{E:aim}, a case which was studied in \citet{OmeyWillekens88} and \citet{dHS96}. The next step is then $n = 3$, see \citet{FAdHL06}. The jump to arbitrary $n$ was made in \citet{WangCheng05}, where the focus is on the form of the limit functions $h_i$. 

Main references to the theory of regular variation, its extensions and its applications are the monographs by \citet{Seneta76}, \citet{BGT}, and \citet{GdH87}. Besides in the already cited papers, extensions of regular variation are studied in particular in \citet{dH74}, \citet{BinghamGoldie82a, BinghamGoldie82b}, and \citet{GS87}. Regular variation plays an important role in certain areas of probability theory, more precisely in the theory of domains of attraction of stable and max-stable distributions: see \citet{Feller71}, \citet{Resnick87}, and \citet{MS01}. Higher-order extensions are relevant for instance to rates of convergence and Edgeworth expansions related to domains of attractions of max-stable distributions: see \citet{dHR96}, \citet{ChengJiang01}, \citet{WangCheng06}, and \citet{HS07}.

Equation~\eqref{E:aim} is closely related to a similar-looking relation for the auxiliary functions $g_i$: for each $i \in \{1, \ldots, n\}$,
\begin{equation}
\label{E:aim:g}
	g_i(xt) = \sum_{j = i}^n A_{ij}(x) g_j(t) + o(|g_n(t)|),
	\qquad x \in (0, \infty), \, t \to \infty.
\end{equation}
The functions $A_{ij}$, $1 \leq i \leq j \leq n$, are real-valued functions on $(0, \infty)$. They are uniquely determined by the functions $g_1, \ldots, g_n$, and \eqref{E:aim:g} implies the same equation for $n$ replaced by $k \in \{i, \ldots, n\}$.

Our study is then centered around the following questions:
\begin{itemize}
\item If \eqref{E:aim} and \eqref{E:aim:g} are known to be true only for certain subsets of $x$ in $(0, \infty)$, then how large should these subsets be so that \eqref{E:aim} and \eqref{E:aim:g} actually hold for all $x \in (0, \infty)$?
\item What is the relation between \eqref{E:aim} and \eqref{E:aim:g}? In particular, up to what extent does \eqref{E:aim} imply \eqref{E:aim:g}?
\item How do the limit functions $h_i$ and $A_{ij}$ look like? How are they related?
\item Up to what extent are \eqref{E:aim} and \eqref{E:aim:g} (locally) uniform in $x$?
\item Is it possible to give global, non-asymptotic upper bounds (so-called Potter bounds) for the remainder terms in \eqref{E:aim} and \eqref{E:aim:g}?
\item Can we represent $f$ in terms of $g_1, \ldots, g_n$? Can we retrieve $g_1, \ldots, g_n$ from $f$?
\end{itemize}

The key to the answers to these questions is a representation in terms of vectors and matrices. For instance, \eqref{E:aim:g} can be written as
\[
	\g(xt) = \A(x) \g(t) + o(|g_n(t)|), \qquad x \in (0, \infty), \, t \to \infty;
\]
here $\g = (g_1, \ldots, g_n)'$ is a column-vector valued function, and $\A = (A_{ij})_{i,j=1}^n$ is an upper-triangular matrix valued function. The function $\A$ necessarily satisfies the matrix version of the (multiplicative) Cauchy functional equation: $\A(xy) = \A(x) \A(y)$. Under measurability, this equation can be solved, yielding 
\[
	\A(x) = x^{\B}, \qquad x \in (0, \infty),
\]
for some upper triangular matrix $\B \in \RR^{n \times n}$; here $x^{\B} = \exp \{ (\log x) \B \}$, the exponential function of a matrix being defined by the usual series expansion. The rate vector $\g$ will be called \emph{regularly varying} with {index matrix} $\B$; notation $\g \in \RV_{\B}$. The row vector of limit functions $\h = (h_1, \ldots, h_n)$ in \eqref{E:aim} can then be written as
\[
	\h(x) = \bm{c} \int_1^x u^{\B} u^{-1} \du, \qquad x \in (0, \infty),
\]
for some $\bm{c} \in \RR^{1 \times n}$. We say that $f$ is \emph{generalised regularly varying} with rate vector $\g$ and $\g$-index $\c$; notation $f \in \GRV(\g)$.

Conceptually, the $n$th order case is strongly similar to the first-order case provided we are ready to think of the index of regular variation as a matrix. The fact that this matrix is upper triangular comes from the assumption that $g_i(t) = o(|g_{i-1}(t)|)$ and gives rise to some convenient simplifications. The diagonal elements of $\B$ are at the same time the eigenvalues of $\B$ and the indices of regular variation of the functions $|g_i|$, and they determine the nature of \eqref{E:aim}. A rather interesting special case arises when all these indices are equal to zero, yielding functions $f$ which are $\Pi$-varying of order $n$. These functions arise naturally as inverses of $\Gamma$-varying functions of which the auxiliary function has regularly varying derivatives up to some order.

\subsection{Outline}

The paper can be divided into two parts: the core theory in Sections~\ref{S:quantlim}--\ref{S:URP} and special cases and examples in Sections~\ref{S:PiI}--\ref{S:examples}.

In Section~\ref{S:quantlim}, we study the interplay between \eqref{E:aim} and \eqref{E:aim:g} without the assumption of measurability. More precisely, we investigate up to what extent convergence for \emph{some} $x$ implies convergence for \emph{all} $x$, and we determine structural properties of and relations between the functions $h_i$ and $A_{ij}$.

From Section~\ref{S:measurability} on, we assume that $f$ and $g_1, \ldots, g_n$ are measurable. After some preliminaries on matrix exponential functions and powers of triangular matrices (Section~\ref{SS:matrixexp}), we define regularly varying rate vectors (Section~\ref{SS:rvrv}) and generalised regular variation of arbitrary order (Section~\ref{SS:grv}). Main results are the two characterisation theorems, giving the precise form of the limit functions $\A$ and $\h$ in terms of an index matrix $\B$ and an index vector $\c$. 

The next fundamental results are the uniform convergence theorems in Section~\ref{S:URP}, leading to representation theorems and Potter bounds. In these Potter bounds, both the largest and the smallest eigenvalue of the index matrix play a role, depending on whether $x$ is larger or smaller than $1$. 

The special case of $\Pi$-variation of arbirary order is studied in Section~\ref{S:PiI} for general functions and in Section~\ref{S:PiII} for functions with continuous derivatives up to some order. The canonical situation here is that the index matrix $\B$ is equivalent to a single Jordan block with zero diagonal (Section~\ref{SS:Pi:Jordan}). A suitable rate vector can be constructed directly from $f$ through repeated applications of a special case of the indices transform (Section~\ref{SS:Pi:indices}) or by higher-order differencing (Section~\ref{SS:PiI:diff}). For smooth functions, an even simpler procedure is available through repeated differentiation (Section~\ref{SS:PiII:smooth}). This procedure applies in particular to inverses of certain $\Gamma$-varying functions (Section~\ref{SS:PiII:Gamma}). 

Two other special cases are described in Section~\ref{S:speccas}. First, if all eigenvalues of $\B$ are different from zero, then $f$ is essentially just a linear combination of the rate functions $g_1, \ldots, g_n$ (Section~\ref{SS:speccas:nonzero}). Second, if all eigenvalues of $\B$ are distinct, then $f$ is essentially a linear combination of power functions, a logarithm, and the final rate function $g_n$ (Section~\ref{SS:speccas:diff}). 

Finally, the paper is concluded with a long list of examples in Section~\ref{S:examples}. Except for the logarithm of the Gamma function, the most interesting examples all come from the class $\Pi$ of arbitrary order. These include the Lambert $W$ function and the inverses of the reciprocals of the complementary Gamma and complementary error functions.

\subsection{Conventions}

A real function is \emph{defined in a neighbourhood of infinity} if its domain contains an interval of the form $[t_0, \infty)$ for some real $t_0$. A real-valued function $f$ defined in a neighbourhood of infinity is \emph{eventually of constant sign} if there exists a real $t_0$ such that either $f(t) > 0$ for all $t \geq t_0$ or $f(t) < 0$ for all $t \geq t_0$. \emph{Measure} and \emph{measurability} refer to the Lebesgue measure and $\sigma$-algebra. 

Unless specified otherwise, limit relations are to be understood as $t \to \infty$. We write $a(t) \sim b(t)$ if $a(t)/b(t) \to 1$.
 
A column vector $\g = (g_1, \ldots, g_n)'$ of real-valued functions defined in a neighbourhood of infinity is called a \emph{rate vector} if each component function is ultimately of constant sign and, in case $n \geq 2$, if $g_{i+1}(t) = o(g_i(t))$ for every $i \in \{1, \ldots, n-1\}$. Even though the sign of $g_i(t)$ could be eventually negative, for simplicity we will write $o(g_i(t))$ rather than $o(|g_i(t)|)$.

\section{Quantifiers and limits}
\label{S:quantlim}

\subsection{Preliminaries}
\label{SS:prelimin}
For a rate vector $\g$ of length $n$, let $S(\g)$ be the set of all $x \in (0, \infty)$ for which there exists $\A(x) \in \RR^{n \times n}$ such that
\begin{equation}
\label{E:g}
	\g(xt) = \A(x) \g(t) + o(g_n(t)).
\end{equation}
For a real-valued function $f$ defined in a neighbourhood of infinity and a rate vector $\g$ of length $n$, let $T(f, \g)$ be the set of all $x \in (0, \infty)$ for which there exists a row vector $\h(x) = (h_1(x), \ldots, h_n(x))$ such that
\begin{equation}
\label{E:f}
	f(xt) = f(t) + \h(x) \g(t) + o(g_n(t)).
\end{equation}
Whenever clear from the context, we will just write $S = S(\g)$ and $T = T(f, \g)$.

In this section, we will study properties of and relations between the sets $S$ and $T$ and the matrix and vector functions $\A : S \to \RR^{n \times n}$ and $\h : T \to \RR^{1 \times n}$. In particular, we are interested up to what extent \eqref{E:g} is implied by \eqref{E:f}. In Subsection~\ref{SS:g}, we study the properties of $\A$ and $S$, while in Subsection~\ref{SS:f}, we study the interplay between $\A$ and $S$ on the one hand and $\h$ and $T$ on the other hand. We do not assume that $f$ or $\g$ are measurable; this assumption will be added from the next section onwards.

\begin{remark}
Let $f$ be a real-valued function defined in a neighbourhood of infinity and let $\g$ be a rate vector.
\begin{flushenumerate}
\item[(a)] For $x \in T$, the vector $\h(x)$ is uniquely defined by $f$ and $\g$ through the recursive relations
\begin{align*}
	h_{1}(x) &= \lim_{t \to\infty} \{f(tx) - f(t) \} / g_{1}(t), \\
	h_{k}(x) &= \lim_{t \to\infty} 
		\biggl\{ f(tx) - f(t) - \sum_{i=1}^{k-1} h_i(x) g_i(t) \biggr\} \bigg/ g_{k}(t), 
		\qquad k \in \{2, \ldots, n \}.
\end{align*}
\item[(b)] If $\boldsymbol{Q}$ is an invertible, upper triangular $n \times n$ matrix, then $\boldsymbol{Q} \g$ is a rate vector as well and \eqref{E:f} holds true with $\g(t)$ and $\h(x)$ replaced by $\boldsymbol{Q} \g(t)$ and $\h(x) \boldsymbol{Q}^{-1}$, respectively.
\item[(c)] Rather than \eqref{E:f}, one could also study more general relations of the form
\[
	f(tx) = x^{\alpha }f(t) + \h(x)\g(t) + o(g_{n}(t))
\]
for some fixed $\alpha \in \RR$. However, replacing $f(t)$ by $t^{-\alpha }f(t)$, $\g(t)$ by $t^{-\alpha} \g(t)$ and $\h(x)$ by $x^{-\alpha} \h(x)$ would lead back to \eqref{E:f} again.
\end{flushenumerate}
\end{remark}

\subsection{Rate vectors}
\label{SS:g}

We study some elementary properties of the set $S$ and the matrix function $\A$ in \eqref{E:g}.

\begin{proposition}
\label{P:A}
Let $\g$ be a rate vector and let $S = S(\g)$ and $\A(x)$ be as in \eqref{E:g}.
\begin{flushenumerate}
\item[(a)] The matrix $\A(x)$ is upper triangular and is uniquely determined by $\g$ and $x$.
\item[(b)] If $x, y \in S$ then $xy \in S$ and $\A(xy) = \A(x) \A(y)$.
\item[(c)] For $x \in S$, the matrix $\A(x)$ is invertible if and only if $x^{-1} \in S$; in that case, $\A(x)^{-1} = \A(x^{-1})$.
\end{flushenumerate}
\end{proposition}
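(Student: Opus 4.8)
The plan is to establish the three parts of Proposition~\ref{P:A} more or less in the order stated, exploiting the defining relation \eqref{E:g} and the hierarchical structure $g_{i+1}(t) = o(g_i(t))$ that makes the components of $\g$ asymptotically independent.

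For part~(a), the key observation is that the rate functions are asymptotically independent in the following sense: if $\sum_{j=1}^n \lambda_j g_j(t) = o(g_n(t))$, then all $\lambda_j$ vanish. Indeed, dividing by $g_1(t)$ and letting $t \to \infty$ kills every term except $\lambda_1$ (since $g_j = o(g_1)$ for $j \geq 2$ and $g_n = o(g_1)$), forcing $\lambda_1 = 0$; one then repeats with $g_2(t)$, and so on inductively. This immediately gives uniqueness of $\A(x)$ in \eqref{E:g}: two candidate matrices would have rows differing by vectors $\lambda$ with $\lambda \g(t) = o(g_n(t))$, hence equal. For upper-triangularity, I would look at the $i$th component of \eqref{E:g}, namely $g_i(xt) = \sum_{j=1}^n A_{ij}(x) g_j(t) + o(g_n(t))$, and show $A_{ij}(x) = 0$ for $j < i$: divide through by $g_{i-1}(t)$; the left side tends to $0$ because $g_i(xt)/g_{i-1}(xt) \to 0$ and $g_{i-1}(xt) \sim A_{i-1,i-1}(x) g_{i-1}(t)$ with $A_{i-1,i-1}(x) \neq 0$ (the diagonal entries are nonzero, being the limits $g_i(xt)/g_i(t)$ that must be positive since each $g_i$ is of constant sign — this is itself a short sub-argument), while on the right side only the terms $A_{i1}(x), \ldots, A_{i,i-1}(x)$ survive, giving $\sum_{j<i}A_{ij}(x) g_j(t) = o(g_{i-1}(t))$, and then the asymptotic independence lemma applied to the truncated vector $(g_1,\ldots,g_{i-1})'$ finishes it. This last step — getting the bookkeeping right so that the remainder is genuinely $o(g_{i-1}(t))$ and not merely $o(g_n(t))$ — is the one place requiring care, and I expect it to be the main obstacle.

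For part~(b), given $x, y \in S$ I would substitute $t \mapsto yt$ into \eqref{E:g} for $x$, obtaining $\g(xyt) = \A(x)\g(yt) + o(g_n(yt))$, then insert $\g(yt) = \A(y)\g(t) + o(g_n(t))$. One needs $g_n(yt) = O(g_n(t))$, which holds because $g_n(yt)/g_n(t) \to A_{nn}(y)$ by the $(n,n)$-component of \eqref{E:g} for $y$; and one needs $\A(x) \cdot o(g_n(t)) = o(g_n(t))$, which is clear since $\A(x)$ is a fixed matrix. Combining gives $\g(xyt) = \A(x)\A(y)\g(t) + o(g_n(t))$, so $xy \in S$ and, by the uniqueness from part~(a), $\A(xy) = \A(x)\A(y)$.

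For part~(c), note $1 \in S$ with $\A(1) = \I$ (trivially from \eqref{E:g}, using asymptotic independence again to identify the matrix). If $x^{-1} \in S$, then by part~(b), $\A(x)\A(x^{-1}) = \A(xx^{-1}) = \A(1) = \I$ and likewise $\A(x^{-1})\A(x) = \I$, so $\A(x)$ is invertible with inverse $\A(x^{-1})$. Conversely, suppose $\A(x)$ is invertible; I want to show $x^{-1} \in S$. Replace $t$ by $t/x$ in \eqref{E:g} for $x$: $\g(t) = \A(x) \g(t/x) + o(g_n(t/x))$. Since $g_n(t/x) = O(g_n(t))$ by the same argument as above (the limit of $g_n(t/x)/g_n(t)$ exists and equals $A_{nn}(x)^{-1} \neq 0$, as $A_{nn}(x)$, being a nonzero limit along a ratio, is invertible), multiplying by $\A(x)^{-1}$ yields $\g(t/x) = \A(x)^{-1}\g(t) + o(g_n(t))$, so $x^{-1} \in S$ with $\A(x^{-1}) = \A(x)^{-1}$. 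The only subtlety is justifying $g_n(t/x) = O(g_n(t))$ before we know $x^{-1} \in S$; but this follows directly from the $(n,n)$-entry of \eqref{E:g} written for $x$, evaluated at $t/x$ in place of $t$, which gives $g_n(t) = A_{nn}(x) g_n(t/x) + o(g_n(t/x))$ and hence $g_n(t/x)/g_n(t) \to 1/A_{nn}(x)$.
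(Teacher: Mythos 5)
Your overall route coincides with the paper's: upper triangularity via ratio arguments together with an induction across the rows (the paper organises the same idea as an induction on the subvector $(g_2, \ldots, g_n)'$ after killing the first column below the diagonal), uniqueness via the observation that $\lambda \g(t) = o(g_n(t))$ forces $\lambda = \0$, part (b) by composing the two relations and absorbing the $o(g_n(yt))$ term through $g_n(yt)/g_n(t) \to A_{nn}(y)$, and part (c) by the substitution $t \mapsto t/x$ followed by multiplication with $\A(x)^{-1}$ — all exactly as in the paper.

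One assertion in your part (a) is false as stated, though: the diagonal entries $A_{ii}(x)$ need \emph{not} be nonzero, and constancy of sign only makes the limit $g_i(xt)/g_i(t)$ nonnegative, not positive, so the promised ``short sub-argument'' does not exist. For instance $g(t) = e^{-t}$ is a perfectly good one-component rate vector with $x \in S$ and $A_{11}(x) = 0$ for every $x > 1$; nothing in the definition excludes rapidly varying components. Fortunately the claim is not load-bearing. In part (a) all you need is that $g_{i-1}(xt)/g_{i-1}(t)$ has a \emph{finite} limit (namely $A_{i-1,i-1}(x)$, available once row $i-1$ is known to be triangular, which is why the induction over rows should be made explicit), so that $g_i(xt)/g_{i-1}(t) = \{g_i(xt)/g_{i-1}(xt)\}\,\{g_{i-1}(xt)/g_{i-1}(t)\} \to 0$ whether or not $A_{i-1,i-1}(x)$ vanishes; the asserted equivalence $g_{i-1}(xt) \sim A_{i-1,i-1}(x) g_{i-1}(t)$ with nonzero constant should simply be dropped. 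In part (c) the nonvanishing of $A_{nn}(x)$ \emph{is} needed, but there it follows from the hypothesis that the upper triangular matrix $\A(x)$ is invertible (its determinant is $\prod_i A_{ii}(x)$), not from any sign consideration — which is how the paper argues. With that parenthetical excised and the row induction spelled out, your proof is correct and essentially identical to the paper's; the bookkeeping step you worried about (the remainder being genuinely $o(g_{i-1}(t))$) is unproblematic, since every term $A_{ij}(x) g_j(t)$ with $j \geq i$ and the $o(g_n(t))$ remainder are all $o(g_{i-1}(t))$.
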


\begin{proof}
(a) If $n = 1$, then \eqref{E:g} is just $g_1(xt) / g_1(t) \to A_{11}(x)$, so there is nothing to prove. Suppose therefore that $n \geq 2$. Row number $i \in \{1, \ldots, n\}$ in \eqref{E:g} reads
\[
	g_i(xt) = \sum_{j=1}^n A_{ij}(x) g_j(t) + o(g_n(t)), \qquad x \in S.
\]
Since $\g$ is a rate vector, we find
\[
	g_i(xt) / g_1(t) \to A_{i1}(x), \qquad x \in S.
\]
If $i \geq 2$, then also
\[
	\frac{g_i(xt)}{g_1(t)} = \frac{g_i(xt)}{g_1(xt)} \frac{g_1(xt)}{g_1(t)}
	\to 0 \cdot A_{11}(x) = 0, \qquad x \in S.
\]
As a consequence, $A_{i1}(x) = 0$ for $x \in S$ and $i \in \{2, \ldots, n\}$. Therefore, the vector $(g_2, \ldots, g_n)'$, which is a rate vector too, satisfies \eqref{E:g} as well but with $\A(x)$ replaced by $(A_{ij}(x))_{i,j=2}^n$, $x \in S$. Proceed by induction to find that $\A(x)$ is upper triangular. From
\[
	g_i(xt) = \sum_{j=i}^n A_{ij}(x) g_j(t) + o(g_n(t)), \qquad x \in S, i \in \{1, \ldots, n\},
\]
and the fact that $\g$ is a rate vector, it follows that the component functions $A_{ij}$ can be retrieved recursively from $\g$ via the relations
\begin{align}
\label{E:g2A:a}
	A_{ii}(x) &= \lim_{t \to \infty} g_i(xt) / g_i(t), \\
\label{E:g2A:b}
	A_{ij}(x) &= \lim_{t \to \infty} \biggl( g_i(xt) - \sum_{k=i}^{j-1} A_{ik}(x) g_k(t) \biggr) \bigg/ g_j(t)
\end{align}
for $i \in \{1, \ldots, n\}$ and $j \in \{i+1, \ldots, n\}$.

(b) For $x, y \in S$, we have
\begin{align*}
	\g(xyt)
	&= \A(x) \g(yt) + o(g_n(yt)) \\
	&= \A(x) \A(y) \g(t) + o(g_n(t)) + o(g_n(yt)).
\end{align*}
Since $g_n(yt)/g_n(t) \to A_{nn}(y)$, we obtain
\[
	\g(xyt) = \A(x) \A(y) \g(t) + o(g_n(t)),
\]
and thus $xy \in S$. Moreover, $\A(xy) = \A(x)\A(y)$ by uniqueness.

(c) Trivially, $1 \in S$ and $\A(1) = I_n$, the $n \times n$ identity matrix. Therefore, if both $x$ and $x^{-1}$ belong to $S$, then by (b), $\A(x) \A(x^{-1}) = \A(x x^{-1}) = \A(1) = I_n$. Conversely, suppose that $x \in S$ and that $\A(x)$ is invertible. Then
\begin{align*}
	\A(x)^{-1} \g(xt) 
	&= \A(x)^{-1} \{ \A(x) \g(t) + o(g_n(t)) \} \\
	&= \g(t) + o(g_n(t)).
\end{align*}
From the fact that $\A(x)$ is upper triangular and invertible, it follows that $A_{nn}(x)$ is nonzero. But as $g_n(xt) / g_n(t) \to A_{nn}(x)$, we may therefore rewrite the previous display as
\[
	\A(x)^{-1} \g(xt) = \g(x^{-1} (xt)) + o(g_n(xt)).
\]
Putting $s = xt$, we get
\[
	\g(x^{-1}s) = \A(x)^{-1} \g(s) + o(g_n(s)), \qquad s \to \infty.
\]
As a consequence, $x^{-1} \in S$. Moreover, by uniqueness, $\A(x^{-1}) = \A(x)^{-1}$.
\end{proof}

\begin{proposition}
\label{P:S}
Let $\g$ be a rate vector and let $S = S(\g)$.
\begin{flushenumerate}
\item[(a)] If $S \cap (1, \infty)$ contains a set of positive measure, then $(x, \infty) \subset S$ for some $x \in (1, \infty)$. 
\item[(b)] If $S \cap (0, 1)$ contains a set of positive measure, then $(0, x) \subset S$ for some $x \in (0, 1)$. 
\item[(c)] If $S$ contains a set of positive measure and if both $S \cap (0, 1)$ and $S \cap (1, \infty)$ are non-empty, then $S = (0, \infty)$.
\end{flushenumerate}
\end{proposition}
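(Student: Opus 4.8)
The plan is to exploit the single structural fact from Proposition~\ref{P:A}(b): $S$ is a sub-semigroup of the multiplicative group $(0,\infty)$ and contains $1$, so $L := \log S$ is an additive sub-semigroup of $\RR$ containing $0$. I would stress at the outset that none of (a)--(c) needs $\g$ or $S$ to be measurable; only a measurable subset of $S$ of positive measure enters, through a Steinhaus-type step.

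For (a) I would pick a measurable $M \subseteq S \cap (1,\infty)$ of positive measure, set $A := \log M \subseteq (0,\infty)$ and, after intersecting with a large bounded interval, assume $A$ bounded. Then $\mathbf{1}_A * \mathbf{1}_A$ is continuous with positive integral, hence strictly positive on a nonempty open interval $(a,b)$, and $(a,b) \subseteq A + A$ (positivity of the convolution at a point $s$ forces $\{ y \in A : s - y \in A \}$ to be nonempty). Since $A \subseteq (0,\infty)$ we have $A + A \subseteq (0,\infty)$, so we may take $0 < a < b$; and since $S$ is multiplicatively closed, $A + A = \log(MM) \subseteq L$, so $(a,b) \subseteq L$ and, iterating the semigroup property, $(ka, kb) \subseteq L$ for every integer $k \geq 1$. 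The elementary observation driving the proof is that $(ka,kb)$ and $((k+1)a,(k+1)b)$ overlap as soon as $(k+1)a < kb$, i.e. as soon as $k > a/(b-a)$; hence for $N := \lfloor a/(b-a) \rfloor + 1$ the union $\bigcup_{k \geq N}(ka,kb)$ is the single half-line $(Na, \infty)$. Thus $(Na,\infty) \subseteq L$, i.e. $(e^{Na},\infty) \subseteq S$ with $e^{Na} > 1$, which is (a). Part (b) goes the same way with $A = \log M \subseteq (-\infty,0)$: the Steinhaus interval then lies in $(-\infty,0)$, the translates $(ka,kb)$ eventually overlap as $k \to \infty$, and their union is a half-line $(-\infty,Nb)\subseteq L$, giving $(0,e^{Nb})\subseteq S$ with $e^{Nb}<1$.

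For (c), since points are null sets, the assumption that $S$ contains a set of positive measure forces $S\cap(1,\infty)$ or $S\cap(0,1)$ to contain a set of positive measure; I would treat the former case and note the latter is identical with $(0,1)$ and $(1,\infty)$ interchanged and part (b) used in place of part (a). So by (a) there is $x_0>1$ with $(x_0,\infty)\subseteq S$, and by hypothesis there is some $y_0\in S\cap(0,1)$, whence $y_0^k\in S$ for all $k\geq 1$ by Proposition~\ref{P:A}(b) and $y_0^k\downarrow 0$. Given an arbitrary $z\in(0,\infty)$, pick $k$ with $y_0^k < z/x_0$; then $z\,y_0^{-k}>x_0$, so $z\,y_0^{-k}\in S$, and Proposition~\ref{P:A}(b) applied to $z\,y_0^{-k}$ and $y_0^k$ gives $z = (z\,y_0^{-k})\,y_0^k\in S$. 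Hence $S=(0,\infty)$.

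The substantive ingredient is the Steinhaus step converting positive measure into an open interval; after that the argument is only the remark that a half-line is a union of enough translates of one short interval, together with multiplicative closure. The point I expect to need the most care is that $S$ is merely a semigroup and not a group — one cannot divide inside $S$ — which is exactly why (a) and (b) treat the rays $(1,\infty)$ and $(0,1)$ separately and why the proof of (c) must marry an interval coming from the ray of positive measure with the powers of a point drawn from the opposite ray.
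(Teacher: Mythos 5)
Your proposal is correct, and its part (c) is essentially the paper's argument: the paper takes $y \in S \cap (0,1)$, gets $(y^{-k},\infty) \subset S$ from (a), and then shows by induction that $(a,\infty) \subset S$ and $y \in S$ imply $(ay,\infty) \subset S$, so that $(y^l,\infty) \subset S$ for all $l$; your choice of a single sufficiently large power $y_0^k$ to pull an arbitrary $z$ into the half-line is the same idea in a slightly different packaging, and your observation that one of $S\cap(0,1)$, $S\cap(1,\infty)$ must carry the positive measure matches the paper. Where you genuinely diverge is in (a) and (b): the paper disposes of these by citing Corollary~1.1.5 of \citet{BGT}, due to \citet{HillePhillips57}, whereas you re-prove that result from scratch — Steinhaus via the continuity of $\mathbf{1}_A * \mathbf{1}_A$ to get an interval $(a,b)$ inside $A+A \subseteq \log S$, then the $k$-fold sumsets $(ka,kb)$, which eventually overlap and fill a half-line. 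This is in substance the standard proof behind the cited corollary; what it buys is self-containedness (and it makes transparent that only the semigroup property and one positive-measure subset are used, no measurability of $S$ itself), at the cost of reproving a known lemma. The only step you leave tacit is that $A=\log M$ again has positive measure; since $\exp$ is locally Lipschitz, a null image would force $M$ to be null, so this is standard and harmless, but worth a half-sentence if you write it up.
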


\begin{proof}
By Proposition~\ref{P:A}, $S$ is a multiplicative semigroup. Statements (a) and (b) then follow directly from Corollary~1.1.5 in \citet{BGT}, due to \citet{HillePhillips57}. 

To prove (c), proceed as follows. By assumption, $S$ contains a set of positive measure; hence $S \cap (0, 1)$ or $S \cap (1, \infty)$ must contain a set of positive measure; assume the latter. Take $y \in S \cap (0, 1)$, which is non-empty by assumption. By (a), there exists a positive integer $k$ such that $(y^{-k}, \infty) \subset S$. Since $y \in S$ and since $S$ is a multiplicative semigroup, $(a, \infty) \subset S$ implies $(ay, \infty) \subset S$. By induction, we get that $(y^l, \infty) \subset S$ for every integer $l$. Hence $(0, \infty) \subset S$.
\end{proof}

\subsection{General functions}
\label{SS:f}

We will now study the interplay between equations~\eqref{E:g} and \eqref{E:f}. In Proposition~\ref{P:S2T} we will see how properties of $S$ transfer to properties of $T$. Conversely, in Proposition~\ref{P:T2S}, we investigate up to what extent \eqref{E:f} implies \eqref{E:g}; in particular, we express $\g$ and $\A$ in terms of $f$ and $\h$.

\begin{proposition}
\label{P:S2T}
Let $f$ be a real-valued function defined in a neighbourhood of infinity and let $\g$ be a rate vector. If $S = (0, \infty)$, then $T$ is a multiplicative group and for $x, y \in T$,
\begin{align}
\label{E:hxy}
	\h(xy) &= \h(x) \A(y) + \h(y), \\
\label{E:hx1}
	\h(x^{-1}) &= - \h(x) \A(x)^{-1}.
\end{align}
In particular, if $T$ also contains a set of positive measure, then $T = (0, \infty)$.
\end{proposition}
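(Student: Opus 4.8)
The plan is to establish the multiplicative-group structure of $T$ together with the cocycle identities \eqref{E:hxy}--\eqref{E:hx1}, and then invoke Proposition~\ref{P:S} (or rather its analogue for $T$) to upgrade a positive-measure hypothesis to $T = (0,\infty)$. The starting point is the observation that $S = (0,\infty)$ makes $\A$ defined on all of $(0,\infty)$, so by Proposition~\ref{P:A}(b),(c) $\A$ is a genuine multiplicative homomorphism into the invertible upper triangular matrices, with $\A(x)^{-1} = \A(x^{-1})$ available everywhere.

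First I would show $T$ is closed under products and derive \eqref{E:hxy}. For $x, y \in T$, write
\begin{align*}
	f(xyt)
	&= f(yt) + \h(x)\g(yt) + o(g_n(yt)) \\
	&= f(t) + \h(y)\g(t) + \h(x)\bigl(\A(y)\g(t) + o(g_n(t))\bigr) + o(g_n(yt)).
\end{align*}
Since $S = (0,\infty)$ gives $g_n(yt)/g_n(t) \to A_{nn}(y)$, the error terms combine to $o(g_n(t))$, so $xy \in T$ and, by the uniqueness of $\h$ from Remark~2.1(a), $\h(xy) = \h(x)\A(y) + \h(y)$. Next, for inverses: $1 \in T$ trivially with $\h(1) = \0$, and for $x \in T$ one runs the same substitution trick as in the proof of Proposition~\ref{P:A}(c). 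Starting from \eqref{E:f}, apply $\A(x)^{-1}$-type reasoning at the level of $f$: write $s = xt$, so $f(s) = f(x^{-1}s) + \h(x)\g(x^{-1}s) + o(g_n(x^{-1}s))$; using $\g(x^{-1}s) = \A(x^{-1})\g(s) + o(g_n(s))$ (valid since $S = (0,\infty)$) and rearranging gives $f(x^{-1}s) = f(s) - \h(x)\A(x^{-1})\g(s) + o(g_n(s))$, whence $x^{-1} \in T$ and $\h(x^{-1}) = -\h(x)\A(x^{-1}) = -\h(x)\A(x)^{-1}$, which is \eqref{E:hx1}. Combined with closure under products, $T$ is a multiplicative subgroup of $(0,\infty)$.

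For the final claim, the point is that a multiplicative subgroup of $(0,\infty)$ that has positive measure must be all of $(0,\infty)$: such a group is in particular a semigroup meeting both $(0,1)$ and $(1,\infty)$ nontrivially (being a group containing some $x \neq 1$, it contains both $x$ and $x^{-1}$), so Proposition~\ref{P:S}(c) — whose proof uses only the semigroup property and is stated for $S$ but applies verbatim to any multiplicative semigroup — yields $T = (0,\infty)$. Alternatively one cites the Steinhaus-type result (Corollary~1.1.4--1.1.5 in \citet{BGT}) directly for measurable subgroups.

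The main obstacle is the bookkeeping of remainder terms when substituting $s = xt$ in the inverse argument: one must be careful that $o(g_n(x^{-1}s))$ is genuinely $o(g_n(s))$, which requires knowing $g_n(x^{-1}s)/g_n(s) \to A_{nn}(x^{-1}) \neq 0$ — and this is exactly where the hypothesis $S = (0,\infty)$, rather than merely $x^{-1} \in S$, does its work, since it guarantees $\A(x)$ is invertible (its diagonal entry $A_{nn}(x)$ being nonzero) via Proposition~\ref{P:A}(c). Everything else is a routine chase through the defining relations and the uniqueness statement of Remark~2.1(a).
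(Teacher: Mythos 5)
Your proof is correct and follows essentially the same route as the paper: the same decomposition $f(xyt)-f(t) = \{f(xyt)-f(yt)\}+\{f(yt)-f(t)\}$ with $S=(0,\infty)$ used to replace $\g(yt)$ by $\A(y)\g(t)$ and absorb the $o(g_n(yt))$ term, the same evaluation of the defining relation at $x^{-1}t$ (your substitution $s=xt$) for the inverse, and a Steinhaus-type argument for the last claim. The only cosmetic difference is that the paper cites \citet[Corollary~1.1.4]{BGT} directly rather than re-running Proposition~\ref{P:S}(c) for the semigroup $T$, which you also mention as an alternative.
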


\begin{proof}
Let $x, y \in T$. We have
\begin{align*}
	f(xyt) - f(t)
	&= \{f(xyt) - f(yt)\} + \{f(yt) - f(t)\} \\
	&= \h(x)\g(yt) + o(g_n(yt)) + \h(y)\g(t) + o(g_n(t)) \\
	&= \h(x)\A(y)\g(t) + \h(y)\g(t) + o(g_n(yt)) + o(g_n(t)) \\
	&= \{\h(x)\A(y) + \h(y)\}\g(t) + o(g_n(t)),
\end{align*}
where in the last step we used the fact that $g_n(yt)/g_n(t) \to A_{nn}(y)$. Hence $xy \in T$, and \eqref{E:hxy} follows from the uniqueness of $\h$.

Next, let $x \in T$. We have
\begin{align*}
	f(x^{-1}t)-f(t)
	&= -\{f(x(x^{-1}t)) - f(x^{-1}t)\} \\
	&= -\h(x)\g(x^{-1}t) + o(g_n(x^{-1}t)) \\
	&= -\h(x)\A(x^{-1})\g(t) + o(g_n(t)) + o(g_n(x^{-1}t)) \\
	&= -\h(x)\A(x^{-1})\g(t) + o(g_n(t)),
\end{align*}
where in the last step we used the fact that $g_n(x^{-1}t)/g_n(t) \to A_{nn}(x^{-1})$. Hence $x^{-1} \in T$, and \eqref{E:hx1} follows from the uniqueness of $\h$.

The final statement follows \citet[Corollary~1.1.4]{BGT}, going back to \citet{Steinhaus20}.
\end{proof}

\begin{remark}
\label{R:C}
Let $f$ be a real-valued function defined in a neighbourhood of infinity and let $\g = (g_1, \ldots, g_n)'$ be a rate vector. Put 
\begin{align}
	\bm{r} &= (f, g_1, \ldots, g_n)', \nonumber \\
\label{E:C}
	\bm{C}(x) &= \begin{pmatrix} 1 & \h(x) \\ \bm{0} & \A(x) \end{pmatrix}, \qquad x \in S \cap T. 
\end{align}
By Proposition~\ref{P:A}(a), $\bm{C}(x)$ is an $(n+1) \times (n+1)$ upper triangular matrix. Then \eqref{E:g} and \eqref{E:f} can be put together as
\[
	\bm{r}(xt) = \bm{C}(x) \bm{r}(t) + o(r_{n+1}(t)),
\]
which is again of the form \eqref{E:g}. If $x, y \in S \cap T$, then by going through the proof of Proposition~\ref{P:S2T} we find that also $xy \in S \cap T$ and that $\bm{C}(xy) = \bm{C}(x) \bm{C}(y)$. Similarly, for $x \in S \cap T$, the matrix $\bm{C}(x)$ is invertible if and only if $x^{-1} \in S \cap T$, in which case $\bm{C}(x^{-1}) = \bm{C}(x)^{-1}$. 

However, $\bm{r}$ is not necessarily a rate vector. Moreover, as we shall be interested in the interplay between \eqref{E:g} and \eqref{E:f}, we continue to study these equations separately.
\end{remark}

Conversely, to go from $T$ and $\h$ to $S$ and $\A$, we need to impose that the functions $h_1, \ldots, h_n$ are linearly independent. For instance, if $n = 1$ and $h_1 \equiv 0$, then \eqref{E:f} reduces to $f(xt) - f(t) = o(g(t))$, from which in general nothing can be deduced about the rate function $g$; likewise, in case $n = 2$, linear independence of the functions $h_1$ and $h_2$ is assumed from the start in \citet{dHS96}. Note that by Remark~\ref{R:linearindependence} below, unless the functions $h_1, \ldots, h_n$ are all identically zero, it is always possible to find a non-empty $I \subset \{1, \ldots, n\}$ and a rate vector $\tilde{\g} = (\tilde{g}_i)_{i \in I}'$ such that the functions $h_i$, $i \in I$ are linearly independent, $\tilde{g}_i(t)/g_i(t) \to 1$ for all $i \in I$, and $f(xt) = f(t) + \sum_{i \in I} h_i(x) \tilde{g}_i(t) + o(g_n(t))$. 

\begin{notation}
Let $f$ and $\h$ be as in \eqref{E:f}. For a vector $\x = (x_1, \ldots, x_n) \in T^n$, put
\begin{equation}
\label{E:hx}
	\h(\x) = (\h(x_1)', \ldots, \h(x_n)')',
\end{equation}
that is, $\h(\x)$ is an $n \times n$-matrix with $\h(x_i)$ as row number $i$. Further, for $\boldsymbol{t} = (t_1, \ldots, t_n)$ such that all $t_i$ are in the domain of $f$, put 
\[
	f(\boldsymbol{t}) = (f(t_1), \ldots, f(t_n))'. 
\]
Finally, put $\1 = (1, \ldots, 1) \in \RR^n$.
\end{notation}

Note that the functions $h_1, \ldots, h_n$ are linearly independent if and only if there exists $\x \in T^n$ such that the vectors $(h_i(x_1), \ldots, h_i(x_n))$, $i \in \{1, \ldots, n\}$ are independent \citep[Chapter~1, Problem~8]{CheneyLight00}, i.e., the matrix $\h(\x)$ is invertible.  

\begin{proposition}
\label{P:T2S}
Let $f$ be a real-valued function defined in a neighbourhood of infinity and let $\g$ be a rate vector. If $T = (0, \infty)$ and if the functions $h_1, \ldots, h_n$ in \eqref{E:f} are linearly independent, then $S = (0, \infty)$, and for $y \in (0, \infty)$ and for any $\x \in (0, \infty)^n$ such that $\h(\x)$ is invertible,
\begin{align}
\label{E:f2g}
	\g(t) &= \h(\x)^{-1} \{ f(t \x) - f(t \1) \} + o(g_n(t)), \\
\label{E:h2A}
	\A(y) &= \h(\x)^{-1} \{ \h(y \x) - \h(y \1) \}.
\end{align}
\end{proposition}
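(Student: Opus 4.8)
The plan is to exploit the symmetry between the two equations \eqref{E:g} and \eqref{E:f}: equation \eqref{E:f} says that the scalar increment $f(xt)-f(t)$ has the asymptotic expansion $\h(x)\g(t)+o(g_n(t))$, and since $T=(0,\infty)$ we may evaluate it at each coordinate of a vector $\x=(x_1,\dots,x_n)\in(0,\infty)^n$ simultaneously. Stacking these $n$ scalar relations into a single vector relation gives
\begin{align*}
	f(t\x) - f(t\1) &= \h(\x)\,\g(t) + o(g_n(t)),
\end{align*}
where $\h(\x)$ is the $n\times n$ matrix of \eqref{E:hx}. First I would choose $\x$ so that $\h(\x)$ is invertible; this is possible precisely because $h_1,\dots,h_n$ are linearly independent, by the characterisation recalled just before the statement (\citet[Chapter~1, Problem~8]{CheneyLight00}). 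Multiplying on the left by $\h(\x)^{-1}$ then yields \eqref{E:f2g}, the representation of $\g$ in terms of $f$. Here one has to be slightly careful that left-multiplication by the fixed matrix $\h(\x)^{-1}$ preserves the error term: $\h(\x)^{-1}\cdot o(g_n(t)) = o(g_n(t))$ because each entry of the error vector is $o(g_n(t))$ and a finite linear combination of such terms is again $o(g_n(t))$.

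Next I would establish that $S=(0,\infty)$. By Proposition~\ref{P:S2T}, since $S=(0,\infty)$ is assumed there one gets $T=(0,\infty)$; here we want the converse direction under the linear-independence hypothesis. I would argue directly: fix $y\in(0,\infty)$ and use \eqref{E:f} twice, once with the argument scaled by $y$ and once not, to compute $\g(yt)$. Concretely, apply the displayed vector form of \eqref{E:f} with $t$ replaced by $yt$ and with $\x$ replaced by $\x$ itself, giving $f(yt\x)-f(yt\1)=\h(\x)\g(yt)+o(g_n(yt))$; and separately, apply \eqref{E:f} (scalar form, at the $2n$ points $yx_i t$ and $y t$) to express $f(yt\x)-f(t\1)$ and $f(yt\1)-f(t\1)$ via $\h(y\x)$ and $\h(y\1)$ acting on $\g(t)$. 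Subtracting and rearranging, and using $g_n(yt)\sim A_{nn}(y)g_n(t)$ (which holds because the $(n,n)$ entry of any admissible limit matrix for the rate vector is the limit of $g_n(yt)/g_n(t)$, and this limit exists once we know $y\in S$ — or more cleanly, I would first note that the scalar relations force $g_i(yt)/g_1(t)$ and then all the lower ratios to converge, exactly as in the proof of Proposition~\ref{P:A}(a)), one obtains
\begin{align*}
	\h(\x)\,\g(yt) &= \{\h(y\x) - \h(y\1)\}\,\g(t) + o(g_n(t)).
\end{align*}
Left-multiplying by $\h(\x)^{-1}$ shows that $\g(yt) = \A(y)\g(t)+o(g_n(t))$ with $\A(y)$ given by the right-hand side of \eqref{E:h2A}, hence $y\in S$; since $y$ was arbitrary, $S=(0,\infty)$, and \eqref{E:h2A} holds by the uniqueness of $\A(y)$ from Proposition~\ref{P:A}(a).

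The main obstacle, and the step I would spend the most care on, is the bookkeeping of error terms when passing from the scaled argument $yt$ back to $t$: every time an $o(g_n(yt))$ or $o(g_n(y^{-1}t))$ appears it must be rewritten as $o(g_n(t))$, which requires knowing that $g_n(yt)/g_n(t)$ converges to a finite limit. The clean way to handle this is to observe, as in Proposition~\ref{P:A}(a), that the scalar form of \eqref{E:f} evaluated at suitable points already forces the ratios $g_i(yt)/g_i(t)$ to converge (working top-down from $g_1$), so that the substitution $s=yt$ is legitimate without circularity; alternatively one can first prove $\A(y)$ exists as a limit and only afterwards identify its value. A secondary point worth stating explicitly is that the formula \eqref{E:h2A} for $\A(y)$ is independent of the choice of $\x$ (any two admissible choices give the same matrix), which is automatic from the uniqueness clause in Proposition~\ref{P:A}(a) but deserves a one-line remark. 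Everything else is linear algebra: invertibility of $\h(\x)$ from linear independence, and the fact that left-multiplication by a fixed invertible matrix commutes with the $o(\cdot)$ relation coordinatewise.
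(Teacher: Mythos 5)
Your proposal is correct and follows essentially the same route as the paper: stack the scalar relations into $f(t\x) - f(t\1) = \h(\x)\g(t) + o(g_n(t))$, invert $\h(\x)$ to get \eqref{E:f2g}, derive $\g(yt) = \A(y)\g(t) + o(g_n(t)) + o(g_n(yt))$ by the same subtraction, and then absorb the $o(g_n(yt))$ term by the top-down row-by-row argument of Proposition~\ref{P:A}(a), which is exactly how the paper avoids the circularity you flag. The only caution is that your first suggested shortcut (invoking $g_n(yt) \sim A_{nn}(y) g_n(t)$ from $y \in S$) would indeed be circular, but your ``more cleanly'' alternative is the paper's actual argument, so the proof stands as written.
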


\begin{proof}
For $\x \in (0, \infty)^n$ and $i \in \{1, \ldots, n\}$, we have
\[
	f(x_i t) = f(t) + \h(x_i) \g(t) + o(g_n(t));
\]
in matrix notation, this becomes
\begin{equation}
\label{E:f:vector}
	f(t \x) = f(t \1) + \h(\x) \g(t) + o(g_n(t)).
\end{equation}
Now let $\x \in (0, \infty)^n$ be such that $\h(\x)$ is invertible. Then \eqref{E:f:vector} clearly implies \eqref{E:f2g}. For $y \in (0, \infty)$, we have by \eqref{E:f2g} and \eqref{E:f:vector}
\begin{align}
\label{E:f2g:aux}
	\g(ty) 
	&= \h(\x)^{-1} \{ f(ty \x) - f(ty \1) \} + o(g_n(yt)) \nonumber \\
	&= \h(\x)^{-1} \{ f(ty \x) - f(t \1) \} - \h(\x)^{-1} \{ f(ty \1) - f(t \1) \} + o(g_n(yt)) \nonumber \\
	&= \h(\x)^{-1} \h(y \x) \g(t) - \h(\x)^{-1} \h(y \1) \g(t) + o(g_n(t)) + o(g_n(yt)) \nonumber \\
	&= \A(y) \g(t) + o(g_n(t)) + o(g_n(yt)),
\end{align}
with $\A(y) = \h(\x)^{-1} \{ \h(y \x) - \h(y \1) \}$. Row number $i$ of \eqref{E:f2g:aux} reads
\[
	g_i(ty) = \sum_{j=1}^n A_{ij}(y) g_j(t) + o(g_n(t)) + o(g_n(ty)).
\]
In particular, $g_1(ty) = A_{11}(y) g_1(t) + o(g_1(t)) + o(g_1(ty))$ and thus
\[
	g_1(ty) / g_1(t) \to A_{11}(y).
\]	
For $i \in \{2, \ldots, n\}$, we find from the two previous displays that
\[
	A_{i1}(y) = \lim_{t \to \infty} g_i(ty) / g_1(t) = 0
\]
and thus
\[
	g_i(ty) = \sum_{j=2}^n A_{ij}(y) g_j(t) + o(g_n(t)) + o(g_n(ty)), \qquad i \in \{2, \ldots, n\}
\]
Repeating the same argument inductively yields eventually
\[
	g_n(ty) = A_{nn}(y) g_n(y) + o(g_n(t)) + o(g_n(ty)),
\]
from which
\[
	g_n(ty) / g_n(t) \to A_{nn}(y).
\]
Hence, on the right-hand side in \eqref{E:f2g:aux}, the term $o(g_n(ty))$ can be absorbed by the term $o(g_n(t))$, yielding $\g(ty) = \A(y) \g(t) + o(g_n(t))$. Hence $y \in S$ with $\A(y)$ as in \eqref{E:h2A}.
\end{proof}

\begin{remark}
\label{R:linearindependence}
Let $f$ be a real-valued function defined in a neighbourhood of infinity, let $\g$ be a rate vector, and assume that not all functions $h_1,\ldots,h_n$ in \eqref{E:f} are identically zero. Then by basic linear algebra, there exists a non-empty subset $I$ of $\{1, \ldots, n\}$ such that the following holds: 
\begin{enumerate}
\item[(a)] the functions $h_i$, $i \in I$ are linearly independent;
\item[(b)] there exist $\lambda_{ij} \in \RR$, $i \in \{1, \ldots, n\}$ and $j \in I \cap \{1, \ldots, i\}$, such that $h_i = \sum_{i \in I, j \leq i} \lambda_{ij} h_j$ (the empty sum being zero zero by convention). 
\end{enumerate}
Then
\[
	\h(x) \g(t) 
	= \sum_{i=1}^n \sum_{j \in I, j \leq i} \lambda_{ij} h_j(x) g_i(t)
	= \sum_{j \in I} h_j(x) \tilde{g}_j(t)
\]
with $\tilde{g}_j(t) = \sum_{i=j}^n \lambda_{ij} g_i(t)$. Since $\lambda_{jj} = 1$ for $j \in I$, we have $\tilde{g}_j(t) / g_j(t) \to 1$; in particular, $(\tilde{g}_j)_{j \in I}$ is a rate vector.
\end{remark}

\begin{remark}
\label{R:Tx}
Let $f$ be a real-valued function defined in a neighbourhood of infinity, let $\g$ be a rate vector, and assume that there exists $\x \in T^n$ such that $\h(\x)$ is invertible. Let 
\begin{equation}
\label{E:Tx}
	T(\x) = \{ y \in T : x_i y \in T, \forall i \}. 
\end{equation}
By repeating the proof of Proposition~\ref{P:T2S}, we see that $T(\x) \subset S$. As a consequence, if $T(\x)$ contains a set of positive measure and if both $T(\x) \cap (0, 1)$ and $T(\x) \cap (1, \infty)$ are non-empty, then by Proposition~\ref{P:S}(c) and Proposition~\ref{P:S2T} actually $S = (0, \infty)$ and $T = (0, \infty)$.
\end{remark}

\section{Regularly varying rate vectors and generalised regular variation}
\label{S:measurability}

Up to now, we have studied the asymptotic relations \eqref{E:g} and \eqref{E:f} without assuming that $f$ or $\g$ are measurable. In this section, we will add this assumption and also assume that $S = (0, \infty)$ and $T = (0, \infty)$. We pay particular attention to the functional forms of the limit functions $\A$ and $\h$. These forms admit simple expressions in terms of matrix exponential functions.

Some preliminaries on matrix exponentials are collected in Subsection~\ref{SS:matrixexp}. In Subsection~\ref{SS:rvrv}, we focus on the rate vector $\g$. Finally, in Subsection~\ref{SS:grv}, we turn to the function $f$ itself.

\subsection{Matrix exponentials}
\label{SS:matrixexp}

For a square matrix $\B$, recall the matrix exponential functions
\begin{align*}
	\exp(\B) &= \sum_{k = 0}^\infty \frac{1}{k!} \B^k, \\
	x^\B &= \exp \{ (\log x) \B \}, \qquad x \in (0, \infty).
\end{align*}
By convention, $\B^0 = \I$, the identity matrix. If $\B$ is upper triangular, the entries of $x^\B$ can be computed in a recursive way.

\begin{proposition}
\label{P:B2A}
Let $\B \in \RR^{n \times n}$ be upper triangular and put $\A(x) = x^\B$, $x \in (0, \infty)$. Then $A_{ii}(x) = x^{b_i}$ with $b_i = B_{ii}$, $i \in \{1, \ldots, n\}$, while for $i \in \{1, \ldots, n-1\}$,
\begin{equation}
\label{E:B2A}
	(A_{i,i+1}(x), \ldots, A_{i,n}(x))
	= x^{b_i} (B_{i,i+1}, \ldots, B_{i,n}) \int_1^x \A_{i+1,n}(y) y^{-b_i-1} \dy,
\end{equation}
where $\A_{kl}(x) = (A_{ij}(x))_{i,j = k}^l$. In particular, if all diagonal elements of $\B$ are zero, then for integer $i$ and $k$ such that $1 \leq i < i+k \leq n$ and for $x \in (0, \infty)$,
\begin{equation}
\label{E:xB:Pi}
	A_{i, i+k}(x) = \sum_{l = 1}^k \frac{(\log x)^l}{l!} \sum_{i = j_0 < \cdots < j_l = i+k} \prod_{m=1}^l B_{j_{m-1}, j_m},
\end{equation}
the inner sum being over all $(l+1)$-tuples of positive integers $(j_0, \ldots, j_l)$ satisfying the stated (in)equalities.
\end{proposition}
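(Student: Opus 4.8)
The plan is to prove the two halves of the proposition by different routes: the diagonal entries together with the recursion \eqref{E:B2A} follow from a $2 \times 2$ block decomposition of $\B$ combined with the linear differential equation satisfied by $x \mapsto x^\B$, whereas the closed form \eqref{E:xB:Pi} in the nilpotent case is read off directly from the (now finite) power series defining $x^\B$.

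\emph{Diagonal entries and the recursion \eqref{E:B2A}.} First I would differentiate the series $x^\B = \sum_{k \geq 0} (\log x)^k \B^k / k!$ term by term --- legitimate since it converges locally uniformly in $x \in (0,\infty)$ --- to obtain $x \frac{\mathrm{d}}{\mathrm{d}x} x^\B = \B x^\B$, with $x^\B = \I$ at $x = 1$. Next I would observe that, since $\B$ is upper triangular, restricting a product of upper triangular matrices to a fixed trailing block of coordinates is multiplicative; hence $(\B^k)_{j,\ell} = (\B_{m,n}^{\,k})_{j,\ell}$ for $j,\ell \in \{m, \ldots, n\}$, where $\B_{m,n} = (B_{pq})_{p,q=m}^n$, and consequently $\A_{m,n}(x) = x^{\B_{m,n}}$. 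In particular $(\B^k)_{ii} = b_i^k$, so $A_{ii}(x) = \sum_{k \geq 0} (\log x)^k b_i^k / k! = x^{b_i}$. Now fix $i \in \{1, \ldots, n-1\}$ and write $\B_{i,n}$ in block form with the scalar $b_i$ in the top-left corner, the row vector $\v = (B_{i,i+1}, \ldots, B_{i,n})$ to its right, and $\D = \B_{i+1,n}$ in the bottom-right corner; by the previous remark $x^{\B_{i,n}}$ is block upper triangular with diagonal blocks $x^{b_i}$ and $x^\D = \A_{i+1,n}(x)$, so that, writing $\bm{\rho}(x) = (A_{i,i+1}(x), \ldots, A_{i,n}(x))$,
\[
	\A_{i,n}(x) = x^{\B_{i,n}} = \begin{pmatrix} x^{b_i} & \bm{\rho}(x) \\ \0 & \A_{i+1,n}(x) \end{pmatrix}.
\]
Substituting this block form into $x \frac{\mathrm{d}}{\mathrm{d}x} \A_{i,n}(x) = \B_{i,n} \A_{i,n}(x)$ and equating the $(1,2)$ blocks gives the linear first-order equation $x \bm{\rho}'(x) = b_i \bm{\rho}(x) + \v \, \A_{i+1,n}(x)$ with initial condition $\bm{\rho}(1) = \0$. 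Solving it with the integrating factor $x^{-b_i}$ --- the scalar factors $y^{-b_i-1}$ commuting freely past the matrix $\A_{i+1,n}(y)$ --- yields $\bm{\rho}(x) = x^{b_i} \v \int_1^x \A_{i+1,n}(y) y^{-b_i-1} \dy$, which is exactly \eqref{E:B2A}.

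\emph{The closed form \eqref{E:xB:Pi}.} When every $b_i = 0$ the matrix $\B$ is strictly upper triangular, hence nilpotent with $\B^n = \0$, so $x^\B = \sum_{l=0}^{n-1} (\log x)^l \B^l / l!$. Here I would expand $(\B^l)_{i,i+k} = \sum \prod_{m=1}^l B_{j_{m-1},j_m}$, the sum running over all $(l+1)$-tuples of integers in $\{1, \ldots, n\}$ with $j_0 = i$ and $j_l = i+k$; since $B_{pq} = 0$ unless $p < q$, only strictly increasing tuples $i = j_0 < \cdots < j_l = i+k$ contribute, and such a tuple exists only for $1 \leq l \leq k$ (while the $l = 0$ term vanishes because $i \neq i+k$). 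Inserting these expressions for $(\B^l)_{i,i+k}$ into the series gives \eqref{E:xB:Pi}. Alternatively, the same identity can be obtained by downward induction on $i$ from \eqref{E:B2A} using $\int_1^x (\log y)^l y^{-1} \dy = (\log x)^{l+1}/(l+1)$.

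\emph{Main difficulty.} There is no genuine obstacle: once the differential equation for $x^\B$ and the block structure are in place, everything is a routine computation. The only points that require a little attention are the bookkeeping that identifies a trailing sub-block of $x^\B$ with $x$ raised to the corresponding sub-block of $\B$, the non-commutative ordering in the product $\v \, \A_{i+1,n}(y) \, y^{-b_i-1}$ (harmless because the variable factor is scalar), and, in the nilpotent case, the observation that a strictly increasing index chain from $i$ to $i+k$ has at most $k$ steps.
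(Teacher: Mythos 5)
Your proof is correct. For the recursion \eqref{E:B2A} you follow essentially the same route as the paper: both arguments rest on the linear differential equation $x\,\frac{\mathrm{d}}{\mathrm{d}x}x^{\B}=\B x^{\B}$ with value $\I$ at $x=1$, solved by the integrating factor $x^{-b_i}$; the only differences are cosmetic (you obtain the ODE by differentiating the defining series rather than from the Cauchy relation $\A(xy)=\A(x)\A(y)$ as $y\to 1$, and you organise the computation through the trailing-block identity $\A_{i,n}(x)=x^{\B_{i,n}}$ and a $2\times 2$ block split instead of writing the system entrywise). Where you genuinely diverge is \eqref{E:xB:Pi}: the paper deduces it from \eqref{E:B2A} by induction on $k$, leaving the algebra implicit, whereas you read it off directly from the finite expansion $x^{\B}=\sum_{l=0}^{n-1}(\log x)^l\B^l/l!$ together with the combinatorial formula for $(\B^l)_{i,i+k}$ as a sum over strictly increasing index chains, which forces $1\leq l\leq k$. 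This direct route is arguably cleaner and makes the coefficient structure transparent, while the paper's induction keeps everything within the single recursive scheme \eqref{E:B2A}; both are valid, and you correctly note the inductive alternative as well.
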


\begin{proof}
The starting point is the relation $\A(xy) = \A(x) \A(y)$, from which it follows that $\A(xy) - \A(x) = \{\A(y) - \I\} \A(x)$. We obtain
\[
	\frac{\A(xy) - \A(x)}{y-1} = \frac{\A(y) - I_n}{y - 1} \A(x), \qquad x, y \in (0, \infty).
\]
Taking limits as $y \to 1$, we find that $x \dot{\A}(x) = \B \A(x)$, and therefore, since $\B$ and $\A(x)$ are upper triangular,
\begin{equation}
\label{E:diffeq}
	\dot{A}_{ij}(x) = x^{-1} \sum_{k = i}^{j} B_{ik} A_{kj}(x), \qquad x \in (0, \infty), \quad 1 \leq i \leq j \leq n.
\end{equation}
This system of differential equations can be solved as follows. If $i = j$, equation~\eqref{E:diffeq} becomes
\[
	\dot{A}_{ii}(x) = x^{-1} b_i A_{ii}(x), \qquad x \in (0, \infty).
\]
In combination with the initial condition $A_{ii}(1) = 1$, this implies $A_{ii}(x) = x^{b_i}$.

Next assume $1 \leq i < j \leq n$. Rewrite \eqref{E:diffeq} as
\[
	\dot{A}_{ij}(x) - x^{-1} b_i A_{ij}(x) = x^{-1} \sum_{k=i+1}^{j} B_{ik} A_{kj}(x), \qquad x \in (0, \infty).
\]
Looking for solutions of the form $A_{ij}(x) = C_{ij}(x) x^{b_i}$, we find that $C_{ij}(x)$ should satisfy 
\[
	\dot{C}_{ij}(x) = x^{-b_i-1} \sum_{k = i+1}^{j} B_{ik} A_{kj}(x), \qquad x \in (0, \infty).
\]
Since $C_{ij}(1)=0$, we obtain
\[
	C_{ij}(x) = \sum_{k = i+1}^{j} B_{ik} \int_{1}^{x} A_{kj}(y) y^{-b_i-1} \dy, \qquad x \in (0, \infty)
\]
and consequently
\[
	A_{ij}(x) = x^{b_i} \sum_{k = i+1}^{j} B_{ik} \int_{1}^{x} A_{kj}(y) y^{-b_i-1} \dy, \qquad x \in (0, \infty),
\]
which is \eqref{E:B2A}.

Next suppose that all diagonal elements of $\B$ are zero. Then all diagonal elements of $\A(x)$ are equal to unity, and by \eqref{E:B2A},
\begin{align*}
	A_{i,i+k}(x) 
	&= \sum_{j=i+1}^{i+k} B_{ij} \int_1^x A_{j,i+k}(u) u^{-1} \du \\
	&= \sum_{j=i+1}^{i+k-1} B_{ij} \int_1^x A_{j,i+k}(u) u^{-1} \du + B_{i,i+k} \log x.
\end{align*}
We proceed by induction on $k$. If $k = 1$, then the above display tells us that $A_{i,i+1}(x) = B_{i,i+1} \log x$, which is \eqref{E:xB:Pi}. If $k \geq 2$, then use of the induction hypothesis and the previous display again leads, after some algebra, to the desired equality.
\end{proof}

\begin{remark}
\label{R:B2h}
\begin{flushenumerate}
\item[(a)]
Let $\B \in \RR^{n \times n}$ and write $\bm{H}(x) = \int_1^x u^\B u^{-1} \du$. Term-by-term integration of the series expansion $u^\B = \sum_{k=0}^\infty (\log u)^k B^k / k!$ yields the convenient formula
\begin{equation}
\label{E:Bh}
	\B \bm{H}(x) = \H(x) \B = x^\B - \I, \qquad x \in (0, \infty).
\end{equation}
If $\B$ is invertible, then
\begin{equation}
\label{E:B2h:inv}
	\bm{H}(x) = \B^{-1} (x^\B - \I) = (x^\B - \I) \B^{-1}.
\end{equation}
\item[(b)]
Suppose that $\bm{D} \in \RR^{(n+1) \times (n+1)}$ is given by
\[
	\bm{D} =
	\begin{pmatrix}
		0 & \c \\
		\0 & \B
	\end{pmatrix}
\]
where $\c \in \RR^{1 \times n}$ and where $\B \in \RR^{n \times n}$ is upper triangular. Then $\bm{D}$ is upper triangular as well, and by Proposition~\ref{P:B2A}, for $x \in (0, \infty)$,
\[
	x^{\bm{D}} =
	\begin{pmatrix}
		1 & \h(x) \\
		\0 & x^\B
	\end{pmatrix},
\]
where $\h(x) \in \RR^{1 \times n}$ is given by
\[
	\h(x) = \c \int_1^x y^\B y^{-1} \dy.
\]
\end{flushenumerate}
\end{remark}

\subsection{Regularly varying rate vectors}
\label{SS:rvrv}

\begin{proposition}
\label{P:B:a}
Let $\g$ be a measurable rate vector. If there exists $\A : (0, \infty) \to \RR^{n \times n}$ such that
\[
	\g(xt) = \A(x) \g(t) + o(g_n(t)), \qquad x \in (0, \infty),
\]
then there exists an upper triangular matrix $\B \in \RR^{n \times n}$ such that
\[
	\A(x) = x^{\B}, \qquad x \in (0, \infty).
\]
\end{proposition}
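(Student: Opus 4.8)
The plan is to reduce the matrix functional equation to the classical scalar theory of regular variation via the recursive structure already laid out in Proposition~\ref{P:A}. By Proposition~\ref{P:A}(a), for every $x \in (0, \infty)$ the matrix $\A(x)$ is upper triangular, it is uniquely determined by $\g$ and $x$, and its entries are recovered through the explicit limit formulas \eqref{E:g2A:a}--\eqref{E:g2A:b}. By Proposition~\ref{P:A}(b), the map $x \mapsto \A(x)$ satisfies the multiplicative Cauchy equation $\A(xy) = \A(x)\A(y)$ on all of $(0, \infty)$. The first step is therefore to observe that each entry $A_{ij}$ is a \emph{measurable} function of $x$: this follows from \eqref{E:g2A:a}--\eqref{E:g2A:b} since a pointwise limit (along $t \to \infty$, which can be taken along the integers) of measurable functions of $x$ is measurable, and $\g$ is measurable by hypothesis. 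Measurability is the crucial ingredient that upgrades the Cauchy equation to a rigid form.

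Next I would handle the diagonal entries. Formula \eqref{E:g2A:a} says $A_{ii}(x) = \lim_{t\to\infty} g_i(xt)/g_i(t)$, so $|g_i|$ is a measurable, positive function whose ratio limit exists and is finite for every $x$; moreover $A_{ii}$ is itself multiplicative, $A_{ii}(xy) = A_{ii}(x)A_{ii}(y)$, as the $(i,i)$ entry of $\A(xy) = \A(x)\A(y)$ (the product being upper triangular, the diagonal of the product is the product of the diagonals). By the Karamata characterisation of measurable solutions of the multiplicative Cauchy equation (Corollary~1.1.9--1.1.10 in \citet{BGT}), there is $b_i \in \RR$ with $A_{ii}(x) = x^{b_i}$ for all $x \in (0, \infty)$; equivalently $|g_i| \in \RV_{b_i}$ in the classical sense. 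Set $B_{ii} = b_i$.

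For the off-diagonal entries I would argue by induction on the superdiagonal index $k = j - i$, exploiting that the equation $\A(xy) = \A(x)\A(y)$, read entrywise, gives
\[
	A_{ij}(xy) = \sum_{m=i}^{j} A_{im}(x) A_{mj}(y)
	= x^{b_i} A_{ij}(y) + A_{ij}(x) y^{b_j} + \sum_{m=i+1}^{j-1} A_{im}(x) A_{mj}(y),
\]
where the last sum involves only strictly shorter superdiagonals and is, by the induction hypothesis, a known (measurable, explicit) function. Dividing by $x^{b_i} y^{b_j}$ and writing $\phi_{ij}(x) = A_{ij}(x) x^{-b_i}$ or an analogous normalisation, one obtains a measurable ``twisted additive'' Cauchy equation of the type $\psi(xy) = \psi(x) + \psi(y) + (\text{known term})$, or after a logarithmic change of variable an inhomogeneous additive Cauchy equation on $\RR$; its measurable solutions are pinned down by the classical Cauchy--Karamata theory (again via \citet{BGT}, Theorem~1.1.8 and its corollaries), forcing $A_{ij}$ into exactly the polynomial-in-$\log x$ times power form produced by a single upper triangular $\B$. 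Concretely, one reads off $B_{i,i+1}$ from the $k=1$ case and then the remaining $B_{ij}$ inductively, and checks that the resulting upper triangular $\B$ reproduces all the $A_{ij}$ — this is precisely the system solved in Proposition~\ref{P:B2A}, so the verification that $\A(x) = x^{\B}$ amounts to noting that both sides satisfy the same recursion \eqref{E:diffeq} with the same initial condition $\A(1) = \I$, or are the unique measurable multiplicative solution with the prescribed diagonal and first superdiagonal.

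The main obstacle I anticipate is the off-diagonal induction: one must set up the correct normalisation so that the inhomogeneous Cauchy equation has a clean form, and one must verify measurability of the inhomogeneous term and that the particular solution coming from lower superdiagonals is consistent with a single triangular exponent matrix rather than merely \emph{some} solution of the functional equation. Equivalently — and this is the cleaner route I would actually take — once measurability of all $A_{ij}$ is established, I would invoke a measurable version of the solution of the matrix Cauchy equation $\A(xy) = \A(x)\A(y)$ directly: differentiability (in fact local boundedness suffices, by the standard Steinhaus-type argument used already in Proposition~\ref{P:S2T}) follows from measurability, and then the argument in the proof of Proposition~\ref{P:B2A} — taking the limit of $\{\A(xy) - \A(x)\}/(y-1)$ as $y \to 1$ — shows $x\dot{\A}(x) = \B\A(x)$ with $\B = \dot{\A}(1)$, whence $\A(x) = x^{\B}$ by uniqueness of solutions of the linear ODE with $\A(1) = \I$, and $\B$ is upper triangular because $\A(x)$ is upper triangular for every $x$ by Proposition~\ref{P:A}(a). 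The only real work is thus the passage from ``measurable'' to ``locally bounded/continuous,'' which is exactly the classical Steinhaus--Ostrowski lemma already cited in the paper.
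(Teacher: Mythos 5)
Your preferred ``cleaner'' route is essentially the paper's own proof: measurability of $\A$ via the recursive limit formulas \eqref{E:g2A:a}--\eqref{E:g2A:b}, the observation (using Proposition~\ref{P:A}) that $\A$ is a measurable multiplicative homomorphism into the invertible upper triangular matrices, and then the classical fact that measurable one-parameter matrix groups are of exponential form, $\A(x)=x^{\B}$ with $\B=\lim_{x\to 1}\{\A(x)-\I\}/(x-1)$, which the paper simply cites from Dunford--Schwartz rather than re-deriving via the Steinhaus/ODE argument you sketch; upper triangularity of $\B$ then follows exactly as you say. The entry-by-entry induction you outline first is a workable but unnecessary detour once this is in place.
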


\begin{proof}
From \eqref{E:g2A:a}--\eqref{E:g2A:b} it follows that $\A$ is measurable. Hence, in view of Proposition~\ref{P:A}, $\A$ is a measurable group homomorphism from $(0, \infty)$ into the group of invertible $n \times n$ matrices. Then necessarily $\A(x) = x^\B$, $x \in (0, \infty)$, with
\[
	\B = \lim_{x \to 1} \frac{\A(x) - \I}{x - 1},
\]
see for instance \citet[Theorem~VIII.1.2 and Lemma~VIII.1.3]{DunfordSchwartz58}. From the above display, it follows that $\B$ is upper triangular.
\end{proof}

\begin{proposition}
\label{P:B:b}
Let $\B$ be the matrix appearing in Proposition~\ref{P:B:a} and put $b_i = B_{ii}$, $i \in \{1, \ldots, n\}$.
\begin{flushenumerate}
\item[(a)] The function $|g_i|$ is regularly varying with index $b_i$; in particular, $b_1 \geq \cdots \geq b_n$.
\item[(b)] All the eigenvalues of $\B$, except maybe for the smallest one, have geometric multiplicity equal to one.
\end{flushenumerate}
\end{proposition}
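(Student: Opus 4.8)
For part~(a) I would read off the diagonal of $\A(x)$ in two ways: Proposition~\ref{P:B2A} gives $A_{ii}(x)=x^{b_i}$, while the recursion~\eqref{E:g2A:a} gives $A_{ii}(x)=\lim_{t\to\infty}g_i(xt)/g_i(t)$. Since each $g_i$ is eventually of constant sign, this says $|g_i(xt)|/|g_i(t)|\to x^{b_i}$, i.e.\ $|g_i|\in\RV_{b_i}$. The inequalities $b_1\ge\cdots\ge b_n$ then follow at once: $|g_{i+1}|/|g_i|$ is regularly varying with index $b_{i+1}-b_i$ and tends to $0$ by the rate-vector hypothesis, whereas a regularly varying function of strictly positive index diverges.

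For part~(b) I would first translate the assertion into a statement about the entries of $\B$. An eigenvalue $\lambda$ occupies consecutive diagonal positions $i,\dots,j$ (the $b_k$ being non-increasing), and writing $\B-\lambda\I$ as a block upper triangular matrix along $\{1,\dots,i-1\}$, $\{i,\dots,j\}$, $\{j+1,\dots,n\}$ one sees that the two outer diagonal blocks are invertible while the middle one is the strictly upper triangular $\bm N:=(B_{kl}-\lambda\delta_{kl})_{k,l=i}^{j}$; hence the geometric multiplicity of $\lambda$ equals $\dim\ker\bm N$. For a nilpotent $m\times m$ matrix, $\dim\ker\bm N=1$ iff $\bm N^{m-1}\neq 0$, and since the only possibly-nonzero entry of $\bm N^{m-1}$ is the $(1,m)$ entry $\prod_{k=i}^{j-1}B_{k,k+1}$, this happens iff all superdiagonal entries of the $\lambda$-block are nonzero. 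So part~(b) is equivalent to: for every eigenvalue $\lambda$ other than the smallest, $B_{k,k+1}\neq 0$ at each superdiagonal position $k$ of the $\lambda$-block. I would prove this by induction on $n$ (trivial for $n\le 2$). Fix such a $\lambda$ (so $\lambda>b_n$, hence the last position $j$ of its block satisfies $j<n$) and a superdiagonal position $k$. If $k\ge 2$, apply the induction hypothesis to the rate vector $(g_2,\dots,g_n)'$, whose index matrix is the trailing principal submatrix $(B_{pl})_{p,l=2}^{n}$: there $\lambda$ is still a non-smallest eigenvalue and $(k,k+1)$ is still a superdiagonal position of its block, so $B_{k,k+1}\neq 0$.

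It remains to treat $k=1$, i.e.\ to show $B_{12}\neq 0$ when $b_1=b_2>b_n$. Replacing $\g(t)$ by $t^{-b_1}\g(t)$ (which replaces $\B$ by $\B-b_1\I$) I may assume $b_1=0$, so $g_1,g_2\in\RV_0$, $g_2=o(g_1)$, $b_n<0$; suppose $B_{12}=0$ and let $j$ be the last index with $b_j=0$, so $2\le j<n$. If $j\ge 3$: the induction hypothesis applied to $(g_2,\dots,g_n)'$ gives $B_{23}\neq 0$, and (using $B_{12}=0$ and $b_1=b_2=b_3=0$) Proposition~\ref{P:B2A} gives $A_{13}(x)=B_{13}\log x$ and $A_{23}(x)=B_{23}\log x$; then $w:=g_1-(B_{13}/B_{23})g_2$ is measurable with $w\sim g_1\in\RV_0$, and $(w,g_3,g_4,\dots,g_n)'$ is again a measurable rate vector satisfying a relation of the form~\eqref{E:g} for all $x$, with index matrix $\B^{[1]}$ (supplied by Proposition~\ref{P:B:a}) of diagonal $(0,0,b_4,\dots,b_n)$ — so $0$ is a non-smallest eigenvalue of multiplicity $\ge 2$. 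But $w$ was chosen precisely so that the $g_3$-coefficient of $w(xt)-w(t)$ vanishes, and then the $(1,2)$-entry formula of Proposition~\ref{P:B2A} forces $B^{[1]}_{12}=0$, contradicting the induction hypothesis in dimension $n-1$. If $j=2$: then $|g_l|\in\RV_{b_l}$ with $b_l<0$ for every $l\ge 3$, so for each $x$ the increment $g_1(xt)-g_1(t)=\sum_{l\ge 3}A_{1l}(x)g_l(t)+o(g_n(t))=O(g_3(t))$ is polynomially small; since $g_3\in\RV_{b_3}$ with $b_3<0$, telescoping along $t=2^m$ (the increments being summable by Potter bounds for $g_3$) shows $g_1(2^m)$ converges, say to $g_1(\infty)$, and likewise $g_2(2^m)\to g_2(\infty)$. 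If $g_1(\infty)=0$ the telescoping bound gives $|g_1(2^m)|=O(|g_3(2^m)|)$, impossible because $|g_1|/|g_3|\in\RV_{-b_3}$ diverges; so $g_1(\infty)\neq 0$, and likewise $g_2(\infty)\neq 0$. But $g_2=o(g_1)$ forces $g_2(2^m)/g_1(2^m)\to 0$, hence $g_2(\infty)=0$ — a contradiction. This closes the induction.

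The delicate part is entirely in~(b): the matrix reductions are routine triangular bookkeeping and the $k\ge 2$ case is immediate, but the $k=1$ case needs the little elimination trick producing $w$, and its sub-case $j=2$ — where the inductive mechanism collapses — must be handled by the hands-on telescoping/convergence argument above, for which one has to be careful about summability of increments and about invoking the scalar facts (regular variation of ratios, Potter-type bounds) for ordinary regular variation.
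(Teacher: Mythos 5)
Your part~(a) is essentially the paper's own argument, so nothing to add there. Part~(b) is correct, but it takes a genuinely different route from the paper. The paper works directly with left eigenvectors: if $\v \B = b \v$ with $b > b_n$, then $\v x^\B = x^b \v$, so $t^{-b} \v \g(t)$ lies in $o\Pi_a$ with $a(t) = t^{-b} g_n(t)$ regularly varying of negative index, and the representation theorem for $o\Pi$ \citep[Theorem~3.6.1]{BGT} yields $\v \g(t) = c t^b + o(g_n(t))$; two linearly independent eigenvectors for the same $b$ would then produce a nontrivial combination $\v$ with $\v \g(t) = o(g_n(t))$, which is impossible for a rate vector. You instead translate multiplicity one into the equivalent nonvanishing of all superdiagonal entries inside the constant-diagonal block (your block-kernel reduction and the nilpotency argument are correct) and prove that by induction on $n$: restriction to $(g_2, \ldots, g_n)'$ disposes of every superdiagonal position except the top one, the elimination $w = g_1 - (B_{13}/B_{23}) g_2$ pushes the top case down one dimension when the block has length at least three, and the telescoping along $t = 2^m$ settles the residual case of a $2 \times 2$ zero block sitting above eigenvalues of negative index --- that last step is in effect a hands-on proof of exactly the special case of the $o\Pi$ representation theorem that the paper simply cites. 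The paper's eigenvector argument buys brevity and uniformity (all block sizes at once, one appeal to a standard theorem); your argument buys a self-contained, elementary proof relying only on scalar regular-variation facts and Potter bounds, and it delivers as a by-product the explicit superdiagonal characterisation of geometric multiplicity one, which the paper records only later (for zero-diagonal matrices) in Proposition~\ref{P:Jordan}.
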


\begin{proof}
(a) This follows from the fact that
\[
	g_i(xt) / g_i(t) \to A_{ii}(x) = x^{b_i}, 
	\qquad x \in (0, \infty), \quad i \in \{1, \ldots, n\}.
\]
Since $g_i(t) = o(g_{i-1}(t))$, necessarily $b_i \leq b_{i-1}$.

(b) Let $b$ be an eigenvalue (diagonal element) of $\B$ and let $\v \in \RR^{1 \times n}$ satisfy $\v\B = b\v$. Then $\v x^\B = x^b \v$ for all $x > 0$. Hence
\[
	\v \g(xt) = \v x^\B \g(t) + o(g_n(t)) = x^b \v\g(t) + o(g_n(t)), \qquad x \in (0, \infty),
\]
and thus
\[
	(xt)^{-b} \v\g(xt) = t^{-b} \v\g(t) + o(t^{-b} g_n(t)), \qquad x \in (0, \infty).
\]
This states that the function $t \mapsto t^{-b} \v\g(t)$ belongs to the class $o\Pi_a$ with $a(t) = t^{-b} g_n(t)$. Now unless $b$
is equal to the smallest eigenvalue, $b_n$, the Representation Theorem for $o\Pi$ \citep[Theorem~3.6.1]{BGT} stipulates the existence of $c \in \RR$ such that
\[
	t^{-b} \v\g(t) = c + o(t^{-b}g_n(t)),
\]
or in other words
\[
	\v\g(t) = ct^b + o(g_n(t)).
\]
Now let $b > b_n$ and suppose that $\v_1, \v_2 \in \RR^{1 \times n}$ both satisfy $\v_i \B = b \v_i$, for $i = 1,2$. Let $c_1, c_2 \in \RR$ be such that $\v_i \g(t) = c_i t^b + o(g_n(t))$, for $i = 1,2$. Find real numbers $\lambda_1$ and $\lambda_2$, not both zero, such that $\lambda_1 c_1 + \lambda_2 c_2 = 0$. The vector $\v = \lambda_1 \v_1 + \lambda_2 \v_2$ clearly satisfies $\v \g(t) = o(g_n(t))$. As $\g$ is a rate vector, necessarily $\v = \bm{0}$. Hence $\v_1$ and $\v_2$ are linearly dependent.
\end{proof}

These results motivate the following definition.

\begin{definition}
\label{D:RV}
A matrix $\B \in \RR^{n \times n}$ is an \emph{index matrix} if it is upper triangular, if its diagonal elements are non-increasing, and if all of its eigenvalues, except maybe for the smallest one, have geometric multiplicity equal to one.

A rate vector $\g$ of length $n$ is \emph{regularly varying with index matrix $\B$} if $\g$ is measurable and 
\begin{equation}
\label{E:rv}
	\g(xt) = x^\B \g(t) + o(g_n(t)), \qquad x \in (0, \infty). 
\end{equation}
Notation: $\g \in \RV_\B$.
\end{definition}

Combining Propositions~\ref{P:S}(c), \ref{P:B:a}, and \ref{P:B:b}, we arrive at our first main result.

\begin{theorem}[(Characterisation Theorem for $\RV_\B$)]
\label{T:rv:char}
Let $\g$ be a measurable rate vector of length $n$ and let $S$ be the set of $x \in (0, \infty)$ for which there exists $\A(x) \in \RR^{n \times n}$ such that $\g(xt) = \A(x) \g(t) + o(g_n(t))$. If $S$ contains a set of positive measure and if both $S \cap (0,1)$ and $S \cap (1,\infty)$ are non-empty, then $\g \in \RV_\B$ for some index matrix $\B$.
\end{theorem}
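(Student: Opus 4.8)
The plan is to assemble the conclusion from the three preparatory results exactly as the sentence before the statement suggests. The hypotheses are precisely those of Proposition~\ref{P:S}(c): $S = S(\g)$ contains a set of positive measure, and both $S \cap (0,1)$ and $S \cap (1,\infty)$ are non-empty. Hence Proposition~\ref{P:S}(c) gives $S = (0,\infty)$, so that \eqref{E:g} holds for \emph{all} $x \in (0,\infty)$ with the (unique, by Proposition~\ref{P:A}(a)) matrix function $\A$. In other words, the hypothesis of Proposition~\ref{P:B:a} is met, since $\g$ is assumed measurable.

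Next I would invoke Proposition~\ref{P:B:a} to obtain an upper triangular matrix $\B \in \RR^{n\times n}$ with $\A(x) = x^{\B}$ for all $x \in (0,\infty)$; this is exactly the relation \eqref{E:rv} in Definition~\ref{D:RV}, so $\g$ satisfies the displayed convergence with this $\B$. It then remains to check that $\B$ is an \emph{index matrix} in the sense of Definition~\ref{D:RV}, i.e.\ that in addition to being upper triangular, its diagonal elements are non-increasing and all of its eigenvalues except possibly the smallest have geometric multiplicity one. But these are precisely statements (a) and (b) of Proposition~\ref{P:B:b}, which applies verbatim to the matrix $\B$ just produced: part (a) gives that $|g_i|$ is regularly varying with index $b_i = B_{ii}$ and hence $b_1 \geq \cdots \geq b_n$ (using $g_i(t) = o(g_{i-1}(t))$), and part (b) gives the geometric-multiplicity condition. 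Therefore $\B$ is an index matrix and $\g \in \RV_\B$, which is the claim.

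There is essentially no genuine obstacle here: the theorem is a packaging result, and all the substantive work — the semigroup/Steinhaus-type measure argument behind Proposition~\ref{P:S}(c), the solution of the matrix Cauchy equation behind Proposition~\ref{P:B:a}, and the $o\Pi$ Representation Theorem argument behind Proposition~\ref{P:B:b}(b) — has already been carried out. The only point that warrants a word of care is making sure the matrix $\B$ furnished by Proposition~\ref{P:B:a} is the \emph{same} object to which Proposition~\ref{P:B:b} is applied; this is immediate because Proposition~\ref{P:B:b} is stated precisely for "the matrix appearing in Proposition~\ref{P:B:a}". One could also remark that the non-emptiness of $S\cap(0,1)$ and $S\cap(1,\infty)$ is automatically inherited by $\A$ being a homomorphism once $S=(0,\infty)$, so no separate verification of invertibility of $\A(x)$ is needed beyond what Proposition~\ref{P:A}(c) already provides.
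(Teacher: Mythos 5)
Your proposal is correct and is essentially the paper's own argument: the paper proves the theorem by exactly this combination of Proposition~\ref{P:S}(c) (to get $S=(0,\infty)$), Proposition~\ref{P:B:a} (to get $\A(x)=x^{\B}$ with $\B$ upper triangular), and Proposition~\ref{P:B:b} (to verify the non-increasing diagonal and the geometric-multiplicity condition, so that $\B$ is an index matrix).
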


\begin{remark}
\label{R:rv:char}
Let $\g \in \RV_\B$.
\begin{flushenumerate}
\item[(a)] If $\bm{Q} \in \RR^{n \times n}$ is upper triangular and invertible, then $\bm{Q}\g$ is a rate vector as well and $\bm{Q}\g \in \RV_{\bm{Q} \B \bm{Q}^{-1}}$.
\item[(b)] For integer $1 \leq k \leq l \leq n$, the subvector $\g_{kl} = (g_k, \ldots, g_l)'$ is a rate vector as well and $\g_{kl} \in \RV_{\B_{kl}}$, where $\B_{kl} = (B_{ij})_{i,j = k}^l$.
\end{flushenumerate}
\end{remark}

\subsection{Generalised regular variation}
\label{SS:grv}

Now we take up again the study of the relation $f(xt) = f(t) + \h(x) \g(t) + o(g_n(t))$, this time for measurable $f$. In view of Proposition~\ref{P:T2S}, there is not much harm in assuming from the start that $\g$ is measurable as well.

\begin{definition}
A measurable, real-valued function $f$ defined in a neighbourhood of infinity is \emph{generalised regularly varying with rate vector $\g \in \RV_\B$ and $\g$-index $\c \in \RR^{1 \times n}$} if
\begin{equation}
\label{E:grv}
	f(xt) = f(t) + \h(x) \g(t) + o(g_n(t)), \qquad x \in (0, \infty),
\end{equation}
where
\begin{equation}
\label{E:B2h}
	\h(x) = \c \int_1^x y^\B y^{-1} \dy, \qquad x \in (0, \infty).
\end{equation}
Notation: $f \in \GRV(\g)$.
\end{definition}

Our second main result asserts that the regular variation property of $\g$ and the form of the limit function $\h$ in \eqref{E:B2h} are in some sense built in in the relation \eqref{E:grv}. The assumption in Theorem~\ref{T:grv:char} that the limit functions $h_1, \ldots, h_n$ are linearly independent is unavoidable; however, in view of Remark~\ref{R:linearindependence}, unless all $h_i$ are identically zero, it is always possible to switch to a subvector $(h_i)_{i \in I}$ of functions that are linearly independent.

\begin{theorem}[(Characterisation Theorem for $\GRV(\g)$)]
\label{T:grv:char}
Let $f$ be a measurable, real-valued function defined in a neighbourhood of infinity and let $\g$ be a measurable rate vector of length $n$. Let $T$ be the set of $x \in (0, \infty)$ for which there exists $\h(x) \in \RR^{1 \times n}$ such that $f(tx) = f(t) + \h(x) \g(t) + o(g_n(t))$. Assume that there exists $\x \in T^n$ such that
\begin{enumerate}
\item[(i)] the matrix $\h(\x)$ in \eqref{E:hx} is invertible;
\item[(ii)] the set $T(\x)$ in \eqref{E:Tx} contains a set of positive measure and has non-empty intersections with both $(0, 1)$ and $(1, \infty)$.
\end{enumerate}
Then $\g \in \RV_\B$ and $f \in \GRV(\g)$ with $\h$ as in \eqref{E:B2h}. In addition, all eigenvalues (including the smallest one) of the index matrix $\B$ have geometric multiplicity equal to one.
\end{theorem}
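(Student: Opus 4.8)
The plan is to reduce everything to the machinery already established in Sections~\ref{S:quantlim} and~\ref{SS:rvrv}, and then to extract the extra conclusion about geometric multiplicities from the linear independence of the $h_i$. First I would use hypothesis~(ii) together with Remark~\ref{R:Tx}: since $T(\x) \subset S$ by the argument repeating the proof of Proposition~\ref{P:T2S}, and since $T(\x)$ contains a set of positive measure and meets both $(0,1)$ and $(1,\infty)$, Proposition~\ref{P:S}(c) gives $S = (0,\infty)$, and then Proposition~\ref{P:S2T} gives $T = (0,\infty)$. Now hypothesis~(i) supplies an $\x \in T^n = (0,\infty)^n$ with $\h(\x)$ invertible, so Proposition~\ref{P:T2S} applies: its formula~\eqref{E:f2g} shows that $\g$ is (up to an $o(g_n)$ term, hence after adjusting on a set where it does not matter) a fixed linear combination of the measurable functions $t \mapsto f(t x_i)$, so $\g$ is measurable, and formula~\eqref{E:h2A} exhibits $\A$. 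Since $S = (0,\infty)$ and $\g$ is a measurable rate vector satisfying \eqref{E:g} for all $x$, Proposition~\ref{P:B:a} yields an upper triangular $\B$ with $\A(x) = x^\B$, and Proposition~\ref{P:B:b} shows $\B$ has non-increasing diagonal and that every eigenvalue except possibly the smallest has geometric multiplicity one; that is, $\g \in \RV_\B$.

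Next I would pin down the form of $\h$. With $T = (0,\infty)$ and $\A(x) = x^\B$, equation~\eqref{E:hxy} of Proposition~\ref{P:S2T} reads $\h(xy) = \h(x) x^{\B}\big|_{x := y}\, $, i.e.\ $\h(xy) = \h(x) y^{\B} + \h(y)$; together with measurability of $\h$ (which follows from part~(a) of the opening Remark, the $h_i$ being pointwise limits of measurable functions) this is an inhomogeneous matrix Cauchy equation. Forming the $(n+1)\times(n+1)$ matrix $\C(x)$ of Remark~\ref{R:C}, we have $\C(xy) = \C(x)\C(y)$ for all $x,y \in (0,\infty)$ and $\C$ measurable, so by Proposition~\ref{P:B:a} (applied in dimension $n+1$) $\C(x) = x^{\D}$ for an upper triangular $\D \in \RR^{(n+1)\times(n+1)}$; the block structure of $\C$ forces $\D = \left(\begin{smallmatrix} 0 & \c \\ \0 & \B \end{smallmatrix}\right)$ for some $\c \in \RR^{1\times n}$, and Remark~\ref{R:B2h}(b) then gives exactly $\h(x) = \c \int_1^x y^{\B} y^{-1}\,\dy$, which is \eqref{E:B2h}. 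Hence $f \in \GRV(\g)$.

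It remains to prove the additional assertion: \emph{all} eigenvalues of $\B$, including the smallest, have geometric multiplicity one. This is the one genuinely new point, and I expect it to be the main obstacle. The idea is that a geometric multiplicity $\geq 2$ for the smallest eigenvalue $b_n$ would force a linear relation among the $h_i$, contradicting hypothesis~(i). Concretely, suppose $\v_1, \v_2 \in \RR^{1\times n}$ are linearly independent left eigenvectors of $\B$ for $b_n$. Then $\v_i x^{\B} = x^{b_n} \v_i$, so from $\h(x) = \c\int_1^x y^{\B}y^{-1}\dy$ and $\B\h(x) = x^{\B} - \I$ (formula~\eqref{E:Bh}) one computes $\v_i \B \h(x) = \v_i(x^{\B}-\I) = (x^{b_n}-1)\v_i$; since $\v_i \B = b_n \v_i$, if $b_n \neq 0$ this gives $\h(x)'\v_i{}' = b_n^{-1}(x^{b_n}-1)\,\v_i{}'$, i.e.\ each vector $\big(h_1(x),\dots,h_n(x)\big)$ applied to $\v_i{}'$ is a scalar multiple of the \emph{same} scalar function $x^{b_n}-1$, so the two linear functionals $\x \mapsto \h(\x)\v_i{}'$ on $\RR^n$ are proportional as columns of $\h(\x)$, making $\h(\x)$ singular for every $\x$ — contradiction. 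If $b_n = 0$ the same computation gives $\h(x)\v_i{}' = (\v_i \c')\log x$ (using $\int_1^x y^{-1}\dy = \log x$ after noting $\v_i y^{\B} = \v_i$), again proportional columns, again contradicting invertibility of $\h(\x)$. Either way no such independent pair exists, so $b_n$ too has geometric multiplicity one. Combined with Proposition~\ref{P:B:b}(b) for the larger eigenvalues, this proves the final claim. The only subtlety to handle carefully is the degenerate case where $\c\v_i{}' = 0$ for some eigenvector, but then $\h(x)\v_i{}' \equiv 0$ directly kills invertibility of $\h(\x)$, so that case is if anything easier.
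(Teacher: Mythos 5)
Your overall route is the paper's: Remark~\ref{R:Tx} together with Proposition~\ref{P:S}(c) and Proposition~\ref{P:S2T} (equivalently Theorem~\ref{T:rv:char}) gives $T = S = (0,\infty)$ and $\g \in \RV_\B$; then the matrix $\C(x)$ of Remark~\ref{R:C} is a measurable homomorphism into the invertible upper triangular matrices, so $\C(x) = x^{\D}$, and Remark~\ref{R:B2h}(b) yields \eqref{E:B2h}. Two small touch-ups there: $\g$ is assumed measurable in the statement, so you need not derive its measurability from \eqref{E:f2g}; and you should say ``by the \emph{argument} in the proof of Proposition~\ref{P:B:a}'' rather than by the proposition itself, since $\bm{r} = (f, g_1, \ldots, g_n)'$ need not be a rate vector --- what is actually used is measurability of $\C$ (your observation that $\h$ is a pointwise limit of measurable functions covers this), invertibility of $\C(x)$ for all $x$ (from Remark~\ref{R:C} with $S = T = (0,\infty)$), and the homomorphism property.

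The genuine problem is in your multiplicity-one argument for $b_n$. You take \emph{left} (row) eigenvectors $\v_i \B = b_n \v_i$ and, writing $\bm{H}(x) = \int_1^x u^\B u^{-1} \du$ (note that \eqref{E:Bh} concerns this matrix, not the row vector $\h(x) = \c\,\bm{H}(x)$, so ``$\B \h(x)$'' is not meaningful as written), you correctly obtain $\v_i \bm{H}(x) = b_n^{-1}(x^{b_n}-1)\v_i$ when $b_n \neq 0$. But this is a statement about $\bm{H}(x)'\v_i{}'$, whereas the quantity you must control to attack invertibility of $\h(\x)$ is $\h(x)\v_i{}' = \c\,\bm{H}(x)\v_i{}'$, and $\bm{H}(x)\v_i{}'$ is not determined by $\v_i\bm{H}(x)$ ($\bm{H}(x)$ is not symmetric). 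So the step ``each $\h(x)$ applied to $\v_i{}'$ is a multiple of $x^{b_n}-1$'' does not follow from what you derived, and the contradiction collapses at that point. The repair is immediate and is exactly the paper's argument: work with \emph{right} (column) eigenvectors $\bm{w}_1, \bm{w}_2$ with $\B\bm{w}_i = b\,\bm{w}_i$ (geometric multiplicity is the same whether computed with rows or columns, so your hypothesis of two independent left eigenvectors does give two independent columns); then $y^\B \bm{w}_i = y^b \bm{w}_i$, hence $\h(x)\bm{w}_i = \bigl(\int_1^x y^{b-1}\dy\bigr)\,\c\,\bm{w}_i$, and choosing $\lambda_1, \lambda_2$, not both zero, with $\lambda_1 \c\,\bm{w}_1 + \lambda_2 \c\,\bm{w}_2 = 0$ gives $\h(x)(\lambda_1\bm{w}_1 + \lambda_2\bm{w}_2) = 0$ for all $x$, so $\h(\x)$ annihilates a nonzero vector, contradicting (i). This version needs no case split on $b_n \neq 0$, $b_n = 0$, or $\c\,\bm{w}_i = 0$, and it applies verbatim to every eigenvalue, not only the smallest.
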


\begin{proof}
By Remark~\ref{R:Tx}, necessarily $T = (0, \infty)$ and there exists $\A : (0, \infty) \to \RR^{n \times n}$ such that $\g(tx) = \A(x) \g(t) + o(g_n(t))$ for all $x \in (0, \infty)$. By Theorem~\ref{T:rv:char}, $\g \in \RV_\B$ for some index matrix $\B \in \RR^{n \times n}$. 

By Remark~\ref{R:C}, the matrix function $\bm{C} : (0, \infty) \to \RR^{(n+1) \times (n+1)}$ in \eqref{E:C} is a measurable group homomorphism. As in the proof of Proposition~\ref{P:B:a}, we find that $\bm{C}(x) = x^{\bm{D}}$ for some upper triangular matrix $\bm{D} \in \RR^{(n+1) \times (n+1)}$. Since $C_{11}(x) = 1$ and $(C_{ij}(x))_{i,j=2}^{n+1} = \A(x) = x^\B$, the expression for $h$ in \eqref{E:B2h} follows from Remark~\ref{R:B2h}.

Let $b$ be an eigenvalue (i.e.\ a diagonal element) of $\B$ and let $\v \in \RR^{n \times 1}$ be such that $\B \v = b \v$. For $y \in (0, \infty)$, we have $y^\B \v = y^b \v$. As a consequence, for $x \in (0, \infty)$,
\[
	\h(x) \v 
	= \c \int_1^x y^\B y^{-1} \dy \, \v
	= \int_1^x y^{b-1} \dy \, \c \v.
\]
Now let both $\v_1, \v_2 \in \RR^{n \times 1}$ be eigenvectors of $\B$ with the same eigenvalue $b$. There exist $\lambda_1, \lambda_2 \in \RR$, not both zero, such that $\lambda_1 \c \v_1 + \lambda_2 \c \v_2 = 0$, and thus
\[
	\h(x) (\lambda_1 \v_1 + \lambda_2 \v_2) = \int_1^x y^{b-1} \dy \, (\lambda_1 \c \v_1 + \lambda_2 \c \v_2) = 0, 
	\qquad x \in (0, \infty).
\]
Since the functions $h_1, \ldots, h_n$ were assumed to be linearly independent, the above identity implies that $\lambda_1 \v_1 + \lambda_2 \v_2 = \bm{0}$, that is, $\v_1$ and $\v_2$ are linearly dependent. As a consequence, the dimension of the eigenspace of $b$ cannot be larger than one.
\end{proof}

\begin{remark}
\label{R:grv:char}
Let $f \in \GRV(\g)$ with $\g$-index $\c \in \RR^{1 \times n}$. If $\Q \in \RR^{n \times n}$ is upper triangular and invertible, then also $f \in \GRV(\Q \g)$ with $\Q \g$-index $\c \Q^{-1}$.
\end{remark}

\section{Uniformity, representations, and Potter bounds}
\label{S:URP}
\subsection{Uniform convergence theorems}
\label{SS:UCT}

If $\g \in \RV_\B$, then the asymptotic relation \eqref{E:rv} holds locally uniformly in $x \in (0, \infty)$. Similarly, if $f \in \GRV(\g)$, then the asymptotic relation \eqref{E:grv} holds locally uniformly in $x \in (0, \infty)$. The proof of Theorem~\ref{T:rv:UCT} is inspired by the one in \citet{Delange55} for the uniform convergence theorem for slowly varying functions as presented in \citet[Theorem~1.2.1]{BGT}.

\begin{theorem}[(Uniform convergence theorem for $\RV_\B$)]
\label{T:rv:UCT}
If $\g\in \RV_\B$, then $\g(tx) = x^{B} \g(t) + o(g_{n}(t))$ locally uniformly in $x \in (0, \infty)$.
\end{theorem}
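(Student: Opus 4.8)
The plan is to reduce the vector statement to a collection of scalar $o\Pi$-type uniform convergence statements, following the Delange-style argument but organised around the triangular structure of $\B$. The starting point is the observation that, because $\B$ is upper triangular, the relation $\g(tx) = x^{\B}\g(t) + o(g_n(t))$ decouples row by row from the bottom up: row $n$ reads $g_n(tx) = x^{b_n} g_n(t) + o(g_n(t))$, which says $|g_n|$ is regularly varying of index $b_n$, and the classical uniform convergence theorem for regularly varying functions \citep[Theorem~1.2.1 and Theorem~1.5.2]{BGT} gives that this holds locally uniformly in $x$. The idea is then to proceed by downward induction on the row index $i$, at each stage treating the already-established uniform estimates for $g_{i+1}, \ldots, g_n$ as known input.

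The inductive step is the heart of the matter. Fix $i < n$ and suppose the conclusion holds (locally uniformly) for the subvector $\g_{i+1,n}$, which by Remark~\ref{R:rv:char}(b) lies in $\RV_{\B_{i+1,n}}$. Row $i$ of \eqref{E:rv} reads
\[
	g_i(tx) = x^{b_i} g_i(t) + x^{b_i}(B_{i,i+1}, \ldots, B_{i,n}) \Bigl( \int_1^x \A_{i+1,n}(y) y^{-b_i - 1}\,\dy \Bigr) \g_{i+1,n}(t) + o(g_n(t)),
\]
using the explicit form of $x^{\B}$ from Proposition~\ref{P:B2A}. Dividing by $t^{b_i}$ (or more precisely comparing with $g_i$) and subtracting the contribution of the lower rows, which is uniformly controlled by the induction hypothesis, reduces the claim to a uniform statement for the scalar function $t \mapsto g_i(t)$ relative to the ``error scale'' $g_n$. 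Concretely, after absorbing the explicitly computable cross terms, one is left with showing that the remainder in $g_i(tx) = x^{b_i} g_i(t) + (\text{explicit}) + o(g_n(t))$ is $o(g_n(t))$ uniformly on compact $x$-sets; this is exactly an $o\Pi$-type uniform convergence statement, of the kind handled by the Uniform Convergence Theorem for the class $o\Pi$ \citep[Theorem~3.1.16 and Section~3.6]{BGT}. The Delange trick — integrating the pointwise relation over a short $x$-interval to convert pointwise convergence into uniform convergence via monotonicity/continuity-in-measure of the integrated quantity — is applied at this scalar level, with $g_n$ playing the role of the auxiliary function.

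I would then assemble the rows: given uniform control of each $g_i(tx) - x^{b_i} g_i(t) - (\text{cross terms})$ on a fixed compact $K \subset (0,\infty)$, and uniform control of the cross terms (products of the bounded-on-$K$ polynomial-in-$\log x$ coefficients $A_{i+1,n}(y)$ with the inductively estimated $g_{i+1,n}$), one obtains $\|\g(tx) - x^{\B}\g(t)\| = o(g_n(t))$ uniformly on $K$, which is the assertion. The main obstacle I anticipate is bookkeeping the error terms across rows: each $o(g_j(t))$ with $j > n$... (there is no such $j$) — rather, each row's natural error is $o(g_i(t))$, which is \emph{larger} than $o(g_n(t))$, so one cannot simply combine the per-row estimates naively. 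The resolution is that the relation \eqref{E:rv} is stated with the single small scale $g_n(t)$ as hypothesis for \emph{all} rows simultaneously, so the induction must carry the stronger per-row statement ``remainder is $o(g_n(t))$, uniformly'' rather than the weaker ``$o(g_i(t))$'' one gets from regular variation of $|g_i|$ alone; verifying that the Delange integration argument still delivers the $g_n$-scale (and not just the $g_i$-scale) uniformity is the delicate point, and it is precisely here that one must invoke the $o\Pi$ uniform convergence machinery rather than the plain regular-variation one. Finally, local uniformity on all of $(0,\infty)$ follows from uniformity on $[1/a, a]$ for every $a > 1$ together with the semigroup relation $\A(xy) = \A(x)\A(y)$, exactly as in the classical case.
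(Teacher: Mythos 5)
Your base case and the bottom-up organisation are fine, but the inductive step --- the part you yourself call the heart of the matter --- contains a genuine gap. The quantity that must be shown to be $o(g_n(t))$ uniformly for row $i$ is $R_i(t,x) = g_i(tx) - x^{b_i}g_i(t) - \bm{a}_i(x)\,\g_{i+1,n}(t)$, where $\bm{a}_i(x)$ is the off-diagonal part of row $i$ of $x^{\B}$. This is \emph{not} ``exactly an $o\Pi$-type uniform convergence statement'': an $o\Pi$ statement concerns increments $F(tx)-F(t)$ of a single function, whereas the cross term $\bm{a}_i(x)\,\g_{i+1,n}(t)$ couples an $x$-dependent coefficient with the unknown functions $g_{i+1},\dots,g_n$ and is not an increment of anything, so the theorem you cite cannot be invoked as it stands. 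The Delange argument needs an increment (cocycle) structure, and for $R_i$ that structure has to be built, not quoted: using the inductive uniform estimate for $\g_{i+1,n}$ one can construct a primitive, e.g.\ $G_i(t) = (B_{i,i+1},\dots,B_{i,n})\int_a^t u^{-b_i}\g_{i+1,n}(u)\,u^{-1}\du$ after normalising row $i$ by $t^{-b_i}$, whose increments reproduce the cross term up to $o(t^{-b_i}g_n(t))$ locally uniformly (this is essentially the computation in the proof of Theorem~\ref{T:grv:repr}, restricted to the lower block); only then is $t^{-b_i}g_i - G_i$ in $o\Pi$ with auxiliary $t^{-b_i}|g_n(t)|$, and even then one must split cases, since for $b_i > b_n$ this auxiliary is regularly varying of negative index and the conclusion comes from the representation theorem for $o\Pi$ (as in Proposition~\ref{P:B:b}) rather than from a uniform convergence theorem, the Delange argument being needed only when $b_i = b_n$. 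None of this appears in your write-up; the sentence ``it is precisely here that one must invoke the $o\Pi$ uniform convergence machinery'' is precisely the assertion that needs proof.

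For comparison, the paper's proof avoids the row-by-row reduction altogether: it runs the Delange measure-theoretic argument once, at the vector level, on the exceptional sets $E(t) = \{ s \in [a^{-1}t, at] : \norm{\g(s) - (s/t)^{\B}\g(t)} \geq \eps |g_n(t)| \}$, uses the group identity $(s/t)^{\B} = (s/(xt))^{\B} x^{\B}$ together with invertibility and continuity of $y \mapsto y^{\B}$ to transfer the estimate from a good common point $s$ to $\norm{\g(xt) - x^{\B}\g(t)}$, and needs the classical uniform convergence theorem only for the regularly varying scalar $|g_n|$. In your scheme the role of this group identity would be played by the cocycle relation $\bm{a}_i(xy) = x^{b_i}\bm{a}_i(y) + \bm{a}_i(x)\,y^{\B_{i+1,n}}$, so you would in effect be reproving the vector statement row by row with more bookkeeping, not less. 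A minor point: uniformity on $[a^{-1},a]$ for every $a>1$ already \emph{is} local uniformity on $(0,\infty)$; no appeal to the semigroup relation is needed at the end.
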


\begin{proof}
We will prove uniform convergence for $x \in [a^{-1},a] $ where $a \in (1, \infty)$. For $\v=(v_{1},v_{2},...,v_{n})$ let $\| 
\v \| = \max ( |v_1|, \ldots, |v_n| )$. The corresponding operator norm on $\RR ^{n\times n}$ will also be denoted by $\| \, \cdot \, \|$. Put $b = B_{nn}$ and choose $\eps \in (0, \log a)$. For $t > 0$ sufficiently large, we define the sets
\begin{align*}
	E(t) &= \{ s \in [ a^{-1}t, at] : \| \g(s)-(s/t)^{B}\g(t)\| \geq \eps |g_{n}(t)| \}, \\
	V(t) &= \{ x \in [ a^{-1}, a] :\| \g(tx) - x^{B}\g(t) \| \geq \eps |g_{n}(t)| \}.
\end{align*}%
These sets are measurable and $E(t)= \{ tx : x \in V(t) \} =t V(t)$. With $\mu$ the measure on $(0, \infty)$ defined by $\mu(\dy) = y^{-1}\dy$, we have $\mu (E(t)) = \mu(V(t))$. As $\g \in \RV_\B$, it follows that the indicator function of $V(t)$ converges pointwise to zero. By dominated convergence, we have $\mu(V(t)) \to 0$ and we can find $t_{0} > 0$ such that $\mu(E(t)) = \mu (V(t)) \leq \eps /2$ for all $t \geq t_{0}$.

For $a^{-1}\leq x\leq a$, the intersection $[ a^{-1}xt, axt ] \cap [a^{-1}t, at]$ contains at least one of the intervals $[a^{-1}t, t] $ or $[t, at]$. This implies that $\mu ([a^{-1}xt,axt] \cap [ a^{-1}t, at]) \geq \log a$. On the other hand, if $t \geq at_{0} = t_{1}$ and $x \geq a^{-1}$, then $\mu (E(tx)\cup E(t)) \leq \eps$. Now as $\eps < \log a$, for all $x \in [a^{-1}, a]$ and $t \geq t_{1}$, the set
\[
	V(x,t) = ([ a^{-1}xt, axt ] \cap [a^{-1}t, at]) \setminus (E(xt)\cup E(t))
\]
has positive $\mu$-measure and so is certainly non-empty. Let $s=s(x,t) \in V(x,t)$. By definition of $E(t)$ we have for $x\in [a^{-1}, a] $ and $t \geq t_{1}$,
\begin{align*}
	\| \g(s)-(s/t)^{\B}\g(t)\| &< \eps | g_{n}(t) |, \\
	\| \g(s)-(s/(xt))^{\B}\g(xt) \| &< \eps | g_{n}(xt)|,
\end{align*}
whence, by the triangle inequality, 
\[
	\| (s/(xt))^\B \{ \x^\B \g(t) - \g(xt) \} \| < 2 \eps \{ |g_n(t)| + |g_n(xt)| \}.
\]
Since the function $| g_{n} |$ is regularly varying with index $b$, by the Uniform convergence theorem for regularly varying functions \citep[Theorem~1.2.1]{BGT}, there exists $t_2 \geq t_1$ such that
\[
	| g_{n}(tx)| \leq 2 a^{b} |g_{n}(t)|, \qquad  x \in [a^{-1}, a], \quad t \geq t_{2}.
\]
Combine the last two displays to see that
\[
	\| (s/(xt))^\B \{ \x^\B \g(t) - \g(xt) \} \| < 2 \eps (1 + a^b) |g_n(t)|,
	\qquad x \in [a^{-1}, a], \quad t \geq t_2.
\]
Applying the inequality $\| \bm{T} \v\| \geq \| \bm{T}^{-1}\| ^{-1}\| \v\| $, valid for invertible $\bm{T} \in \RR^{n\times n}$, we get from the previous display that
\[
	\| x^{B} \g(t)-\g(tx)\| \leq 2 \eps (1 + a^b) |g_n(t)| \, \| (xt/s)^{\B} \|, 
	\qquad x \in [a^{-1}, a], \quad t \geq t_{2}.
\]
Now note that for such $x$ and $t$ we have $a^{-2} \leq xt/s \leq a^{2}$. Since the function $0 < y \mapsto y^{\B}$ is continuous, there is a positive constant $C = C(a,\B)$ such that 
\[
	\| x^{B}\g(t)-\g(tx)\| \leq \eps C |g_n(t)|,
	\qquad x \in [a^{-1}, a], \quad t \geq t_{2}.
\]
This proves the result.
\end{proof}

\begin{theorem}[(Uniform convergence theorem for $\GRV(\g)$)]
\label{T:grv:UCT}
If $f \in \GRV(\g)$, then $f(tx) = f(t) + \h(x) \g(t) + o(g_n(t))$ holds locally uniformly in $x \in (0, \infty)$.
\end{theorem}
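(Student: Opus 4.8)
The plan is to reduce the statement to the already-proven Uniform convergence theorem for $\RV_\B$ (Theorem~\ref{T:rv:UCT}) by lifting $f$ into the augmented vector of Remark~\ref{R:C}. Set $\bm{r} = (f, g_1, \ldots, g_n)'$ and let $\bm{C}(x)$ be the $(n+1)\times(n+1)$ matrix with first row $(1, \h(x))$ and lower-right block $x^\B$, as in \eqref{E:C}. Since $f \in \GRV(\g)$, by definition $\g \in \RV_\B$, so \eqref{E:rv} holds, and $f$ satisfies \eqref{E:grv} with $\h$ of the form \eqref{E:B2h}. Combining these two relations, Remark~\ref{R:C} gives the pointwise identity $\bm{r}(xt) = \bm{C}(x)\bm{r}(t) + o(g_n(t))$ for every $x \in (0,\infty)$, and by Remark~\ref{R:B2h}(b) one has $\bm{C}(x) = x^{\bm{D}}$ with $\bm{D}$ upper triangular and $D_{11} = 0$; in particular $\bm{C}(x)$ is invertible for each $x$ and $x \mapsto \bm{C}(x)$ is continuous on $(0,\infty)$.

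The point to be careful about is that $\bm{r}$ need not be a rate vector (its first component $f$ need not be of constant sign, nor need $g_1$ be $o(f)$), so Theorem~\ref{T:rv:UCT} cannot be invoked literally. However, inspecting the proof of Theorem~\ref{T:rv:UCT} one sees that the rate-vector hypothesis enters only through four ingredients: measurability of the vector-valued function; the pointwise relation $\g(xt) = x^\B\g(t) + o(g_n(t))$; regular variation, with index $b_n$, of the scalar function $|g_n|$ normalising the remainder (used for the bound $|g_n(tx)| \leq 2a^{b}|g_n(t)|$); and continuity and invertibility of $x \mapsto x^\B$. All four remain valid with $\bm{r}$ in place of $\g$, $\bm{C}(x) = x^{\bm{D}}$ in place of $x^\B$, and the same normaliser $|g_n|$, which is $\RV_{b_n}$ with $b_n = B_{nn} = D_{n+1,n+1}$ by Proposition~\ref{P:B:b}(a). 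Hence the verbatim argument of Theorem~\ref{T:rv:UCT} — defining the bad sets $E(t) = tV(t)$, applying dominated convergence against $\mu(\dy) = y^{-1}\dy$ on $[a^{-1},a]$, the interval-overlap covering step (using $\eps < \log a$), the triangle inequality at a good point $s$, and the bound $\|\bm{T}\v\| \geq \|\bm{T}^{-1}\|^{-1}\|\v\|$ together with continuity of $y \mapsto y^{\bm{D}}$ on $[a^{-2},a^{2}]$ — carries through and yields $\bm{r}(tx) = \bm{C}(x)\bm{r}(t) + o(g_n(t))$ locally uniformly in $x \in (0,\infty)$.

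Reading off the first coordinate of this last relation gives $f(tx) = f(t) + \h(x)\g(t) + o(g_n(t))$ locally uniformly in $x$, which is the assertion. The main obstacle is exactly the middle step: one must check that no step in the proof of Theorem~\ref{T:rv:UCT} secretly relies on the ordering $g_{i+1}(t) = o(g_i(t))$ or on $f$ being eventually of one sign, so that the argument genuinely applies to the non-rate vector $\bm{r}$; everything else is bookkeeping. A cleaner way to organise this, which I would adopt, is to isolate the Delange argument once as a lemma — uniform convergence holds for any measurable $\RR^{m}$-valued $\bm{v}$ with $\bm{v}(xt) = x^{\bm{M}}\bm{v}(t) + o(a(t))$ pointwise, $\bm{M}$ upper triangular and $a$ regularly varying — of which Theorem~\ref{T:rv:UCT} (applied to $\g$) and Theorem~\ref{T:grv:UCT} (applied to $\bm{r}$) are then both immediate instances.
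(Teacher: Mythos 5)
Your proposal is correct and is essentially the paper's own argument: the paper likewise invokes Remark~\ref{R:C} to pass to $\bm{r}=(f,g_1,\ldots,g_n)'$ and $\bm{C}(x)$, and observes that the proof of Theorem~\ref{T:rv:UCT} never uses the rate-vector structure of $\g$, only the pointwise relation with some square matrix together with regular variation of $|g_n|$. Your suggestion to package the Delange argument as a standalone lemma is just a cleaner phrasing of the same reduction.
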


\begin{proof}
In view of Remark~\ref{R:C}, we can recycle the proof of Theorem~\ref{T:rv:UCT}. Note that in that proof we nowhere used the fact that $\g$ is a rate vector, but only that $\g(xt) = \x^\B \g(t) + o(g_n(t))$ for all $x \in (0, \infty)$ and some square matrix $\B$ as well as regular variation of $|g_n|$.
\end{proof}

\subsection{Representation theorems}
\label{SS:repr}

\begin{theorem}[(Representation theorem for $\RV_\B$)]
\label{T:rv:repr}
Let $\g$ be a rate vector of length $n$ and let $\B \in \RR^{n \times n}$ be an index matrix. Then $\g \in \RV_\B$ if and only there exist $a \in (0, \infty)$, $\v \in \RR^{n \times 1}$ and measurable functions $\etab, \phib : [a, \infty) \to \RR^{n \times 1}$, both $o(g_n(t))$, such that
\begin{equation}
\label{E:rv:repr}
	\g(t) = t^\B \v + \etab(t) + t^\B \int_a^t u^{-\B} \phib(u) u^{-1} \du, \qquad t \in [a, \infty).
\end{equation}
\end{theorem}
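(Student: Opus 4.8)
The plan is to prove the two implications separately, modelling the argument on the classical representation theorem for regularly varying functions \citep[Theorem~1.3.1]{BGT} and exploiting the differential relation between $\g$ and $\A(x) = x^\B$ already derived in the proof of Proposition~\ref{P:B2A}.

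\emph{Sufficiency.} Suppose $\g$ has the form \eqref{E:rv:repr}. First I would verify that $t^\B \v$ alone satisfies $(xt)^\B \v = x^\B (t^\B \v)$ exactly, and that $\etab(t) = o(g_n(t))$ contributes a term $x^\B o(g_n(t)) + o(g_n(xt)) = o(g_n(t))$ after using $g_n(xt)/g_n(t) \to x^{b_n}$ (regular variation of $|g_n|$, which follows from $\B$ being an index matrix only once we know $\g\in\RV_\B$ — so here I would instead argue directly that each coordinate of $\etab$ is $o(g_n(t))$ and that the bottom coordinate $g_n$ is regularly varying, which can be read off from the representation itself by an induction from the bottom row upward). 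The main point is the integral term: writing $\bm\psi(t) = t^\B\int_a^t u^{-\B}\phib(u)u^{-1}\du$, a change of variables $u = xs$ gives
\[
	\bm\psi(xt) = (xt)^\B \int_a^{xt} u^{-\B}\phib(u) u^{-1}\du
	= x^\B \bm\psi(t) + (xt)^\B \int_t^{xt} u^{-\B}\phib(u)u^{-1}\du,
\]
and the remaining integral is bounded in norm by $\sup_{u \in [t, xt]} \|\phib(u)\| \cdot \|(xt)^\B\| \int_t^{xt}\|u^{-\B}\| u^{-1}\du$, which is $o(g_n(t))$ because $\phib = o(g_n)$, because $\|u^{-\B}\|u^{-1}$ integrated over $[t,xt]$ grows only polylogarithmically against the regular variation of $|g_n|$, and because $\|(xt)^\B\| = O(t^{b_1}(\log t)^{n-1})$ is dominated appropriately. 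Assembling the three pieces yields $\g(xt) = x^\B\g(t) + o(g_n(t))$, i.e.\ $\g \in \RV_\B$.

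\emph{Necessity.} Suppose $\g \in \RV_\B$. By Theorem~\ref{T:rv:UCT} the convergence in \eqref{E:rv} is locally uniform, so the map $(x,t)\mapsto \g(tx) - x^\B\g(t)$ is $o(g_n(t))$ uniformly on compact $x$-sets; this is what makes an averaging argument work. I would set $\bm\Gamma(t) = t^{-\B}\g(t)$ and study its increments: from $\g(xt) = x^\B\g(t) + o(g_n(t))$ one gets $\bm\Gamma(xt) - \bm\Gamma(t) = (xt)^{-\B}\{\g(xt) - x^\B\g(t)\} = (xt)^{-\B}o(g_n(t))$. Define $\v$ and the correction $\etab$ by a Cesàro-type average (for instance $\v = \lim \frac{1}{\log 2}\int_t^{2t}\bm\Gamma(s)s^{-1}\ds$, which exists by the increment bound and uniform convergence), and set $\etab(t) = t^\B(\bm\Gamma(t) - \v) - $ (the integral term), then solve for $\phib$. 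Concretely, mimicking the classical proof, define
\[
	\phib(t) = t \frac{\d}{\dt}\Bigl(t^{-\B}\g(t)\Bigr) \cdot t^\B \cdot t \quad\text{(suitably interpreted)},
\]
or, to avoid differentiability issues, define $\phib$ through a smoothed/averaged version of $\g$ as in \citet{BGT}, show that the smoothed $\g$ differs from $\g$ by $o(g_n)$ (absorbed into $\etab$), and that the smoothed version is differentiable with $t\,\dot{\g}(t) = \B\g(t) + o(g_n(t))$, whence $\phib(t) := t\,\dot{\g}(t) - \B\g(t) = o(g_n(t))$; integrating the linear ODE $t\,\dot{\g}(t) = \B\g(t) + \phib(t)$ by variation of constants produces exactly \eqref{E:rv:repr}.

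\emph{Main obstacle.} The hard part will be the necessity direction, specifically handling the \emph{lack of a priori smoothness} of $\g$: one must pass to a smoothed rate vector (a convolution in the $\log$-variable, or an indefinite integral) that is genuinely differentiable, check that it still lies in $\RV_\B$ with the same $\B$ and differs from $\g$ by a term that is $o(g_n(t))$ so it can be swept into $\etab$, and verify that the smoothed version satisfies the differential relation $t\dot{\g}(t) = \B\g(t) + o(g_n(t))$ with the error term again $o(g_n)$ \emph{coordinate by coordinate}. Controlling the matrix factors $t^{\pm\B}$ — which carry logarithmic powers off the diagonal — against the scalar regular variation of $|g_n|$ throughout these estimates is the recurring technical nuisance; the triangular structure and the ordering $b_1 \geq \cdots \geq b_n$ of the diagonal are what ultimately keep everything under control, exactly as they did in Propositions~\ref{P:B2A} and~\ref{P:B:b}.
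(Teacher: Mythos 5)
Your necessity plan, in its second form (smooth $\g$ by a multiplicative average, read off $\phib$ from the resulting differential relation, and integrate by variation of constants), is essentially the paper's argument: the paper takes $\bar{\g}(t) = \int_1^e u^{-\B}\g(ut)\,u^{-1}\du$, sets $\etab = \g - \bar{\g}$, $\phib(t) = e^{-\B}\g(et) - \g(t)$ and $\v = \int_a^{ea} u^{-\B}\g(u)\,u^{-1}\du$, gets $\etab, \phib = o(g_n)$ from the uniform convergence theorem (Theorem~\ref{T:rv:UCT}), and checks \eqref{E:rv:repr} by direct algebra. Your first suggestion, however, would fail as stated: $\v$ cannot be defined as $\lim_t \frac{1}{\log 2}\int_t^{2t} s^{-\B}\g(s)\,s^{-1}\,\mathrm{d}s$, because $t^{-\B}\g(t)$ need not converge even in Ces\`aro mean --- already for $n=1$, $\B = 0$, $g(t) = \log t$ the average diverges. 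The constant must be a fixed-window integral; the divergence is carried by the term $t^\B\int_a^t u^{-\B}\phib(u)\,u^{-1}\du$, in which $\phib$ is small only relative to $g_n$, not absolutely.

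The genuine gap is in the sufficiency direction. Your estimates invoke regular variation, or at least local boundedness of ratios, of $|g_n|$ --- for instance, to conclude that $\sup_{u\in[t,xt]}\norm{\phib(u)}$ is $o(g_n(t))$ from $\phib = o(g_n)$ you need $\sup_{u\in[t,xt]}|g_n(u)| = O(g_n(t))$ --- but none of this is available a priori: the hypothesis is only the representation \eqref{E:rv:repr}, with $\etab,\phib$ small relative to the unknown $g_n$ itself. The paper's remark after the theorem stresses that this is precisely the main point of its sufficiency proof: from row $n$ of \eqref{E:rv:repr} one derives the self-referential bound $\sup_{u\in[1,x]}|g_n(ut)| \leq x^{b_n}|g_n(t)| + o\bigl(\sup_{u\in[1,x]}|g_n(ut)|\bigr)$, which bootstraps to $\sup_{u\in[1,x]}|g_n(ut)| = O(g_n(t))$; only then can the $o(g_n(ut))$ errors be converted into $o(g_n(t))$, yielding $\g(xt) = x^\B\g(t) + o(g_n(t))$ for $x \geq 1$, the case $x < 1$ following from invertibility of $x^\B$ via Proposition~\ref{P:A}(c). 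Your parenthetical claim that regular variation of $g_n$ ``can be read off from the representation by induction from the bottom row upward'' asserts exactly this missing step without an argument, and it is the crux, not a routine verification. A secondary but real defect in the same part: bounding $\norm{(xt)^\B}$ and $\norm{u^{-\B}}$ separately over $u \in [t,xt]$ loses a factor of order $t^{b_1-b_n}$ (times logarithms) and does not give $o(g_n(t))$ when $b_1 > b_n$; you must keep the matrices together as $(xt/u)^\B$, which stays bounded for $u \in [t, xt]$ and $x$ in a compact set.
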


\begin{proof}
\emph{Necessity.} Assume that $\g \in \RV_\B$. Since the functions $|g_i|$ are regularly varying, they are locally bounded on $[a, \infty)$ for some sufficiently large $a \in (0, \infty)$. Define
\begin{align*}
	\v &= \int_a^{ea} u^{-\B} \g(u) u^{-1} \du, \\
	\etab(t) &= \int_1^e \{ \g(t) - u^{-B} \g(ut) \} u^{-1} \du, \\
	\phib(t) &= e^{-B} \g(et) - \g(t).
\end{align*}
By Theorem~\ref{T:rv:UCT}, the vector valued functions $\etab(t)$ and $\phib(t)$ are both $o(g_n(t))$. The equality \eqref{E:rv:repr} can be verified by simple algebra.

\paragraph{Sufficiency.} 
Suppose that the rate vector $\g$ admits the representation \eqref{E:rv:repr} with $\etab(t)$ and $\phib(t)$ both $o(g_n(t))$. Put $b = B_{nn}$. Since $\B$ is upper triangular, row number $n$ of \eqref{E:rv:repr} reads
\[
	g_n(t) = t^b v_n + \eta_n(t) + t^b \int_a^t u^{-b} \phi_n(u) u^{-1} \du, \qquad t \in [a, \infty).
\]
For $u \in (1, \infty)$ and $t \in [a, \infty)$, we have
\begin{align}
	g_n(ut) - u^b g_n(t) 
	&=  \eta_n(ut) - u^b \eta_n(t)
	+ (ut)^b \int_t^{ut} y^{-b-1} \phi_n(y) \dy \nonumber \\
\label{E:rv:repr:10}
	&=  \eta_n(ut) - u^b \eta_n(t)
	+ u^b \int_1^u y^{-b-1} \phi_n(yt) \dy.
\end{align}
Fix $x \in (1, \infty)$. We have
\begin{align*}
	\sup_{u \in [1, x]} |\eta_n(ut)|
	&= \sup_{u \in [1, x]} \frac{|\eta_n(ut)|}{|g_n(ut)|} |g_n(ut)| \\
	&\leq \sup_{u \in [1, x]} \frac{| \eta_n(ut)|}{|g_n(ut)|} 
	\cdot \sup_{u \in [1, x]} |g_n(ut)|
\end{align*}
and thus
\[
	\sup_{u \in [1, x]} |\eta_n(ut)|
	= o \biggl( \sup_{u \in [1, x]} |g_n(ut)| \biggr).
\]
Similarly for $\eta_n$ replaced by $\phi_n$. In view of \eqref{E:rv:repr:10}, it follows that
\[
	\sup_{u \in [1, x]} |g_n(tu)|
	\leq x^b |g_n(t)| + o \biggl( \sup_{u \in [1, x]} |g_n(ut)| \biggr),
\]
and thus
\begin{equation}
\label{E:rv:repr:20}
	\sup_{u \in [1, x]} |g_n(tu)| = O(g_n(t)).
\end{equation}
Now let us look at the complete rate vector $\g$. As in \eqref{E:rv:repr:10}, we find, again for $x \in (1, \infty)$,
\begin{equation}
\label{E:rv:repr:30}
	\g_n(xt) - x^\B \g_n(t) 
	=  \etab(xt) - x^\B \etab(t)
	+ x^\B \int_1^x u^{-\B} \phib(ut) u^{-1} \du.
\end{equation}
From \eqref{E:rv:repr:20} and the assumption that both $\etab(t)$ and $\phib(t)$ are $o(g_n(t))$, the above display implies
\[
	\g_n(xt) - x^\B \g_n(t) = o(g_n(t)), \qquad x \in [1, \infty).
\]
Since $x^\B$ is invertible, the above display and Proposition~\ref{P:A} imply that $\g \in \RV_\B$.
\end{proof}

\begin{remark}
The case $n = 1$ in Theorem~\ref{T:rv:repr} seems to be a new representation for regularly varying functions. The representation is the same as the one for the class $o\Pi_g$ in \citet[Theorem~3.6.1]{BGT}, but with the difference that the function $g$ is not assumed to be of bounded increase. Indeed, the main point in the proof of sufficiency was precisely to show that the representation actually implies that $g$ is of bounded increase, see \eqref{E:rv:repr:20}.
\end{remark}

\begin{theorem}[(Representation theorem for $\GRV(\g)$)]
\label{T:grv:repr}
Let $\g \in \RV_\B$ and let $f$ be a measurable, real-valued function defined in a neighbourhood of infinity. Then $f \in \GRV(\g)$ with $\g$-index $\c \in \RR^{1 \times n}$ if and only if there exist constants $a \in (0, \infty)$ and $v \in \RR$ as well as measurable functions $\eta, \phi : [a, \infty) \to \RR$, both $o(g_n(t))$, such that
\[
	f(t) = v + \eta(t) + \int_a^t \{ \c \g(u) + \phi(u) \} u^{-1} \du, \qquad t \in [a, \infty).
\]
\end{theorem}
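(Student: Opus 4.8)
The plan is to mimic the structure of the proof of Theorem~\ref{T:rv:repr}, since the relation $f \in \GRV(\g)$ is, by Remark~\ref{R:C}, just the statement that the augmented vector $\bm{r} = (f, g_1, \ldots, g_n)'$ satisfies $\bm{r}(xt) = \bm{C}(x) \bm{r}(t) + o(r_{n+1}(t))$ with $\bm{C}(x) = x^{\bm{D}}$, where $\bm{D}$ is the $(n+1) \times (n+1)$ upper triangular matrix with first row $(0, \c)$ and lower-right block $\B$ (Remark~\ref{R:B2h}(b)). The catch is that $\bm{r}$ is not a rate vector, so Theorem~\ref{T:rv:repr} does not apply verbatim; but its proof does, because (as noted in the proof of Theorem~\ref{T:grv:UCT}) nothing in the argument used that $\g$ is a rate vector beyond regular variation of $|g_n|$ and the uniform convergence in Theorem~\ref{T:rv:UCT}, both of which are available here via Theorem~\ref{T:grv:UCT}.

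For \textbf{necessity}, assume $f \in \GRV(\g)$. Since $|g_n|$ is regularly varying it is locally bounded on $[a, \infty)$ for $a$ large; $f$ is locally integrable there as well. Following the recipe of Theorem~\ref{T:rv:repr} applied to the first coordinate, set
\[
	v = \int_a^{ea} \bigl\{ f(u) - \h(u/a)\,\g(\,\cdot\,) \bigr\}\text{-type average}, \quad
	\eta(t) = \int_1^e \{ f(t) - f(ut) + \h(u)\g(t) \}\, u^{-1}\du, \quad
	\phi(t) = f(t) - f(et) + \h(e)\g(t) + \c\g(t) \cdot(\text{correction}),
\]
the precise constants being chosen so that differentiating the claimed identity in $t$ returns $f'(t) = t^{-1}\{\c\g(t) + \phi(t)\} + \eta'(t)$ in the distributional sense; concretely one takes $\eta(t) = \int_1^e \{f(t) - f(ut) + \h(u)\g(t)\} u^{-1}\du$ and $\phi(t)$ the analogous one-step difference $e^{-1}$-weighted so that $\eta$ and $\phi$ absorb the remainder. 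By Theorem~\ref{T:grv:UCT} (applied with $x$ ranging over the compact set $[1,e]$) both $\eta(t)$ and $\phi(t)$ are $o(g_n(t))$, and the integral identity itself then follows by the same elementary algebra (Fubini plus the fundamental theorem of calculus) that verifies \eqref{E:rv:repr}.

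For \textbf{sufficiency}, suppose $f$ admits the stated representation. The argument of Theorem~\ref{T:rv:repr} applied to $\bm{r}$ goes through: from the representation of $\g$ (which we already have, since $\g \in \RV_\B$ is hypothesised) one gets $\sup_{u\in[1,x]}|g_n(tu)| = O(g_n(t))$, exactly \eqref{E:rv:repr:20}; then writing
\[
	f(xt) - f(t) - \h(x)\g(t)
	= \{\eta(xt) - \eta(t)\} + \int_1^x \{\c\g(ut) - \c\, x^{?}\cdots\} \text{-regroup},
\]
more carefully $f(xt) - f(t) = \eta(xt) - \eta(t) + \int_1^x \{\c\g(ut) + \phi(ut)\} u^{-1}\du$, and substituting $\g(ut) = u^\B \g(t) + o(g_n(t))$ locally uniformly in $u \in [1,x]$ (Theorem~\ref{T:rv:UCT}) together with $\int_1^x u^\B u^{-1}\du\,\c' = \h(x)'$, i.e.\ $\c\int_1^x u^\B u^{-1}\du = \h(x)$, gives $f(xt) - f(t) - \h(x)\g(t) = o(g_n(t))$ for $x \geq 1$, using \eqref{E:rv:repr:20} to control the error over the range $[1,x]$; the case $x < 1$ follows by replacing $t$ with $xt$. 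Hence $f \in \GRV(\g)$ with $\g$-index $\c$.

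The main obstacle is bookkeeping rather than conceptual: in the necessity direction one must pin down the additive constant $v$ and check that the one-step difference functions $\eta, \phi$ are genuinely $o(g_n)$ — this is where Theorem~\ref{T:grv:UCT} is essential, since pointwise convergence of $\{f(ut) - f(t) - \h(u)\g(t)\}/g_n(t)$ for each fixed $u$ is not enough to conclude $o(g_n(t))$ after integrating over $u \in [1,e]$; one needs uniformity on that compact set. In the sufficiency direction the only subtlety is that $\int_1^x \phi(ut) u^{-1}\du = o(\sup_{u\in[1,x]}|g_n(ut)|) = o(g_n(t))$ requires \eqref{E:rv:repr:20}, which in turn is exactly the "bounded increase is automatic" phenomenon already exploited in Theorem~\ref{T:rv:repr} and which here we simply inherit from $\g \in \RV_\B$.
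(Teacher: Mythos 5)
Your sufficiency half is essentially sound and coincides in substance with what the paper does: insert $\g(ut)=u^\B\g(t)+o(g_n(t))$, locally uniformly in $u$ by Theorem~\ref{T:rv:UCT}, into $\int_1^x\{\c\g(ut)+\phi(ut)\}u^{-1}\du$, and control the $\eta$ and $\phi$ contributions by regular variation of $|g_n|$.

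The necessity half, however, has a genuine gap: the functions $v$, $\eta$, $\phi$ are never actually produced ("-type average", "(correction)"), and the one candidate you do write down does not deliver the stated identity. With $\eta(t)=\int_1^e\{f(t)-f(ut)+\h(u)\g(t)\}u^{-1}\du$ you do get $\eta(t)=o(g_n(t))$ from Theorem~\ref{T:grv:UCT}, but since $\int_1^e u^{-1}\du=1$,
\[
	f(t)-\eta(t)=\int_t^{et}f(s)\,s^{-1}\,\mathrm{d}s-\bm{k}\,\g(t),
	\qquad \bm{k}=\int_1^e\h(u)\,u^{-1}\du,
\]
and the term $\bm{k}\,\g(t)$ is in general of order $g_1(t)$, not $o(g_n(t))$, and is merely measurable; it can neither be absorbed into $\eta$ nor equal $v+\int_a^t\{\c\g(u)+\phi(u)\}u^{-1}\du$ without substantial extra work. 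Similarly, the natural one-step difference gives $f(et)-f(t)=\h(e)\g(t)+o(g_n(t))$, so "the same elementary algebra" as in Theorem~\ref{T:rv:repr} produces the coefficient $\h(e)$, not $\c$, inside the integral; and reading off the first row of the vector representation for $\bm{r}=(f,g_1,\ldots,g_n)'$ yields an integrand involving $\h(t/u)$ times a remainder plus a term $\h(t)$ times a constant vector outside the integral, again not the stated form. A repair is possible but needs real input — e.g.\ the representation \eqref{E:rv:repr} applied to $\g$ itself plus the integration-by-parts identity $\h(e)-\bm{k}\B=\c$ — none of which appears in your sketch. The paper avoids all of this with one move you never make: set $\tilde{f}(t)=\c\int_a^t\g(u)u^{-1}\du$, show via Theorem~\ref{T:rv:UCT} that $\tilde{f}\in\GRV(\g)$ with $\g$-index $\c$, conclude that $\xi=f-\tilde{f}$ satisfies $\xi(xt)-\xi(t)=o(g_n(t))$ for all $x$, and then quote the representation theorem for $o\Pi_{|g_n|}$ \citep[Theorem~3.6.1]{BGT} for $\xi$; adding back $\tilde{f}$ gives exactly the asserted representation (and the same reduction also gives sufficiency). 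Without either that reduction or the explicit corrections indicated above, your necessity direction is incomplete.
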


\begin{proof}
Let $a \in (0, \infty)$ be large enough so that the domain of $\g$ includes $[a, \infty)$ and put
\begin{align*}
	\tilde{f}(t) &= \c \int_a^t \g(u) u^{-1} \du, \\
	\xi(t) &= f(t) - \tilde{f}(t),
\end{align*}
for $t \in [a, \infty)$. For $x \in (0, \infty)$ and $t$ large enough so that $t \geq a$ and $xt \geq a$,
\[
	\tilde{f}(xt) - \tilde{f}(t)
	= \c \int_t^{xt} \g(u) u^{-1} \du
	= \c \int_1^x \g(ut) u^{-1} \du.
\]
By the Uniform convergence theorem for $\RV_\B$,
\[
	\tilde{f}(xt) - \tilde{f}(t)
	= \c \int_1^x u^\B u^{-1} \du \g(t) + o(g_n(t)), \qquad x \in (0, \infty),
\]
and thus $\tilde{f} \in \GRV(\g)$ with $\g$-index $\c$. It follows that $f \in \GRV(\g)$ with $\g$-index $\c$ if and only if
\[
	\xi(xt) - \xi(t) = o(g_n(t)), \qquad x \in (0, \infty),
\]
that is, $\xi \in o\Pi_{|g_n|}$. The Representation theorem for $o\Pi_{|g_n|}$ \citep[Theorem~3.6.1]{BGT} says that the above display is equivalent to the existence of a constant $v \in \RR$ and measurable functions $\eta, \phi : [a, \infty) \to \RR$, both $o(g_n(t))$, such that
\[
	\xi(t) = v + \eta(t) + \int_a^t \phi(u) u^{-1} \du.
\]
Since $f = \tilde{f} + \xi$, we arrive at the desired representation.
\end{proof}

\subsection{Potter bounds}
\label{SS:Potter}

The representation theorems for $\RV_\B$ and $\GRV(\g)$ allow us to derive global upper bounds for $\norm{ \g(xt) - x^\B \g(t) } / |g_n(t)|$ and $|f(xt) - f(t) - \h(x) \g(t)| / |g_n(t)|$. In analogy to classical regular variation theory, such kind of bounds will be called Potter bounds.

First recall that for any matrix $\Q \in \RR^{n \times n}$ and any matrix norm $\norm{ \, \cdot \, }$,
\[
	\lim_{m \to \infty} \norm{ \Q^m }^{1/m} = \max \{ |\lambda| : \text{$\lambda$ is an eigenvalue of $\Q$} \}.
\]
Now let $\B \in \RR^{n \times n}$ be an upper triangular matrix whose diagonal elements $b_i = B_{ii}$, $i \in \{1, \ldots, n\}$, are non-increasing, $b_1 \geq \cdots \geq b_n$. For $x \in (0, \infty)$, the eigenvalues of $x^\B$ and $x^{-\B}$ are $\{x^{b_i}\}$ and $\{x^{-b_i}\}$, respectively. The above display then implies
\begin{align}
\label{E:xB:1}
	\lim_{x \to \infty} \frac{\log \norm{x^\B}}{\log x} &= b_1, \\
\label{E:xB:n}
	\lim_{x \to 0} \frac{\log \norm{x^\B}}{\log x} &= \lim_{y \to \infty} \frac{\log \norm{y^{-\B}}}{- \log y} = b_n.
\end{align}

\begin{theorem}[(Potter bounds for $\RV_\B$)]
\label{T:rv:Potter}
Let $\g \in \RV_\B$. For every $\eps > 0$, there exists $t(\eps) > 0$ such that
\begin{equation}
\label{E:rv:Potter}
	\frac{\norm{\g(xt) - x^\B \g(t)}}{|g_n(t)|} \leq
	\begin{cases}
		\eps x^{b_1 + \eps} 
			& \text{if $t \geq t(\eps)$ and $x \geq 1$;} \\
		\eps x^{b_n - \eps} 
			& \text{if $t \geq t(\eps)$ and $t(\eps)/t \leq x \leq 1$.}
	\end{cases}
\end{equation}
\end{theorem}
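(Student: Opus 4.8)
The plan is to read the bound off the Representation Theorem for $\RV_\B$ (Theorem~\ref{T:rv:repr}), exactly as classical Potter bounds follow from the Karamata representation. Write $\g(t) = t^\B \v + \etab(t) + t^\B \int_a^t u^{-\B}\phib(u)u^{-1}\du$ on $[a,\infty)$ with $\etab,\phib$ both $o(g_n(t))$. Since $x^\B$ and $t^\B$ are power series in $\B$ they commute, so the $\v$-terms cancel in $\g(xt) - x^\B\g(t)$; carrying out the substitution $u = ts$ in the integral (cf.\ \eqref{E:rv:repr:30}) gives, for $t \geq a$ and $x \geq 1$,
\[
	\g(xt) - x^\B \g(t) = \etab(xt) - x^\B \etab(t) + \int_1^x (x/s)^\B\, \phib(ts)\, s^{-1}\,\mathrm{d}s ,
\]
and for $t \geq a$ and $a/t \leq x \leq 1$ the same identity with $-\int_x^1 (x/s)^\B \phib(ts)\, s^{-1}\,\mathrm{d}s$ as the final term. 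I would keep $x^\B$ and $s^{-\B}$ amalgamated into $(x/s)^\B$ here: splitting them as $\norm{x^\B}\,\norm{s^{-\B}}$ is wasteful and would produce an exponent like $2b_1 - b_n$ instead of $b_1$.

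Next I would assemble three standard ingredients, each carrying a free parameter $\eps' > 0$ to be fixed later. From \eqref{E:xB:1} together with local boundedness of $y \mapsto \norm{y^\B}$ there is a constant $C_+ = C_+(\eps')$ with $\norm{y^\B} \leq C_+ y^{b_1+\eps'}$ for all $y \geq 1$, and from \eqref{E:xB:n} a constant $C_- = C_-(\eps')$ with $\norm{y^\B} \leq C_- y^{b_n-\eps'}$ for all $0 < y \leq 1$. Since $|g_n|$ is regularly varying with index $b_n$ (recall $b_1 \geq \cdots \geq b_n$ by Proposition~\ref{P:B:b}), the classical uniform Potter bound supplies $X = X(\eps')$ with $|g_n(ts)| \leq 2\,s^{b_n+\eps'}|g_n(t)|$ for $s \geq 1$, $t \geq X$, and $|g_n(ts)| \leq 2\,s^{b_n-\eps'}|g_n(t)|$ for $0 < s \leq 1$, $ts \geq X$. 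Finally, $\norm{\etab(t)} = o(|g_n(t)|)$ and $\norm{\phib(t)} = o(|g_n(t)|)$ give, for any $\delta > 0$, an $s_0 = s_0(\delta)$ past which both ratios are $< \delta$.

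Now for $x \geq 1$ and $t \geq \max(a, X, s_0)$ I would bound $\norm{\etab(xt)} < \delta |g_n(xt)| \leq 2\delta\, x^{b_1+\eps'}|g_n(t)|$ (using $b_n \leq b_1$, $x \geq 1$), $\norm{x^\B\etab(t)} \leq C_+ x^{b_1+\eps'}\cdot\delta|g_n(t)|$, and inside the integral $\norm{(x/s)^\B} \leq C_+(x/s)^{b_1+\eps'}$ with $\norm{\phib(ts)} < 2\delta\, s^{b_n+\eps'}|g_n(t)|$; the powers of $s$ then combine to $s^{b_n-b_1-1}$, whose integral over $[1,x]$ is at most $1/(b_1-b_n)$ when $b_n < b_1$ and equals $\log x \leq (\eps'')^{-1}x^{\eps''}$ when $b_n = b_1$. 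Collecting the three terms yields $\norm{\g(xt) - x^\B\g(t)} \leq \delta\, M(\eps',\eps'')\, x^{b_1+\eps'+\eps''}|g_n(t)|$ for an explicit constant $M$; choosing $\eps' + \eps'' \leq \eps$ (which fixes $M$), then $\delta = \eps/M$ (which fixes $s_0$), and $t(\eps) = \max(a, X, s_0)$ gives the first line of \eqref{E:rv:Potter}. The range $t(\eps)/t \leq x \leq 1$ is treated symmetrically: now $x/s \leq 1$ and $s \leq 1$, the powers of $s$ collapse to $s^{-1}$ with integral $\log(1/x) \leq (\eps'')^{-1}x^{-\eps''}$ over $[x,1]$, the matrix factors contribute $x^{b_n-\eps'}$, and one arrives at $\leq \delta\, M'\, x^{b_n-\eps'-\eps''}|g_n(t)| \leq \eps\, x^{b_n-\eps}|g_n(t)|$; the constraint $x \geq t(\eps)/t$ is exactly what guarantees $xt \geq a$ and $xt \geq X$, so that both the representation and the scalar Potter bound apply at the point $xt$.

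The computations are routine; the one place demanding care is the order of quantifiers — the constants $C_\pm$, $X$ and the integral estimate all depend on $\eps'$ and $\eps''$, so these must be frozen before $\delta$ is made small, and $t(\eps)$ then depends on $\delta$ through $s_0$. Beyond this bookkeeping I expect no real obstacle.
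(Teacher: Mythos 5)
Your proposal is correct and follows essentially the same route as the paper: both read the bounds off the Representation Theorem~\ref{T:rv:repr}, control the matrix factors via \eqref{E:xB:1}--\eqref{E:xB:n}, and handle the scalar factors with Potter's theorem for $|g_n| \in \RV_{b_n}$, the only cosmetic difference being that you treat the range $t(\eps)/t \leq x \leq 1$ directly while the paper passes to the equivalent inverted statement \eqref{E:rv:Potter:b}. One aside in your write-up is off, though it does not affect your argument: splitting $\norm{(x/u)^{\B}} \leq \norm{x^{\B}}\,\norm{u^{-\B}}$ is not wasteful, because the factor $u^{-b_n+\eps}$ it introduces is cancelled by $|g_n(ut)| \lesssim u^{b_n+\eps}|g_n(t)|$ from the scalar Potter bound --- this is exactly what the paper's proof does and it still yields the exponent $b_1+\delta$, not $2b_1-b_n$.
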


\begin{proof}
We shall prove the following two statements, which are equivalent to \eqref{E:rv:Potter}: for $\delta > 0$,
\begin{align}
\label{E:rv:Potter:a}
	\lim_{t \to \infty} \sup_{x \geq 1} \frac{\norm{\g(xt) - x^\B \g(t)}}{x^{b_1 + \delta} |g_n(t)|} &= 0, \\
\label{E:rv:Potter:b}
	\lim_{t \to \infty} \sup_{x \geq 1} \frac{\norm{\g(t) - x^{-\B} \g(xt)}}{x^{- b_n + \delta} |g_n(xt)|} &= 0.
\end{align}
Let $a$, $\etab$ and $\phib$ be as in the Representation theorem for $\RV_\B$ (Theorem~\ref{T:rv:repr}).

\paragraph{Proof of \eqref{E:rv:Potter:a}.} 
For all $t \geq a$ and $x \geq 1$,
\begin{align*}
	\g(xt) - x^\B \g(t)
	&= \etab(xt) - x^\B \etab(t) + (xt)^\B \int_t^{xt} u^{-\B} \phib(u) u^{-1} \du \\
	&= \etab(xt) - x^\B \etab(t) + x^\B \int_1^x u^{-\B} \phib(ut) u^{-1} \du
\end{align*}
and thus, since $b_n \leq b_1$,
\begin{multline*}
	\frac{\norm{\g(xt) - x^\B \g(t)}}{x^{b_1 + \delta} |g_n(t)|} 
	\leq \frac{\norm{\etab(xt)}}{|g_n(xt)|} \frac{|g_n(xt)|}{x^{b_n + \delta}|g_n(t)|} 
		+ \frac{\norm{x^\B}}{x^{b_1 + \delta}} \frac{\norm{\etab(t)}}{|g_n(t)|} \\
		+ \frac{\norm{x^\B}}{x^{b_1 + \delta/2}} 
		\int_1^x 
			\frac{\norm{u^{-\B}}}{u^{-b_n} x^{\delta/4}} 
			\frac{\norm{\phib(ut)}}{|g_n(ut)|} 
			\frac{|g_n(ut)|}{u^{b_n} x^{\delta/4} |g_n(t)|} 
		u^{-1} \du.
\end{multline*}
As a consequence, for $t \geq a$,
\begin{multline*}
	\sup_{x \geq 1} \frac{\norm{\g(xt) - x^\B \g(t)}}{x^{b_1 + \delta}|g_n(t)|} \\
	\shoveleft{\qquad 
	\leq \sup_{s \geq t} \frac{\norm{\etab(s)}}{|g_n(s)|} \cdot
	\sup_{x \geq 1} \frac{|g_n(xt)|}{x^{b_n + \delta}|g_n(t)|}
	+ \sup_{x \geq 1} \frac{\norm{x^\B}}{x^{b_1 + \delta}} \cdot \frac{\norm{\etab(t)}}{|b_n(t)|}
	} \\
	+ \sup_{x \geq 1} \frac{\norm{x^\B}}{x^{b_1 + \delta/4}} \cdot
		\sup_{u \geq 1} \frac{\norm{u^{-\B}}}{u^{-b_n + \delta/4}} \cdot
		\sup_{s \geq t} \frac{\norm{\phib(s)}}{|g_n(s)|} \cdot
		\sup_{u \geq 1} \frac{|g_n(ut)|}{u^{b_n + \delta/4} |g_n(t)|} \cdot
		\sup_{x \geq 1} \frac{\log x}{x^{\delta/4}}.
\end{multline*}
By Potter's theorem for $\RV_{b_n}$ and by \eqref{E:xB:1}--\eqref{E:xB:n}, for every $\eps > 0$,
\[
	\lim_{t \to \infty} \sup_{x \geq 1} \frac{|g_n(xt)|}{x^{b_n + \eps} |g_n(t)|} = 1, \qquad
	\sup_{x \geq 1} \frac{\norm{x^\B}}{x^{b_1 + \eps}} < \infty, \qquad
	\sup_{x \geq 1} \frac{\norm{x^{-\B}}}{x^{-b_n + \eps}} < \infty.
\]
Combine the last two displays and the fact that both $\norm{\etab(t)}$ and $\norm{\phib(t)}$ are $o(g_n(t))$ to arrive at \eqref{E:rv:Potter:a}.

\paragraph{Proof of \eqref{E:rv:Potter:b}.}
By \eqref{E:rv:repr}, for all $t \geq a$ and $x \geq 1$,
\begin{align*}
	\g(t) - x^{-\B} \g(xt)
	&= \etab(t) - x^{-\B} \etab(xt) + t^\B \int_t^{xt} u^{-\B} \phib(u) u^{-1} \du \\
	&= \etab(t) - x^{-\B} \etab(xt) + \int_1^x u^{-\B} \phib(ut) u^{-1} \du
\end{align*}
and thus
\begin{multline*}
	\frac{\norm{\g(t) - x^{-\B} \g(xt)}}{x^{- b_n + \delta}|g_n(xt)|}
	\leq \frac{\norm{\etab(t)}}{|g_n(t)|} \frac{|g_n(t)|}{x^{-b_n + \delta} |g_n(xt)|}
	+ \frac{\norm{x^{-\B}}}{x^{-b_n + \delta}} \frac{\norm{\etab(xt)}}{|g_n(xt)|} \\
	+ \frac{|g_n(t)|}{x^{-b_n+\delta/2}|g_n(xt)|}
		\int_1^x 
			\frac{\norm{u^{-\B}}}{u^{-b_n} x^{\delta/4}} 
			\frac{\norm{\phib(ut)}}{|g_n(ut)|}
			\frac{|g_n(ut)|}{u^{b_n} x^{\delta/4}|g_n(t)|}
		u^{-1} \du.
\end{multline*}
As a consequence, for $t \geq a$,
\begin{multline*}
	\sup_{x \geq 1} \frac{\norm{\g(t) - x^{-\B} \g(xt)}}{x^{- b_n + \delta}|g_n(xt)|} \\
	\shoveleft{\qquad
	\leq \frac{\norm{\etab(t)}}{|g_n(t)|} \cdot \sup_{x \geq 1} \frac{|g_n(t)|}{x^{-b_n + \delta} |g_n(xt)|}
	+ \sup_{x \geq 1} \frac{\norm{x^{-\B}}}{x^{-b_n + \delta}} \cdot \sup_{s \geq t} \frac{\norm{\etab(s)}}{|g_n(s)|}
	} \\
	+ \sup_{x \geq 1} \frac{|g_n(t)|}{x^{-b_n+\delta/4}|g_n(xt)|} \cdot
		\sup_{u \geq 1} \frac{\norm{u^{-\B}}}{u^{-b_n + \delta/4}} \cdot
		\sup_{s \geq t} \frac{\norm{\phib(s)}}{|g_n(s)|} \cdot
		\sup_{u \geq 1} \frac{|g_n(ut)|}{u^{b_n + \delta/4}|g_n(t)|} \cdot
		\sup_{x \geq 1} \frac{\log x}{x^{\delta/4}}.
\end{multline*}
By Potter's theorem for $\RV_{b_n}$ and by \eqref{E:xB:n}, for every $\eps > 0$,
\[
	\lim_{t \to \infty} \sup_{x \geq 1} \frac{|g_n(t)|}{x^{-b_n + \eps} |g_n(xt)|} = 1, \qquad
	\lim_{t \to \infty} \sup_{x \geq 1} \frac{|g_n(xt)|}{x^{b_n + \eps} |g_n(t)|} = 1, \qquad
	\sup_{u \geq 1} \frac{\norm{u^{-\B}}}{u^{-b_n + \eps}} < \infty.
\]
Combine the last two displays and the fact that both $\norm{\etab(t)}$ and $\norm{\phib(t)}$ are $o(g_n(t))$ to arrive at \eqref{E:rv:Potter:b}.
\end{proof}

\begin{remark}
\label{R:rv:Potter}
\begin{flushenumerate}
\item[(a)]
Since
\[
	\frac{\norm{\g(t) - x^{-\B} \g(xt)}}{x^\delta |g_n(t)|}
	= \frac{|g_n(xt)|}{x^{b_n + \delta/2} |g_n(t)|} \cdot \frac{\norm{\g(t) - x^{-\B} \g(xt)}}{x^{-b_n + \delta/2} |g_n(xt)|},
\]
equation~\eqref{E:rv:Potter:b} and Potter's theorem for $\RV_{b_n}$ imply
\begin{equation}
\label{E:rv:Potter:c}
	\lim_{t \to \infty} \sup_{x \geq 1} \frac{\norm{\g(t) - x^{-\B} \g(xt)}}{x^\delta |g_n(t)|} = 0,
	\qquad \delta > 0.
\end{equation}
\item[(b)]
Equation~\eqref{E:rv:Potter:a} can be used to give a simple proof of a statement which is slightly weaker than \eqref{E:rv:Potter:b}. For $t \geq a$ and $x \geq 1$,
\[
	\frac{\norm{\g(t) - x^{-\B} \g(xt)}}{x^{b_1 - 2b_n + \delta}|g_n(xt)|}
	\leq \frac{\norm{x^{-\B}}}{x^{-b_n + \delta/3}} \cdot
	\frac{|g_n(t)|}{x^{-b_n + \delta/3} |g_n(xt)|} \cdot
	\frac{\norm{\g(xt) - x^\B \g(t)}}{x^{b_1 + \delta/3} |g_n(t)|}.
\]
By \eqref{E:xB:n} and by Potter's theorem for $\RV_{b_n}$, for every $\eps > 0$,
\[
	\sup_{x \geq 1} \frac{\norm{x^{-\B}}}{x^{-b_n + \eps}} < \infty, \qquad
	\lim_{t \to \infty} \sup_{x \geq 1} \frac{|g_n(t)|}{x^{-b_n + \eps} |g_n(xt)|} = 1.
\]
Combine the last two displays and \eqref{E:rv:Potter:a} to arrive at
\[
	\lim_{t \to \infty} \sup_{x \geq 1} \frac{\norm{\g(t) - x^{-\B} \g(xt)}}{x^{b_1 - 2b_n + \delta}|g_n(xt)|} = 0,
	\qquad \delta > 0.
\]
\end{flushenumerate}
\end{remark}

For $a, b \in \RR$, write $a \vee b = \max(a, b)$.

\begin{theorem}[(Potter bounds for $\GRV(\g)$)]
\label{T:grv:Potter}
Let $\g \in \RV_\B$ and let $f \in \GRV(\g)$. For every $\eps > 0$, there exists $t(\eps) > 0$ such that
\begin{equation}
\label{E:grv:Potter}
	\frac{|f(xt) - f(t) - \h(x) \g(t)|}{|g_n(t)|} \leq
	\begin{cases}
		\eps x^{(b_1 + \eps) \vee 0} 
			& \text{if $t \geq t(\eps)$ and $x \geq 1$;} \\
		\eps x^{(b_n - \eps) \wedge 0}
			& \text{if $t \geq t(\eps)$ and $t(\eps)/t \leq x \leq 1$.}
	\end{cases}
\end{equation}
\end{theorem}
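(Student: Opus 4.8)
The plan is to mirror the proof of the Potter bounds for $\RV_\B$ (Theorem~\ref{T:rv:Potter}), using the Representation theorem for $\GRV(\g)$ (Theorem~\ref{T:grv:repr}) in place of the one for $\RV_\B$, and then to combine the resulting estimate with the already-established bounds \eqref{E:rv:Potter:a}, \eqref{E:rv:Potter:b} and \eqref{E:rv:Potter:c} for the rate vector itself. Concretely, I would fix $\eps>0$ and reformulate \eqref{E:grv:Potter} as two separate limit statements analogous to \eqref{E:rv:Potter:a}--\eqref{E:rv:Potter:b}: for every $\delta>0$,
\[
	\lim_{t\to\infty}\sup_{x\geq 1}\frac{|f(xt)-f(t)-\h(x)\g(t)|}{x^{(b_1+\delta)\vee 0}|g_n(t)|}=0,
	\qquad
	\lim_{t\to\infty}\sup_{x\geq 1}\frac{|f(t)-f(xt)+\h(x)\g(t)|}{x^{(b_n-\delta)\wedge 0}|g_n(xt)|}=0,
\]
the second one after the substitution $x\mapsto x^{-1}$ and use of the identities \eqref{E:hxy}--\eqref{E:hx1} for $\h$ (which hold here since $T=(0,\infty)$); note that $\h(x^{-1})=-\h(x)x^{-\B}$, so the numerator in the second display is really $|f(t)-f(xt)-\h(x^{-1})\g(xt)|$ written back in terms of $\h(x)$.

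Next I would plug in the representation $f(t)=v+\eta(t)+\int_a^t\{\c\g(u)+\phi(u)\}u^{-1}\,\du$ with $\eta,\phi$ both $o(g_n(t))$. Writing $f(xt)-f(t)=\eta(xt)-\eta(t)+\int_t^{xt}\{\c\g(u)+\phi(u)\}u^{-1}\,\du=\eta(xt)-\eta(t)+\int_1^x\{\c\g(ut)+\phi(ut)\}u^{-1}\,\du$, and recalling $\h(x)\g(t)=\c\int_1^x u^\B u^{-1}\,\du\,\g(t)$, the difference becomes
\[
	f(xt)-f(t)-\h(x)\g(t)=\eta(xt)-\eta(t)+\int_1^x \phi(ut)u^{-1}\,\du+\c\int_1^x\{\g(ut)-u^\B\g(t)\}u^{-1}\,\du.
\]
Now divide by $x^{(b_1+\delta)\vee 0}|g_n(t)|$ and estimate the three groups of terms termwise, exactly as in the proof of Theorem~\ref{T:rv:Potter}: the $\eta$ terms are handled by $|\eta(t)|=o(g_n(t))$ together with the one-sided Potter bound $\sup_{x\geq 1}|g_n(xt)|/(x^{b_n+\delta'}|g_n(t)|)\to 1$ for $|g_n|\in\RV_{b_n}$; the $\phi$ integral is handled the same way, absorbing the factor $\log x$ against $x^{-\delta'}$; and the last integral is controlled by \eqref{E:rv:Potter:a}, pulling the supremum over $u\in[1,x]$ inside as was done for $\g$ and using $\int_1^x u^{b_1+\delta'}u^{-1}\,\du=O(x^{(b_1+\delta')\vee 0})$. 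The mild novelty compared with Theorem~\ref{T:rv:Potter} is the appearance of $(b_1+\eps)\vee 0$ rather than $b_1+\eps$: this is precisely because here one integrates $\g(ut)-u^\B\g(t)$ against $\du/u$ over $[1,x]$, and $\int_1^x u^{b_1+\delta'-1}\,\du$ grows like $x^{b_1+\delta'}$ when $b_1+\delta'>0$ but stays bounded when $b_1\leq 0$ (taking $\delta'$ small); the same $\vee 0$ phenomenon already occurs in the classical $\Pi$-variation Potter bounds. The proof of the second display is entirely symmetric, using \eqref{E:rv:Potter:b} (or its consequence \eqref{E:rv:Potter:c}) in place of \eqref{E:rv:Potter:a} and $b_n$ in place of $b_1$, with $\int_1^x u^{-b_n+\delta'-1}\,\du=O(x^{(-b_n+\delta')\vee 0})$, i.e. $O(x^{(b_n-\delta')\wedge 0})$ after the inversion $x\mapsto x^{-1}$.

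I expect the only genuinely delicate point to be the bookkeeping in the final integral term: one must split the exponent $\delta$ into several pieces (as in the displays preceding \eqref{E:rv:Potter:a}) so that each of the quantities $\norm{u^{-\B}}/u^{-b_n+\delta/4}$, $|g_n(ut)|/(u^{b_n+\delta/4}|g_n(t)|)$, $\norm{\phib(ut)}/|g_n(ut)|$, and $\log x/x^{\delta/4}$ can be bounded uniformly, and simultaneously verify that the leftover power of $x$ is exactly $x^{(b_1+\eps)\vee 0}$ (respectively $x^{(b_n-\eps)\wedge 0}$) and no worse. Everything else is a routine transcription of the $\RV_\B$ argument. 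Finally I would note that the two limit statements together are equivalent to \eqref{E:grv:Potter} by the usual argument: the first gives the bound for $x\geq 1$, the second (after $x\mapsto 1/x$) the bound for $t(\eps)/t\leq x\leq 1$, the restriction $x\geq t(\eps)/t$ coming, as in Theorem~\ref{T:rv:Potter}, from the requirement $xt\geq a$ so that the representation is applicable at the point $xt$.
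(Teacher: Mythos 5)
Your treatment of the half $x \geq 1$ is exactly the paper's: the same reformulation \eqref{E:grv:Potter:a}, the same decomposition of $f(xt)-f(t)-\h(x)\g(t)$ into $\eta(xt)-\eta(t)+\int_1^x\phi(ut)u^{-1}\du+\c\int_1^x\{\g(ut)-u^\B\g(t)\}u^{-1}\du$ via Theorem~\ref{T:grv:repr}, and the same estimates (Potter for $\RV_{b_n}$, the bound \eqref{E:rv:Potter:a}, and $\int_1^x u^{b_1+\delta/2-1}\du=O(x^{(b_1+\delta)\vee 0})$). The gap is in the half $t(\eps)/t\leq x\leq 1$. The second limit statement you display is not a correct reformulation of it: after the substitution $x\mapsto x^{-1}$, $t\mapsto xt$ the weight must flip sign, so the denominator should carry $x^{(-b_n+\delta)\vee 0}$ (a non-negative power of $x$, as in \eqref{E:grv:Potter:b}), not $x^{(b_n-\delta)\wedge 0}$; you keep the un-inverted exponent while taking the supremum over the inverted variable $x\geq 1$. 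As written the statement is false already for $n=1$, $g_1\equiv 1$, $b_n=0$: take $f(t)=\log t+\sin\sqrt{\log t}$, which is in $\GRV(\g)$ with $\h(x)=\log x$, yet $\sup_{x\geq 1}x^{\delta}\,|f(t)-f(xt)+\log x|$ does not tend to $0$ (choose $x=x_t$ so that $\sqrt{\log t+\log x_t}=\sqrt{\log t}+\pi$). Moreover, the identity you invoke to ``write the numerator back in terms of $\h(x)$'' is wrong: by \eqref{E:hx1}, $-\h(x^{-1})\g(xt)=\h(x)x^{-\B}\g(xt)$, which is not $\h(x)\g(t)$. The discrepancy $\h(x)\{x^{-\B}\g(xt)-\g(t)\}$ is, by \eqref{E:rv:Potter:c}, of order $\norm{\h(x)}\,x^{\delta'}|g_n(t)|$, and since $\norm{\h(x)}$ can grow like $x^{b_1+\delta'}$, this error is not absorbable at the scale $x^{(-b_n+\delta)\vee 0}|g_n(xt)|$ for a general index matrix (it is harmless only when, e.g., all diagonal elements coincide, as in the pure $\Pi$ case); so the two numerators are not interchangeable.

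Consequently, the claim that the backward half is ``entirely symmetric'' skips precisely the step where the work lies. The correct target is \eqref{E:grv:Potter:b} with the numerator $|f(t)-f(xt)-\h(x^{-1})\g(xt)|$ kept as such. One writes $\h(x^{-1})\g(xt)=-\c\int_1^x (x/u)^{-\B}\g(xt)\,u^{-1}\du$, so that after inserting the representation of $f$ the critical term becomes $\c\int_1^x\{\g(ut)-(x/u)^{-\B}\g(xt)\}u^{-1}\du$; this is then controlled by the uniform bound \eqref{E:rv:Potter:c} applied at the point $s=ut$ with ratio $y=x/u$ (which you do cite, but whose use requires exactly this rewriting), combined with Potter for $\RV_{b_n}$ in the form $\sup_{y\geq 1}|g_n(s)|/\{y^{-b_n+\delta/4}|g_n(ys)|\}$ and the elementary estimate $\int_1^x u^{a-1}\du=O(x^{a\vee\eps})$; the correct exponent $(-b_n+\delta)\vee 0$ emerges from this bookkeeping. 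With the second limit statement corrected and this decomposition supplied, your outline coincides with the paper's proof; as it stands, the backward half both asserts and relies on false identities.
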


\begin{proof}
We shall prove the following two statements, which are equivalent to \eqref{E:grv:Potter}: for $\delta > 0$,
\begin{align}
\label{E:grv:Potter:a}
	\lim_{t \to \infty} \sup_{x \geq 1}
	\frac{|f(xt) - f(t) - \h(x) \g(t)|}{x^{(b_1 + \delta) \vee 0} |g_n(t)|} &= 0, \\
\label{E:grv:Potter:b}
	\lim_{t \to \infty} \sup_{x \geq 1}
	\frac{|f(t) - f(xt) - \h(x^{-1}) \g(xt)|}{x^{(-b_n + \delta) \vee 0} |g_n(xt)|} &= 0.
\end{align}
Recall that $\h(x) = \c \int_1^x u^\B u^{-1} \du$ for $x \in (0, \infty)$, with $\c \in \RR^{1 \times n}$ the $\g$-index of $f$. Also, recall the representation of $f$ in Theorem~\ref{T:grv:repr}. 

\paragraph{Proof of \eqref{E:grv:Potter:a}.}
For $t \geq a$ and $x \geq 1$,
\begin{equation}
\label{E:grv:Potter:10}
	f(xt) - f(t) - \h(x) \g(t)
	= \c \int_1^x \{ \g(ut) - u^\B \g(t) \} u^{-1} \du + \eta(xt) - \eta(t) + \int_1^x \phi(ut) u^{-1} \du
\end{equation}
and thus, since $b_n + \delta \leq b_1 + \delta \leq (b_1 + \delta) \vee 0$,
\begin{multline*}
	\frac{|f(xt) - f(t) - \h(x) \g(t)|}{x^{(b_1 + \delta) \vee 0} |g_n(t)|} \\
	\leq 
			\frac{\norm{\c}}{x^{(b_1 + \delta) \vee 0}}
			\int_1^x \frac{\norm{\g(ut) - u^\B \g(t)}}{u^{b_1 + \delta/2} |g_n(t)|} u^{b_1+\delta/2-1} \du
		+ \frac{|\eta(xt)|}{|g_n(xt)|} \cdot \frac{|g_n(xt)|}{x^{b_n + \delta}|g_n(t)|}
		+ \frac{|\eta(t)|}{|g_n(t)|} \\
		+ \frac{1}{x^{(b_1 + \delta) \vee 0}}
			\int_1^x \frac{|\phi(ut)|}{|g_n(ut)|} \frac{|g_n(ut)|}{u^{b_n + \delta/2} |g_n(t)|} u^{b_n + \delta/2 - 1} \du.
\end{multline*}
We obtain that, for $t \geq a$,
\begin{multline*}
	\sup_{x \geq 1} \frac{|f(xt) - f(t) - \h(x) \g(t)|}{x^{(b_1 + \delta) \vee 0} |g_n(t)|} \\
	\leq 
			\sup_{u \geq 1} \frac{\norm{\g(ut) - u^\B \g(t)}}{u^{b_1 + \delta/2} |g_n(t)|} \cdot
			\sup_{x \geq 1} \frac{\norm{\c}}{x^{(b_1 + \delta) \vee 0}} \int_1^x u^{b_1 + \delta/2 - 1} \du \\
		+ \sup_{s \geq 1} \frac{|\eta(s)|}{|g_n(s)|} \cdot 
			\sup_{x \geq 1} \frac{|g_n(xt)|}{x^{b_n + \delta}|g_n(t)|}
		+ \frac{|\eta(t)|}{|g_n(t)|} \\
		+ \sup_{s \geq 1} \frac{|\phi(s)|}{|g_n(s)|} \cdot 
			\sup_{u \geq 1} \frac{|g_n(ut)|}{u^{b_n + \delta/2} |g_n(t)|} \cdot
			\sup_{x \geq 1} \frac{1}{x^{(b_1 + \delta) \vee 0}} \int_1^x u^{b_n + \delta/2 - 1} \du.
\end{multline*}
In view of Potter's theorem for $\RV_{b_n}$ and for $\RV_\B$ (Theorem~\ref{T:rv:Potter}), it suffices to note that $\int_1^x u^{b_1 + \delta/2 - 1} \du = O(x^{(b_1 + \delta) \vee 0})$ as $x \to \infty$.

\paragraph{Proof of \eqref{E:grv:Potter:b}.} By \eqref{E:hx1}, $\h(x^{-1}) = - \h(x) x^{-\B}$ and thus $\h(x^{-1}) \g(xt) = - \h(x) x^{-\B} \g(xt) = - \c \int_1^x u^\B u^{-1} \du \, x^{-\B} \g(xt) = - \c \int_1^x (x/u)^{-\B} \g(xt) u^{-1} \du$ for $x \geq 1$ and $t \geq a$. We obtain that
\begin{multline*}
	f(t) - f(xt) - \h(x^{-1}) \g(xt) \\
	= \eta(t) - \eta(xt) - \int_1^x \phi(ut) u^{-1} \du
	- \c \int_1^x \{ \g(ut) - (x/u)^{-\B} \g(xt) \} u^{-1} \du,
	\qquad t \geq a, \quad x \geq 1.
\end{multline*}
We obtain that, for $t \geq a$ and $x \geq 1$,
\begin{align*}
	\lefteqn{
	\frac{|f(t) - f(xt) - \h(x^{-1}) \g(xt)|}{x^{(-b_n + \delta) \vee 0} |g_n(xt)|}  
	} \\
	&\leq 
		\frac{|\eta(t)|}{|g_n(t)|} \frac{|g_n(t)|}{x^{-b_n + \delta} |g_n(xt)|}
	+ \frac{|\eta(xt)|}{|g_n(xt)|} \\
	&\qquad \mbox{} + \frac{1}{x^{\{(-b_n + \delta) \vee 0 \} + b_n - \delta/2}}
		\int_1^x \frac{|\phi(ut)|}{|g_n(ut)|} \frac{|g_n(ut)|}{(x/u)^{-b_n + \delta/2} |g_n(xt)|} u^{b_n - \delta/2 - 1} \du \\
	&\qquad \mbox{} + \frac{\norm{\c}}{x^{\{ (-b_n + \delta) \vee 0 \} + b_n - \delta/2}} 
		\int_1^x 
			\frac{\norm{\g(ut) - (x/u)^{-\B} \g(xt)}}{(x/u)^{\delta/4} |g_n(ut)|} 
			\frac{|g_n(ut)|}{(x/u)^{-b_n + \delta/4}|g_n(xt)|}
		u^{b_n - \delta/2 - 1} \du.
\end{align*}
Since $\{ (-b_n + \delta) \vee 0 \} + b_n - \delta/2 = (b_n - \delta/2) \vee (\delta/2)$, we find, for $t \geq a$,
\begin{align*}
	\lefteqn{
	\sup_{x \geq 1} \frac{|f(t) - f(xt) - \h(x^{-1}) \g(xt)|}{x^{(-b_n + \delta) \vee 0} |g_n(xt)|} 
	} \\
	&\leq 		
		\frac{|\eta(t)|}{|g_n(t)|} \cdot \sup_{x \geq 1} \frac{|g_n(t)|}{x^{-b_n + \delta} |g_n(xt)|}
	+ \sup_{s \geq 1} \frac{|\eta(s)|}{|g_n(s)|} \\
	&\qquad \mbox{} + \sup_{s \geq 1} \frac{|\phi(s)|}{|g_n(s)|} \cdot
		\sup_{s \geq t} \sup_{y \geq 1} \frac{|g_n(s)|}{y^{-b_n + \delta/2} |g_n(ys)|} \cdot
		\frac{1}{x^{(b_n - \delta/2) \vee (\delta/2)}} \int_1^x u^{b_n - \delta/2 - 1} \du \\
	&\qquad \mbox{} + \sup_{s \geq 1} \sup_{y \geq 1} \frac{\norm{\g(s) - y^{-\B} \g(ys)}}{y^{\delta/4} |g_n(s)|} \cdot
	\sup_{s \geq 1} \sup_{y \geq 1} \frac{|g_n(s)}{y^{-b_n + \delta/4} |g_n(ys)|} \cdot
		\frac{1}{x^{(b_n - \delta/2) \vee (\delta/2)}} \int_1^x u^{b_n - \delta/2 - 1} \du.	
\end{align*}
Now apply the following elements:
\begin{enumerate}
\item[(a)] the assumption that both $\eta(t)$ and $\phi(t)$ are $o(g_n(t))$ (Theorem~\ref{T:grv:repr});
\item[(b)] Potter's theorem for $\RV_{b_n}$;
\item[(c)] Potter's theorem for $\RV_\B$ [equation~\eqref{E:rv:Potter:c} in Remark~\ref{R:rv:Potter}];
\item[(d)] the fact that for $a \in \RR$ and $\eps > 0$, $\int_1^x u^{a - 1} \du = O(x^{a \vee \eps})$ as $x \to \infty$;
\end{enumerate}
to arrive at \eqref{E:grv:Potter:b}.
\end{proof}

\section{$\Pi$-variation of arbitrary order. I. General functions}
\label{S:PiI}

An interesting special case of (generalised) regular variation of arbitrary order arises when all diagonal elements of the index matrix $\B$ are equal to zero. In this case, the Potter bounds in \eqref{E:rv:Potter} and \eqref{E:grv:Potter} simplify in an obvious way. The case $n = 2$ has been studied in \cite{OmeyWillekens88}.

\begin{notation}
If $\g \in \RV_\B$ and if all diagonal elements of $\B$ are equal to $0$, then, in analogy to the first-order case, we write $\Pi(\g)$ rather than $\GRV(\g)$.
\end{notation}

If all diagonal elements of $\B$ are equal to $b$, nonzero, then $\g \in \RV_\B$ if and only if the rate vector $t \mapsto t^{-b} \g(t)$ belongs to $\RV_{\B - b\I}$, the index matrix $\B - b\I$ having zeroes on the diagonal. For $f \in \Pi(\g)$, however, such a reduction does not occur; instead, see Remark~\ref{R:speccas:nonzero}.

For general $n \geq 2$, by Proposition~\ref{P:B2A} and Remark~\ref{R:B2h}, if $\g \in \RV_\B$ and if $\B$ has a zero diagonal, then for each $i \in \{1, \ldots, n-1\}$, the function $g_i$ belongs to the class $\Pi((g_{i+1}, \ldots, g_n)')$ with index $(B_{i,i+1}, \ldots, B_{i,n})$.

\subsection{Reduction to Jordan block index matrices}
\label{SS:Pi:Jordan}

In case of the $n \times n$ Jordan block
\begin{equation}
\label{E:Jordan}
	\J_{n} =
	\begin{pmatrix}
	0      & 1      & 0      & \ldots & \ldots & 0 \\ 
	0      & 0      & 1      & 0      & \ldots & 0 \\ 
	\vdots &        & \ddots & \ddots & \ddots & \vdots \\ 
	0      & \ldots & \ldots & 0      & 1      & 0 \\
	0      & \ldots & \ldots & \ldots & 0      & 1 \\ 
	0      & \ldots & \ldots & \ldots & \ldots & 0
	\end{pmatrix},
\end{equation}
we find by direct computation
\begin{equation}
\label{E:xJ}
	x^{\J_n} = \I + \sum_{k=1}^{n-1} \frac{(\log x)^k}{k!} \J_n^k =
	\begin{pmatrix}
	1 & \log x & \frac{1}{2!} (\log x)^{2} & \ldots & \frac{1}{(n-1)!}(\log x)^{n-1} \\ 
	0 & 1 & \log x & \ldots & \frac{1}{(n-2)!}(\log x)^{n-2} \\ 
	\ldots & \ldots & \ldots & \ldots & \ldots \\ 
	0 & \ldots & 0 & 1 & \log x \\ 
	0 & \ldots & 0 & 0 & 1
	\end{pmatrix}.
\end{equation}
Hence, $\g \in \RV_{\J_n}$ if and only if for every $i \in \{1, \ldots, n\}$,
\[
	g_i(xt) = g_i(t) + \sum_{k = 1}^{n-i} \frac{(\log x)^k}{k!} g_{i+k}(t) + o(g_n(t)), \qquad x \in (0, \infty).
\]

For general upper triangular $\B$ with zero diagonal, $(x^\B)_{i,i+k}$ is a polynomial in $\log x$ of degree at most $k$, the coefficient of $(\log x)^k / k!$ being $B_{i,i+1} B_{i+1, i+2} \cdots B_{i+k-1, i+k}$; see \eqref{E:xB:Pi}. The highest power of $\log x$ which can arise in $x^\B$ is $n-1$, the coefficient of $(\log x)^{n-1} / (n-1)!$ being
\[
	B_{1,2} B_{2,3} \cdots B_{n-1, n},
\]
the product of all elements on the superdiagonal. This product is nonzero if and only if $\B$ has no zeroes on the superdiagonal. According to the next proposition, this is equivalent to the condition that the eigenvalue $0$ of $\B$ has geometric multiplicity equal to one, a property which is guaranteed in the setting of the Characterisation Theorem for $\GRV(\g)$, Theorem~\ref{T:grv:char}. In this case, regular variation with index matrix $\B$ can be reduced to regular variation with index matrix $\J_n$.

\begin{proposition}
\label{P:Jordan}
Let $\B \in \RR^{n \times n}$ be upper triangular and with zeroes on the diagonal. The following are equivalent:
\begin{flushenumerate}
\item[(i)] $B_{i,i+1} \neq 0$ for every $i \in \{1, \ldots, n-1\}$.
\item[(ii)] There exists an invertible, upper triangular matrix $\Q \in \RR^{n \times n}$ such that $\B = \Q \J_n \Q^{-1}$.
\item[(iii)] The eigenvalue $0$ of $\B$ has geometric multiplicity equal to one.
\item[(iv)] If $\A(x) = x^\B$, then the functions $A_{1j}$, $j \in \{2, \ldots, n\}$, are linearly independent.
\end{flushenumerate}
In this case, $\g \in \RV_\B$ if and only if $\Q^{-1} \g \in \RV_{\J_n}$.
\end{proposition}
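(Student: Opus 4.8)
The plan is to prove the four conditions equivalent by running the cycle of implications (i)~$\Rightarrow$~(ii)~$\Rightarrow$~(iv)~$\Rightarrow$~(iii)~$\Rightarrow$~(i). Once (ii) is available, the closing assertion is immediate from Remark~\ref{R:rv:char}(a): applied with the upper triangular invertible matrix $\Q^{-1}$ it gives $\g\in\RV_\B \Rightarrow \Q^{-1}\g\in\RV_{\Q^{-1}\B\Q}=\RV_{\J_n}$, and applied with $\Q$ it gives the reverse implication. Throughout I would use the elementary fact that a strictly upper triangular $\B$ satisfies $\B^n=\0$ and that $\B^k$ has nonzero entries only in positions $(i,j)$ with $j-i\geq k$; combined with \eqref{E:xB:Pi} this pins down the ``leading'' entry of $\B^k$ in each row as $(\B^k)_{i,i+k}=B_{i,i+1}B_{i+1,i+2}\cdots B_{i+k-1,i+k}$. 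The implication I expect to be the main obstacle is (i)~$\Rightarrow$~(ii): the classical Jordan reduction already tells us that a nilpotent matrix with a single Jordan block is similar to $\J_n$, but it produces only \emph{some} invertible change of basis, whereas here it is essential that the conjugating matrix be \emph{upper triangular} (this is what makes the reduction compatible with the rate-vector structure and with Remark~\ref{R:rv:char}). Securing triangularity forces a specific choice of Jordan chain.

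For (i)~$\Rightarrow$~(ii), note first that under~(i) the entry $(\B^{n-1})_{1,n}=B_{1,2}\cdots B_{n-1,n}$ is nonzero, so the Jordan chain $\bm e_n,\B\bm e_n,\dots,\B^{n-1}\bm e_n$ generated by the \emph{last} standard basis vector is a basis of $\RR^n$. I would take $\Q$ to be the matrix whose $j$-th column is $\B^{\,n-j}\bm e_n$. The band structure of the powers of $\B$ shows that $\B^{\,n-j}\bm e_n$ lies in $\mathrm{span}(\bm e_1,\dots,\bm e_j)$, so $\Q$ is upper triangular; moreover its $j$-th diagonal entry equals $(\B^{\,n-j})_{j,n}=\prod_{i=j}^{n-1}B_{i,i+1}$ for $j<n$ and $1$ for $j=n$, hence is nonzero by~(i), so $\Q$ is invertible. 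Finally $\B\Q\bm e_j=\B^{\,n-j+1}\bm e_n=\Q\bm e_{j-1}=\Q\J_n\bm e_j$ for $j\geq 2$ and $\B\Q\bm e_1=\B^n\bm e_n=\0=\Q\J_n\bm e_1$, that is, $\B=\Q\J_n\Q^{-1}$.

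For (ii)~$\Rightarrow$~(iv), write $x^\B=\Q\,x^{\J_n}\,\Q^{-1}$ and look at the first row. Let $\bm q$ denote the first row of $\Q$; its first entry $Q_{11}$ is nonzero since $\Q$ is upper triangular and invertible. By \eqref{E:xJ}, the $j$-th entry of the row vector $\bm q\,x^{\J_n}$ is a polynomial in $\log x$ of degree exactly $j-1$, with leading coefficient $Q_{11}/(j-1)!$; hence these $n$ functions are linearly independent, and multiplication on the right by the invertible matrix $\Q^{-1}$ preserves that. Thus $A_{11},\dots,A_{1n}$ are linearly independent; since $A_{11}\equiv 1$ while $A_{1j}(1)=0$ for $j\geq 2$, no nonzero constant can lie in the span of $A_{12},\dots,A_{1n}$, so these $n-1$ functions are already linearly independent, which is~(iv). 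For (iv)~$\Rightarrow$~(iii) I would argue by contraposition. If the eigenvalue $0$ of $\B$ has geometric multiplicity at least $2$ then, since $\bm e_1\in\ker\B$, there is a vector $u\in\ker\B$ not proportional to $\bm e_1$; subtracting a suitable multiple of $\bm e_1$ yields a nonzero $\v\in\ker\B$ whose first coordinate vanishes. From $\B\v=\0$ we get $x^\B\v=\v$, so $\sum_{j=1}^n v_jA_{1j}(x)$ equals the first coordinate of $x^\B\v$, namely $0$; as the first coordinate of $\v$ is $0$ this is a nontrivial linear relation among $A_{12},\dots,A_{1n}$, contradicting~(iv).

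Finally, for (iii)~$\Rightarrow$~(i), again by contraposition: if $B_{k,k+1}=0$ for some $k$, then column $k+1$ of $\B$ lies in $\mathrm{span}(\bm e_1,\dots,\bm e_{k-1})$, so the $k$ columns numbered $2,\dots,k+1$ all lie in that $(k-1)$-dimensional subspace, the remaining columns numbered $k+2,\dots,n$ span at most $n-k-1$ further dimensions, and column $1$ is zero; hence $\mathrm{rank}\,\B\leq n-2$ and $\dim\ker\B\geq 2$, so the eigenvalue $0$ has geometric multiplicity at least $2$. Apart from (i)~$\Rightarrow$~(ii), the only ingredients are linear independence of $1,\log x,\dots,(\log x)^{n-1}$, the fact that powers of a strictly upper triangular matrix are ``more nilpotent'', and a one-line rank count, so I expect the write-up to be short once the right Jordan chain has been identified.
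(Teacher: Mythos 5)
Your proof is correct, but it follows a genuinely different route from the paper's on the two substantive points. For (i)~$\Rightarrow$~(ii), the paper argues by induction on $n$: it assumes the $(n-1)\times(n-1)$ leading block has already been reduced and then solves an upper triangular linear system \eqref{E:Jordan:n-1} for the last column of $\Q$ (with $Q_{nn}=Q_{n-1,n-1}/B_{n-1,n}$ and $Q_{1n}$ free), whereas you give a closed-form conjugator: the Jordan chain $\Q\bm e_j=\B^{\,n-j}\bm e_n$, whose upper triangularity and invertibility you read off from the band structure of powers of $\B$, the diagonal entries being the partial products $\prod_{i=j}^{n-1}B_{i,i+1}$. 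That explicit construction is arguably more transparent than the induction and makes the dependence of $\Q$ on the superdiagonal visible. Secondly, your logical architecture differs: the paper proves (ii)~$\Rightarrow$~(iii) via the eigenspace correspondence under similarity, (iii)~$\Rightarrow$~(i) by explicitly constructing a second eigenvector from the submatrix $(B_{kl}:1\leq k\leq i,\,2\leq l\leq i+1)$, and handles (i)~$\Leftrightarrow$~(iv) separately and directly from \eqref{E:xB:Pi} (each $A_{1j}$ is a polynomial in $\log x$ of degree at most $j-1$ with top coefficient $\prod_{i=1}^{j-1}B_{i,i+1}$); you instead close a single cycle (i)~$\Rightarrow$~(ii)~$\Rightarrow$~(iv)~$\Rightarrow$~(iii)~$\Rightarrow$~(i), deducing (iv) from the Jordan form and \eqref{E:xJ}, getting (iv)~$\Rightarrow$~(iii) by exhibiting a kernel vector with vanishing first coordinate, and (iii)~$\Rightarrow$~(i) by a rank count in place of the paper's eigenvector construction. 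Both routes are sound and of comparable length; yours leans on \eqref{E:xJ} and the explicit $\Q$, the paper's on \eqref{E:xB:Pi} and induction. One small redundancy: in (ii)~$\Rightarrow$~(iv), once $A_{11},\dots,A_{1n}$ are linearly independent, the independence of the subfamily $A_{12},\dots,A_{1n}$ is automatic, so the sentence about constants not lying in their span can be dropped. Your handling of the final claim via Remark~\ref{R:rv:char}(a), applied once with $\Q^{-1}$ and once with $\Q$, matches what the paper intends.
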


\begin{proof}
\textit{(i) implies (ii).} The proof is by induction on $n$. For $n=1$ the statement is trivial: just take any nonzero number $Q = Q_{11}$. 

Let $n \geq 2$ and write $\tilde{\B} = (B_{ij})_{i,j=1}^{n-1}$. The induction hypothesis implies that we can find and invertible, upper triangular matrix $\tilde{\Q} \in \RR^{(n-1) \times (n-1)}$ such that $\tilde{\B} \tilde{\Q} = \tilde{\Q} \J_{n-1}$. Let $\bm{b} = (B_{in})_{i=1}^{n-1} \in \RR^{(n-1) \times 1}$. For $\bm{q} \in \RR^{(n-1) \times 1}$ and $Q_{nn}$ to be determined, write
\begin{align*}
	\B &= \begin{pmatrix} \tilde{\B} & \bm{b} \\ \bm{0} & 0 \end{pmatrix}, & 
	\Q &= \begin{pmatrix} \tilde{\Q} & \bm{q} \\ \bm{0} & Q_{nn} \end{pmatrix}.
\end{align*}
We have to show that we can choose $\bm{q}$ and $Q_{nn}$ in such a way that $Q_{nn} \neq 0$ and
\[
	\B \Q = \Q \J_n.
\]
By the decomposition of $\B$ and $\Q$ above and by the property of $\tilde{\Q}$, the equation in the preceding display reduces to
\begin{equation}
\label{E:Jordan:n-1}
	\tilde{\B} \bm{q} + Q_{nn} \bm{b} = (Q_{i,n-1})_{i=1}^{n-1}.
\end{equation}
Since the last row of $\tilde{\B}$ contains zeroes only, the $(n-1)$th equation of the previous display is just $Q_{nn} B_{n-1,n} = Q_{n-1,n-1}$. Since $B_{n-1,n} \neq 0$ (by assumption) and since $Q_{n-1,n-1} \neq 0$ (by the induction hypothesis), we find $Q_{nn} = Q_{n-1,n-1} / B_{n-1,n}$. Let $\hat{\B}$ be the $(n-2) \times (n-2)$ submatrix of $\B$ with range $i = 1, \ldots, n-2$ and $j = 2, \ldots, n-1$. The first $n-2$ equations in \eqref{E:Jordan:n-1} are
\[
	\hat{\B} (Q_{in})_{i=2}^{n-1} = (Q_{i,n-1})_{i=1}^{n-2} - Q_{nn} (B_{in})_{i=1}^{n-2}.
\]
Since $\hat{\B}$ is an upper triangular matrix with nonzeroes on the diagonal, the above system has a unique solution in $(Q_{in})_{i=2}^{n-1}$. The element $Q_{1n}$ can be chosen arbitrarily.

\textit{(ii) implies (iii).} The vector $\bm{v} \in \RR^{n \times 1}$ is an eigenvector of $\B$ with eigenvalue $0$ if and only if the vector $\Q^{-1} \bm{v}$ is an eigenvector of $\J_n$ with the same eigenvalue. The dimension of the eigenspace of $\J_n$ corresponding to eigenvalue $0$ is equal to one.

\textit{(iii) implies (i).} The vector $(1, 0, \ldots, 0)' \in \RR^{n \times 1}$ is an eigenvector of $\B$ with eigenvalue $0$. Now suppose that there exists $i \in \{1, \ldots, n-1\}$ such that $B_{i,i+1} = 0$. We will construct an eigenvector of $\B$ with eigenvalue $0$ which is linearly independent of $(1, 0, \ldots, 0)'$.

Consider the $i \times i$ submatrix $\tilde{\B} = (B_{kl} : 1 \leq k \leq i, 2 \leq l \leq i+1)$. Since $B_{i,i+1} = 0$, the last row of $\tilde{\B}$ contains only zeroes. Hence there exists a non-zero vector $\tilde{\bm{v}} \in \RR^{i \times 1}$ such that $\tilde{\B} \tilde{\bm{v}} = \bm{0}$. Put $\bm{v} = (0, \tilde{\bm{v}}', 0, \ldots, 0)' \in \RR^{n \times 1}$. The $\B \bm{v} = \bm{0}$ and $\bm{v}$ is linearly independent of $(1, 0, \ldots, 0)'$.

\textit{(i) and (iv) are equivalent.} By \eqref{E:xB:Pi}, $A_{1j}(x)$ is a polynomial in $\log x$ of degree at most $j-1$, without constant term, the coefficient of $(\log x)^j / j!$ being $\prod_{i=1}^{j-1} B_{i,i+1}$. Hence, the component functions $A_{1j}$, $j \in \{2, \ldots, n\}$, are linearly independent if and only if $\prod_{i=1}^{j-1} B_{i,i+1} \neq 0$ for all $j \in \{2, \ldots, n\}$. But this is equivalent to $\prod_{i=1}^{n-1} B_{i,i+1} \neq 0$, which is (i).
\end{proof}

If $\g \in \RV_\B$ with $\B$ having a zero diagonal, then for $i \in \{1, \ldots, n-1\}$, as $(x^\B)_{i,i+1} = B_{i,i+1} \log x$,
\[
	g_i(xt) = g_i(t) + B_{i,i+1} \log (x) g_{i+1}(t) + o(g_{i+1}(t)), \qquad x \in (0, \infty).
\]
Hence, the condition on $\B$ in Proposition~\ref{P:Jordan} means that for every $i \in \{1, \ldots, n-1\}$, the function $g_i$ is up to its sign in the (univariate) de Haan class $\Pi$ with auxiliary slowly varying function $|g_{i+1}|$.

\subsection{Representation via indices transform}
\label{SS:Pi:indices}

As in de Haan's Theorem \citep[Theorem~3.7.3]{BGT} and in \citet[Theorem~2.1]{OmeyWillekens88}, we consider the relation between the asymptotic behaviour of a function $f$, locally integrable on $[a, \infty)$ for some $a > 0$, and the function $C$ defined by
\begin{equation}
\label{E:f2C}
	C(t) = f(t) - \frac{1}{t} \int_{a}^{t}f(y)\dy, \qquad t \in [a, \infty).
\end{equation}
The map from $f$ to $C$ is a special case of the indices transform \citep[equation~(3.5.2)]{BGT}. Equation~\eqref{E:f2C} can be inverted as follows: 
\begin{equation}
\label{E:C2f}
	f(x) = C(x) + \int_{a}^{x}C(t)t^{-1}\dt, \qquad x \in [a, \infty).
\end{equation}
According to the next theorem, if $f$ is $\Pi$-varying of order $n$, then \eqref{E:C2f} can be seen as a representation of $f$ in terms of a function $C$ which is $\Pi$-varying of order at most $n-1$. Note that the condition that $f$ is (eventually) locally integrable entails no loss of generality, for local integrability is a consequence of the representation for $f$ in Theorem~\ref{T:grv:repr}.

\begin{theorem}[(Representation theorem for $\Pi(\g)$)]
\label{T:Pi:repr}
Let $\B$ be an index matrix with zero diagonal, let $\g \in \RV_\B$, and let $f$ be locally integrable on $[a, \infty)$ for some $a > 0$. Let $C$ be as in \eqref{E:f2C}. Then $f \in \Pi(\g)$ with $\g$-index $\c$ if and only if
\begin{equation}
\label{E:g2C}
	C(t) = \bm{d} \g(t) + o(g_n(t))
\end{equation} 
with $\bm{d} = \c (\I + \B)^{-1}$. In particular, $C \in \Pi((g_2, \ldots, g_n)')$ with index $((\bm{d}\B)_i)_{i=2}^n$.
\end{theorem}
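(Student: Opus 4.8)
The plan is to exploit the inversion formula~\eqref{E:C2f} for the ``if'' direction, to reduce the ``only if'' direction to the representation theorem for $o\Pi$, and to read off the concluding claim by elementary matrix bookkeeping. Throughout write $\H(x) = \int_1^x u^\B u^{-1} \du$, so that $\h(x) = \c \H(x)$ and, by~\eqref{E:Bh}, $\B \H(x) = \H(x) \B = x^\B - \I$; since $\B$ is nilpotent (zero diagonal), $\I + \B$ is invertible, so $\bm{d} = \c(\I+\B)^{-1}$ is well defined and the asserted relation between $\c$ and $\bm{d}$ is equivalent to $\c = \bm{d}(\I+\B)$. Enlarging $a$ if necessary we may assume that $\g$ is locally bounded on $[a,\infty)$ (it is regularly varying), so that $\bm{d}\g$ is locally integrable there; enlarging $a$ changes $C$ only by a term $O(1/t) = o(g_n(t))$ (because $|g_n|$ is slowly varying, hence eventually dominates $t^{-1}$), which is harmless for~\eqref{E:g2C}.

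\emph{Sufficiency of~\eqref{E:g2C}.} Assume $C(t) = \bm{d}\g(t) + o(g_n(t))$. Feeding this into~\eqref{E:C2f} gives, for large $t$ and $x \in (0,\infty)$,
\[
	f(xt) - f(t) = C(xt) - C(t) + \int_1^x C(ut) u^{-1} \du .
\]
Since $\g \in \RV_\B$ and $g_n(ut)/g_n(t) \to 1$, the first difference equals $\bm{d}(x^\B - \I)\g(t) + o(g_n(t)) = \bm{d}\B\H(x)\g(t) + o(g_n(t))$; the Uniform convergence theorem for $\RV_\B$ (Theorem~\ref{T:rv:UCT}), together with local boundedness of $u \mapsto g_n(ut)/g_n(t)$, turns the integral into $\bm{d}\H(x)\g(t) + o(g_n(t))$. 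Adding the two contributions yields $f(xt) - f(t) = \bm{d}(\I+\B)\H(x)\g(t) + o(g_n(t))$. With $\c := \bm{d}(\I+\B)$ the right-hand side is $\h(x)\g(t) + o(g_n(t))$, so $f \in \Pi(\g)$ with $\g$-index $\c$, i.e.\ $\bm{d} = \c(\I+\B)^{-1}$.

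\emph{Necessity of~\eqref{E:g2C}.} Let $f \in \Pi(\g)$ with $\g$-index $\c$, and put $\bm{d} = \c(\I+\B)^{-1}$, $\tilde C := \bm{d}\g$. Define $\tilde f$ from $\tilde C$ via~\eqref{E:C2f}. By the sufficiency part (applied with the trivial remainder $\tilde C = \bm{d}\g + o(g_n)$), $\tilde f \in \Pi(\g)$ with $\g$-index $\c$, and by the fact that~\eqref{E:f2C} and~\eqref{E:C2f} are mutually inverse, the indices transform of $\tilde f$ returns $\tilde C = \bm{d}\g$ exactly. Hence $\xi := f - \tilde f$ satisfies $\xi(xt) - \xi(t) = o(g_n(t))$ for every $x$, i.e.\ $\xi \in o\Pi_{|g_n|}$. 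It remains to show that the transform~\eqref{E:f2C} maps $o\Pi_{|g_n|}$ into $o(g_n)$: by the Representation theorem for $o\Pi_{|g_n|}$ \citep[Theorem~3.6.1]{BGT}, $\xi(t) = v + \eta(t) + \int_a^t \phi(u) u^{-1} \du$ with $\eta,\phi = o(g_n)$, and substituting this into~\eqref{E:f2C} and integrating by parts shows that the transform of $\xi$ equals a constant multiple of $1/t$, plus $\eta(t)$, minus $t^{-1}\int_a^t \eta(u)\,\du$, plus $t^{-1}\int_a^t \phi(u)\,\du$. Each term is $o(g_n(t))$: the first because $t|g_n(t)| \to \infty$, the second by hypothesis, and the last two because Cesàro-type averages of functions that are $o(g_n)$ are again $o(g_n)$ when $|g_n|$ is slowly varying (Karamata's theorem, $\int_a^t |g_n(u)|\,\du \sim t|g_n(t)|$). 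By linearity of~\eqref{E:f2C}, $C = C_{\tilde f} + C_{\xi} = \bm{d}\g + o(g_n)$, which is~\eqref{E:g2C}.

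\emph{Reduction to order $n-1$ and main obstacle.} Granting~\eqref{E:g2C}, we get $C(xt) - C(t) = \bm{d}(x^\B - \I)\g(t) + o(g_n(t)) = (\bm{d}\B)\H(x)\g(t) + o(g_n(t))$. Since $\B$ is upper triangular with zero diagonal its first column vanishes, so $(\bm{d}\B)_1 = 0$, and as $\H(x)$ is upper triangular the first entry of $(\bm{d}\B)\H(x)$ vanishes too; thus the $g_1$-term drops out and $C(xt) = C(t) + \sum_{j=2}^n \bigl((\bm{d}\B)\H(x)\bigr)_j g_j(t) + o(g_n(t))$. By Remark~\ref{R:rv:char}(b), $\g_{2,n} = (g_2,\ldots,g_n)' \in \RV_{\B_{2,n}}$, and because the lower-right $(n-1)\times(n-1)$ block of $u^\B$ equals $u^{\B_{2,n}}$, the block of $\H(x)$ on indices $2,\ldots,n$ is $\int_1^x u^{\B_{2,n}} u^{-1} \du$; since $(\bm{d}\B)$ has zero first entry, the truncated limit row is therefore $\bigl((\bm{d}\B)_i\bigr)_{i=2}^n \int_1^x u^{\B_{2,n}} u^{-1}\du$. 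As $\B_{2,n}$ again has zero diagonal, this says exactly that $C \in \Pi(\g_{2,n})$ with index $\bigl((\bm{d}\B)_i\bigr)_{i=2}^n$. The main obstacle is the necessity half, and inside it the fact that the indices transform does not raise the $o\Pi$-order; everything else is the algebraic identity $\B\H(x) = x^\B - \I$ and routine uniform-convergence estimates already available from Section~\ref{S:URP}.
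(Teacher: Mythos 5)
Your proof is correct, and it splits naturally into a part that mirrors the paper and a part that does not. The sufficiency half and the concluding reduction to $\Pi((g_2,\ldots,g_n)')$ are essentially the paper's argument: the inversion formula \eqref{E:C2f}, the uniform convergence theorem for $\RV_\B$, the identity $\B\H(x)=\H(x)\B=x^\B-\I$ from \eqref{E:Bh}, and the observation that the first column of $x^\B-\I$ vanishes so that $g_1$ drops out. The necessity half is genuinely different. The paper computes $\int_0^1\h(x)\,\dx=-\bm{d}$, writes $C(t)-\bm{d}\g(t)$ as an explicit four-term decomposition over $x\in(0,1)$, and controls the key term with the Potter bound of Theorem~\ref{T:grv:Potter} (together with the growth estimate $f(t)/t=o(g_n(t))$). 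You instead build a comparison function: set $\tilde C=\bm{d}\g$, invert it to $\tilde f$ via \eqref{E:C2f}, use your sufficiency step and the exact mutual inversion of \eqref{E:f2C} and \eqref{E:C2f} (which you verify correctly) to conclude $\tilde f\in\Pi(\g)$ with the same index and transform exactly $\bm{d}\g$, so that $\xi=f-\tilde f\in o\Pi_{|g_n|}$; then you show the indices transform maps $o\Pi_{|g_n|}$ into $o(g_n)$ by plugging the $o\Pi$ representation \citep[Theorem~3.6.1]{BGT} into \eqref{E:f2C} and using Karamata/Ces\`aro averaging ($\int_a^t|g_n(u)|\,\du\sim t|g_n(t)|$ and $1/t=o(g_n(t))$, both valid since $|g_n|$ is slowly varying). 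This trades the higher-order Potter bounds of Section~\ref{S:URP} for the first-order $o\Pi$ representation theorem (which the paper invokes anyway in Theorem~\ref{T:grv:repr}); the paper's route stays closer to its own machinery, while yours is more modular and isolates the one analytic fact really needed, namely that the indices transform does not leave the class $o(g_n)$. The base-point issues (enlarging $a$, the representation of $\xi$ starting at a possibly larger point) are genuinely harmless and you dispose of them correctly with the $O(1/t)=o(g_n(t))$ remark.
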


\begin{proof}
Since the entries of $x^\B$ are polynomials in $\log v$, see \eqref{E:xB:Pi}, the integral $\bm{K} = \int_0^1 x^\B \dx$ is well-defined. By \eqref{E:Bh}, we find
\begin{align*}
	\K = \int_0^1 (x^\B - \I) \dx + \I 
	&= - \B \int_0^1 \int_x^1 u^\B u^{-1} \du \dx + \I \\
	&= - \B \int_0^1 \int_0^u \dx u^\B u^{-1} \du + \I
	= - \B \K + \I,
\end{align*}
whence
\begin{equation}
\label{E:K}
	\int_0^1 x^\B \dx = (\I + \B)^{-1}.
\end{equation}

\textsl{Sufficiency.}
Suppose that $C$ satisfies \eqref{E:g2C}. Recall $\bm{H}(x) = \int_1^x u^\B u^{-1} \du$ in Remark~\ref{R:B2h}(a). By the uniform convergence theorem for $\RV_\B$ (Theorem~\ref{T:rv:UCT}), we have $C(ut) = \bm{d} u^\B \g(t) + o(g_n(t))$ locally uniformly in $u \in (0, \infty)$. As a consequence, for $x \in (0, \infty)$ and $t$ sufficiently large,
\begin{align*}
	f(xt) - f(t)
	&= C(xt) - C(t) + \int_t^{xt} C(u) u^{-1} \du \\
	&= \bm{d} x^\B \g(t) - \bm{d} \g(t) + \int_1^x \bm{d} u^\B \g(t) u^{-1} \du + o(g_n(t)) \\
	&= \bm{d} \{ x^\B - \I + \bm{H}(x) \} \g(t) + o(g_n(t)).
\end{align*}
By \eqref{E:Bh},
\[
	f(xt) - f(t) 
	= \bm{d} (\I + \B) \bm{H}(x) \g(t) + o(g_n(t))
	= \c \bm{H}(x) \g(t) + o(g_n(t)).
\]

\textsl{Necessity.} 
Suppose that $f \in \Pi(\g)$ with $\g$-index $\c$. Write $\h(x) = \c \int_1^x u^\B u^{-1} \du$. By \eqref{E:K}, we have
\begin{align*}
	\int_0^1 \h(x) \dx 
	&= - \c \int_0^1 \int_x^1 u^\B u^{-1} \du \, \dx
	= - \c \int_0^1 \int_0^u \dx \, u^\B u^{-1} \du \\
	&= - \c \int_0^1 u^\B \du
	= - \c (\I + \B) = - \bm{d}.
\end{align*}
Fix $\eps > 0$. By Potter's theorem for $\Pi(\B) = \GRV(\B)$, there exists $t_\eps \geq a$ such that
\[
	|f(xt) - f(t) - \h(x) \g(t)| \leq \eps x^{-\eps} |g_n(t)|,
	\qquad t \geq t_\eps, \quad t_\eps / t \leq x \leq 1.
\]
For $t \geq t_\eps$, we have
\begin{align}
\label{E:C:decomp}
	C(t) - \bm{d} \g(t)
	&= f(t) - \frac{1}{t} \int_a^t f(y) \dy + \int_0^1 \h(x) \dx \g(t) \nonumber \\
	&= t_\eps \frac{f(t)}{t} - \frac{1}{t} \int_a^{t_\eps} f(y) \dy \nonumber \\
	& \qquad \mbox{} - \int_{t_\eps/t}^1 \{f(xt) - f(t) - \h(x) \g(t) \} \dx + \int_0^{t_\eps/t} \h(x) \dx \g(t).
\end{align}
Recall that all the functions $|g_i|$ are slowly varying. The first term in the display above is $O(t^{-1})$ and thus $o(g_n(t))$. Since $f \in (o)\Pi_{|g_1|}$, the function $|f|$ cannot grow faster than a slowly varying function, whence also $f(t) / t = o(g_n(t))$. By the Potter bound above, the third term is bounded by $\eps \int_0^1 x^{-\eps} \dx |g_n(t)| = \{\eps/(1-\eps)\} |g_n(t)|$. Finally, since the entries of $u^\B$ are polynomials in $\log u$, see \eqref{E:xB:Pi}, we have $\int_0^s \h(x) \dx = o(s^{1-\eps})$ as $s \downarrow 0$; since all the functions $|g_i|$ are slowly varying, the last term in \eqref{E:C:decomp} must be $o(g_n(t))$ as well.

\textsl{$\Pi$-variation of $C$.}
Using \eqref{E:Bh} and \eqref{E:g2C}, we find for $x \in (0, \infty)$,
\[
	C(xt) - C(t) 
	= \bm{d} (x^\B - \I) \g(t) + o(g_n(t))
	= \bm{d} \B \int_1^x u^\B u^{-1} \du \, \g(t) + o(g_n(t)).
\]
Since $(x^\B)_{i1} = \delta_{i1}$ for $i \in \{1, \ldots, n\}$, the function $g_1$ disappears from the above equation. It follows that $C \in \Pi((g_2, \ldots, g_n)')$, that is, $C$ is in the class $\Pi$ but of order at most $n-1$; the $(g_2, \ldots, g_n)'$-index of $C$ is given by $((\bm{d} \B)_i)_{i = 2}^n$.
\end{proof}

Now let us iterate the procedure in Theorem~\ref{T:Pi:repr}. Define functions $C_0, C_1, \ldots, C_n$ recursively as follows:
\begin{align}
\label{E:Ci}
	C_0 &= f; &
	C_i(t) &= C_{i-1}(t) - \frac{1}{t} \int_a^t C_{i-1}(y) \dy, 
	\qquad t \in [a, \infty), \, i \in \{1, \ldots, n\}.
\end{align}
In words, $C_i$ is the result of applying the indices transform in \eqref{E:f2C} to $C_{i-1}$. According to Theorem~\ref{T:Pi:repr:n} below, for $\Pi$-varying functions $f$ of order $n$, the `canonical' situation is the following: the vector $\C = (C_1, \ldots, C_n)'$ is itself a regularly varying rate vector with index matrix $\K_n \in \RR^{n \times n}$ given by
\begin{equation}
\label{E:Kn}
	\K_n =
	\begin{pmatrix}
		0      & 1      &      1 & \ldots & 1      \\
		0      & 0      &      1 & \ldots & 1      \\
		\vdots &        & \ddots & \ddots & \vdots \\
		0      & \ldots & \ldots & 0      & 1 \\
		0      & \ldots & \ldots & \ldots & 0
	\end{pmatrix};
\end{equation}
moreover, $f \in \Pi(\C)$ with $\C$-index $(1, \ldots, 1) \in \RR^{1 \times n}$. The condition in Theorem~\ref{T:Pi:repr:n} that $B_{i,i+1} \neq 0$ for all $i \in \{1, \ldots, n-1\}$ was found to be a natural one in Proposition~\ref{P:Jordan}: according to Theorem~\ref{T:grv:char}, it is guaranteed if the limit functions $h_1, \ldots, h_n$ are linearly independent.

\begin{theorem}[(Iterating the indices transform)]
\label{T:Pi:repr:n}
Let $\B \in \RR^{n \times n}$ be an index matrix with zero diagonal, let $\g \in \RV_\B$, and let $f$ be locally integrable on $[a, \infty)$ for some $a > 0$. Let $C_1, \ldots, C_n$ be as in \eqref{E:Ci}, and put $\C = (C_1, \ldots, C_n)'$. If $f \in \Pi(\g)$ with $\g$-index $\c$, then there exists an upper triangular matrix $\Q \in \RR^{n \times n}$ such that
\begin{equation}
\label{E:g2C:n}
	\C(t) = \Q \g(t) + o(g_n(t)).
\end{equation}
The matrix $\Q$ is non-singular if and only if $c_1 \prod_{i=1}^{n-1} B_{i,i+1} \neq 0$. In that case, $\C$ is a rate vector itself, $\C \in \RV_{\K_n}$ with $\K_n$ as in \eqref{E:Kn}, and $f \in \Pi(\C)$ with $\C$-index $(1, \ldots, 1)$.
\end{theorem}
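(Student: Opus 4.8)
The plan is to induct on $n$, peeling off the first indices transform $f \mapsto C_1$ with Theorem~\ref{T:Pi:repr} and treating $C_2, \ldots, C_n$ — which by \eqref{E:Ci} are precisely the iterated indices transforms of $C_1$ — by the induction hypothesis applied to $C_1$ in place of $f$ (local integrability of $C_1$ on $[a, \infty)$ being inherited from that of $f$, and measurability being clear throughout). The base case $n = 1$ is Theorem~\ref{T:Pi:repr} itself: there $\bm{d} = c_1$, so $\Q = (c_1)$, non-singular iff $c_1 \neq 0$ (empty product $=1$); and when $c_1 \neq 0$ one has $C_1(t) \sim c_1 g_1(t)$, so $\C = (C_1)$ is a slowly varying rate vector, $\C \in \RV_{\K_1}$ with $\K_1 = (0)$, and $f(xt) - f(t) = c_1 (\log x) g_1(t) + o(g_1(t)) = (\log x) C_1(t) + o(C_1(t))$, i.e.\ $f \in \Pi(\C)$ with $\C$-index $(1)$.

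For the inductive step, write $\g' = (g_2, \ldots, g_n)'$, $\C' = (C_2, \ldots, C_n)'$, and $\B' = (B_{ij})_{i,j=2}^n$, which is again an index matrix with zero diagonal and satisfies $\g' \in \RV_{\B'}$ (Remark~\ref{R:rv:char}). Theorem~\ref{T:Pi:repr} gives $C_1(t) = \bm{d} \g(t) + o(g_n(t))$ with $\bm{d} = \c (\I + \B)^{-1}$, together with $C_1 \in \Pi(\g')$ with $\g'$-index $\bm{c}' = ((\bm{d}\B)_j)_{j=2}^n$. Since $\B$ is strictly upper triangular, $(\I + \B)^{-1}$ is upper triangular with unit diagonal, whence $d_1 = c_1$ and $(\bm{d}\B)_2 = d_1 B_{12} = c_1 B_{12}$; these two identities carry the non-singularity bookkeeping. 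Applying the induction hypothesis to $C_1$ (with rate vector $\g'$, of length $n - 1$) produces an upper triangular $\Q' \in \RR^{(n-1) \times (n-1)}$ with $\C'(t) = \Q' \g'(t) + o(g_n(t))$, non-singular iff $(\bm{c}')_1 \prod_{j=1}^{n-2} B'_{j,j+1} \neq 0$, that is — using $(\bm{c}')_1 = c_1 B_{12}$ and $B'_{j,j+1} = B_{j+1,j+2}$ — iff $c_1 \prod_{j=1}^{n-1} B_{j,j+1} \neq 0$; and in that case $\C'$ is a rate vector, $\C' \in \RV_{\K_{n-1}}$, and $C_1 \in \Pi(\C')$ with $\C'$-index $(1, \ldots, 1)$.

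Stacking the row relation for $C_1$ on top of the relation for $\C'$ (after padding $\Q'$ with a leading zero column) yields \eqref{E:g2C:n} with $\Q$ the upper triangular matrix whose first row is $\bm{d}$ and whose lower-right $(n-1) \times (n-1)$ block is $\Q'$; then $\det \Q = c_1 \det \Q'$, which gives the non-singularity criterion. In the non-singular case every diagonal entry $Q_{ii}$ is nonzero, so $C_i(t) \sim Q_{ii} g_i(t)$ and $\C$ is a rate vector. For the remaining two assertions I would use the block decomposition $\K_n = \bigl(\begin{smallmatrix} 0 & \1 \\ \0 & \K_{n-1} \end{smallmatrix}\bigr)$ with $\1 = (1, \ldots, 1) \in \RR^{1 \times (n-1)}$: by Remark~\ref{R:B2h}(b), $x^{\K_n} = \bigl(\begin{smallmatrix} 1 & \h(x) \\ \0 & x^{\K_{n-1}} \end{smallmatrix}\bigr)$ with $\h(x) = \1 \int_1^x u^{\K_{n-1}} u^{-1} \du$. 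Reading the desired relation $\C(xt) = x^{\K_n} \C(t) + o(C_n(t))$ block by block, the bottom $n - 1$ rows are exactly $\C' \in \RV_{\K_{n-1}}$ and the top row is exactly $C_1 \in \Pi(\C')$ with index $\1$, both supplied by the induction hypothesis; hence $\C \in \RV_{\K_n}$. Finally, the inversion formula \eqref{E:C2f}, here $f(t) = C_1(t) + \int_a^t C_1(u) u^{-1} \du$, gives $f(xt) - f(t) = \{C_1(xt) - C_1(t)\} + \int_1^x C_1(ut) u^{-1} \du$; substituting $C_1 \in \Pi(\C')$ with index $\1$ and using the uniform convergence theorem (Theorem~\ref{T:grv:UCT}) to move the limit through the integral, then recognising the resulting coefficient of $\C(t)$ as $(1, \ldots, 1) \int_1^x u^{\K_n} u^{-1} \du$ via the block formula just obtained, yields $f \in \Pi(\C)$ with $\C$-index $(1, \ldots, 1)$.

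The main obstacle is the index bookkeeping across the iteration: one has to verify the identities $d_1 = c_1$ and $(\bm{d}\B)_2 = c_1 B_{12}$ and track how the first coordinate of the level-$k$ index vector evolves, so that the non-singularity condition telescopes precisely to $c_1 \prod_{i=1}^{n-1} B_{i,i+1} \neq 0$, and so that the block identifications above reproduce $\K_n$ on the nose rather than merely up to an upper-triangular similarity. Everything else reduces to the already established Theorems~\ref{T:Pi:repr}, \ref{T:rv:UCT} and \ref{T:grv:UCT} together with routine algebra with strictly upper triangular matrices.
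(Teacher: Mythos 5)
Your proposal is correct and takes essentially the same route as the paper: both arguments iterate Theorem~\ref{T:Pi:repr} one indices transform at a time (the paper via the explicit recursion \eqref{E:Q}, you via induction on $n$), track the leading coefficients $d_1 = c_1$ and $(\bm{d}\B)_2 = c_1 B_{12}$ to telescope the criterion $c_1 \prod_{i=1}^{n-1} B_{i,i+1} \neq 0$, and in the non-singular case deduce $\C \in \RV_{\K_n}$ exactly as you do, from $\C' = (C_2, \ldots, C_n)' \in \RV_{\K_{n-1}}$ together with $C_1 \in \Pi(\C')$ with index $(1, \ldots, 1)$. The only cosmetic difference is the final identification of the $\C$-index: the paper applies Theorem~\ref{T:Pi:repr} to $f$ with rate vector $\C$ and uses $C_1 = (1, 0, \ldots, 0)\,\C$ to conclude $\c \Q^{-1} = (1, \ldots, 1)$, whereas you recompute it directly from the inversion formula \eqref{E:C2f} and the uniform convergence theorem — both are fine.
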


Hence, for $\Pi$-varying functions $f$ of order $n$, iterating the indices transform $n$ times provides a method to construct an appropriate rate vector directly out of $f$, the index matrix and index vector being of standard form. For smooth functions, however, a much easier procedure leading to an even simpler index matrix and index vector will be described in Section~\ref{S:PiII}.

\begin{proof}
Let $\I_k$ be the $k \times k$ identity matrix and let $\B_k = (B_{ij})_{i,j = k}^n$ be the lower-right square submatrix of $\B$ starting at $B_{kk}$. Define row vectors $\c_0, \ldots, \c_{n-1}$ and $\q_1, \ldots, \q_n$ recursively as follows:
\begin{equation}
\label{E:Q}
	\begin{array}{rcl}
	\c_0 &=& \c \in \RR^{1 \times n}, \\ [1ex]
	\q_i &=& \c_{i-1} (\I_{n-i+1} + \B_{i})^{-1} \in \RR^{1 \times (n-i+1)}, 
		\qquad i \in \{1, \ldots, n\}, \\ [1ex]
	\c_i &=& \bigl( ( \q_i \B_{i} )_j \bigr)_{j = i+1}^n \in \RR^{1 \times (n-i)},
	\qquad i \in \{1, \ldots, n-1\}.
	\end{array}
\end{equation}
From Theorem~\ref{T:Pi:repr}, we obtain by induction that
\[
	C_i(t) = \q_i \bigl( g_i(t), \ldots, g_n(t) \bigr)' + o(g_n(t)),
	\qquad i \in \{1, \ldots, n\}
\]
and that
\[
	C_i \in \Pi \bigl( (g_{i+1}, \ldots, g_n)' \bigr) \text{ with index } \c_i,
	\qquad i \in \{0, \ldots, n-1\}.
\]
As a consequence, \eqref{E:g2C:n} holds with $(Q_{ij})_{j=i}^n = \q_i$ for $i \in \{1, \ldots, n\}$.

The recursive equations for $\c_i$ and $\q_i$ imply that
\begin{align*}
	Q_{11} &= q_{11} = c_{01} = c_1, \\
	Q_{ii} &= q_{i1} = c_{i-1,1} = q_{i-1,1} B_{i-1,i}, \qquad i \in \{2, \ldots, n\}.
\end{align*}
As a consequence, the diagonal elements of $\Q$ are all non-zero as soon as $c_1$ and $B_{i,i+1}$ are non-zero for all $i \in \{1, \ldots, n-1\}$. Since $\Q$ is upper triangular, this is equivalent to $\Q$ being non-singular. In this case, $\C$ is a regularly varying rate vector with index matrix $\Q \B \Q^{-1}$ and $f \in \Pi(\C)$ with index $\c \Q^{-1}$; see Remark~\ref{R:rv:char}(a) and Remark~\ref{R:grv:char}.

Now assume that $\Q$ is non-singular. The proof that $\Q \B \Q^{-1} = \K_n$ and $\c \Q^{-1} = (1, \ldots, 1)$ is by induction on $n$. If $n = 1$, this is clear, for $\B = \K_1 = 0$ and $\Q = 1/c_1$. So assume $n \geq 2$. Since $C_1 \in \Pi((g_2, \ldots, g_n)')$ with index $\c_1$, we can apply the induction hypothesis on $C_1$. As a result, $(C_2, \ldots, C_n)'$ is a rate vector of length $n-1$, regularly varying with index matrix $\K_{n-1}$, and $C_1 \in \Pi((C_2, \ldots, C_n)')$ with index $(1, \ldots, 1) \in \RR^{1 \times (n-1)}$. But this implies that $\C \in \RV_{\K_n}$. From the fact that $f \in \Pi(\C)$ with $\C$-index $\c \Q^{-1}$, we get by Theorem~\ref{T:Pi:repr},
\[
	C_1(t) = (\c \Q^{-1}) (\I + \K_n)^{-1} \C(t) + o(C_n(t)).
\]
Hence $(\c \Q^{-1}) (\I + \K_n)^{-1} = (1, 0, \ldots, 0)$, and thus
\[
	\c \Q^{-1} = (1, 0, \ldots, 0) (\I + \K_n) = (1, \ldots, 1),
\]
as required.
\end{proof}

\begin{theorem}[(Characterisation theorem for $\Pi$ of order $n$)]
\label{T:PiI:char}
Let $f$ be a measurable, locally integrable function defined in a neighbourhood of infinity. Define $C_1, \ldots, C_n$ as in \eqref{E:Ci} and put $\C = (C_1, \ldots, C_n)'$. The following four statements are equivalent:
\begin{flushenumerate}
\item[(i)] $C_n$ is eventually of constant sign and $|C_n|$ is slowly varying.
\item[(ii)] $\C$ is a regularly varying rate vector with index matrix $\K_n$ in \eqref{E:Kn} and $f \in \Pi(\C)$ with $\C$-index $(1, \ldots, 1)$.
\item[(iii)] There exists an index matrix $\B \in \RR^{n \times n}$ with zero diagonal and a rate vector $\g \in \RV_\B$ such that $f \in \Pi(\g)$ with $\g$-index $\c \in \RR^{1 \times n}$ and $c_1 \prod_{i=1}^{n-1} B_{i,i+1} \neq 0$.
\item[(iv)] There exists an index matrix $\B \in \RR^{n \times n}$ with zero diagonal and a rate vector $\g \in \RV_\B$ such that $f \in \Pi(\g)$ with linearly independent limit functions $h_1, \ldots, h_n$.
\end{flushenumerate}
In this case, for every rate vector $\g$ as in (iii) or (iv) there exists an upper triangular, invertible matrix $\bm{P} \in \RR^{n \times n}$ such that $\g(t) = \bm{P} \C(t) + o(C_n(t))$.
\end{theorem}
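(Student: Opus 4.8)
The plan is to run the implications $\text{(ii)}\Rightarrow\text{(iii)}\Rightarrow\text{(ii)}$, $\text{(iii)}\Leftrightarrow\text{(iv)}$, $\text{(ii)}\Rightarrow\text{(i)}\Rightarrow\text{(ii)}$, and then to read off the closing claim from Theorem~\ref{T:Pi:repr:n}. Three of these are essentially free. For $\text{(ii)}\Rightarrow\text{(iii)}$, take $\g=\C$, $\B=\K_n$ and $\c=(1,\ldots,1)$; then $\B$ is an index matrix with zero diagonal, $B_{i,i+1}=1$ and $c_1=1$, so $c_1\prod_{i=1}^{n-1}B_{i,i+1}\neq 0$. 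For $\text{(iii)}\Rightarrow\text{(ii)}$, invoke the second half of Theorem~\ref{T:Pi:repr:n}: the condition $c_1\prod_{i=1}^{n-1}B_{i,i+1}\neq 0$ forces the matrix $\Q$ there to be non-singular, whence $\C\in\RV_{\K_n}$ is a rate vector and $f\in\Pi(\C)$ with $\C$-index $(1,\ldots,1)$, which is exactly (ii). For $\text{(ii)}\Rightarrow\text{(i)}$: if $\C\in\RV_{\K_n}$, then $C_n$, being the last component of a rate vector, is eventually of constant sign, and $|C_n|$ is regularly varying with index $(\K_n)_{nn}=0$.

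For $\text{(iii)}\Leftrightarrow\text{(iv)}$ I would fix one triple $(\g,\B,\c)$ and show that linear independence of $h_1,\ldots,h_n$ is equivalent to $c_1\prod_{i=1}^{n-1}B_{i,i+1}\neq 0$; since this is a statement about the same $\g$, the two existential statements are then equivalent. Recall $\h(x)=\c\int_1^x u^\B u^{-1}\du$. A relation $\sum_j\lambda_j h_j\equiv 0$ is the same as $\c\bigl(\int_1^x u^\B u^{-1}\du\bigr)\lambda\equiv 0$; differentiating in $x$ gives $\c\,x^\B\lambda\equiv 0$ (integrating back gives the converse), and since $\B$ is nilpotent this holds iff $\c\B^k\lambda=0$ for every $k$. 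Hence $h_1,\ldots,h_n$ are linearly independent iff $\c,\c\B,\ldots,\c\B^{n-1}$ span $\RR^{1\times n}$. As $\B$ is upper triangular with zero diagonal, $\c\B^k$ vanishes in its first $k$ coordinates and has $(k{+}1)$st coordinate $c_1B_{1,2}B_{2,3}\cdots B_{k,k+1}$, so these vectors are in row-echelon form and span exactly when all those leading coordinates are nonzero, i.e.\ when $c_1\prod_{i=1}^{n-1}B_{i,i+1}\neq 0$. (This is the $\c$-weighted analogue of the equivalence (i)$\Leftrightarrow$(iv) in Proposition~\ref{P:Jordan}.)

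The real content is $\text{(i)}\Rightarrow\text{(ii)}$, which I would prove by downward induction on $m\in\{n,n-1,\ldots,1\}$ of the statement: $\v_m:=(C_m,\ldots,C_n)'$ is a rate vector, $\v_m\in\RV_{\K_{n-m+1}}$, and $C_{m-1}\in\Pi(\v_m)$ with $\v_m$-index $(1,\ldots,1)$. For $m=n$, hypothesis (i) says precisely that $\v_n=(C_n)$ is a rate vector with $\v_n\in\RV_{\K_1}$; applying Theorem~\ref{T:Pi:repr} with this $\g$, with $\B=\K_1$, and with $C=C_n$ (by \eqref{E:Ci} the indices transform of $C_{n-1}$) gives $C_{n-1}\in\Pi((C_n))$ with $(C_n)$-index $(1)(\I+\K_1)=(1)$. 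For the step, assume the statement at $m$. Reading off the leading term of $C_{m-1}\in\Pi(\v_m)$ gives $C_{m-1}(xt)-C_{m-1}(t)=(\log x)C_m(t)+o(C_m(t))$, so by the classical sign and growth properties of de Haan's class $\Pi$ applied to $\pm C_{m-1}$ with auxiliary function $|C_m|$ (cf.\ \citet[Section~3.7]{BGT}), $C_{m-1}$ is eventually of constant sign and $C_m=o(C_{m-1})$; hence $\v_{m-1}=(C_{m-1},C_m,\ldots,C_n)'$ is a rate vector. By Remark~\ref{R:C} together with Remark~\ref{R:B2h}(b), the relations $C_{m-1}\in\Pi(\v_m)$ and $\v_m\in\RV_{\K_{n-m+1}}$ give $\v_{m-1}(xt)=x^{\bm D}\v_{m-1}(t)+o(C_n(t))$ with $\bm D$ the matrix whose first row is $(0,1,\ldots,1)$ and whose lower-right $(n-m+1)\times(n-m+1)$ block is $\K_{n-m+1}$; but $\bm D=\K_{n-m+2}$, so $\v_{m-1}\in\RV_{\K_{n-m+2}}$. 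Finally apply Theorem~\ref{T:Pi:repr} with $\g=\v_{m-1}$, index matrix $\K_{n-m+2}$, and $C=C_{m-1}$ (the indices transform of $C_{m-2}$): since $C_{m-1}$ is literally the first coordinate of $\v_{m-1}$, relation \eqref{E:g2C} holds with $\bm d=(1,0,\ldots,0)$, whence $C_{m-2}\in\Pi(\v_{m-1})$ with $\v_{m-1}$-index $(1,0,\ldots,0)(\I+\K_{n-m+2})=(1,\ldots,1)$. The case $m=1$ is exactly (ii).

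For the closing claim, let $\g$ be as in (iii), equivalently (iv). Theorem~\ref{T:Pi:repr:n} furnishes an upper triangular $\Q$ with $\C(t)=\Q\g(t)+o(g_n(t))$, non-singular because $c_1\prod_{i=1}^{n-1}B_{i,i+1}\neq 0$; put $\bm P:=\Q^{-1}$, which is upper triangular and invertible. Since $C_n(t)=Q_{nn}g_n(t)+o(g_n(t))$ with $Q_{nn}\neq 0$, we have $o(g_n(t))=o(C_n(t))$, so $\g(t)=\bm P\C(t)+o(C_n(t))$. I expect the only delicate point to be, inside $\text{(i)}\Rightarrow\text{(ii)}$, the verification that the intermediate functions $C_m$ are genuine rate-vector components (eventually of constant sign, with $C_{m+1}=o(C_m)$): this is nothing more than the classical sign/growth dichotomy of de Haan's $\Pi$ in its sign-agnostic form, so it is a matter of careful citation rather than of new ideas, while everything else reduces to linear algebra and one application of Theorem~\ref{T:Pi:repr} per induction step.
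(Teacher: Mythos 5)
Your proposal is correct, but it follows a genuinely different route from the paper's at the two substantive points. For (i)$\Rightarrow$(ii) the paper does not run your downward induction on the $C_m$: it proves (i)$\Rightarrow$(iii) by introducing the integration operator $TC(t)=\int_a^t C(u)u^{-1}\du$, showing that $\bm{T}_n=(T^{n-1}C_n,\ldots,C_n)'$ is a rate vector in $\RV_{\J_n}$, inverting the indices transform to write $f=\sum_{j=0}^n\binom{n}{j}T^jC_n$, and computing that $f\in\Pi(\bm{T}_n)$ with index $\bigl(\binom{n}{0},\ldots,\binom{n}{n-1}\bigr)$; statement (ii) is then deduced from (iii) via Theorem~\ref{T:Pi:repr:n}. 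Your induction --- one application of the sufficiency half of Theorem~\ref{T:Pi:repr} per step, the index-matrix upgrade via Remark~\ref{R:C} and Remark~\ref{R:B2h}(b), and the de Haan sign/growth dichotomy to get that $C_{m-1}$ is eventually of one sign with $C_m=o(C_{m-1})$ --- reaches (ii) directly and avoids the binomial bookkeeping, but it leans on the full representation theory of the univariate class $\Pi$: besides \citet[Theorem~3.7.4]{BGT} you need the representation theorem to handle the case where $C_{m-1}$ tends to a finite limit $L$ (when $L=0$ the eventual constancy of sign comes from $C_{m-1}(t)\sim -c_1\int_t^\infty|C_m(u)|u^{-1}\du$, which dominates $|C_m|$), so the ``careful citation'' you flag genuinely has to cover that case; in the paper's construction each new component is literally an integral of the previous one, so sign and growth are immediate. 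For (iii)$\Leftrightarrow$(iv) the paper cites the equivalence (i)$\Leftrightarrow$(iv) of Proposition~\ref{P:Jordan} together with Remark~\ref{R:B2h}(b), whereas your row-echelon computation with $\c,\c\B,\ldots,\c\B^{n-1}$ proves the sharper fixed-triple statement (linear independence of $h_1,\ldots,h_n$ iff $c_1\prod_{i=1}^{n-1}B_{i,i+1}\neq0$), which is correct and slightly more informative. The remaining implications and the closing claim (take $\bm{P}=\Q^{-1}$ from Theorem~\ref{T:Pi:repr:n}, with $o(g_n(t))=o(C_n(t))$ because $Q_{nn}\neq0$) coincide with the paper.
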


\begin{proof}
To prove the final statement: just take $\bm{P} = \bm{Q}^{-1}$, with $\bm{Q}$ as in Theorem~\ref{T:Pi:repr:n}.

\emph{(i) implies (iii).} For a real-valued function $C$ which is defined and locally integrable on $[a, \infty)$, with $a > 0$, define a new function $TC$ by
\[
	TC(t) = \int_a^t C(u) \frac{\du}{u}, \qquad t \in [a, \infty).
\]
Put $T^0 C = C$ and $T^k C = T (T^{k-1} C)$ for integer $k \geq 1$. By induction, it follows that
\[
	T^k C(t) = \int_a^t \frac{(\log t - \log u)^{k-1}}{(k-1)!} C(u) \frac{\du}{u},
	\qquad t \in [a, \infty), \, k \in \{1, 2, \ldots\}.
\]
An elementary computation shows that for integer $k \geq 1$ and for positive $x$ and $t$ such that $t, xt \in [a, \infty)$ we have
\[
	T^k C(xt) = \sum_{j=0}^{k-1} \frac{(\log x)^j}{j!} T^{k-j} C(t)
	+ \sum_{j=0}^{k-1} \frac{(\log x)^j}{j!} \int_1^x \frac{(- \log u)^{k-j-1}}{(k-j-1)!} C(ut) \frac{\du}{u}.
\]
If $C$ is eventually of constant sign and if $|C|$ is slowly varying, then by the Uniform convergence theorem for slowly varying functions,
\[
	\left( T^k C(xt) - \sum_{j=0}^{k-1} \frac{(\log x)^j}{j!} T^{k-j} C(t) \right) \Bigg/ C(t)
	\to \sum_{j=0}^{k-1} \frac{(\log x)^j}{j!} \int_1^x \frac{(- \log u)^{k-j-1}}{(k-j-1)!} \frac{\du}{u} 
	= \frac{(\log x)^k}{k!},
\]
whence
\begin{equation}
\label{E:TCk}
	T^k C(xt) = \sum_{j=0}^k \frac{(\log x)^j}{j!} T^{k-j} C(t) + o(C(t)).
\end{equation}
By \citet[Theorem~3.7.4]{BGT} and induction, it follows that each $T^k C$ is eventually of constant sign, with slowly varying absolute value, and $T^k C(t) = o(T^{k+1}C(t))$. As a consequence, $\bm{T}_n = (T^{n-1}C, T^{n-2}C, \ldots, C)'$ is an $n$-dimensional rate vector, regularly varying with index matrix $\J_n$, and $T^n C \in \Pi(\bm{T}_n)$ with $\bm{T}_n$-index $(1, 0, \ldots, 0)$.

By \eqref{E:C2f}, $C_{i-1}$ can be expressed in terms of $C_i$ by
\[
	C_{i-1}(t) = C_i(t) + \int_a^t C_i(u) \frac{\du}{u} = (I + T)C_i(t), \qquad i \in \{2, \ldots, n\},
\]
where $I$ is the identity operator. By induction, for $i \in \{0, \ldots, n\}$,
\[
	C_i(t) = (I + T)^{n-i} C_n(t) = \sum_{j=0}^{n-i} \binom{n-i}{j} T^j C_n(t).
\]
Specializing this to $i = 0$ yields
\begin{equation}
\label{E:Cn2f}
	f(t) = \sum_{j=0}^n \binom{n}{j} T^j C_n(t).
\end{equation}

Now assume that $C_n$ is eventually of constant sign and that $|C_n|$ is slowly varying. Combine \eqref{E:TCk} applied to $C_n$ with \eqref{E:Cn2f} to find, after some algebra,
\[
	f(xt) - f(t) = \sum_{s = 0}^{n-1} \sum_{j = s+1}^n \binom{n}{j} \frac{(\log x)^{j-s}}{(j-s)!} T^s C_n(t)
	+ o(C_n(t)), \qquad x \in (0, \infty).
\]
It follows that $f \in \Pi(\bm{T}_n)$ with $\bm{T}_n$-index $\left( \binom{n}{0}, \binom{n}{1}, \ldots, \binom{n}{n-1} \right)$.

\emph{(iii) implies (ii).} See Theorem~\ref{T:Pi:repr:n}.

\emph{(ii) implies (i).} Trivial.

\emph{(iii) and (iv) are equivalent.} This follows from the equivalence of (i) and (iv) in Proposition~\ref{P:Jordan} in combination with Remark~\ref{R:B2h}(b).
\end{proof}

\subsection{Differencing}
\label{SS:PiI:diff}

If $f$ is in de Haan's class $\Pi$ ($n = 1$), then a possible choice for the auxiliary function is $g(t) = \Delta f(t) = f(et) - f(t)$, with $e = \exp(1)$; see for instance de Haan's theorem \citep[Theorem~3.7.3]{BGT}. For general order $n$, one may hope that iterating this procedure yields a suitable rate vector for $f$. This provides an alternative to the iterated indices transform in Theorem~\ref{T:Pi:repr:n}.

Formally, put
\begin{align}
	\Delta^0 f(t) &= f(t), \nonumber \\
	\Delta^k f(t) &= \Delta (\Delta^{k-1} f)(t) = \Delta^{k-1} f(et) - \Delta^{k-1} f(t), 
	\qquad k \in \{1, 3, \ldots\}. \label{E:Deltak}
\end{align}
By induction, it follows that $\Delta^k f(t) = \sum_{i=0}^k \binom{k}{i} (-1)^{k-i} f(e^i t)$. For real $z$ and integer $k \geq 1$, put
\begin{align*}
	\binom{z}{0} &= 1, &
	\binom{z}{k} &= \frac{z (z-1) \cdots (z-k+1)}{k (k-1) \cdots 1} = \frac{(z)_k}{k!}.
\end{align*}

\begin{theorem}[(Higher-order differencing)]
Let $\g \in \RV_\B$ where $\B \in \RR^{n \times n}$ has zero diagonal. Let $f \in \Pi(\g)$ with $\g$-index $\c$. Write $\bm{\Delta} = (\Delta f, \Delta^2 f, \ldots, \Delta^n f)'$ with $\Delta^k f$ as in \eqref{E:Deltak}. 
\begin{flushenumerate}
\item[(a)]
There exists an upper triangular matrix $\Q \in \RR^{n \times n}$ such that
\begin{equation}
\label{E:g2Delta}
	\bm{\Delta}(t) = \Q \g(t) + o(g_n(t)).
\end{equation}
\item[(b)]
For $i \in \{0, 1, \ldots, n\}$ and $x \in (0, \infty)$,
\begin{align}
\label{E:Delta:GRV}
	\Delta^i f(xt) &= \sum_{j=i}^n \binom{\log x}{j-i} \Delta^j f(t) + o(g_n(t)).
\end{align}
\item[(c)]
The matrix $\Q$ in \emph{(a)} is invertible if and only if $c_1 \prod_{i=1}^{n-1} B_{i,i+1} \neq 0$. In this case, $\bm{\Delta}$ is a regularly varying rate vector with index matrix $\bm{M} \in \RR^{n \times n}$, which is a triangular Toeplitz matrix whose non-zero elements are given by 
\begin{equation}
\label{E:Delta:M}
	M_{ij} = \frac{(-1)^{j-i-1}}{j-i}, \qquad 1 \leq i < j \leq n,
\end{equation}
and $f \in \Pi(\bm{\Delta})$ with $\bm{\Delta}$-index $(1, -1/2, 1/3, \ldots, (-1)^{n-1}/n)$.
\end{flushenumerate}
\end{theorem}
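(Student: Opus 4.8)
The plan is to reduce all three parts to the single identity $\Delta^k f(t) = \q_k\g(t) + o(g_n(t))$, where $\q_k\in\RR^{1\times n}$ is a vector determined by the $\g$-index $\c$, and then to recognise the resulting expansions as Newton forward-difference formulas in the variable $\log x$. First, starting from $\Delta^k f(t) = \sum_{l=0}^k\binom{k}{l}(-1)^{k-l}f(e^l t)$ (the identity derived by induction just after \eqref{E:Deltak}) and inserting $f(e^l t) = f(t) + \h(e^l)\g(t) + o(g_n(t))$, the relation $\sum_{l=0}^k\binom{k}{l}(-1)^{k-l}=0$ for $k\geq1$ kills the $f(t)$-terms, so $\Delta^k f(t) = \q_k\g(t) + o(g_n(t))$ with $\q_k = \sum_{l=0}^k\binom{k}{l}(-1)^{k-l}\h(e^l)$. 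Now set $\psi(s) = \h(e^s) = \c\int_0^s e^{v\B}\dv$; since $\B$ is nilpotent the entries of $e^{v\B}$ are polynomials of degree at most $n-1$ in $v$, so each component of $\psi$ is a polynomial of degree at most $n$ in $s$, and the Newton forward-difference formula holds exactly: $\psi(s) = \sum_{j=0}^n\binom{s}{j}\q_j$, where $\q_0 = \psi(0) = \h(1) = \0$. Hence $\h(x) = \sum_{j=1}^n\binom{\log x}{j}\q_j$ for all $x\in(0,\infty)$, and substituting this into $f(xt) = f(t) + \h(x)\g(t) + o(g_n(t))$ and replacing each $\q_j\g(t)$ by $\Delta^j f(t) + o(g_n(t))$ proves \eqref{E:Delta:GRV} for $i=0$. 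Moreover $h_k(x)$ is a polynomial in $\log x$ of degree at most $k$ (combine $\h = \c\H$ with \eqref{E:xB:Pi}), so the $k$-th component of $\q_j$ vanishes for $j>k$; thus the matrix $\Q$ with rows $\q_1,\dots,\q_n$ is upper triangular, which is part~(a) and \eqref{E:g2Delta}.

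For part~(b) with general $i\geq1$, apply $\Delta^i$ to the case $i=0$: using that expansion with $x$ replaced by $e^l x$ and interchanging the two sums gives $\Delta^i f(xt) = \sum_{j=0}^n\bigl(\sum_{l=0}^i\binom{i}{l}(-1)^{i-l}\binom{l+\log x}{j}\bigr)\Delta^j f(t) + o(g_n(t))$, and iterating the Pascal identity $\binom{s+1}{j}-\binom{s}{j}=\binom{s}{j-1}$ collapses the inner coefficient to $\binom{\log x}{j-i}$, which is $0$ when $j<i$. This is \eqref{E:Delta:GRV}.

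For part~(c) I first pin down the diagonal of $\Q$. The key sublemma is that if $\phi\in\Pi(\g)$ with $\g$-index $\gamma$, then $\Delta\phi(t) = \gamma\H(e)\g(t) + o(g_n(t))$ and $\Delta\phi\in\Pi(\g)$ with $\g$-index $\gamma\B\H(e)$; this follows from $\Delta\phi(xt) - \Delta\phi(t) = \gamma\H(x)\{\g(et)-\g(t)\} + o(g_n(t)) = \gamma\H(x)(e^\B-\I)\g(t) + o(g_n(t))$ on rewriting $\gamma\H(x)(e^\B-\I)$ as $\gamma\B\H(e)\H(x)$ by two applications of \eqref{E:Bh} (and the fact that $\H(x)$ and $\H(e)$ commute, both being power series in $\B$). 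Iterating from $\phi=f$ yields $\q_i = \c(\B\H(e))^{i-1}\H(e)$. Since $\B\H(e)$ is upper triangular with zero diagonal and superdiagonal entries $B_{i,i+1}$, while $\H(e)$ is upper triangular with unit diagonal, tracking the nilpotent bands gives $Q_{ii} = (\q_i)_i = c_1\prod_{k=1}^{i-1}B_{k,k+1}$; as $\Q$ is upper triangular, it is invertible exactly when all $Q_{ii}\neq0$, i.e. when $c_1\prod_{i=1}^{n-1}B_{i,i+1}\neq0$. In that case $\bm{\Delta} = \Q\g + o(g_n)$ is a measurable rate vector with $\Delta^i f\sim Q_{ii}g_i$, so $o(g_n) = o(\Delta^n f)$; rewriting \eqref{E:Delta:GRV} for $i\geq1$ in matrix form gives $\bm{\Delta}(xt) = \bm{N}(x)\bm{\Delta}(t) + o(\Delta^n f(t))$ with $N_{ij}(x) = \binom{\log x}{j-i}$, so by Proposition~\ref{P:B:a} $\bm{N}(x) = x^{\bm M}$ with $\bm M = \lim_{x\to1}(\bm{N}(x)-\I)/(x-1)$, and since $\frac{d}{ds}\binom{s}{k}$ at $s=0$ equals $(-1)^{k-1}/k$, this $\bm M$ is the Toeplitz matrix \eqref{E:Delta:M}; hence $\bm{\Delta}\in\RV_{\bm M}$. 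Finally $f\in\Pi(\bm{\Delta})$ by Remark~\ref{R:rv:char}(a), Remark~\ref{R:grv:char} and stability under $o(g_n)$-perturbation of the rate vector, and comparing \eqref{E:Delta:GRV} for $i=0$ (where the $j$-th coordinate of the limit vector is $\binom{\log x}{j}$) with the form $\tilde\c\int_1^x u^{\bm M}u^{-1}\du$ of the $\bm{\Delta}$-index, differentiation at $x=1$ gives $\tilde\c = (1,-1/2,1/3,\dots,(-1)^{n-1}/n)$.

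The main obstacle is the bookkeeping in part~(c), namely the formula $Q_{ii} = c_1\prod_{k=1}^{i-1}B_{k,k+1}$, which is what makes the invertibility criterion clean. The recursion $\q_i = \c(\B\H(e))^{i-1}\H(e)$ reduces it to a computation with nilpotent triangular matrices, but one has to establish the sublemma that $\Delta$ maps $\Pi(\g)$ into itself, sending the index $\gamma$ to $\gamma\B\H(e)$, using only the $\RV_\B$ property of $\g$ and \eqref{E:Bh}, without presupposing that $\bm{\Delta}$ is already a rate vector --- just as in the corresponding step of the proof of Theorem~\ref{T:Pi:repr:n}.
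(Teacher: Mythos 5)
Your argument is correct and essentially follows the paper's proof: your iterated sublemma giving $\q_i=\c(\B\H(e))^{i-1}\H(e)$ is exactly the paper's induction yielding $\Delta^k f(t)=\c\B^{k-1}\bigl(\int_0^1\exp(y\B)\dy\bigr)^k\g(t)+o(g_n(t))$ (the factors commute), and your Newton forward-difference expansions of $s\mapsto\h(e^s)$ and of the $i=0$ case are Lemma~\ref{L:cute} in another guise. The remaining steps --- the band computation giving $Q_{ii}=c_1\prod_{k=1}^{i-1}B_{k,k+1}$, absorbing the $o(g_n(t))$ term into $o(\Delta^n f(t))$, and identifying $\bm{M}$ and the $\bm{\Delta}$-index by differentiating $\binom{\log x}{k}$ at $x=1$ --- coincide with the paper's.
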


Equation~\eqref{E:g2Delta} may be seen as related to the special case $\x = (e, e^2, \ldots, e^n)'$ of \eqref{E:f2g}. The condition in (c) that $c_1 \prod_{i = 1}^{n-1} B_{i,i+1} \neq 0$ is entirely natural in view of the Characterisation theorem for $\Pi$, see Theorem~\ref{T:PiI:char}(iii).

\begin{proof}
\emph{(a)} We have $f(xt) - f(t) = \h(x) \g(t) + o(g_n(t))$ with $\h(x) = \c \int_1^x u^{\B} u^{-1} \du$. By the substitution $\log(u) = y$, we find 
\begin{equation}
\label{E:B2he}
	\h(e) = \c \int_0^1 \exp(y\B) \dy. 
\end{equation}

We claim that for integer $k \geq 1$,
\begin{equation}
\label{E:g2Deltak}
	\Delta^k f(t) = \h(e) \{\exp(\B) - \I\}^{k-1} \g(t) + o(g_n(t)).
\end{equation}
The proof is by induction on $k$. For $k = 1$, this is just $\Delta f(t) = f(et) - f(t) = \h(e) \g(t) + o(g_n(t))$. For integer $k \geq 2$, by regular variation of $\g$ and by the induction hypothesis,
\begin{align*}
	\Delta^k f(t)
	&= \Delta^{k-1} f(et) - \Delta^{k-1} f(t) \\
	&= \h(e) \{\exp(\B) - \I\}^{k-2} \{ \g(et) - \g(t) \} + o(g_n(t)) \\
	&= \h(e) \{\exp(\B) - \I\}^{k-1} \g(t) + o(g_n(t)),
\end{align*}
as required. 

From \eqref{E:Bh} at $x = e$ we get $\B \int_0^1 \exp(y\B) \dy = \int_0^1 \exp(y\B) \dy \B = \exp(\B) - \I$. By \eqref{E:B2he} and \eqref{E:g2Deltak}, it follows that for integer $k \geq 1$,
\begin{equation}
\label{E:diff:g2f}
	\Delta^k f(t)
	= \c \B^{k-1} \Bigl( \int_0^1 \exp(y\B) \dy \Bigr)^k \g(t) + o(g_n(t)).
\end{equation}
The matrix $\int_0^1 \exp(y\B) \dy$ is upper triangular with unit diagonal. Hence the same is true for its powers. For $l \in \{1, \ldots, n-1\}$, the matrix $B^l$ is upper triangular, its diagonal and $l-1$ first superdiagonals being zero. Equation~\eqref{E:g2Delta} follows.

As a complement to \eqref{E:g2Deltak}, note that $\B^n = \0$ implies $\Delta^k f(t) = o(g_n(t))$ if $k \geq n+1$.

\emph{(b)} 
For $i \in \{1, \ldots, n\}$ and $x \in (0, \infty)$, we have by regular variation of $\g$, by \eqref{E:g2Deltak} and by \eqref{E:cute:1} in Lemma~\ref{L:cute} below,
\begin{align*}
	\Delta^i f(xt) - \Delta^i f(t) 
	&= \h(e) \{ \exp(\B) - \I \}^{i-1} (x^\B - \I) \g(t) + o(g_n(t)) \\
	&= \sum_{k=1}^{n-1} \binom{\log x}{k} \h(e) \{ \exp(\B) - \I \}^{i+k-1} \g(t) + o(g_n(t)) \\
	&= \sum_{j=i+1}^{i+n-1} \binom{\log x}{j-i} \h(e) \{ \exp(\B) - \I\}^{j-1} \g(t) + o(g_n(t)) \\
	&= \sum_{j=i+1}^n \binom{\log x}{j-i} \Delta^j f(t) + o(g_n(t)).
\end{align*}
In the last step we used the fact that $\{\exp(\B) - \I\}^n = \0$, which follows in turn from the fact that $\exp(\B) - \I = \B + \frac{1}{2!}\B^2 + \cdots + \frac{1}{(n-1)!} \B^{n-1}$ is upper triangular with zero diagonal.

Similarly, by \eqref{E:g2Deltak} and by \eqref{E:cute:2} in Lemma~\ref{L:cute} below, for $x \in (0, \infty)$,
\begin{align*}
	f(xt) - f(t)
	&= \c \int_1^x u^{\B} u^{-1} \du \, \g(t) + o(g_n(t)) \\
	&= \sum_{k=1}^n \binom{\log x}{k} \c \int_0^1 \exp(y\B) \dy \{ \exp(\B) - \I \}^{k-1} \g(t) 
	+ o(g_n(t)) \\
	&= \sum_{k=1}^n \binom{\log x}{k} \Delta^k f(t) + o(g_n(t)).
\end{align*}

\emph{(c)}
As in the proof of part (a), consider the matrix $\B^l$ for $l \in \{1, \ldots, n-1\}$. The element on position $(1,l+1)$ is equal to $\prod_{i=1}^{l} B_{i,i+1}$. As a result, the coefficient of $g_k$ on the right-hand side of \eqref{E:diff:g2f} is given by $Q_{11} = c_1$ and $Q_{kk} = c_1 \prod_{i=1}^{k-1} B_{i,i+1}$ for $k \in \{2, \ldots, n\}$.
In particular, $\Q$ is invertible if and only if $c_1 \prod_{i=1}^{n-1} B_{i,i+1} \neq 0$, in which case $\bm{\Delta}$ is a regularly varying rate vector with index matrix $\bm{M} = \Q \B \Q^{-1}$.

Since $\Delta^n f(t) = Q_{nn} g_n(t) + o(g_n(t))$, if $Q_{nn} \neq 0$, then in \eqref{E:Delta:GRV}, we may replace the remainder term $o(g_n(t))$ by $o(\Delta^n f(t))$. The form of the index matrix $\bm{M}$ of $\bm{\Delta}$ and of the $\bm{\Delta}$-index of $f$ follows from differentiation of $\binom{\log x}{k}$ at $x = 1$.
\end{proof}

\begin{lemma}
\label{L:cute}
For $x \in (0, \infty)$ and for upper triangular $\B \in \RR^{n \times n}$ with zero diagonal, we have
\begin{align}
\label{E:cute:1}
	x^\B - \I 
	&= \sum_{k=1}^{n-1} \binom{\log x}{k} \{ \exp(\B) - \I \}^k, \\
\label{E:cute:2}
	\int_1^x u^{\B} u^{-1} \du
	&= \int_0^1 \exp(y\B) \dy \sum_{k=1}^n \binom{\log x}{k} \{ \exp(\B) - \I \}^{k-1}.
\end{align}
\end{lemma}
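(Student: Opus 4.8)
The plan is to prove \eqref{E:cute:1} first and then obtain \eqref{E:cute:2} from it by integrating and comparing generating functions. Throughout, put $\N = \exp(\B) - \I$. Since $\B$ is upper triangular with zero diagonal we have $\B^n = \0$, so $\exp(\B)$ is a polynomial in $\B$ and $\N$ is again upper triangular with zero diagonal; in particular $\N^n = \0$ and $\N$ is a polynomial in $\B$ (and vice versa). The nilpotency turns the formal power series identities relating $\exp$, $\log$ and the binomial series into genuine (terminating) matrix identities.

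\textbf{Proof of \eqref{E:cute:1}.} Because $\log(\I+\N)$ and $\B$ are both nilpotent, the formal identity $\log\exp(t) = t$ gives $\log(\I+\N) = \log\exp(\B) = \B$. Hence, for $x \in (0,\infty)$, using the (terminating) binomial series,
\[
	x^\B = \exp\{(\log x)\B\} = \exp\{(\log x)\log(\I+\N)\} = \sum_{k\geq 0}\binom{\log x}{k}\N^k = \I + \sum_{k=1}^{n-1}\binom{\log x}{k}\N^k ,
\]
the last equality because $\N^n = \0$. Subtracting $\I$ yields \eqref{E:cute:1}. (Alternatively one checks directly that the right-hand side is a continuous multiplicative homomorphism, via the Vandermonde identity $\binom{a+b}{k} = \sum_j\binom{a}{j}\binom{b}{k-j}$, and that its derivative at $x=1$ equals $\sum_{k\geq1}\frac{(-1)^{k-1}}{k}\N^k = \B$, so it must equal $x^\B$.)

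\textbf{Proof of \eqref{E:cute:2}.} Substituting $u = e^s$ gives $\int_1^x u^\B u^{-1}\du = \int_0^{\log x}\exp(s\B)\,\mathrm{d}s$, and inserting the expansion $\exp(s\B) = \sum_{k\geq0}\binom{s}{k}\N^k$ just obtained reduces everything to the scalar identity
\[
	\int_0^L\binom{s}{k}\,\mathrm{d}s = \sum_{j=0}^k a_j\binom{L}{k+1-j}, \qquad a_j := \int_0^1\binom{y}{j}\,\mathrm{d}y ,
\]
valid for every integer $k\geq 0$ as a polynomial identity in $L$. This follows by comparing the coefficient of $z^k$ in
\[
	\sum_{k\geq0}\Bigl(\int_0^L\binom{s}{k}\,\mathrm{d}s\Bigr)z^k = \int_0^L(1+z)^s\,\mathrm{d}s = \frac{(1+z)^L-1}{\log(1+z)}
\]
with the coefficient of $z^k$ in $\bigl(\sum_j a_j z^j\bigr)\cdot\frac{(1+z)^L-1}{z}$, where $\sum_j a_j z^j = \int_0^1(1+z)^y\,\mathrm{d}y = z/\log(1+z)$, so that the two power series coincide. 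Substituting $\N$ back, reorganising the double sum by powers of $\N$ (write $m = k+1-j$ and use $\N^n=\0$ to extend the $m$-range to $\{1,\dots,n\}$), one gets
\[
	\int_0^{\log x}\exp(s\B)\,\mathrm{d}s = \Bigl(\sum_{j=0}^{n-1}a_j\N^j\Bigr)\sum_{m=1}^n\binom{\log x}{m}\N^{m-1},
\]
and $\sum_{j=0}^{n-1}a_j\N^j = \int_0^1\exp(y\B)\,\mathrm{d}y$ by integrating \eqref{E:cute:1} term by term; since $\N = \exp(\B)-\I$, this is exactly the right-hand side of \eqref{E:cute:2}.

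The only genuinely delicate point is the bookkeeping of index shifts when passing between $\int_0^L\binom{s}{k}\,\mathrm{d}s$, the Cauchy product $\bigl(\sum_j a_j z^j\bigr)\bigl(\sum_m\binom{L}{m+1}z^m\bigr)$, and the final matrix product; the analytic content is negligible because every series in sight terminates by virtue of $\N^n = \0$ and, correspondingly, the binomial coefficients $\binom{\log x}{k}$ vanish for $k$ beyond the nilpotency index in the reassembled sums. If one prefers to avoid generating functions, the scalar identity can instead be proved by induction on $k$ using $\frac{\mathrm{d}}{\mathrm{d}L}\binom{L}{k+1} = \sum_{l=1}^{k+1}\frac{(-1)^{l-1}}{l}\binom{L}{k+1-l}$ together with $\binom{0}{m}=0$ for $m\geq1$, but the generating-function argument is shorter and self-contained.
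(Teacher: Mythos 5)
Your proposal is correct. For \eqref{E:cute:1} you argue exactly in the paper's spirit: both proofs rest on the terminating binomial series in the nilpotent matrix $\exp(\B) - \I$ (the paper substitutes $\B$ for $b$ in the scalar identity $x^b - 1 = \sum_k \binom{\log x}{k}(e^b-1)^k$, you phrase it as $x^\B = \exp\{(\log x)\log(\I + \exp(\B) - \I)\}$ and expand; these are the same computation). For \eqref{E:cute:2}, however, your route is genuinely different in its mechanics. The paper simply quotes the scalar identity $\int_1^x u^{b-1}\du = \int_0^1 e^{by}\dy \, \sum_{k\geq 1}\binom{\log x}{k}(e^b-1)^{k-1}$ and again replaces $b$ by $\B$, leaning on the fact that both sides are (terminating) power series in the nilpotent argument; this is short but leaves the justification of the matrix substitution somewhat implicit. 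You instead integrate the expansion $\exp(s\B) = \sum_k \binom{s}{k}\{\exp(\B)-\I\}^k$ termwise over $s \in [0,\log x]$, reduce everything to the polynomial identity $\int_0^L \binom{s}{k}\,\mathrm{d}s = \sum_{j=0}^k \bigl(\int_0^1\binom{y}{j}\dy\bigr)\binom{L}{k+1-j}$, prove that by the generating-function factorisation $\frac{(1+z)^L-1}{\log(1+z)} = \frac{z}{\log(1+z)}\cdot\frac{(1+z)^L-1}{z}$, and reassemble the double sum using nilpotency to recover the product form. This costs more bookkeeping (the index shift $m = k+1-j$ and the extension of ranges via $\{\exp(\B)-\I\}^n = \bm{0}$, both of which you handle correctly) but buys a fully explicit verification in which no scalar-to-matrix substitution of an integral identity needs to be invoked; the paper's argument is the quicker of the two, yours the more self-contained.
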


\begin{proof}
In the formal series expansion $(1 + u)^v - 1 = \sum_{k=1}^\infty \binom{v}{k} u^k$, set $u = e^b - 1$ and $v = \log x$ to get
\[
	x^b - 1 = \sum_{k=1}^\infty \binom{\log x}{k} (e^b-1)^k.
\]
Since the matrix exponential function is defined through a series expansion, the identity continues to hold if we replace $b$ by a square matrix $\B \in \RR^{n \times n}$, provided the two series that result converge. If $\B$ is upper triangular with zero diagonal, then the same is true for $\exp(\B) - \I$, so that $\B^n = \{ \exp(\B) - \I \}^n = \0$, which implies that the two series are necessarily finite. This implies~\eqref{E:cute:1}

The proof of~\eqref{E:cute:2} is similar, starting from the identity
\[
	\int_1^x u^{b-1} \du
	= \int_0^1 e^{by} \dy \, \sum_{k=1}^\infty \binom{\log x}{k} (e^b-1)^{k-1}.
\]
Again, replace $b$ by $\B$ and note that $\{\exp(\B) - 1\}^n = \0$.
\end{proof}

\section{$\Pi$-variation of arbitrary order. II. Smooth functions}
\label{S:PiII}

In general, it can be cumbersome to determine whether a particular function $f$ is in some higher-order class $\Pi$ and to determine the corresponding rate vector. However, if $f$ is $n$-times continuously differentiable, then the work can often be reduced significantly (Subsection~\ref{SS:PiII:smooth}). A nice by-product of the approach is that it always gives a rate vector whose index matrix is the Jordan block $\J_n$ in \eqref{E:Jordan}. The approach even works for functions that are given as inverses of functions in the class $\Gamma$ (Subsection~\ref{SS:PiII:Gamma}) and for which no explicit form is available. In Section~\ref{S:examples} for instance, the approach is illustrated for inverses of functions related to the complementary error and complementary gamma functions.

\subsection{Smooth functions}
\label{SS:PiII:smooth}

Let $f$ be a real-valued function defined in a neighbourhood of infinity. Suppose that $f$ is $n$-times continuously differentiable on $(t_0, \infty)$ for some $t_0 > 0$. Let $D$ denote the differential operator. Define functions $L_0, \ldots, L_n$ recursively as follows:
\begin{equation}
\label{E:Lk}
	\begin{array}{ll}
		L_0(t) &= f(t),  \\
		L_k(t) &= t DL_{k-1}(t), \qquad k \in \{1, \ldots, n\}.
	\end{array}
\end{equation}
We have $L_k = D(L_{k-1} \circ \exp) \circ \log$, and thus
\[
	L_k = D^k (f \circ \exp) \circ \log, \qquad k \in \{1, \ldots, n\}
\]
a formula which greatly simplifies the actual computation of the functions $L_k$.

\begin{lemma}
\label{L:L}
Let $f$ and $L_0, \ldots, L_n$ be as in \eqref{E:Lk}. For positive $x$ and $t$ such that $t > t_0$ and $xt > t_0$, we have
\begin{equation}
\label{E:L2f}
	f(xt) - f(t) = \sum_{k=1}^n \frac{(\log x)^k}{k!}  L_k(t) + \int_1^x \frac{(\log x - \log u)^{n-1}}{(n-1)!} \{ L_n(ut) - L_n(t) \} u^{-1} \du.
\end{equation}
\end{lemma}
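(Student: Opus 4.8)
The plan is to prove the Taylor-type identity \eqref{E:L2f} by a substitution to the logarithmic scale followed by the integral form of Taylor's theorem. Set $\phi = f \circ \exp$, so that $\phi$ is $n$-times continuously differentiable on $(\log t_0, \infty)$, and recall from the discussion preceding the lemma that $L_k = D^k \phi \circ \log$, i.e. $\phi^{(k)}(s) = L_k(e^s)$ for all $s > \log t_0$. Writing $s = \log t$ and $s + r = \log(xt)$, so that $r = \log x$, the left-hand side of \eqref{E:L2f} becomes $\phi(s+r) - \phi(s)$.

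First I would apply Taylor's formula with integral remainder to $\phi$ at the point $s$, evaluated at $s + r$:
\[
	\phi(s+r) - \phi(s) = \sum_{k=1}^{n} \frac{r^k}{k!} \phi^{(k)}(s) + \int_0^r \frac{(r - w)^{n-1}}{(n-1)!} \bigl\{ \phi^{(n)}(s+w) - \phi^{(n)}(s) \bigr\} \dz,
\]
where I have written the remainder in the slightly non-standard but equivalent form obtained by adding and subtracting $\phi^{(n)}(s)$ inside the integral; this is legitimate because $\int_0^r (r-w)^{n-1}/(n-1)! \, \phi^{(n)}(s) \, dw = r^n \phi^{(n)}(s) / n!$, which simply reproduces the $k = n$ term of the finite sum, so the two forms agree. (Concretely, one can start from the standard remainder $\int_0^r (r-w)^{n-1}/(n-1)! \, \phi^{(n)}(s+w)\,dw$, split off the $\phi^{(n)}(s)$ piece, and absorb it into the sum.) Then I would substitute back $\phi^{(k)}(s) = L_k(t)$, $\phi^{(n)}(s+w) = L_n(e^{s+w})$, and change the integration variable from $w$ to $u = e^{w}$, so that $w = \log u$, $dw = u^{-1} \du$, the limits $w = 0$ and $w = r$ become $u = 1$ and $u = x$, and $r - w = \log x - \log u$, $e^{s+w} = ut$. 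This turns the remainder integral into exactly $\int_1^x (\log x - \log u)^{n-1}/(n-1)! \, \{ L_n(ut) - L_n(t) \} u^{-1} \du$, and the finite sum into $\sum_{k=1}^n (\log x)^k / k! \; L_k(t)$, which is \eqref{E:L2f}.

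The only points requiring a little care are the hypotheses needed for Taylor's theorem and the direction of the interval. The substitution $s = \log t$, $s + r = \log(xt)$ forces $s, s+r \in (\log t_0, \infty)$, which is guaranteed precisely by the assumption $t > t_0$ and $xt > t_0$; and when $x < 1$ one has $r < 0$, so the integrals run "backwards," but the integral form of Taylor's theorem is valid for $r$ of either sign (one integrates over the oriented segment from $s$ to $s+r$), and the change of variables $u = e^w$ is an orientation-preserving diffeomorphism in either case, so no sign issues arise. Continuity of $\phi^{(n)}$ on the closed segment between $s$ and $s+r$ — which follows from $f \in C^n$ on $(t_0,\infty)$ — is all that is needed to justify the integral remainder. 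I do not anticipate a genuine obstacle here; the main thing to get right is the bookkeeping in the change of variables and the slightly unusual placement of the $\{L_n(ut) - L_n(t)\}$ difference in the remainder (as opposed to a bare $L_n(ut)$), which is engineered so that the remainder is manifestly $o$ of something small once $L_n$ is known to be slowly varying, preparing the ground for the subsequent results in this section.
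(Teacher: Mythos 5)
Your proof is correct, and it takes exactly the route that the paper mentions in one sentence and then declines to follow: the paper's own proof is a self-contained induction on $n$, starting from $f(xt)-f(t)=\int_1^x L_1(ut)\,u^{-1}\,\du$, applying the induction hypothesis to $L_{n-1}$, and interchanging the order of integration by Fubini; the authors explicitly remark that the lemma ``can be shown by applying Taylor's theorem with integral form of the remainder term to the function $f\circ\exp$'' but prefer the direct argument. Your version carries out that Taylor route in full: the key bookkeeping steps --- splitting off the $\phi^{(n)}(s)$ piece of the standard remainder so that the $k=n$ term joins the finite sum and the remainder becomes the difference $\{L_n(ut)-L_n(t)\}$, the change of variables $u=e^w$, and the observation that the integral form of Taylor's theorem and the domain condition $t>t_0$, $xt>t_0$ work for $\log x$ of either sign --- are all handled correctly (apart from a stray $\mathrm{d}z$ where $\mathrm{d}w$ is meant). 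What each approach buys: yours is shorter once the classical theorem is invoked and makes the Taylor-polynomial structure of \eqref{E:L2f} transparent, while the paper's induction is elementary and self-contained, using only the fundamental theorem of calculus and Fubini, and mirrors the recursive definition \eqref{E:Lk} of the $L_k$, which is the pattern reused elsewhere in that section.
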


\begin{proof}
The lemma can be shown by applying Taylor's theorem with integral form of the remainder term to the function $f \circ \exp$. We prefer to give a direct proof, which proceeds by induction on $n$. 

For $n = 1$ we have
\begin{align*}
	f(xt) - f(t) 
	&= \int_t^{xt} L_1(y) y^{-1} \dy = \int_1^x L(ut) u^{-1} \du \\
	&= \log(x) L_1(u) + \int_1^x \{L_1(ut) - L_1(t)\} u^{-1} \du.
\end{align*}

Let $n \geq 2$. By the induction hypothesis applied to $f$,
\[
	f(xt) - f(t) = \sum_{k=1}^{n-1} \frac{(\log x)^k}{k!}  L_k(t) + \int_1^x \frac{(\log x - \log u)^{n-2}}{(n-2)!} \{ L_{n-1}(ut) - L_{n-1}(t) \} u^{-1} \du.
\]
Further, by the induction hypothesis applied to $L_{n-1}$,
\[
	L_{n-1}(ut) - L_{n-1}(t) = \log(u) L_n(t) + \int_1^u \{L_n(vt) - L_n(t)\} v^{-1} \dv.
\]
Combine the two previous displays and apply Fubini's theorem to arrive at \eqref{E:L2f}.
\end{proof}

\begin{theorem}
\label{T:L}
Let $f$ and $L_0, \ldots, L_n$ be as in \eqref{E:Lk}. If $L_n$ is eventually of constant sign and if $|L_n|$ is slowly varying, then $\bm{L} = (L_1, \ldots, L_n)'$ is a regulary varying rate vector with index matrix the Jordan block $\J_n$ in \eqref{E:Jordan}, and $f \in \Pi(\bm{L})$ with $\bm{L}$-index $\c = (1, 0, \ldots, 0)$.
\end{theorem}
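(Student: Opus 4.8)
The plan is to read everything off from Lemma~\ref{L:L} together with de Haan's theory of the indices transform for slowly varying functions, much as in the proof of Theorem~\ref{T:PiI:char}. The argument splits into three parts: (i) $\bm{L}$ is a measurable rate vector; (ii) in fact $\bm{L} \in \RV_{\J_n}$; (iii) the $\bm{L}$-index of $f$ is $(1, 0, \ldots, 0)$. For (i) I would argue by downward induction on $i \in \{n, n-1, \ldots, 1\}$ that $L_i$ is eventually of constant sign, that $|L_i|$ is slowly varying, and, for $i \leq n-1$, that $L_{i+1}(t) = o(L_i(t))$. The case $i = n$ is the hypothesis. For the step, since $f \in C^n$ we have $L_i \in C^{n-i}$ and $L_i'(t) = L_{i+1}(t)/t$, so $L_i(t) = L_i(t_1) + \int_{t_1}^t L_{i+1}(u) u^{-1}\,\du$ for $t \geq t_1$, where $t_1$ is large enough that $L_{i+1}$ has constant sign on $[t_1, \infty)$. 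Applying \citet[Theorem~3.7.4]{BGT} to $C = L_{i+1}$ gives a dichotomy: if $\int_{t_1}^{\infty} L_{i+1}(u) u^{-1}\,\du$ diverges, then $t \mapsto \int_{t_1}^t L_{i+1}(u) u^{-1}\,\du$ is eventually of constant sign with slowly varying modulus and dominates $L_{i+1}$; if it converges, then $L_i$ tends to a finite limit $c$, and the tail $\int_t^{\infty} L_{i+1}(u) u^{-1}\,\du$ has the same properties. In every case one reads off (using the representation $L_i(t) = c - \int_t^{\infty} L_{i+1}(u) u^{-1}\,\du$ when the integral converges to $0$) that $L_i$ is eventually of constant sign, that $|L_i|$ is slowly varying, and that $L_{i+1}(t) = o(L_i(t))$. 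Since each $L_i$ is continuous, $\bm{L}$ is a measurable rate vector.

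For (ii), I would establish
\[
	L_i(xt) = L_i(t) + \sum_{k=1}^{n-i} \frac{(\log x)^k}{k!} L_{i+k}(t) + o(L_n(t)), \qquad x \in (0, \infty),
\]
for every $i \in \{0, 1, \ldots, n\}$. For $i = n$ this is merely the slow variation of $|L_n|$ (together with $L_n$ being eventually of constant sign). For $i \in \{0, 1, \ldots, n-1\}$, apply Lemma~\ref{L:L} to $L_i$ with $n$ replaced by $n-i$; since iterating $g \mapsto t D g$ starting from $L_i$ produces $L_{i+1}, \ldots, L_n$, the function there in the role of $L_n$ is the present $L_n$, and the identity reads
\[
	L_i(xt) - L_i(t) = \sum_{k=1}^{n-i} \frac{(\log x)^k}{k!} L_{i+k}(t)
	+ \int_1^x \frac{(\log x - \log u)^{n-i-1}}{(n-i-1)!} \{ L_n(ut) - L_n(t) \} u^{-1}\,\du.
\]
Since $|L_n|$ is slowly varying and $L_n$ is eventually of constant sign, the uniform convergence theorem for slowly varying functions \citep[Theorem~1.2.1]{BGT} gives $L_n(ut) - L_n(t) = o(L_n(t))$ uniformly for $u$ between $1$ and $x$, so for each fixed $x$ the remainder integral is $o(L_n(t))$ and the displayed relation follows. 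Taking $i \in \{1, \ldots, n\}$ and invoking \eqref{E:xJ}, these relations together amount to $\bm{L}(xt) = x^{\J_n} \bm{L}(t) + o(L_n(t))$; since $\J_n$ is an index matrix, combining this with part (i) gives $\bm{L} \in \RV_{\J_n}$.

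For (iii), the case $i = 0$ of the displayed relation is $f(xt) = f(t) + \sum_{k=1}^n \frac{(\log x)^k}{k!} L_k(t) + o(L_n(t))$, so $f \in \GRV(\bm{L})$ with limit function $\h(x) = \bigl( \log x, \tfrac{1}{2!}(\log x)^2, \ldots, \tfrac{1}{n!}(\log x)^n \bigr)$. A direct computation from \eqref{E:xJ} (or Remark~\ref{R:B2h}(b)) shows $\h(x) = \c \int_1^x u^{\J_n} u^{-1}\,\du$ with $\c = (1, 0, \ldots, 0)$; equivalently, differentiating $\h$ at $x = 1$ gives $\c = \dot{\h}(1) = (1, 0, \ldots, 0)$. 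As $\J_n$ has zero diagonal, this is precisely the assertion that $f \in \Pi(\bm{L})$ with $\bm{L}$-index $(1, 0, \ldots, 0)$.

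I expect part (i) to be the main obstacle: one must invoke de Haan's theorem on the indices transform correctly and keep careful track both of the convergence/divergence dichotomy for $\int_{t_1}^{\infty} L_{i+1}(u) u^{-1}\,\du$ and of the signs, so as to be certain that $\bm{L}$ is a genuine rate vector rather than merely satisfying the $x^{\J_n}$-relation formally. Parts (ii) and (iii) should then be routine consequences of Lemma~\ref{L:L} and the uniform convergence theorem for slowly varying functions.
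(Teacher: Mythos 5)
Your proposal is correct and takes essentially the same route as the paper: the paper also proves the rate-vector property by a downward induction resting on first-order de Haan theory (it applies the $n=1$ case of Lemma~\ref{L:L} to see that each $L_i$ is, up to sign, in the class $\Pi$ with auxiliary function $|L_{i+1}|$, instead of your integral representation plus the convergence dichotomy of \citet[Theorem~3.7.4]{BGT}, but the substance is the same). The remaining steps coincide with the paper's: apply Lemma~\ref{L:L} to each $L_i$ together with the uniform convergence theorem for slowly varying functions to get $\bm{L}(xt)=x^{\J_n}\bm{L}(t)+o(L_n(t))$ and $f\in\Pi(\bm{L})$ with $\bm{L}$-index $(1,0,\ldots,0)$.
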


\begin{addendum}
If in addition the functions $C_1, \ldots, C_n$ are defined as in \eqref{E:Ci}, then for $i \in \{1, \ldots, n\}$,
\[
	C_i(t) = \sum_{j=i}^n (-1)^{j-i} \binom{j-1}{i-1} L_j(t) + o(L_n(t)).
\]
In particular, $C_i(t) \sim L_i(t)$.
\end{addendum}

\begin{proof}
Since $L_n$ is eventually of constant sign and since $DL_{n-1}(t) = t^{-1} L_n(t)$, we find that $L_{n-1}$ is eventually increasing or decreasing, and hence of constant sign. Lemma~\ref{L:L} applied to $L_{n-1}$ gives
\[
	L_{n-1}(xt) - L_{n-1}(t) = \log (x) L_n(t) + \int_1^x \{L_n(ut) - L_n(t)\} u^{-1} \du.
\]
Since $|L_n|$ is slowly varying, the uniform convergence theorem for slowly varying functions gives
\[
	L_{n-1}(xt) - L_{n-1}(t) = \log (x) L_n(t) + o(L_n(t)).
\]
Up to the sign, we find that $L_{n-1}$ is in the univariate class $\Pi$ with auxiliary function $|L_n|$. Hence, $|L_{n-1}|$ is slowly varying as well and $L_n(t) = o(L_{n-1}(t))$. Continuing in this way we obtain that $\bm{L}$ is a rate vector. Lemma~\ref{L:L} applied to $L_i$ ($i \in \{0, \ldots, n-1\}$) gives
\[
	L_i(xt) - L_i(t) = \sum_{k=1}^{n-i} \frac{1}{k!} (\log x)^k L_{i+k}(t) + o(L_n(t)),
\]
again by application of the uniform convergence theorem for slowly varying functions. The conclusions of the theorem follow.

To prove the addendum, it suffices to apply Theorem~\ref{T:Pi:repr:n} to the rate vector $\g = \bm{L}$ and show that the upper-triangular matrix $\Q \in \RR^{n \times n}$ is given by
\[
	Q_{ij} = (-1)^{j-i} \binom{j-1}{i-1},
	\qquad 1 \leq i \leq j \leq n.
\]
Let $\q_i = (Q_{ij})_{j = i}^n \in \RR^{1 \times (n-i+1)}$ be the non-trivial part of the $i$th row of $\Q$; so $(\q_i)_k = Q_{i,i+k-1}$ for $i \in \{1, \ldots, n\}$ and $k \in \{1, \ldots, n-i+1\}$. We have to show that
\[
	(\q_i)_k = (-1)^{k-1} \binom{i+k-2}{i-1},
	\qquad i \in \{1, \ldots, n\}, \quad k \in \{1, \ldots, n-i+1\}.
\]
We will proceed by induction on $i$. From the recursive equations \eqref{E:Q} in the proof of Theorem~\ref{T:Pi:repr:n}, we know that
\begin{align*}
	\q_1 &= (1, 0, \ldots, 0) (\I_n + \J_n)^{-1}, \\
	\q_i &= \bigl( (\q_{i-1})_k \bigr)_{k=1}^{n-i+1} (\I_{n-i+1} + \J_{n-i+1})^{-1}.
\end{align*}
Since $\J_n^n = \0$, the zero matrix, we have $(\I_n + \J_n)^{-1} = \I_n + \sum_{k=1}^{n-1} (-1)^k \J_n^k$, which is an upper-triangular matrix with
\[
	\bigl( (\I_n + \J_n)^{-1} \bigr)_{ij} = (-1)^{j-i}, 
	\qquad 1 \leq i \leq j \leq n.
\]
Hence, $\q_1$ is just the first row of $(\I_n + \J_n)^{-1}$, which is $(1, -1, 1, -1, \ldots)$, as required. Suppose now that $i \geq 2$. By the induction hypothesis, for $k \in \{1, \ldots, n-i+1\}$,
\begin{align*}
	(\q_i)_k
	&= \sum_{l=1}^k (\q_{i-1})_l \bigl( (\I_{n-i+1} + \J_{n-i+1})^{-1} \bigr)_{lk} \\
	&= \sum_{l=1}^k (-1)^{l-1} \binom{i+l-3}{i-2} (-1)^{k-l} \\
	&= (-1)^{k-1} \sum_{l=0}^{k-1} \binom{i+l-2}{i-2} 
	= (-1)^{k-1} \binom{i+k-2}{i-1},
\end{align*}
where in the last step we used a well-known identity for binomial coefficients.
\end{proof}

\subsection{Inverses of $\Gamma$-varying functions}
\label{SS:PiII:Gamma}

Recall that a positive, non-decreasing, and unbounded function $A$ defined in a neighbourhood of infinity belongs to the class $\Gamma$ if there exists a positive, measurable function $a$ such that
\[
	\frac{A(t + x a(t))}{A(t)} \to e^x, \qquad x \in \RR.
\]
The class $\Gamma$ was introduced in \citet{dHL70, dH74}; see also \citet{GdH87}, \citet[Section~3.10]{BGT}, and \citet[Section~0.4.3]{Resnick87}. Functions in $\Gamma$ are inverses of positive, non-decreasing, and unbounded functions in the class $\Pi$ and vice versa. The aim in this section is to provide a simple condition on the function $A$ such that its inverse $f$ satisfies the conditions of Theorem~\ref{T:L} and is therefore $\Pi$-varying of order $n$.

Suppose that $A$ is a positive and continuously differentiable function defined in a neighbourhood of infinity. Assume $DA > 0$ and denote $q = A / DA = 1 / D \log A$, so that for some large enough $t_0$,
\begin{equation}
\label{E:q2A}
	A(t) = A(t_0) \exp \left( \int_{t_0}^t \frac{ds}{q(s)} \right), \qquad t \in [t_0, \infty).
\end{equation}
If $q$ is itself continuously differentiable and $Dq(t) = o(1)$, then it is not hard to check that $A \in \Gamma$ with auxiliary function $q$. Up to asymptotic equivalence, this representation of functions in $\Gamma$ is general.

Suppose that $q$ is $(n-1)$-times continuously differentiable. Define functions $q_1, \ldots, q_n$ by
\begin{align}
\label{E:q}
	q_1 &= q, &
	q_i &= q Dq_{i-1}, \qquad i \in \{2, \ldots, n\}.
\end{align}

\begin{theorem}\label{T:Gamma}
Let $A > 0$ be as in \eqref{E:q2A} with $q > 0$ and with inverse function $f$. If $q$ is $(n-1)$-times continuously differentiable (integer $n \geq 1$), then the functions $L_i$ in \eqref{E:Lk} are well-defined and given by $L_i = q_i \circ f$ for $i \in \{1, \ldots, n\}$. Assume further that $Dq = o(1)$, that $D^{n-1} q$ is eventually of constant sign and that $|D^{n-1} q|$ is regularly varying of index $\alpha - n + 1$ for some $\alpha \leq 1$. If $\alpha$ is not of the form $(m-1)/m$ for some integer $m \in \{2, \ldots, n-1\}$, then $L_n$ in \eqref{E:Lk} is eventually of constant sign too and $|L_n|$ is slowly varying. As a consequence, $f \in \Pi((L_1, \ldots, L_n)')$ with index $\bm{c} = (1, 0, \ldots, 0)$.
\end{theorem}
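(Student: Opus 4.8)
The argument splits into three parts: verify the identity $L_i = q_i\circ f$; reduce the assertion that $|L_n|$ is slowly varying to a statement about $q_n$ alone; and then establish regular variation of $q_n$ by running the recursion $q_i = q\,Dq_{i-1}$ through Karamata's theorem and the monotone density theorem. The last part carries the real content.

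First I would prove $L_i = q_i\circ f$ by induction on $i$. Since $A$ is $C^1$ with $DA>0$, the inverse $f$ is $C^1$; from $DA = A/q$ and $A(f(t)) = t$ one gets $Df(t) = q(f(t))/t$, so $L_1(t) = tDf(t) = q(f(t)) = q_1(f(t))$. If $L_{i-1} = q_{i-1}\circ f$, then by the chain rule $L_i(t) = tDL_{i-1}(t) = t\,(Dq_{i-1})(f(t))\,Df(t) = q(f(t))\,(Dq_{i-1})(f(t)) = q_i(f(t))$; the $(n-1)$-fold continuous differentiability of $q$ makes $q_1,\dots,q_{n-1}$ all $C^1$, so each $L_i$ with $i\le n$ is well defined. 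I would also record here that the hypothesis $Dq=o(1)$ places $A$ (given by \eqref{E:q2A}) in the class $\Gamma$, so its inverse $f$ lies in de Haan's class $\Pi$; in particular $f$ is slowly varying and $f(t)\to\infty$.

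For the reduction: by Theorem~\ref{T:L} applied to $\bm L = (L_1,\dots,L_n)'$, it suffices to show that $L_n = q_n\circ f$ is eventually of constant sign with $|L_n|$ slowly varying, since the remaining conclusions (membership in $\Pi(\bm L)$, the index matrix $\J_n$, the index $(1,0,\dots,0)$) are then exactly what that theorem delivers. Because $f$ is slowly varying with $f(t)\to\infty$, $q_n\circ f$ is eventually of constant sign precisely when $q_n$ is, and $|q_n\circ f|$ is slowly varying as soon as $|q_n|$ is regularly varying of some index: writing $y=f(t)\to\infty$ and $\lambda_t = f(xt)/f(t)\to1$, the uniform convergence theorem for regularly varying functions gives $q_n(f(xt))/q_n(f(t)) = q_n(\lambda_t y)/q_n(y)\to1$. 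So everything comes down to showing that $q_n$ is regularly varying and eventually of constant sign.

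This is the main obstacle. I would prove, by induction on $i$, that each $q_i$ is regularly varying and eventually of constant sign. For the base step $q_1=q$: starting from $D^{n-1}q\in\RV_{\alpha-n+1}$, eventually of constant sign, I would integrate back through $D^{n-2}q,\dots,Dq$ to $q$, applying Karamata's theorem at each stage (its divergent-integral form when the index exceeds $-1$, its convergent-integral form otherwise), using that an antiderivative of a sign-definite function is eventually monotone, and using $\alpha\le1$, $Dq=o(1)$ (and, for one residual degenerate possibility, $q>0$) to exclude the borderline index-$(-1)$ pathologies and the ``reset to a nonzero constant'' that would contradict $Dq=o(1)$. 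The outcome is that $D^jq\in\RV_{\alpha-j}$, eventually of constant sign, for $j\in\{1,\dots,n-1\}$, while $q\in\RV_{\rho_1}$ with $\rho_1\in\{\alpha,0\}$. For the inductive step, observe that $q_i=(qD)^{i-1}q$ is a differential polynomial in $q,Dq,\dots,D^{i-1}q$ of total degree $i$ and weight $i-1$, so every monomial is a product of regularly varying functions hence regularly varying; when $\rho_1=0$ the dominant monomial is $q^{i-1}D^{i-1}q$ (with coefficient $1$, the top-order term), and when $\rho_1=\alpha$ all monomials share the index $i\alpha-(i-1)$ and the leading coefficient equals $\prod_{j=1}^{i-1}\bigl(j\alpha-(j-1)\bigr)$ --- which is precisely the value of $(qD)^{i-1}q$ at $q(t)=t^{\alpha}$. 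The exclusion $\alpha\notin\{(m-1)/m : m\in\{2,\dots,n-1\}\}$ is exactly the condition guaranteeing that this coefficient does not vanish for $i\le n$, so that $q_n$ is genuinely regularly varying, its sign inherited from its dominant term. (Equivalently one can argue the inductive step via the monotone density theorem, valid whenever the relevant index is nonzero, its single exceptional step --- index $0$, occurring only when $\rho_1=0$ --- being covered by the regular variation of $Dq$ from the base step.) With $q_n$ known to be regularly varying and of constant sign, the reduction above completes the proof. The delicate point throughout --- and where $\alpha\le1$, $Dq=o(1)$, and the exclusion of the values $(m-1)/m$ are all consumed --- is the bookkeeping of regular-variation indices and the non-vanishing of leading coefficients along the recursion.
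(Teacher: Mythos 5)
Your proposal is correct and, in outline, coincides with the paper's own proof: the identity $L_i=q_i\circ f$ by induction, the observation that the hypotheses put $A$ in $\Gamma$ so that $f\in\Pi$ and hence $f\in\RV_0$, the reduction of everything to ``$q_n$ eventually of constant sign with $|q_n|$ regularly varying'', and the final appeal to Theorem~\ref{T:L} are exactly the steps of the paper, which delegates the statement about $q_n$ to Proposition~\ref{P:qi}. Where you genuinely differ is in how that statement is proved. The paper runs an induction on $i$ that tracks the whole array $D^j q_i$, $j\le n-i$ (Lemma~\ref{L:Djq} and Propositions~\ref{P:Djqi:1}--\ref{P:Djqi:4}), precisely because the recursion $q_i=q\,Dq_{i-1}$ consumes a derivative at each step; you instead expand $q_i=(qD)^{i-1}q$ once and for all as a differential polynomial in $q,Dq,\dots,D^{i-1}q$, so that only the asymptotics of the $D^jq$ (your Karamata base step, which is Lemma~\ref{L:Djq}) are needed, and you identify the leading coefficient by evaluating the model $q(t)=t^\alpha$; this reproduces the paper's constant $\prod_{k=1}^{i-1}(k\alpha-k+1)$ (obtained there via a generating-function recursion) and gives the excluded values $(m-1)/m$ the same role. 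The gain is lighter bookkeeping; the price is that ``all monomials share the index, so sum the leading coefficients'' is only literally valid when $D^jq\sim(\alpha)_j\,t^{-j}q(t)$ with nonzero constants, i.e.\ for $0<\alpha<1$ or for $\alpha<0$ with $q(\infty)=0$. In your $\rho_1=0$ cases ($\alpha=0$, or $\alpha<0$ with $q(\infty)\neq0$) and in the case $\alpha=1$, monomials of equal regular-variation index differ by slowly varying factors, and the dominance of $q^{i-1}D^{i-1}q$ (respectively of $q\,(Dq)^{i-1}$ when $\alpha=1$) rests on $tDq(t)=o(q(t))$ (respectively $tD^2q(t)=o(Dq(t))$) -- facts the paper extracts from the class $\Pi$ and borderline Karamata arguments in Propositions~\ref{P:Djqi:2} and~\ref{P:Djqi:4}. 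Your sketch's Karamata-at-each-stage machinery can supply these estimates, but they need to be stated explicitly for the inductive step to close; as written they are the only places where your compression hides real work.
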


\begin{proof}
The proof that $L_i = q_i \circ f$ is by induction on $i$. First let $i = 1$. Computing derivatives in $A(f(t)) = t$ yields $DA(f(t)) \cdot Df(t) = 1$ and thus
\[
	L_1(t) = t Df(t) = \frac{A(f(t))}{DA(f(t))} = q(f(t)).
\]
Second suppose $i \in \{2, \ldots, n\}$. By the induction hypothesis,
\begin{align*}
	L_i(t) 
	= t DL_{i-1}(t) 
	&= t D(q_{i-1} \circ f)(t) \\
	&= t Dq_{i-1}(f(t)) Df(t) 
	= q(f(t)) Dq_{i-1}(f(t)) 
	= q_i(f(t)).
\end{align*}

The assumptions on $A$ imply that $A \in \Gamma$ and thus $f \in \Pi$. Since $f$ increases to infinity, this implies $f \in \RV_0$. The stated property of $L_n$ then follows if we can show that $q_n$ is eventually of constant sign and $|q_n|$ is regularly varying. This property of $q_n$ is stated formally in Proposition~\ref{P:qi} below. Theorem~\ref{T:L} then implies that $f \in \Pi((L_1, \ldots, L_n)')$ with index $\bm{c} = (1, 0, \ldots, 0)$.
\end{proof}

\begin{proposition}\label{P:qi}
Let $n \geq 2$ and let $q$ be an $(n-1)$-times continuously differentiable function defined in a neighbourhood of infinity such that $Dq(t) = o(1)$. Define $q_1 = q$ and $q_i = q D q_{i-1}$ for $i \in \{2, \ldots, n\}$. Assume that $D^{n-1} q$ is eventually of constant sign and that $|D^{n-1} q| \in \RV_{\alpha - n + 1}$ for some $\alpha \in (-\infty, 1]$. If $\alpha$ is not of the form $(m-1)/m$ for some integer $m \in \{2, \ldots, n-1\}$, then all the $q_i$ are eventually of constant sign, and $|q_i|$ is regularly varying.
\end{proposition}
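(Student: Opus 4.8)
The plan is to reduce the statement to the regular variation of the ordinary derivatives $D^{0}q=q,Dq,\ldots,D^{n-1}q$ and then to read off each $q_{i}$ from them. Writing $\theta$ for the operator $\psi\mapsto q\,D\psi$, one has $q_{1}=q$ and $q_{i}=\theta q_{i-1}$, so Leibniz's rule shows that $q_{i}$ is a finite integer combination of products of exactly $i$ factors taken from $\{q,Dq,\ldots,D^{i-1}q\}$, the orders of differentiation in each product summing to $i-1$; in particular $q^{i-1}D^{i-1}q$ occurs with coefficient $1$. It therefore suffices to (A) prove that each $D^{j}q$, $j\in\{0,\ldots,n-1\}$, is eventually of constant sign with $|D^{j}q|$ regularly varying, and then (B) control the finite sum.

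For (A) I would work downwards, starting from the hypothesis on $D^{n-1}q$ and using at each step the identity $D^{j-1}q(t)=D^{j-1}q(t_{0})+\int_{t_{0}}^{t}D^{j}q(s)\,\mathrm{d}s$. If $|D^{j}q|$ has index strictly above $-1$, Karamata's theorem makes $\int_{t_{0}}^{t}D^{j}q$ regularly varying with index one unit larger; since every index occurring here is $\leq\alpha-1\leq 0$, this integral tends to $\pm\infty$, dominates the additive constant, and $D^{j-1}q$ inherits both the regular variation and the eventual sign. If the index is $\leq-1$ the integral may converge, and here $Dq(t)=o(1)$ enters: a nonzero limit of $D^{j-1}q$ with $j-1\geq 1$ would, on integrating back up, make $Dq$ unbounded, so $D^{j-1}q\to 0$ for $j\geq 2$ and $D^{j-1}q=-\int_{t}^{\infty}D^{j}q$ is regularly varying of constant sign by Karamata (or, at index exactly $-1$, slowly varying). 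The sole function permitted to converge to a nonzero constant is $q$ itself ($j=1$), which is harmless since then $|q|\in\RV_{0}$.

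For (B), observe that $D^{j}q$ is eventually monotone for $j\leq n-2$ (its derivative being of one sign) while $D^{n-1}q$ is monotone by hypothesis, so the Monotone Density Theorem, applied repeatedly, gives $D^{j}q(t)\sim\alpha(\alpha-1)\cdots(\alpha-j+1)\,q(t)/t^{j}$ when $\alpha\neq 0$, read as $D^{j}q(t)=o(q(t)/t^{j})$ when the coefficient vanishes. Inserting this into the Leibniz expansion makes each product asymptotic to a constant multiple of the single regularly varying function $q(t)^{i}/t^{i-1}$, the constants summing to the value $c_{i}(\alpha)=\prod_{k=1}^{i-1}(k\alpha-k+1)$ obtained from the model $q(t)=t^{\alpha}$. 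Whenever $c_{i}(\alpha)\neq 0$ this yields $q_{i}(t)\sim c_{i}(\alpha)\,q(t)^{i}/t^{i-1}$, which is regularly varying and eventually of one sign. For $i\leq n$ the factors of $c_{i}$ run over $k\leq n-1$, and $k\alpha-k+1=0$ means $\alpha=(k-1)/k$: the value $k=1$, i.e.\ $\alpha=0$, is excluded from this part of the argument, while $k\in\{2,\ldots,n-1\}$ gives precisely the values barred in the hypothesis. It then remains to treat $\alpha=0$ (together with the degenerate subcase of (A) in which $q$ tends to a nonzero constant): there the Monotone Density Theorem gives $Dq(t)=o(q(t)/t)$, so the summand $q^{i-1}D^{i-1}q$ strictly dominates every other term of $q_{i}$ (each being smaller by a positive power of $tDq/q\to 0$), whence $q_{i}\sim q^{i-1}D^{i-1}q$, a product of regularly varying functions of constant sign, and the proof is complete.

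The main obstacle is step (B): all the products making up $q_{i}$ carry the same regular-variation index $i\alpha-(i-1)$, so none dominates by index, and a finite sum of regularly varying functions sharing an index need not be regularly varying (it may even fail to be regularly varying while remaining of constant sign). What rescues the argument is the rigidity of the $D^{j}q$ as successive derivatives: the Monotone Density Theorem fixes their leading terms and turns the leading coefficient of $q_{i}$ into the explicit polynomial $c_{i}(\alpha)$, whose zeros in the relevant range are exactly the forbidden values $(m-1)/m$, while the residual case $\alpha=0$ is disposed of by a separate order-of-magnitude comparison. A secondary, purely bookkeeping, nuisance is the case analysis in (A) near index $-1$ and convergent integrals, where $Dq=o(1)$ must be invoked each time.
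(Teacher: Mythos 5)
Your proof is correct, and it takes a genuinely leaner route than the paper's. The paper establishes the proposition through Lemma~\ref{L:Djq} followed by four separate case analyses ($0<\alpha<1$, $\alpha=0$, $\alpha<0$, $\alpha=1$ in Propositions~\ref{P:Djqi:1}--\ref{P:Djqi:4}), each an induction on $i$ via Leibniz' rule that tracks the asymptotics of \emph{all} derivatives $D^j q_i$, $j \le n-i$; this heavier bookkeeping is forced mainly by the case $\alpha=1$, where three Leibniz terms contribute at leading order and must be combined through the auxiliary function $H(t)=tD^2q(t)/Dq(t)$. You avoid that induction: your step (A) is essentially Lemma~\ref{L:Djq} (downward Karamata integration, with $Dq=o(1)$ forcing the integration constants to vanish), after which you expand $q_i$ as a universal differential polynomial in $q,Dq,\ldots,D^{i-1}q$, reduce everything to the single scale $D^jq\sim(\alpha)_j\,q(t)/t^j$ (with the $o$-reading when the constant vanishes), and identify the leading coefficient as $c_i(\alpha)=\prod_{k=1}^{i-1}(k\alpha-k+1)$ by evaluating the polynomial on the model $t^\alpha$ --- the same device the paper uses, via generating functions, to compute $C_i(0)$ in Proposition~\ref{P:Djqi:1}. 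The exceptional cases ($\alpha=0$, and $\alpha<0$ with $q(\infty)\neq 0$, where $Dq\not\sim\alpha q/t$ so your displayed equivalence fails) are correctly quarantined and settled by the domination argument $q_i\sim q^{i-1}D^{i-1}q$, which reproduces the $j=0$ conclusions of Propositions~\ref{P:Djqi:2} and~\ref{P:Djqi:3}(ii). Two points of hygiene rather than gaps: the Monotone Density Theorem is not directly applicable at the top derivative (for small $n$ you only know $D^{n-1}q$ has constant sign, not that it is monotone), but the facts you actually need --- $Dq\sim\alpha q/t$, and at $\alpha=1$ the estimate $tD^2q(t)=o(Dq(t))$ giving $D^jq=o(q/t^j)$ for $j\ge 2$ --- already follow from the Karamata integrations of your step (A) combined with $Dq=o(1)$; and the blanket statement that $D^jq\sim(\alpha)_jq/t^j$ ``when $\alpha\neq 0$'' should explicitly exclude the $q(\infty)\neq 0$ subcase, as your subsequent treatment in fact does. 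What each approach buys: the paper's inductions yield the full asymptotics of $D^jq_i$ (of independent interest, e.g.\ the $H$-refinement at $\alpha=1$), whereas your argument proves exactly the stated proposition and makes the delicate case $\alpha=1$ almost immediate, since for $j=0$ only the monomial $q(Dq)^{i-1}$ survives.
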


\begin{proof}
The proof is presented in Lemma~\ref{L:Djq} and Propositions~\ref{P:Djqi:1}--\ref{P:Djqi:4}, with details on the asymptotic behaviour of $D^j q_i$ for all $i \in \{1, \ldots, n\}$ and $j \in \{0, \ldots, n-i\}$.
\end{proof}

\begin{example}
The restriction on $\alpha$ in Theorem~\ref{T:Gamma} and Proposition~\ref{P:qi} is unavoidable. For instance, if $q(t) = 2 t^{1/2}$, then all derivatives of $q$ are of constant sign as well as regularly varying in absolute value (with $\alpha = 1/2$), but $q_2 = q Dq = 1$ and $q_i = 0$ for $i \geq 3$. The corresponding functions $A$ and $f$ are $A(t) = \exp(t^{1/2})$ and $f(t) = (\log t)^2$. We have $f(xt) = f(t) + 2 \log(t) \log(x) + \{\log(x)\}^2$, so that $\Pi$-variation for $f$ effectively stops at order $n = 2$.
\end{example}

For real $x$ and integer $k \geq 0$, let $(x)_k$ represent the falling factorial, i.e.\
\[
	(x)_k =
	\begin{cases}
		1 & \text{if $k = 0$,} \\
		x(x-1)\cdots(x-k+1) & \text{if $k \in \{1, 2, \ldots\}$.}
	\end{cases}
\]

\begin{lemma}
\label{L:Djq}
Let $q$ be a $k$-times continuously differentiable function defined in a neighbourhood of infinity such that $Dq(t) = o(1)$. If $D^k q$ is eventually of constant sign and if $|D^k q| \in \RV_{\alpha - k}$ for some $\alpha < 1$, then for every $j \in \{1, \ldots, k\}$ the function $D^j q$ is eventually of constant sign, $|D^j q| \in \RV_{\alpha-j}$, and
\begin{equation}
\label{E:Djq}
	D^j q(t) \sim (\alpha-1)_{j-1} t^{-j+1} Dq(t).
\end{equation}
\end{lemma}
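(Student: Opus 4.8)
The plan is a downward induction on $j$, from $j = k$ (where the hypotheses are assumed outright) down to $j = 1$, each step obtained by integrating one derivative. The only genuine analytic tool is Karamata's theorem on integrals of regularly varying functions; the hypothesis $\alpha < 1$ is present precisely so that $\alpha - j < -1$ for every $j \in \{2, \ldots, k\}$, which places $|D^j q|$ in the integrable regime $\RV_\rho$, $\rho < -1$, so that Karamata's theorem applies in its convergent form with no boundary case to worry about.

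First I would isolate the inductive step. Fix $j \in \{2, \ldots, k\}$ and assume $D^j q$ is eventually of constant sign with $|D^j q| \in \RV_{\alpha - j}$. Since $\alpha - j < -1$, the function $D^j q$ is absolutely integrable near infinity, so its primitive $D^{j-1} q$ tends to a finite limit $\ell$, and $D^{j-1} q(t) = \ell - \int_t^\infty D^j q(s)\,\mathrm{d}s$. The crux of the argument is that $\ell = 0$: when $j - 1 = 1$ this is the standing hypothesis $Dq(t) = o(1)$, while when $j - 1 \geq 2$, assuming $\ell \neq 0$ and integrating repeatedly (pure calculus, using no regular variation) gives $D^{j-1-m}q(t) \sim \ell\, t^m/m!$ for $m = 0, \ldots, j-2$, so that $Dq(t) \sim \ell\, t^{j-2}/(j-2)! \to \pm\infty$, contradicting $Dq(t) = o(1)$. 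With $\ell = 0$, $D^{j-1}q(t) = -\int_t^\infty D^j q(s)\,\mathrm{d}s$, and Karamata's theorem \citep[Section~1.5]{BGT} gives
\[
	\int_t^\infty D^j q(s)\,\mathrm{d}s \sim \frac{-t\, D^j q(t)}{\alpha - j + 1},
\]
a function in $\RV_{\alpha - j + 1}$ which, since $\alpha - j + 1 < 0$, eventually has the same sign as $D^j q$. Hence $D^{j-1}q(t) \sim t\, D^j q(t)/(\alpha - j + 1)$, so $|D^{j-1}q| \in \RV_{\alpha - j + 1}$; and $D^{j-1}q$, being continuous and eventually non-vanishing, is eventually of constant sign by the intermediate value theorem. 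Equivalently, the step yields the one-step relation $D^j q(t) \sim (\alpha - j + 1)\, t^{-1} D^{j-1}q(t)$.

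Running this induction from $j = k$ downwards then establishes, for every $j \in \{1, \ldots, k\}$, that $D^j q$ is eventually of constant sign with $|D^j q| \in \RV_{\alpha - j}$. It remains to read off \eqref{E:Djq}: the case $j = 1$ is the trivial identity $Dq \sim Dq$, and for $j \geq 2$, chaining the one-step relations $D^i q(t) \sim (\alpha - i + 1) t^{-1} D^{i-1}q(t)$ from $i = j$ down to $i = 2$ gives
\[
	D^j q(t) \sim \Bigl( \prod_{i=2}^{j} (\alpha - i + 1) \Bigr) t^{1-j}\, Dq(t),
\]
and $\prod_{i=2}^{j}(\alpha - i + 1) = (\alpha - 1)(\alpha - 2)\cdots(\alpha - j + 1) = (\alpha - 1)_{j-1}$, which is exactly the claimed constant.

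I expect the only real obstacle to be ruling out a non-zero limit $\ell$ at infinity for the intermediate derivatives; once that is settled, the rest is one application of Karamata's theorem per derivative plus routine bookkeeping of signs and the falling-factorial product.
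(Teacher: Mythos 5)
Your proposal is correct and follows essentially the same route as the paper: the one-step passage from $D^j q$ to $D^{j-1}q$ via integrability (since $\alpha - j < -1$), ruling out a nonzero limit at infinity by integrating up to contradict $Dq(t) = o(1)$, and then Karamata's theorem to get $D^{j-1}q(t) \sim t\,D^j q(t)/(\alpha - j + 1)$, chained into the falling-factorial constant. The paper merely packages the iteration as an induction on $k$ rather than a downward loop in $j$, and cites Karamata where you use elementary integration to exclude $\ell \neq 0$; these are cosmetic differences.
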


\begin{proof}
We proceed by induction on $k$. If $k = 1$, there is nothing to prove. So suppose $k \geq 2$. Then $\alpha - k < 1 - 2 = -1$. Hence $D^k q$ is integrable in a neighbourhood of infinity, and thus $D^{k-1} q(t) \to c \in \RR$. Necessarily $c = 0$, for otherwise, by successive applications of Karamata's theorem, we would have $D^{k-2} q(t) \sim ct$, $D^{k-3} q(t) \sim ct^2/2$, \ldots, $Dq(t) \sim c t^{k-2}/(k-2)!$, in contradiction to the assumption that $Dq(t) = o(1)$. Hence $D^{k-1} q(t) \to c = 0$, and thus, by Karamata's theorem,
\[
	D^{k-1} q(t) = - \int_t^\infty D^k q \sim \frac{t D^k q(t)}{\alpha - k + 1}.
\]
As a consequence, $D^{k-1} q$ is eventually of constant sign and $|D^{k-1} q| \in \RV_{\alpha - k + 1}$. The induction hypothesis yields \eqref{E:Djq} for $j \in \{1, \ldots, k-1\}$. The case $j = k$ follows from
\begin{align*}
	D^k q(t) 
	&\sim (\alpha - k + 1) t^{-1} D^{k-1} q(t) \\
	&\sim \{(\alpha-1) - (k-1) + 1\} t^{-1} (\alpha-1)_{k-2} t^{-k+2} D q(t) \\
	&= (\alpha-1)_{k-1} t^{-k+1} D q(t).
\end{align*}
This concludes the proof of the lemma.
\end{proof}

In a relation of the form $a(t) \sim c \, b(t)$, if $c = 0$, then the meaning is that $a(t) = o(b(t))$. 

\begin{proposition}[(Proposition~\ref{P:qi}, case $0 < \alpha < 1$)]
\label{P:Djqi:1}
Let $q$ be an $(n-1)$-times continuously differentiable function defined in a neighbourhood of infinity such that $Dq(t) = o(1)$. Define $q_1 = q$ and $q_i = q D q_{i-1}$ for $i \in \{2, \ldots, n\}$. If $D^{n-1} q$ is eventually of constant sign and if $|D^{n-1} q| \in \RV_{\alpha - n + 1}$ for some $\alpha \in (0, 1)$, then $q$ is eventually of constant sign, $|q| \in \RV_\alpha$, and for all $i \in \{1, \ldots, n\}$ and all $j \in \{0, \ldots, n-i\}$,
\begin{equation}
\label{E:Djqi:1}
	D^j q_i(t) \sim C_i(j) t^{-i-j+1} \{q(t)\}^i
\end{equation}
with $C_i(j)$ being given recursively by
\begin{align}
\label{E:Cij:1}
	C_1(j) &= (\alpha)_j, &
	C_i(j) &= \sum_{k=0}^j C_1(k) C_{i-1}(j-k+1), \qquad i \in \{2, \ldots, n\}.
\end{align}
In particular, $C_i(0) = \prod_{k=1}^{i-1} (k \alpha - k + 1)$ for $i \in \{1, \ldots, n\}$.
\end{proposition}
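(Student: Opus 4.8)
The natural plan is a double induction, the outer one on $i\in\{1,\dots,n\}$ and the inner one an application of the Leibniz rule. For the base case $i=1$: since $\alpha<1$, Lemma~\ref{L:Djq} applies with $k=n-1$ and yields, for each $j\in\{1,\dots,n-1\}$, that $D^jq$ is eventually of constant sign, $|D^jq|\in\RV_{\alpha-j}$, and $D^jq(t)\sim(\alpha-1)_{j-1}t^{-j+1}Dq(t)$. In particular $|Dq|\in\RV_{\alpha-1}$ with $\alpha-1\in(-1,0)$, so Karamata's theorem applied to the integral of $Dq$ shows that $q$ is itself eventually of constant sign, $|q|\in\RV_\alpha$, and $Dq(t)\sim\alpha t^{-1}q(t)$; feeding the last relation back into the previous display gives $D^jq(t)\sim(\alpha)_jt^{-j}q(t)$ for all $j\in\{0,\dots,n-1\}$ (the case $j=0$ being trivial, $(\alpha)_0=1$), which is \eqref{E:Djqi:1} for $i=1$ with $C_1(j)=(\alpha)_j$.

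For the inductive step, assume \eqref{E:Djqi:1} holds with $i-1$ in place of $i$. Fix $j\in\{0,\dots,n-i\}$ and apply Leibniz's rule to $q_i=q\cdot Dq_{i-1}$:
\[
	D^jq_i=\sum_{k=0}^{j}\binom{j}{k}(D^kq)(D^{j-k+1}q_{i-1}).
\]
For $0\le k\le j$ one has $k\le n-1$ and $1\le j-k+1\le n-i+1$, so the base case governs $D^kq$ and the induction hypothesis governs $D^{j-k+1}q_{i-1}$. Multiplying the two equivalences, the powers of $t$ collapse to $t^{-k}\cdot t^{-(i-1)-(j-k+1)+1}=t^{-i-j+1}$, independently of $k$, and the powers of $q(t)$ to $\{q(t)\}^i$, which is eventually non-zero since $q$ is eventually of constant sign. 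Hence each summand divided by $t^{-i-j+1}\{q(t)\}^i$ converges, and so does $D^jq_i(t)/\bigl(t^{-i-j+1}\{q(t)\}^i\bigr)$; its limit is the right-hand side of \eqref{E:Cij:1}, which we take as the definition of $C_i(j)$. This establishes \eqref{E:Djqi:1} for $i$; by the paper's convention no non-vanishing of the $C_i(j)$ is required, the relation simply reading $D^jq_i(t)=o(t^{-i-j+1}\{q(t)\}^i)$ when $C_i(j)=0$.

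Finally, $C_i(0)=\prod_{k=1}^{i-1}(k\alpha-k+1)$ follows either by unwinding \eqref{E:Cij:1} (via the Chu--Vandermonde identity for falling factorials, which in fact gives the closed form $C_i(j)=\bigl(\prod_{k=1}^{i-1}(k\alpha-k+1)\bigr)(i\alpha-i+1)_j$) or, more cheaply, by observing that the $C_i(j)$ depend only on $\alpha$ and evaluating them on the model function $q(t)=t^\alpha$, for which a one-line induction gives $q_i(t)=\bigl(\prod_{k=1}^{i-1}(k\alpha-k+1)\bigr)t^{-i+1}\{q(t)\}^i$ identically. The one step requiring real care is the inductive step: the argument hinges on the synchronising relations $D^kq(t)\sim C_1(k)t^{-k}q(t)$, which force every summand in the Leibniz expansion to contribute at the common order $t^{-i-j+1}\{q(t)\}^i$, so that the limit is the entire sum rather than a single dominant term; once this is seen, reading off \eqref{E:Cij:1} is mechanical, and the rest is routine composition of regular-variation asymptotics.
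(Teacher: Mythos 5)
Your proof is correct and follows essentially the same route as the paper's: induction on $i$, with Lemma~\ref{L:Djq} plus Karamata's theorem giving the base case $D^jq(t)\sim(\alpha)_j\,t^{-j}q(t)$, and Leibniz's rule giving the inductive step, while your evaluation of the constants on the model $q(t)=t^\alpha$ is the paper's generating-function argument in disguise, since $\Psi_i(z)=q_i(1+z)$ for exactly that model. One remark applying equally to both write-ups: the Leibniz expansion carries the factor $\binom{j}{k}$, so the limit is $\sum_{k=0}^{j}\binom{j}{k}C_1(k)C_{i-1}(j-k+1)$ rather than the right-hand side of \eqref{E:Cij:1} as printed (which omits the binomial coefficients); with them included, the closed form $C_i(j)=\bigl(\prod_{k=1}^{i-1}(k\alpha-k+1)\bigr)(i\alpha-i+1)_j$ that you state is exactly right.
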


Note that $C_i(0) = 0$ if $i \geq 3$ and $\alpha = (m-1) / m$ for some integer $m \in \{2, \ldots, i-1\}$.

\begin{proof}
We proceed by induction on $i$.

First, suppose $i = 1$. Lemma~\ref{L:Djq} applies, yielding \eqref{E:Djq}. Since $Dq$ is eventually of constant sign and since $|Dq|$ is regularly varying of index $\alpha - 1 > -1$, Karamata's theorem yields $q(t) \sim \alpha^{-1} t Dq(t)$. Hence $q$ is eventually of constant sign too and $|q| \in \RV_\alpha$. Moreover, $Dq(t) \sim \alpha t^{-1} q(t)$, which in combination with \eqref{E:Djq} gives 
\[
	D^j q(t) \sim (\alpha-1)_{j-1} t^{-j+1} \alpha t^{-1} q(t)
	= (\alpha)_j t^{-j} q(t),
\]
which is \eqref{E:Djqi:1} for $i = 1$.

Second, suppose $i \in \{2, \ldots, n\}$. Let $j \in \{0, \ldots n-i\}$. Apply Leibniz' product rule to see that
\[
	D^j q_i = D^j (q Dq_{i-1}) = \sum_{k=0}^j \binom{j}{k} (D^k q) (D^{j-k+1} q_{i-1}).
\]
By the induction hypothesis, for $k \in \{0, \ldots, j\}$,
\begin{align*}
	D^k q(t) \cdot D^{j-k+1} q_{i-1}(t)
	&\sim C_1(k) t^{-k} q(t) \cdot C_{i-1}(j-k+1) t^{-(j-k+1)-(i-1)+1} \{q(t)\}^{i-1} \\
	&= C_1(k) C_{i-1}(j-k+1) t^{-i-j+1} \{q(t)\}^i.
\end{align*}
Sum over all $k$ to arrive at \eqref{E:Djqi:1} with $C_i(j)$ as in \eqref{E:Cij:1}.

To arrive at the stated expression for $C_i(0)$, proceed as follows. Define functions $\Psi_i$ and coefficients $C_i^*(j)$ by
\[
	\Psi_i(z) 
	= \biggl(\prod_{k=1}^{i-1} (k\alpha - k + 1)\biggr)(1 + z)^{i\alpha - i + 1}
	= \sum_{j=0}^\infty \frac{C_i^*(j)}{j!} z^j, \qquad -1 < z < 1.
\]
Then $\Psi_1(z) = (1+z)^\alpha$ so $C_1^*(j) = (\alpha)_j = C_1(j)$. Moreover, it is not hard to see that $\Psi_i(z) = \Psi_1(z) \, D\Psi_{i-1}(z)$ for $i \geq 2$. Comparing coefficients in the corresponding power series yields $C_i^*(j) = \sum_{k=0}^j C_1^*(k) C_{i-1}^*(j-k+1)$, which is the same recursion relation as in \eqref{E:Cij:1}. As a consequence, $C_i^*(j) = C_i(j)$ for all $i$ and $j$. But then $C_i(0) = C_i^*(0) = \Psi_i(0) = \prod_{k=1}^{i-1} (k\alpha - k + 1)$.
\end{proof}

\begin{proposition}[(Proposition~\ref{P:qi}, case $\alpha = 0$)]
\label{P:Djqi:2}
Let $q$ be an $(n-1)$-times continuously differentiable function defined in a neighbourhood of infinity such that $Dq(t) = o(1)$. Define $q_1 = q$ and $q_i = q D q_{i-1}$ for $i \in \{2, \ldots, n\}$. If $D^{n-1} q$ is eventually of constant sign and if $|D^{n-1} q| \in \RV_{-n+1}$, then $q \in \Pi$ with auxiliary function $t \mapsto t Dq(t)$, and for all $i \in \{1, \ldots, n\}$ and all $j \in \{0, \ldots, n-i\}$ such that $i + j \geq 2$,
\begin{equation}
\label{E:Djqi:2}
	D^j q_i(t) \sim (-1)^{i+j} (i+j-2)! t^{-i-j+2} \{q(t)\}^{i-1} Dq(t).
\end{equation}
\end{proposition}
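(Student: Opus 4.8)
The plan is to mirror the proof of Proposition~\ref{P:Djqi:1}: I would read off the case $i=1$ from Lemma~\ref{L:Djq}, verify separately that $q\in\Pi$ with the claimed auxiliary function, and then prove \eqref{E:Djqi:2} by induction on $i$, applying Leibniz' rule to $q_i=q\,Dq_{i-1}$. The one genuinely new input in the case $\alpha=0$ is the estimate $tDq(t)=o(q(t))$, which is what forces a single term of each Leibniz expansion to dominate.

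\textbf{Base case and $\Pi$-variation of $q$.} Since $\alpha=0<1$, Lemma~\ref{L:Djq} (applied with its ``$k$'' equal to $n-1$) gives, for every $j\in\{1,\ldots,n-1\}$, that $D^jq$ is eventually of constant sign, that $|D^jq|\in\RV_{-j}$, and that
\[
	D^jq(t)\sim(-1)_{j-1}\,t^{-j+1}Dq(t)=(-1)^{j+1}(j-1)!\,t^{-j+1}Dq(t),
\]
which is precisely \eqref{E:Djqi:2} for $i=1$ (there $\{q(t)\}^{0}=1$ and $(-1)^{1+j}=(-1)^{j+1}$). In particular $Dq$ is eventually of constant sign with $|Dq|\in\RV_{-1}$, so $t\mapsto tDq(t)$ is up to its sign slowly varying; writing $q(xt)-q(t)=t\int_1^xDq(ut)\,\du$ and dividing by $tDq(t)$, the uniform convergence theorem for regularly varying functions \citep[Theorem~1.2.1]{BGT} gives $\{q(xt)-q(t)\}/\{tDq(t)\}\to\int_1^xu^{-1}\,\du=\log x$ for all $x\in(0,\infty)$, i.e.\ $q\in\Pi$ with auxiliary function $t\mapsto tDq(t)$.

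\textbf{The domination estimate.} I would next prove $tDq(t)=o(q(t))$. As $Dq$ has eventually constant sign, $q$ is eventually monotone, hence has a limit $L\in[-\infty,+\infty]$. If $L$ is finite and nonzero this is trivial, since $tDq(t)\to0$ while $q(t)\to L\neq0$. Otherwise I would write $q(t)$ as $q(t_1)\pm\int_{t_1}^t|Dq|$ (when $L=\pm\infty$, so that the integral diverges) or as $L\mp\int_t^\infty|Dq|$ (when $L$ is finite, so that the integral converges), and apply Karamata's theorem in the boundary case $\RV_{-1}$: in the divergent case $t\mapsto\int_{t_1}^t|Dq|$ is slowly varying and $t|Dq(t)|=o\bigl(\int_{t_1}^t|Dq|\bigr)$, while in the convergent case $t\mapsto\int_t^\infty|Dq|$ is slowly varying and $t|Dq(t)|=o\bigl(\int_t^\infty|Dq|\bigr)$. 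Either way $tDq(t)=o(q(t))$ — the familiar fact that the auxiliary function of a $\Pi$-varying function is of smaller order than the function itself.

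\textbf{Induction on $i$.} Fixing $i\in\{2,\ldots,n\}$ and $j\in\{0,\ldots,n-i\}$ and assuming \eqref{E:Djqi:2} for $i-1$ (for all admissible arguments), I would expand
\[
	D^jq_i=D^j(q\,Dq_{i-1})=\sum_{k=0}^j\binom{j}{k}(D^kq)\,(D^{j-k+1}q_{i-1}).
\]
For $k=0$ the summand is $q\cdot D^{j+1}q_{i-1}$, and the induction hypothesis (applied with $i-1$ and $j+1$, legitimate since $(i-1)+(j+1)=i+j\geq2$ and $j+1\leq n-i+1$) gives
\[
	q(t)\,D^{j+1}q_{i-1}(t)\sim(-1)^{i+j}(i+j-2)!\,t^{-i-j+2}\{q(t)\}^{i-1}Dq(t),
\]
with a constant that is never zero. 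For $k\geq1$, multiplying the base-case asymptotic $D^kq(t)\sim(-1)^{k-1}(k-1)!\,t^{-k+1}Dq(t)$ by the induction hypothesis for $D^{j-k+1}q_{i-1}$ shows that each such summand is asymptotic to a constant times $t^{-i-j+3}\{q(t)\}^{i-2}\{Dq(t)\}^2$, which by the domination estimate is $o\bigl(t^{-i-j+2}\{q(t)\}^{i-1}Dq(t)\bigr)$, i.e.\ a factor $tDq(t)/q(t)$ smaller than the $k=0$ summand. Adding the finitely many summands, the $k=0$ term dominates and \eqref{E:Djqi:2} follows for $i$. The rest is routine bookkeeping: the arguments $j-k+1$ and $j+1$ all lie in $\{0,\ldots,n-(i-1)\}$, so the induction hypothesis may be invoked; $i+j-k-2\geq0$, so all factorials are defined; and $q$ is eventually nonzero (being strictly monotone), so the equivalences are genuine ratio limits.

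\textbf{Main obstacle.} The only step that is not pure bookkeeping is the domination estimate $tDq(t)=o(q(t))$. It plays the role that, in Proposition~\ref{P:Djqi:1}, was played by the fact that only the $k=0$ Leibniz term matters, except that here it must be extracted from $\Pi$-variation of $q$ rather than from $|q|\in\RV_\alpha$ with $\alpha>0$. Once it is in hand the induction runs mechanically, with the nonvanishing constant $(-1)^{i+j}(i+j-2)!$ taking over the part played by $C_i(0)$ in Proposition~\ref{P:Djqi:1}.
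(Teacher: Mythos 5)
Your proposal is correct and follows essentially the same route as the paper's proof: the base case $i=1$ from Lemma~\ref{L:Djq}, the $\Pi$-variation of $q$ with auxiliary function $t \mapsto t\,Dq(t)$, and an induction on $i$ via Leibniz' rule in which the $k=0$ term dominates because $t\,Dq(t) = o(q(t))$. The only difference is that you spell out the Karamata argument for the domination estimate $t\,Dq(t)=o(q(t))$, which the paper simply invokes as a known consequence of $\Pi$-variation.
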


\begin{proof}
By Lemma~\ref{L:Djq}, the function $Dq$ is eventually of constant sign and $|Dq| \in \RV_{-1}$. Hence $q \in \Pi$ with auxiliary function $t \mapsto t Dq(t)$. To show \eqref{E:Djqi:2}, we proceed by induction on $i$. 

First, suppose $i = 1$. Then the statement is that for all $j \in \{1, \ldots, n\}$ we have $D^j q(t) \sim (-1)^{j+1} (j-1)! t^{-j+1} Dq(t)$. But as $(-1)^{j+1} (j-1)! = (-1)_{j-1}$, this is just \eqref{E:Djq} in Lemma~\ref{L:Djq}.

Second, suppose $i \in \{2, \ldots, n\}$. Then the range of $j$ is $\{0, \ldots, n-i+1\}$. By Leibniz' product rule,
\[
	D^j q_i = D^j (q Dq_{i-1}) = \sum_{k=0}^j \binom{j}{k} (D^k q) (D^{j-k+1} q_{i-1}).
\]
By the induction hypothesis, the term corresponding to $k = 0$ is
\begin{align*}
	q(t) D^{j+1} q_{i-1}(t) 
	&\sim q(t) (-1)^{(i-1)+(j+1)} \{(i-1)+(j+1)-2\}! t^{-(i-1)-(j+1)+2} \{q(t)\}^{i-2} Dq(t) \\
	&= (-1)^{i+j} (i+j-2)! t^{-i-j+2} \{q(t)\}^{i-1} Dq(t).
\end{align*}
The terms corresponding to $k \in \{1, \ldots, j\}$ are of smaller order: by the induction hypothesis,
\begin{align*}
	D^k q(t) \cdot D^{j-k+1} q_{i-1}(t)
	&= O \bigl( t^{-k+1} Dq(t) \bigr) O \bigl( t^{-(j-k+1)-(i-1)+2} \{q(t)\}^{i-2} Dq(t) \bigr) \\
	&= O \bigl( t^{-i-j+2} \{q(t)\}^{i-1} Dq(t) \bigl) O \biggl( \frac{tDq(t)}{q(t)} \biggr).
\end{align*}
Since $t Dq(t) = o(q(t))$, indeed $D^k q(t) \cdot D^{j-k+1} q_{i-1}(t) = o\bigl(t^{-i-j+2} \{q(t)\}^{i-1} Dq(t)\bigr)$.
\end{proof}

\begin{proposition}[(Proposition~\ref{P:qi}, case $\alpha < 0$)]
\label{P:Djqi:3}
Let $q$ be an $(n-1)$-times continuously differentiable function defined in a neighbourhood of infinity such that $Dq(t) = o(1)$. Define $q_1 = q$ and $q_i = q D q_{i-1}$ for $i \in \{2, \ldots, n\}$. If $D^{n-1} q$ is eventually of constant sign and if $|D^{n-1} q| \in \RV_{\alpha - n + 1}$ for some $\alpha \in (-\infty, 0)$, then $q(\infty) = \lim_{t \to \infty} q(t)$ exists in $\RR$, the function $q - q(\infty)$ is eventually of constant sign, $|q - q(\infty)| \in \RV_\alpha$, and now:
\begin{itemize}
\item[(i)] If $q(\infty) = 0$, then \eqref{E:Djqi:1} and \eqref{E:Cij:1} in Proposition~\ref{P:Djqi:1} are valid.
\item[(ii)] If $q(\infty) \neq 0$, then for all $i \in \{1, \ldots, n\}$ and all $j \in \{0, \ldots, n-i\}$ such that $i + j \geq 2$,
\begin{equation}
\label{E:Djqi:3}
	D^j q_i(t) \sim (\alpha-1)_{i+j-2} \{q(\infty)\}^{i-1} t^{-i-j+2} Dq(t).
\end{equation}
\end{itemize}
\end{proposition}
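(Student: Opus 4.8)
The plan is to establish the three assertions of the proposition in order: first the existence of $q(\infty)$ together with the regular variation of $q-q(\infty)$, then case~(i), then case~(ii). Throughout I would lean on Lemma~\ref{L:Djq} and on Karamata's theorem, and — crucially — observe that case~(i) is subsumed by the already-proven Proposition~\ref{P:Djqi:1}.

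First I would apply Lemma~\ref{L:Djq} with $k=n-1$: since $\alpha<0<1$ and $|D^{n-1}q|\in\RV_{\alpha-n+1}$, the lemma yields, for every $j\in\{1,\ldots,n-1\}$, that $D^jq$ is eventually of constant sign, that $|D^jq|\in\RV_{\alpha-j}$, and that $D^jq(t)\sim(\alpha-1)_{j-1}t^{-j+1}Dq(t)$. In particular $|Dq|\in\RV_{\alpha-1}$ with $\alpha-1<-1$, so $Dq$ is absolutely integrable near infinity; hence $q(t)=q(t_0)+\int_{t_0}^tDq$ converges to a finite limit $q(\infty)$, and $q(t)-q(\infty)=-\int_t^\infty Dq$. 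Since $Dq$ is eventually of constant sign, $q$ is eventually strictly monotone, so $q-q(\infty)$ is eventually of constant sign, and Karamata's theorem applied to the tail integral gives $q(t)-q(\infty)\sim\alpha^{-1}t\,Dq(t)$; as $t\,Dq(t)\in\RV_\alpha$, this shows $|q-q(\infty)|\in\RV_\alpha$. (The values $(m-1)/m$ with $m\ge2$ are all positive, so for $\alpha<0$ the exclusion appearing in Proposition~\ref{P:qi} is vacuous and plays no role here.)

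For case~(i), $q(\infty)=0$, so $q=q-q(\infty)$ is eventually of constant sign with $|q|\in\RV_\alpha$, and $q(t)\sim\alpha^{-1}t\,Dq(t)$ rearranges to $Dq(t)\sim\alpha t^{-1}q(t)$; combined with $D^jq(t)\sim(\alpha-1)_{j-1}t^{-j+1}Dq(t)$ this gives $D^jq(t)\sim(\alpha)_jt^{-j}q(t)$, which is \eqref{E:Djqi:1}--\eqref{E:Cij:1} for $i=1$. These are precisely the facts about $q$ and its derivatives used to start the induction in the proof of Proposition~\ref{P:Djqi:1}; that proof invokes the hypothesis $0<\alpha<1$ only to derive them (via the increasing form of Karamata's theorem, whereas here I used the tail form), so its remaining steps — the induction on $i$ via Leibniz' rule and the recursion \eqref{E:Cij:1}, together with the generating-function computation giving $C_i(0)=\prod_{k=1}^{i-1}(k\alpha-k+1)$ — transfer verbatim, establishing~(i).

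For case~(ii), $q(\infty)\neq0$, I would induct on $i$. The base case $i=1$ forces $j\ge1$ and is exactly \eqref{E:Djq} from Lemma~\ref{L:Djq}, since $\{q(\infty)\}^0=1$. For $i\ge2$, apply Leibniz' rule to $q_i=q\,Dq_{i-1}$, namely $D^jq_i=\sum_{k=0}^j\binom{j}{k}(D^kq)(D^{j-k+1}q_{i-1})$. The $k=0$ term is $q(t)\,D^{j+1}q_{i-1}(t)$; here $q(t)\to q(\infty)$ and, by the induction hypothesis (applicable since $(i-1)+(j+1)=i+j\ge2$ and $j+1\le n-(i-1)$), $D^{j+1}q_{i-1}(t)\sim(\alpha-1)_{i+j-2}\{q(\infty)\}^{i-2}t^{-i-j+2}Dq(t)$, so this term is $\sim(\alpha-1)_{i+j-2}\{q(\infty)\}^{i-1}t^{-i-j+2}Dq(t)$, the asserted leading behaviour. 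For $k\in\{1,\ldots,j\}$ the corresponding term is $O(t^{-k+1}Dq(t))\cdot O(t^{-i-j+k+2}Dq(t))=O(t^{-i-j+3}(Dq(t))^2)=o(t^{-i-j+2}Dq(t))$, since $t\,Dq(t)\in\RV_\alpha$ with $\alpha<0$ tends to zero; hence these terms are negligible, and since $q(\infty)\neq0$ the leading coefficient is nonzero, so $D^jq_i$ is eventually of constant sign with $|D^jq_i|$ regularly varying. This last error estimate is the one spot requiring care — the \emph{main obstacle} in a bookkeeping, rather than conceptual, sense: it relies precisely on $Dq(t)=o(1)$ (indeed $t\,Dq(t)\to0$) to let the single $k=0$ term dominate, and on keeping the derivative orders of $q_{i-1}$ within the range covered by the induction hypothesis. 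Apart from it, the whole proof is a routine combination of Lemma~\ref{L:Djq}, Karamata's theorem, and, in case~(i), Proposition~\ref{P:Djqi:1}.
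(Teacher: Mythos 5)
Your proposal is correct and follows essentially the same route as the paper: Lemma~\ref{L:Djq} plus the tail form of Karamata's theorem for the existence of $q(\infty)$ and the relation $q(t)-q(\infty)\sim\alpha^{-1}tDq(t)$, reduction of case~(i) to the argument of Proposition~\ref{P:Djqi:1} via $Dq(t)\sim\alpha t^{-1}q(t)$, and for case~(ii) an induction on $i$ through Leibniz' rule in which the $k=0$ term dominates and the $k\geq1$ terms are absorbed because $tDq(t)=o(1)$. Your bookkeeping of the admissible derivative orders in the induction hypothesis is, if anything, slightly more careful than the paper's.
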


Note that \eqref{E:Djqi:2} can be viewed as the case $\alpha = 0$ of \eqref{E:Djqi:3}.

\begin{proof}
By Lemma~\ref{L:Djq}, $Dq$ is eventually of constant sign and $|Dq| \in \RV_{\alpha-1}$. Since $\alpha-1 < -1$, the function $Dq$ is integrable in a neighbourhood of infinity. Hence $q(t) \to q(\infty) \in \RR$, and by Karamata's theorem,
\[
	q(t) - q(\infty) = - \int_t^\infty Dq \sim \frac{t Dq(t)}{\alpha}.
\]
As a consequence, $q - q(\infty)$ is eventually of constant sign too and $|q - q(\infty)| \in \RV_\alpha$.

\textit{(i)} If $q(\infty) = 0$, then the above display becomes $Dq(t) \sim \alpha t^{-1} q(t)$ and we can proceed in exactly the same way as in Proposition~\ref{P:Djqi:1}.

\textit{(ii)} Suppose $q(\infty) \neq 0$. To show \eqref{E:Djqi:3}, we proceed by induction on $i$.

First, if $i = 1$, then the statement is that for all $j \in \{1, \ldots, n\}$ we have $D^j q(t) \sim (\alpha-1)_{j-1} t^{-j+1} Dq(t)$. But this is just \eqref{E:Djq} in Lemma~\ref{L:Djq}.

Second, suppose $i \in \{2, \ldots, n\}$. Then the range of $j$ is $\{0, \ldots, n-i+1\}$. By Leibniz' product rule,
\[
	D^j q_i = D^j (q Dq_{i-1}) = \sum_{k=0}^j \binom{j}{k} (D^k q) (D^{j-k+1} q_{i-1}).
\]
By the induction hypothesis, the term corresponding to $k = 0$ is
\begin{align*}
	q(t) D^{j+1} q_{i-1}(t) 
	&\sim q(\infty) (\alpha-1)_{(j+1)+(i-1)-2} t^{-(i-1)-(j+1)+2} \{q(\infty)\}^{i-2} Dq(t) \\
	&= (\alpha-1)_{j+i-2} t^{-i-j+2} \{q(\infty)\}^{i-1} Dq(t).
\end{align*}
The terms corresponding to $k \in \{1, \ldots, j\}$ are of smaller order: by the induction hypothesis,
\begin{align*}
	D^k q(t) \cdot D^{j-k+1} q_{i-1}(t)
	&= O \bigl( t^{-k+1} Dq(t) \bigr) O \bigl( t^{-(j-k+1)-(i-1)+2} Dq(t) \bigr) \\
	&= O \bigl( t^{-i-j+2} Dq(t) \bigr) O\bigl( tDq(t) \bigr).
\end{align*}
Since $t Dq(t) = o(1)$, indeed $D^k q(t) \cdot D^{j-k+1} q_{i-1}(t) = o\bigl(t^{-i-j+2} Dq(t)\bigr)$.
\end{proof}

\begin{proposition}[(Proposition~\ref{P:qi}, case $\alpha = 1$)]
\label{P:Djqi:4}
Let $q$ be an $(n-1)$-times continuously differentiable function defined in a neighbourhood of infinity such that $Dq(t) = o(1)$. Define $q_1 = q$ and $q_i = q D q_{i-1}$ for $i \in \{2, \ldots, n\}$. If $D^{n-1} q$ is eventually of constant sign and if $|D^{n-1} q| \in \RV_{- n + 2}$, then $q \in \RV_1$ and for $i \in \{1, \ldots, n\}$ and $j \in \{0, \ldots, n-i\}$,
\begin{equation}
\label{E:Djqi:4}
	D^j q_i(t) \sim 
	\begin{cases}
		t^{-i-j+1} \{q(t)\}^i & \mbox{if $j \leq 1$,} \\
		i(-1)^{j-2} (j-2)! t^{-i-j+1} \{q(t)\}^i H(t) & \mbox{if $j \geq 2$.}
	\end{cases}
\end{equation}
where $H(t) = t D^2 q(t) / Dq(t)$ is eventually of constant sign, $|H| \in \RV_0$, and $H(t) = o(1)$.
\end{proposition}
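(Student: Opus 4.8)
The plan is to follow the two-stage pattern of Propositions~\ref{P:Djqi:1}--\ref{P:Djqi:3}: first settle the case $i=1$ (the behaviour of $q$ and its derivatives), then induct on $i$ via Leibniz' rule. The new difficulty is that Lemma~\ref{L:Djq} cannot be applied to $q$ directly, since its hypothesis requires index $\alpha<1$ while here $\alpha=1$. The remedy is to apply Lemma~\ref{L:Djq} to $Dq$ rather than to $q$: the function $Dq$ is $(n-2)$-times continuously differentiable, $D^{n-2}(Dq)=D^{n-1}q$ is eventually of constant sign, and $|D^{n-2}(Dq)|=|D^{n-1}q|\in\RV_{2-n}=\RV_{0-(n-2)}$, so the hypotheses of Lemma~\ref{L:Djq} hold with ``$k$''$=n-2$ and ``$\alpha$''$=0$, once one knows that $D(Dq)=D^2q=o(1)$. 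This last point is immediate for $n=3$ (then $D^2q\in\RV_{-1}$), and for $n\geq4$ it follows from the chain argument used inside Lemma~\ref{L:Djq}: $D^{n-1}q$ is integrable near infinity, so $D^{n-2}q$ has a finite limit, which must be $0$ (otherwise $Dq$ would be unbounded, contradicting $Dq=o(1)$), and iterating downwards gives $D^mq\to0$ and $|D^mq|\in\RV_{1-m}$ for $m\in\{2,\ldots,n-1\}$. Lemma~\ref{L:Djq} applied to $Dq$ then yields that each $D^mq$, $m\in\{2,\ldots,n-1\}$, is eventually of constant sign, that $|D^mq|\in\RV_{1-m}$, and that
\[
	D^mq(t)\sim(-1)_{m-2}\,t^{2-m}D^2q(t)=(-1)^{m-2}(m-2)!\,t^{2-m}D^2q(t).
\]

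Next I would establish the stated properties of $q$ and $H$ and finish the case $i=1$. Since $D^2q$ is eventually of constant sign with $|D^2q|\in\RV_{-1}$ and $Dq\to0$, we have $Dq(t)=-\int_t^\infty D^2q(s)\,\mathrm{d}s$, whence $|Dq|\in\RV_0$ by Karamata's theorem; being a positive slowly varying function it is non-integrable, so $q$ tends to $+\infty$ or $-\infty$, and Karamata's theorem gives $q(t)\sim\int_a^tDq(s)\,\mathrm{d}s\sim tDq(t)$, that is, $q\in\RV_1$. The function $H(t)=tD^2q(t)/Dq(t)$ is then eventually of constant sign and $|H|=t|D^2q|/|Dq|\in\RV_{1+(-1)-0}=\RV_0$; finally $H(t)=o(1)$ because $t|D^2q(t)|=o(|Dq(t)|)$: indeed $|Dq(t)|=\int_t^\infty|D^2q|\geq\int_t^{2t}|D^2q|$, which by uniform convergence of the $\RV_{-1}$ function $|D^2q|$ is bounded below by a positive multiple of $t|D^2q(t)|$, while $\int_t^{2t}|D^2q|/|Dq(t)|=1-|Dq(2t)|/|Dq(t)|\to0$ by slow variation of $|Dq|$. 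Writing $D^2q=t^{-1}Dq\,H$ and using $Dq\sim t^{-1}q$, the displayed asymptotics become $D^jq(t)\sim(-1)^{j-2}(j-2)!\,t^{-j}q(t)H(t)$ for $j\in\{2,\ldots,n-1\}$, which is \eqref{E:Djqi:4} for $i=1$; the cases $j\in\{0,1\}$ are $q\sim q$ and $Dq\sim t^{-1}q$. (For $n=2$ the clause on $H$ is vacuous and only $q\in\RV_1$, hence $Dq\sim t^{-1}q$, is needed.)

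For $i\geq2$ I would induct on $i$ as in Propositions~\ref{P:Djqi:2}--\ref{P:Djqi:3}, expanding
\[
	D^jq_i=D^j(q\,Dq_{i-1})=\sum_{k=0}^j\binom{j}{k}(D^kq)(D^{j-k+1}q_{i-1})
\]
and substituting the induction hypothesis for $q_{i-1}$ together with the $i=1$ asymptotics just obtained. For $j=0$ the single term $k=0$ gives $q_i\sim q\cdot t^{-(i-1)}q^{i-1}=t^{-i+1}q^i$, and for $j=1$ the terms $k=0,1$ give $Dq_i\sim t^{-1}q\cdot t^{-(i-1)}q^{i-1}=t^{-i}q^i$ (the remaining term carries a factor $H=o(1)$); this is \eqref{E:Djqi:4} for $j\leq1$. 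For $j\geq2$, comparing powers of $t$ and of $H$ shows that the dominant contributions come precisely from $k\in\{0,1,j\}$, each of order $t^{-i-j+1}q^iH$, while every other term ($2\leq k\leq j-1$) carries an extra factor $H$ and so is negligible; adding the three leading coefficients, which by the falling-factorial identity $(-1)^{j-1}(j-1)!=-(j-1)(-1)^{j-2}(j-2)!$ combine to
\[
	-(i-1)(j-1)+j(i-1)+1=i,
\]
one obtains $D^jq_i(t)\sim i(-1)^{j-2}(j-2)!\,t^{-i-j+1}\{q(t)\}^iH(t)$, as claimed. Since every $q_i$ is now seen to be asymptotically a nonzero constant times a product of an integer power of $t$, a power of $q\in\RV_1$ and possibly $H$ with $|H|\in\RV_0$, each $q_i$ is eventually of constant sign with regularly varying absolute value, which is the assertion of Proposition~\ref{P:qi} in the present case (and explains why no exclusion on $\alpha$ is needed here).

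The main obstacle is the first stage: recognising that Lemma~\ref{L:Djq}, inapplicable to $q$ itself, becomes applicable after one differentiation, and then extracting the three properties of $H$ — especially $H=o(1)$, a borderline $\RV_{-1}$ phenomenon which does not follow from Karamata's theorem in the form quoted elsewhere in the paper and must be argued by hand as above. In the Leibniz induction the only real trap is bookkeeping: the term $k=j$, arising from $(D^jq)(Dq_{i-1})$, is easy to overlook and is exactly what turns the coefficient $i-1$ coming from $k=0,1$ alone into the correct $i$.
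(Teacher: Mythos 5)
Your proposal is correct and takes essentially the same route as the paper's proof: first the chain of derivative asymptotics for $q$ (the paper proves the analogue of Lemma~\ref{L:Djq} directly via \eqref{E:Djq:4}, you obtain it by applying Lemma~\ref{L:Djq} to $Dq$ with $\alpha=0$ after checking $D^2q=o(1)$), then the properties of $H$ (the paper via $Dq\in\Pi$ with auxiliary function $t\mapsto tD^2q(t)$, you via a direct Karamata-type estimate comparing $\int_t^{2t}|D^2q|$ with $|Dq(t)|$), and finally the same Leibniz induction on $i$ with dominant terms $k\in\{0,1,j\}$ whose coefficients sum to $i$. These are only cosmetic variations, so nothing further is needed.
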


\begin{proof}
If $n = 2$ then the case $j \geq 2$ cannot occur, whereas the case $j \leq 1$ follows from Karamata's theorem, which stipulates that $q(t) \sim t Dq(t)$, yielding $Dq(t) \sim t^{-1} q(t)$ and $q_2(t) = q(t) Dq(t) \sim t^{-1} \{q(t)\}^2$.

Next assume $n \geq 3$. Then $D^2 q$ is eventually of constant sign and $|D^2 q| \in \RV_{-1}$; for $n = 3$ this follows by assumption, while if $n \geq 4$, this follows from
\begin{equation}
\label{E:Djq:4}
	D^j q(t) \sim (-1)_{j-2} t^{-j+2} D^2 q(t), \qquad j \in \{2, \ldots, n\},
\end{equation}
which can be shown in a similar way as in Lemma~\ref{L:Djq}. As a consequence, $Dq$ is in the class $\Pi$ with auxiliary function $t \mapsto t D^2 q(t)$. In particular, $Dq$ is also eventually of constant sign, $|Dq| \in \RV_0$, and $t D^2q(t) = o \bigl( Dq(t) \bigr)$. The statements about $H$ follow. By Karamata's theorem, $q(t) \sim t Dq(t)$. To show \eqref{E:Djqi:4}, we proceed by induction on $i$.

First if $i = 1$, then \eqref{E:Djqi:4} is just $q(t) \sim q(t)$ for $j = 0$, which is trivial, $Dq(t) \sim t^{-1} q(t)$ for $j = 1$, which is Karamata's theorem, and $D^j q(t) \sim (-1)^{j-2} (j-2)! t^{-j} q(t) H(t) \sim (-1)_{j-2} t^{-j+2} D^2 q(t)$ for $j \geq 2$, which is \eqref{E:Djq:4}.

Second suppose $i \geq 2$. If $j = 0$, then by the induction hypothesis,
\begin{align*}
	q_i(t) 
	&= q(t) Dq_{i-1}(t) \\
	&\sim q(t) t^{-(i-1)-1+1} \{q(t)\}^{i-1} = t^{-i+1} \{q(t)\}^i.
\end{align*}
If $j = 1$, then by the induction hypothesis and the fact that $H(t) = o(1)$,
\begin{align*}
	Dq_i(t) &= D \bigl( q(t) Dq_{i-1}(t) \bigr) 
	= Dq(t) Dq_{i-1}(t) + q(t) D^2 q_{i-1}(t) \\
	&= t^{-1} q(t) \cdot t^{-i+1} \{q(t)\}^{i-1} \{1 + o(1)\} 
	+ q(t) \cdot (i-1) t^{-(i-1)-2+1} \{q(t)\}^{i-1} H(t) \{1 + o(1)\} \\
	&\sim t^{-i} \{q(t)\}^i.
\end{align*}
If $j \geq 2$, then as before we start from Leibniz' product rule:
\[
	D^j q_i = D^j (q Dq_{i-1}) = \sum_{k=0}^j \binom{j}{k} (D^k q) (D^{j-k+1} q_{i-1}).
\]
The terms $k \in \{0, 1, j\}$ will contribute to the asymptotics, while the other ones will turn out to be asymptotically negligible.
\begin{itemize}
\item \underline{Case $k=0$:} By the induction hypothesis,
\begin{align*}
	\binom{j}{0} q(t) \cdot D^{j+1} q_{i-1}(t) 
	&\sim q(t) \cdot (i-1) (-1)^{j-1} (j-1)! t^{-i-j+1} \{q(t)\}^{i-1} H(t) \\
	&= (i-1) (-1)^{j-1} (j-1)! t^{-i-j+1} \{q(t)\}^i H(t).
\end{align*}
\item \underline{Case $k=1$:} By the induction hypothesis,
\begin{align*}
	\binom{j}{1} Dq(t) \cdot D^j q_{i-1}(t)
	&\sim j t^{-1} q(t) \cdot (i-1) (-1)^{j-2} (j-2)! t^{-(i-1)-j+1} \{q(t)\}^{i-1} H(t) \\
	&= (i-1) (-1)^{j-2} (j-2)! j t^{-i-j+1} \{q(t)\}^i H(t).
\end{align*}
\item \underline{Case $2 \leq k \leq j-1$:} By the induction hypothesis, since $k \geq 2$ and $j-k+1 \geq 2$ and since $H(t) = o(1)$,
\begin{align*}
	D^k q(t) \cdot D^{j-k+1} q_{i-1}(t)
	&= O \bigl( t^{-1-k+1} q(t) H(t) \cdot t^{-(i-1)-(j-k+1)+1} \{q(t)\}^{i-1} H(t) \bigr) \\
	&= O \bigl( t^{-i-j+1} \{q(t)\}^i H^2(t) \bigr) \\
	&= o \bigl( t^{-i-j+1} \{q(t)\}^i H(t) \bigr).
\end{align*}
\item \underline{Case $k = j$:} By the induction hypothesis,
\begin{align*}
	\binom{j}{j} D^j q(t) \cdot Dq^{i-1}(t)
	&\sim (-1)^{j-2} (j-2)! t^{-1-j+1} q(t) H(t) \cdot t^{-i+1} \{q(t)\}^{i-1} \\
	&= (i-1) (-1)^{j-2} (j-2)! t^{-i-j+1} \{q(t)\}^i H(t).
\end{align*}
\end{itemize}
Summing over all $k \in \{0, \ldots, j\}$, we get
\begin{align*}
	\lim_{t \to \infty} \frac{D^j q_i(t)}{t^{-i-j+1} \{q(t)\}^i H(t)}
	&= (i-1) (-1)^{j-1} (j-1)! + (i-1) (-1)^{j-2} (j-2)! j + (-1)^{j-2} (j-2)! \\
	&= (-1)^{j-2} (j-2)! \{ -(i-1) (j-1) + (i-1) j + 1 \} \\
	&= i (-1)^{j-2} (j-2)!
\end{align*}
as required.
\end{proof}

\section{Other special cases}
\label{S:speccas}
\subsection{All indices different from zero}
\label{SS:speccas:nonzero}

Let $\g \in \RV_\B$. If none of the diagonal elements of $\B$ are zero, that is, if none of the rate functions $g_i$ are slowly varying, then the class $\Pi(\g)$ essentially consists of linear combinations of the rate functions and the constant function.

\begin{theorem}
\label{T:speccas:nonzero}
Let $\B \in \RR^{n \times n}$ be upper triangular and invertible, i.e.\ without zeroes on the diagonal. Let $\g \in \RV_\B$. Then $f \in \GRV(\g)$ with $\g$-index $\c$ if and only if there exists a constant $C$ such that
\begin{equation}
\label{E:speccas:nonzero}
	f(t) = C + \c \B^{-1} \g(t) + o(g_n(t)).
\end{equation}
\end{theorem}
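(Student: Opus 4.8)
The plan is to prove both implications through the auxiliary function $\tilde f(t) = f(t) - \c\B^{-1}\g(t)$; the point is that \eqref{E:speccas:nonzero} is precisely the assertion that $\tilde f(t) = C + o(g_n(t))$ for some constant $C$. The one algebraic fact I will lean on throughout is that, since $\B$ is invertible, Remark~\ref{R:B2h}(a) gives $\int_1^x u^\B u^{-1}\,\du = \B^{-1}(x^\B-\I)$, so a member of $\GRV(\g)$ with $\g$-index $\c$ has limit function $\h(x) = \c\B^{-1}(x^\B-\I)$.

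For the ``if'' direction I would assume \eqref{E:speccas:nonzero} and compute directly: for fixed $x\in(0,\infty)$, using $\g\in\RV_\B$ and the fact that $o(g_n(xt))=o(g_n(t))$ (because $g_n(xt)/g_n(t)\to x^{b_n}\ne 0$, the function $|g_n|$ being regularly varying of index $b_n=B_{nn}$), one gets
\[
	f(xt)-f(t) = \c\B^{-1}\{\g(xt)-\g(t)\} + o(g_n(t)) = \c\B^{-1}(x^\B-\I)\g(t) + o(g_n(t)),
\]
which by the identity above equals $\c\int_1^x u^\B u^{-1}\,\du\,\g(t) + o(g_n(t))$. Hence $f\in\GRV(\g)$ with $\g$-index $\c$.

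For the ``only if'' direction I would assume $f\in\GRV(\g)$ with $\g$-index $\c$ and run the same computation in reverse: substituting $\h(x)=\c\B^{-1}(x^\B-\I)$ and $\g(xt)-\g(t)=(x^\B-\I)\g(t)+o(g_n(t))$ into $\tilde f(xt)-\tilde f(t) = \{f(xt)-f(t)\} - \c\B^{-1}\{\g(xt)-\g(t)\}$ yields $\tilde f(xt)-\tilde f(t) = o(g_n(t))$ for every $x\in(0,\infty)$, i.e. $\tilde f\in o\Pi_{|g_n|}$. The remaining step, which I expect to be the main obstacle, is to upgrade this to $\tilde f(t)=C+o(g_n(t))$, using that $b_n\ne 0$ since $\B$ is invertible: by the Representation Theorem for $o\Pi$ \citep[Theorem~3.6.1]{BGT} one has $\tilde f(t) = C + \eta(t) + \int_a^t \phi(u)u^{-1}\,\du$ with $\eta,\phi$ both $o(g_n(t))$, and a Karamata-type estimate shows $\int_a^t\phi(u)u^{-1}\,\du = C' + o(g_n(t))$ for some constant $C'$; substituting back gives \eqref{E:speccas:nonzero} with constant $C+C'$.

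The delicate point is exactly that last estimate, and it requires splitting on the sign of $b_n$. If $b_n>0$, then $|g_n(t)|\to\infty$ and $\int_a^t|g_n(u)|u^{-1}\,\du\sim|g_n(t)|/b_n$ by Karamata's theorem, so bounding $|\phi(u)|\le\eps|g_n(u)|$ beyond some $a_\eps$ and letting $\eps\downarrow0$ gives $\int_a^t\phi(u)u^{-1}\,\du = o(g_n(t))$ (here $C'=0$). If $b_n<0$, then $|g_n|$ is integrable against $u^{-1}$ near infinity, so $\int_a^\infty\phi(u)u^{-1}\,\du$ converges to a constant $C'$, while $\bigl|\int_t^\infty\phi(u)u^{-1}\,\du\bigr|\le\eps\int_t^\infty|g_n(u)|u^{-1}\,\du\sim\eps|g_n(t)|/|b_n|$ by Karamata, again giving $\int_a^t\phi(u)u^{-1}\,\du = C' + o(g_n(t))$. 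Everything else is bookkeeping with $\B^{-1}(x^\B-\I)=\int_1^x u^\B u^{-1}\,\du$ and with the regular variation of $|g_n|$; in particular no hypothesis on $\c$ is needed, the case $\c=\0$ being subsumed since then $\tilde f=f\in o\Pi_{|g_n|}$ directly.
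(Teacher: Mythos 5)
Your proposal is correct and follows essentially the same route as the paper: the same auxiliary function $\xi(t)=f(t)-\c\B^{-1}\g(t)$, the same identity $\int_1^x u^\B u^{-1}\,\du=\B^{-1}(x^\B-\I)$ from \eqref{E:B2h:inv} for the sufficiency computation, and the same reduction of necessity to $\xi\in o\Pi_{|g_n|}$. The only difference is at the final step, where the paper simply cites the degenerate representation theorems for $o\Pi_g$ with auxiliary function of nonzero index (Theorems~3.6.1$^{\pm}$ in Bingham--Goldie--Teugels), while you re-derive that fact from the general representation theorem plus Karamata's theorem, splitting on the sign of $b_n$ --- a correct, self-contained substitute for the citation.
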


\begin{proof}
\textsl{Necessity.} Suppose $f \in \GRV(\g)$ with $\g$-index $\c$. Define $\xi(t) = f(t) - \c \B^{-1} \g(t)$. Then for $x \in (0, \infty)$, in view of \eqref{E:B2h:inv},
\begin{align*}
	\xi(xt)
	&= f(xt) - \c \B^{-1} \g(xt) \\
	&= f(t) + \c \B^{-1} (x^\B - \I) \g(t) - \c \B^{-1} x^\B \g(t) + o(g_n(t)) \\
	&= f(t) - \c \B^{-1} \g(t) + o(g_n(t)) \\
	&= \xi(t) + o(g_n(t)).
\end{align*}
Equation~\eqref{E:speccas:nonzero} now follows from the representation theorem for $o\Pi_g$ for auxiliary functions which are regularly varying but not slowly varying \citep[Theorems~3.6.1$^\pm$, pp.~152--153]{BGT}. Note that in case $b_n > 0$, the constant $C$ can be absorbed in the $o(g_n(t))$ remainder term.

\paragraph{Sufficiency.} For $f$ as in \eqref{E:speccas:nonzero}, we have
\[
	f(xt) - f(t) = \c \B^{-1} (x^\B - \I) \g(t) + o(g_n(t))
\]
so that, by \eqref{E:B2h:inv}, indeed $f \in \GRV(\g)$ with $\g$-index $\c$.
\end{proof}

\begin{remark}
\label{R:speccas:nonzero}
In the special case where all diagonal elements of $\B$ are equal to $b$, nonzero, we find
\[
	t^{-b} \{ f(t) - C \} = \c \B^{-1} \tilde{\g}(t) + o(\tilde{g}_n(t))
\]
the rate vector $\tilde{\g}(t) = t^{-b} \g(t)$ being regularly varying with index matrix $\tilde{\B} = \B - b\I$, all of whose diagonal elements are zero, a case which was covered in Sections~\ref{S:PiI}--\ref{S:PiII}.
\end{remark}

\subsection{All indices distinct}
\label{SS:speccas:diff}

In case all the diagonal elements of the index matrix $\B$ are distinct, rate vectors $\g$ in the class $\RV_\B$ are essentially given by linear combinations of power functions and $g_n$. This representation can be extended to the class $\GRV(\g)$ with $\g \in \RV_\B$. In addition, the Potter bounds for $\RV_\B$ and $\GRV(\g)$ can be sharpened.

For a square matrix $\A$, let $\diag(\A)$ be the diagonal matrix of the same dimension containing the diagonal elements of $\A$.

\begin{theorem}
\label{T:rv:repr:diff}
Let $\B \in \RR^{n \times n}$ ($n \geq 2$) be upper triangular with distinct diagonal elements $B_{11} > \cdots > B_{nn}$. Put $b_i = B_{ii}$ and $\D = \diag(\B) \in \RR^{n \times n}$. Then a rate vector $\g$ belongs to $\RV_\B$ if and only if $|g_n| \in \RV_{b_n}$ and there exists an upper triangular and invertible matrix $\Q \in \RR^{n \times n}$ such that $\Q_{nn} = 1$, $\B = \Q \D \Q^{-1}$ and
\begin{equation}
\label{E:rv:repr:diff}
	g_i(t) = \sum_{j = i}^{n-1} Q_{ij} t^{b_j} + Q_{in} g_n(t) + o(g_n(t)), \qquad i \in \{1, \ldots, n-1\}.
\end{equation}
\end{theorem}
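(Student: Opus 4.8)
The plan is to diagonalise the index matrix by an \emph{upper triangular} change of basis, thereby reducing the whole statement to the case of a diagonal index matrix, where each coordinate can be handled by classical one‑dimensional regular variation theory.

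For necessity, suppose $\g \in \RV_\B$. First I would extract a conjugating matrix from pure linear algebra: since $\B$ is upper triangular with the $n$ \emph{distinct} numbers $b_1 > \cdots > b_n$ on its diagonal, for each $i$ the system $(\B - b_i \I)\v = \0$ has a solution $\v^{(i)}$ with $v^{(i)}_j = 0$ for $j > i$ and $v^{(i)}_i = 1$ — one solves it by back substitution from the last row upward, using that the diagonal coefficients $b_k - b_i$ are non-zero for $k \neq i$. Taking $\v^{(1)}, \ldots, \v^{(n)}$ as columns yields an upper triangular matrix $\Q_0$ with unit diagonal, hence invertible and with $(\Q_0)_{nn} = 1$, satisfying $\B \Q_0 = \Q_0 \D$. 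Put $\tilde\g = \Q_0^{-1}\g$. By Remark~\ref{R:rv:char}(a), $\tilde\g$ is again a rate vector and $\tilde\g \in \RV_{\Q_0^{-1}\B\Q_0} = \RV_\D$; since $\Q_0^{-1}$ is upper triangular with unit diagonal we even have $\tilde g_n = g_n$ exactly, and Proposition~\ref{P:B:b}(a) applied to $\tilde\g$ gives $|\tilde g_i| \in \RV_{b_i}$ for each $i$, in particular $|g_n| = |\tilde g_n| \in \RV_{b_n}$.

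Next, for $i \in \{1, \ldots, n-1\}$, row $i$ of $\tilde\g(xt) = x^\D \tilde\g(t) + o(g_n(t))$ reads $\tilde g_i(xt) = x^{b_i}\tilde g_i(t) + o(g_n(t))$, so $t \mapsto t^{-b_i}\tilde g_i(t)$ belongs to $o\Pi_{a_i}$ with $a_i(t) = t^{-b_i}g_n(t)$, a function that is regularly varying of index $b_n - b_i < 0$. Exactly as in the proof of Proposition~\ref{P:B:b}(b), the Representation Theorem for $o\Pi$ then produces a constant $c_i$ with $\tilde g_i(t) = c_i t^{b_i} + o(g_n(t))$, and $c_i \neq 0$ because otherwise $\tilde g_i(t) = o(g_n(t))$ would contradict $|\tilde g_i| \in \RV_{b_i}$ with $b_i > b_n$. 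Setting $\Q = \Q_0\,\diag(c_1, \ldots, c_{n-1}, 1)$ — still upper triangular and invertible, with $Q_{nn} = 1$ and, since diagonal matrices commute with $\D$, still satisfying $\Q\D\Q^{-1} = \Q_0\D\Q_0^{-1} = \B$ — and multiplying the relations $\tilde g_i(t) = c_i t^{b_i} + o(g_n(t))$ ($i < n$) and $\tilde g_n(t) = g_n(t)$ by $\Q_0$ yields precisely \eqref{E:rv:repr:diff}. For sufficiency I would run the same argument backwards: assuming $|g_n| \in \RV_{b_n}$ and \eqref{E:rv:repr:diff} for some $\Q$ as stated, write $\bm{p}(t) = (t^{b_1}, \ldots, t^{b_{n-1}}, g_n(t))'$, so the hypothesis is $\g(t) = \Q\bm{p}(t) + o(g_n(t))$ and $\tilde\g := \Q^{-1}\g$ satisfies $\tilde g_i(t) = t^{b_i} + o(g_n(t))$ for $i < n$ and $\tilde g_n = g_n$. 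Since $|g_n| \in \RV_{b_n}$ and $b_n < b_{n-1} < \cdots < b_1$ force $g_n(t) = o(t^{b_i})$ for every $i < n$, the vector $\tilde\g$ is a rate vector, and for each fixed $x$ the relations $\tilde g_i(xt) = (xt)^{b_i} + o(g_n(xt))$ together with $g_n(xt) \sim x^{b_n} g_n(t)$ give $\tilde\g(xt) = x^\D \tilde\g(t) + o(\tilde g_n(t))$, i.e.\ $\tilde\g \in \RV_\D$; then $\g = \Q\tilde\g \in \RV_{\Q\D\Q^{-1}} = \RV_\B$ by Remark~\ref{R:rv:char}(a).

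The only step requiring genuine care is the linear-algebra construction of the triangular, unit-diagonal eigenbasis $\Q_0$ (and bookkeeping that the subsequent rescaling by $\diag(c_1,\ldots,c_{n-1},1)$ preserves all of triangularity, invertibility, $Q_{nn}=1$, and the conjugation relation). The sole analytic ingredient is the $o\Pi$ Representation Theorem, which enters in exactly the same manner as in Proposition~\ref{P:B:b}(b), so I do not anticipate a real obstacle: the substance of the proof is the observation that conjugation by $\Q_0$ turns $\RV_\B$ with distinct eigenvalues into $\RV_\D$, after which every coordinate is classical.
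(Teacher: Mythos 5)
Your proof is correct and follows essentially the same route as the paper's: conjugate by an upper triangular, unit-diagonal matrix to reduce to the diagonal index matrix $\D$ (the paper invokes Lemma~\ref{L:speccas:diff}, whose back-substitution construction you spell out explicitly), apply the $o\Pi$ representation theorem coordinatewise to obtain $\tilde g_i(t) = c_i t^{b_i} + o(g_n(t))$, and absorb the nonzero constants $c_i$ into $\Q$. If anything, your sufficiency half is slightly more careful than the paper's terse version, since you explicitly track the $o(g_n(t))$ error, verify that $\Q^{-1}\g$ is a rate vector, and use $|g_n| \in \RV_{b_n}$ where needed.
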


\begin{proof}
\textsl{Necessity.}
Suppose that $\g \in \RV_\B$. By Lemma~\ref{L:speccas:diff} below, there exists an invertible, upper triangular matrix $\bm{P}$ such that $\B = \bm{P} \D \bm{P}^{-1}$ and with $P_{ii} = 1$ for all $i$. Then $\tilde{\g} := \bm{P}^{-1} \g$ is a rate vector as well and is regularly varying with index matrix $\bm{P}^{-1} \B \bm{P} = \D$, see Remark~\ref{R:rv:char}(a). For $x \in (0, \infty)$, the matrix $x^\D$ is diagonal and with diagonal elements $x^{b_1}, \ldots, x^{b_n}$. Fix $i \in \{1, \ldots, n-1\}$. Row number $i$ of the relation $\tilde{\g}(xt) = x^{\D} \tilde{\g}(t) + o(g_n(t))$ is just 
\[
	\tilde{g}_i(xt) = x^{b_i} \tilde{g}_i(t) + o(g_n(t)).
\]
Writing $L_i(t) = t^{-b_i} \tilde{g}_i(t)$, we find
\[
	L_i(xt) = L_i(t) + o(t^{-b_i} g_n(t)).
\]
Since the function $t^{-b_i} |g_n(t)|$ is regularly varying with negative index $b_n - b_i$, the representation theorem for $o\Pi_g$ for auxiliary functions $g$ with positive increase \citep[Theorem~3.6.1$^-$, p.~152]{BGT} implies that there exists $C_i \in \RR$ such that
\[
	L_i(t) = C_i + o(t^{-b_i} g_n(t)),
\]
and thus
\[
	\tilde{g}_i(t) = C_i t^{b_i} + o(g_n(t)).
\]
Since $\tilde{g}$ is a rate vector, $C_i$ must be nonzero. Write $\bm{C} = (C_1, \ldots, C_{n-1}, 1)$. Since $\g = \bm{P} \tilde{\g}$, we find
\[
	\g(t) = \bm{P} \bm{C} (t^{b_1}, \ldots, t^{b_{n-1}}, g_n(t))' + o(g_n(t)),
\]
which is \eqref{E:rv:repr:diff} in matrix notation and with $\Q = \bm{P} \bm{C}$. Finally, since diagonal matrices commute, $\Q \D \Q^{-1} = \bm{P} \bm{C} \D \bm{C}^{-1} \bm{P}^{-1} = \bm{P} \D \bm{P}^{-1} = \B$.

\paragraph{Sufficiency.}
The rate vector $\hat{\g}(t) = (t^{b_1}, \ldots, t^{b_{n-1}}, g_n(t))'$ is regularly varying with index matrix $\D$. By Remark~\ref{R:rv:char}(a), $\g = \Q \hat{\g}$ is regularly varying with index matrix $\Q \D \Q^{-1} = \B$.
\end{proof}

\begin{theorem}
\label{T:grv:repr:diff}
Let $\B \in \RR^{n \times n}$ ($n \geq 2)$ be upper triangular with distinct diagonal elements $B_{11} > \cdots > B_{nn}$. Put $b_i = B_{ii}$. Then $f \in \Pi(\g)$ for some $\g \in \RV_\B$ if and only if there exist $a, a_0, \ldots, a_n \in \RR$ such that 
\begin{equation}
\label{E:grv:repr:diff}
	f(t) = a_0 + \sum_{i=1}^{n-1} a_i \int_1^t u^{b_i - 1} \du + \int_a^t \{a_n + o(1)\} g_n(u) u^{-1} \du + o(g_n(t)).
\end{equation}
\end{theorem}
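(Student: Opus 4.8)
The plan is to reduce the assertion, via the structural description of $\RV_\B$ in Theorem~\ref{T:rv:repr:diff}, to the case of the diagonal index matrix $\D = \diag(b_1, \ldots, b_n)$, and then to extract the representation from the Representation theorem for $\GRV(\g)$ (Theorem~\ref{T:grv:repr}); throughout I write $\GRV(\g)$ for the membership property called $\Pi(\g)$ in the statement.

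First I would introduce the canonical rate vector $\hat{\g}(t) = (t^{b_1}, \ldots, t^{b_{n-1}}, g_n(t))'$; since $b_1 > \cdots > b_n$ and $|g_n| \in \RV_{b_n}$, this is indeed a rate vector, and $\hat{\g} \in \RV_\D$ with $\D$ an index matrix in the sense of Definition~\ref{D:RV}. By Theorem~\ref{T:rv:repr:diff}, every $\g \in \RV_\B$ can be written as $\g(t) = \Q\hat{\g}(t) + o(g_n(t))$ for some upper triangular, invertible $\Q$ with $\B = \Q\D\Q^{-1}$ and $Q_{nn} = 1$. Using $u^\B = \Q u^\D \Q^{-1}$, hence $\bigl(\int_1^x u^\B u^{-1}\,\du\bigr)\Q = \Q\int_1^x u^\D u^{-1}\,\du$, one checks that $f \in \GRV(\g)$ with $\g$-index $\c$ if and only if $f \in \GRV(\hat{\g})$ with $\hat{\g}$-index $\c\Q$; this combines Remark~\ref{R:grv:char} with the elementary remark that replacing $\g$ by $\g + o(g_n)$ alters $\h(x)\g(t)$ only by $o(g_n(t))$ for each fixed $x$. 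Conversely, from any such $\hat{\g}$ one manufactures a $\g \in \RV_\B$ with $f \in \GRV(\g)$ by taking $\g = \Q\hat{\g}$ with $\Q$ supplied by Lemma~\ref{L:speccas:diff}. Hence both ``for some $\g \in \RV_\B$'' clauses reduce to the corresponding statements for $\hat{\g}$, with $g_n$ any eventually sign-definite function in $\RV_{b_n}$.

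It then remains to characterise $f \in \GRV(\hat{\g})$. By Theorem~\ref{T:grv:repr}, this holds with $\hat{\g}$-index $\c = (c_1, \ldots, c_n)$ precisely when there are $a > 0$, $v \in \RR$ and measurable $\eta, \phi : [a, \infty) \to \RR$, both $o(g_n(t))$, with $f(t) = v + \eta(t) + \int_a^t \{\c\hat{\g}(u) + \phi(u)\}u^{-1}\,\du$. I would then expand $\c\hat{\g}(u) = \sum_{i=1}^{n-1} c_i u^{b_i} + c_n g_n(u)$, split the integral, rewrite each $\int_a^t u^{b_i - 1}\,\du$ as $\int_1^t u^{b_i - 1}\,\du$ plus a constant, collect these constants with $v$ into a single constant $a_0$, write $c_n g_n(u) + \phi(u) = \{c_n + \phi(u)/g_n(u)\}g_n(u) = \{c_n + o(1)\}g_n(u)$, and retain $\eta(t)$ as the trailing $o(g_n(t))$ term; this is exactly \eqref{E:grv:repr:diff} with $a_i = c_i$ for $1 \le i \le n$. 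The reverse implication is obtained by running these steps backwards: given \eqref{E:grv:repr:diff} (with $g_n$ eventually of constant sign and $|g_n| \in \RV_{b_n}$), set $\phi(u) := \{o(1)\}g_n(u)$, take $\eta$ to be the trailing term, recover $v$ by restoring the lower limits of integration, and apply the sufficiency part of Theorem~\ref{T:grv:repr} to $\hat{\g}$.

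The only delicate matters are bookkeeping ones: one must ensure that the additive constants produced by shifting the lower limits of integration land in $a_0$ rather than being absorbed into $o(g_n)$ --- which would be illegitimate when $b_n \le 0$, since then $|g_n| \not\to \infty$ --- and one must check that the passage from $\g$ to $\hat{\g}$ preserves both membership in $\GRV$ and the prescribed shape $\h(x) = \c\int_1^x u^\B u^{-1}\,\du$ of the limit function, which is where the commutation of $\Q$ with the matrix exponential of $\D$ is used. Beyond that, the proof is a routine combination of Theorems~\ref{T:rv:repr:diff} and~\ref{T:grv:repr}.
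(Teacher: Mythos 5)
Your proposal is correct and takes essentially the same route as the paper: reduce to the canonical rate vector $\hat{\g}(t) = (t^{b_1}, \ldots, t^{b_{n-1}}, g_n(t))'$ via Theorem~\ref{T:rv:repr:diff} and the conjugating matrix $\Q$, then invoke the Representation theorem for $\GRV(\g)$ (Theorem~\ref{T:grv:repr}) and rearrange the constants, shifting the lower integration limits into $a_0$. The only cosmetic difference is that the paper applies Theorem~\ref{T:grv:repr} directly to $\g$ and substitutes $\c\,\g(u) = (\c\Q)\hat{\g}(u) + o(g_n(u))$ inside the integral, whereas you first transfer $\GRV$-membership from $\g$ to $\hat{\g}$ (with index $\c\Q$) and then apply the representation theorem to $\hat{\g}$; the bookkeeping and the sufficiency construction via Lemma~\ref{L:speccas:diff} coincide with the paper's.
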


\begin{proof}
Let $\hat{\g}(t) = (t^{b_1}, \ldots, t^{b_{n-1}}, g_n(t))'$. By Theorem~\ref{T:rv:repr:diff}, $\g(t) = \Q \hat{\g}(t) + o(g_n(t))$ for some invertible, upper triangular matrix $\Q$ satisfying $Q_{nn} = 1$ and $\Q \diag(\B) \Q^{-1} = \B$.

\paragraph{Necessity.}
Suppose that $f \in \GRV(\g)$ with $\g$-index $\bm{c} \in \RR^{1 \times n}$. By the representation theorem for $\GRV(\g)$ (Theorem~\ref{T:grv:repr}), we have
\[
	f(t) = v + o(g_n(t)) + \int_a^t \{ \bm{c} \g(u) + o(g_n(u)) \} u^{-1} \du.
\]
Putting $\bm{a} = \c \Q$, we find $\c \g(t) = \bm{a} \hat{\g}(t) + o(g_n(t))$ and thus
\[
	f(t) = v + \sum_{i=1}^{n-1} a_i \int_a^t u^{b_i-1} \du + \int_a^t \{a_n + o(1)\} g_n(u) u^{-1} \du + o(g_n(t)).
\]
Put $a_0 = v - \sum_{i=1}^{n-1} a_i \int_1^a u^{b_i-1} \du$ to arrive at \eqref{E:grv:repr:diff}.

\paragraph{Sufficiency.} Put $\c = (a_1, \ldots, a_n) \Q^{-1}$, do the steps of the previous paragraph in reverse, and apply the representation theorem for $\GRV(\g)$ (Theorem~\ref{T:grv:repr:diff}).
\end{proof}

In case $B_{nn} \neq 0$, the representation in \eqref{E:grv:repr:diff} can be even further simplified to
\[
	f(t) = a_0 + \sum_{i=1}^{n-1} a_i \int_1^t u^{b_i - 1} \du + a_n g_n(t) + o(g_n(t))
\]
(where $a_0$ and $a_n$ have been redefined). Note also that for those $i \in \{1, \ldots, n-1\}$ for which $b_i$ is nonzero (which has to occur for all but at most one $i$), the integral $\int_1^t u^{b_i-1} \du$ can be replaced by $t^{b_i}$ (after redefining $a_i$ and $a_0$).

\begin{theorem}[(Improved Potter bounds for $\RV_\B$ and $\GRV(\g)$)]
\label{T:diff:Potter}
Let $\B \in \RR^{n \times n}$ with all diagonal elements distinct. Put $b_n = B_{nn}$.
\begin{flushenumerate}
\item[(a)]
If $\g \in \RV_\B$, then for every $\eps > 0$, there exists $t(\eps) > 0$ such that
\begin{equation}
\label{E:diff:Potter:rv}
	\frac{\norm{\g(xt) - x^\B \g(t)}}{|g_n(t)|} \leq \eps x^{b_n} \max(x^\eps, x^{-\eps}), \qquad t \geq t(\eps), \quad x \geq t(\eps) / t.
\end{equation}
\item[(b)]
If $\g \in \RV_\B$ and $f \in \GRV(\g)$ with limit functions $\h$, then for every $\eps > 0$, there exists $t(\eps) > 0$ such that
\begin{equation}
\label{E:diff:Potter:grv}
	\frac{|f(xt) - f(t) - \h(x) \g(t)|}{|g_n(t)|} \leq \eps \max(1, x^{b_n + \eps}, x^{b_n - \eps}) \qquad t \geq t(\eps), \quad x \geq t(\eps) / t.
\end{equation}
\end{flushenumerate}
\end{theorem}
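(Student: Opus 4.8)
The plan is to obtain both bounds from the refined structural description of $\RV_\B$ in the distinct case (Theorem~\ref{T:rv:repr:diff}) together with a sharpened one–dimensional Potter estimate; write $b_i=B_{ii}$, so that distinctness gives $b_1>\cdots>b_n$. I would first note that for $t(\eps)/t\le x\le 1$ both \eqref{E:diff:Potter:rv} and \eqref{E:diff:Potter:grv} reduce, after simplifying $\max(1,x^{b_n\pm\eps})$, to exactly the bounds already established in Theorems~\ref{T:rv:Potter} and~\ref{T:grv:Potter}. Hence the real work lies in the regime $x\ge 1$, where the goal is to replace the factor $x^{b_1+\eps}$ (resp.\ $x^{(b_1+\eps)\vee 0}$) of those theorems by $x^{b_n+\eps}$ (resp.\ $x^{(b_n+\eps)\vee 0}$).

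For part (a) I would put $\B=\Q\D\Q^{-1}$ with $\D=\diag(\B)$ and $\Q$ upper triangular and invertible, and, by Theorem~\ref{T:rv:repr:diff}, $\g(t)=\Q\hat{\g}(t)+\bm{\rho}(t)$ with $\hat{\g}(t)=(t^{b_1},\ldots,t^{b_{n-1}},g_n(t))'$ and $\bm{\rho}(t)=o(g_n(t))$. Since $x^\B=\Q x^\D\Q^{-1}$ and the first $n-1$ entries of $\hat{\g}(xt)-x^\D\hat{\g}(t)$ vanish identically (because $(xt)^{b_i}=x^{b_i}t^{b_i}$), one gets
\[
	\g(xt)-x^\B\g(t)=\Q\,(0,\ldots,0,\,g_n(xt)-x^{b_n}g_n(t))'+\bm{\rho}(xt)-x^\B\bm{\rho}(t),
\]
so everything reduces to a single scalar remainder plus two vector remainders. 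For the scalar one I would prove the sharpening of Potter's theorem for the slowly varying function $\ell(t)=t^{-b_n}|g_n(t)|$: for every $\eps>0$ there is $t_1(\eps)$ with $|\ell(xt)/\ell(t)-1|\le\eps\max(x^\eps,x^{-\eps})$ whenever $t\ge t_1(\eps)$ and $xt\ge t_1(\eps)$. This follows by splitting at a large $A$: on $x\in[A^{-1},A]$ it comes from the uniform convergence theorem for slowly varying functions (since $\max(x^\eps,x^{-\eps})\ge 1$), and off $[A^{-1},A]$ from the classical Potter bounds for $\ell$, because $\max(x^{\eps/2},x^{-\eps/2})/\max(x^\eps,x^{-\eps})\to0$ as $x\to0$ and as $x\to\infty$. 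Multiplying by $x^{b_n}$ and absorbing $\norm{\Q}$ into the constant shows that the first term above already meets \eqref{E:diff:Potter:rv}.

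For part (b) I would insert the representation of $f$ from Theorem~\ref{T:grv:repr}, $f(t)=v+\eta(t)+\int_a^t\{\c\g(u)+\phi(u)\}u^{-1}\,\du$ with $\eta,\phi=o(g_n)$ and $\h(x)=\c\int_1^x u^\B u^{-1}\,\du$, into the identity
\[
	f(xt)-f(t)-\h(x)\g(t)=\c\int_1^x\{\g(ut)-u^\B\g(t)\}u^{-1}\,\du+\eta(xt)-\eta(t)+\int_1^x\phi(ut)u^{-1}\,\du .
\]
The first integral is bounded by $\norm{\c}$ times the integral of the bound from (a), and $\int_1^x\eps u^{b_n+\eps-1}\,\du=O(x^{(b_n+\eps)\vee0})$, which produces the factor $\max(1,x^{b_n+\eps})$; the $\eta$– and $\phi$–remainders are disposed of exactly as in the proof of Theorem~\ref{T:grv:Potter}, comparing $|g_n(ut)|$ with $|g_n(t)|$ by Potter's theorem for $\RV_{b_n}$. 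For $x\le1$ one passes to $x^{-1}$ via $\h(x^{-1})=-\h(x)x^{-\B}$ (equation~\eqref{E:hx1}) and repeats.

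The step I expect to be the main obstacle is the control of $x^\B\bm{\rho}(t)$ in (a) for large $x$: a priori $\norm{x^\B}\asymp x^{b_1}$ as $x\to\infty$, which exceeds the target factor $x^{b_n+\eps}$, whereas $\bm{\rho}(xt)$ is harmless (its ratio to $g_n(xt)$ tends to $0$ and $|g_n(xt)|/|g_n(t)|$ is itself $O(x^{b_n}\max(x^\eps,x^{-\eps}))$ by the sharpened Potter bound). It is precisely here that the hypothesis of distinct eigenvalues must be used, the natural device being to reduce to the canonical rate vector $\Q\hat{\g}$, for which $\bm{\rho}\equiv0$; checking that this reduction costs no more than an admissible $o(g_n)$ is the one place I would scrutinise carefully, the rest being routine bookkeeping of the kind carried out in Section~\ref{SS:Potter}.
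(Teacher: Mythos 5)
Your route is the paper's route: for (a) you diagonalise via Theorem~\ref{T:rv:repr:diff}, use that the first $n-1$ components of $\hat{\g}(xt)-x^{\D}\hat{\g}(t)$ vanish identically, and reduce everything to the scalar Potter bound for $g_n$ (your separate sharpened one-dimensional estimate is exactly the case $n=1$ of Theorem~\ref{T:rv:Potter}, so the splitting argument is not needed); for (b) you insert \eqref{E:diff:Potter:rv} into the identity \eqref{E:grv:Potter:10} coming from Theorem~\ref{T:grv:repr} and dispose of the $\eta$- and $\phi$-terms as in Theorem~\ref{T:grv:Potter}, and your observation that the regime $t(\eps)/t\le x\le 1$ is already covered by Theorems~\ref{T:rv:Potter} and~\ref{T:grv:Potter} matches the paper. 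So, modulo the remainder issue, this is the same proof.

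The step you flag as the one to ``scrutinise carefully'' is, however, not routine bookkeeping: it is a genuine gap, and it cannot be closed in the generality in which the theorem is stated. Theorem~\ref{T:rv:repr:diff} only yields $\g(t)=\Q\hat{\g}(t)+\bm{\rho}(t)$ with $\bm{\rho}(t)=o(g_n(t))$, and the term $x^{\B}\bm{\rho}(t)$ really does destroy the uniformity in $x$. Concretely, take $n=2$, $\B=\diag(1,0)$, $g_2\equiv 1$ and $g_1(t)=t+1/\log t$: then $\g\in\RV_{\B}$ with distinct diagonal elements, but $|g_1(xt)-xg_1(t)|\sim x/\log t$ as $x\to\infty$ for fixed $t$, which is not $O(\eps x^{\eps})$, so \eqref{E:diff:Potter:rv} fails for this $\g$; choosing $f=g_1$ (so $f\in\GRV(\g)$ with $\h(x)=(x-1,0)$) shows that \eqref{E:diff:Potter:grv} fails as well, via the analogous term $\h(x)\bm{\rho}(t)$. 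What your argument actually proves — and what the paper's proof proves, since it writes $\g=\Q\hat{\g}$ and thereby silently discards $\bm{\rho}$ — is the bound for the canonical rate vector $\Q\hat{\g}$, equivalently under the additional hypothesis that the remainder in \eqref{E:rv:repr:diff} vanishes (or after replacing $\g$ by a rate vector equal to it up to $o(g_n)$). So your instinct was right: the reduction to $\Q\hat{\g}$ is cost-free only locally uniformly in $x$, not in the global Potter regime $x\ge t(\eps)/t$, and neither your proposal nor the paper's own argument establishes the theorem as literally stated for an arbitrary $\g\in\RV_\B$.
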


\begin{proof}
\textsl{(a)}
Put $b_i = B_{ii}$ and $\D = \diag(\B)$. By Theorem~\ref{T:rv:repr:diff}, there exists an invertible, upper triangular matrix $\Q \in \RR^{n \times n}$ so that $\B = \Q \D \Q^{-1}$ and $\g = \Q \hat{\g}$ where $\hat{\g}(t) = (t^{b_1}, \ldots, t^{b_{n-1}}, g_n(t))'$. Since $x^\D$ is diagonal with diagonal elements $x^{b_1}, \ldots, x^{b_n}$, we have
\[
	\hat{\g}(xt) - x^\D \hat{\g}(t) = \left(0, \ldots, 0, g_n(xt) - x^{b_n} g_n(t)\right)', \qquad x \in (0, \infty).
\]
As a consequence, since $x^\B = \Q x^\D \Q^{-1}$,
\begin{align*}
	\norm{\g(xt) - x^\B \g(t)} 
	&= \norm{\Q \hat{\g}(xt) - \Q x^\D \Q^{-1} \Q \hat{\g}(t)} \\
	&\leq \norm{\Q} \, \norm{\hat{\g}(xt) - x^\D \hat{\g}(t)} \\
	&= \norm{Q} \, |g_n(xt) - x^{b_n} g_n(t)|.
\end{align*}
[here we implicitly assumed, without loss of generality, that the norm of $(0, \ldots, 0, 1)$ is equal to unity]. Now apply Potter's theorem for $\RV_{b_n}$ (Theorem~\ref{T:rv:Potter}).

\textsl{(b)}
For $x \leq 1$, this is just the general Potter bound \eqref{E:grv:Potter}. For $x \geq 1$, start again from \eqref{E:grv:Potter:10} and use \eqref{E:diff:Potter:rv} rather than the general Potter bound for $\RV_\B$.
\end{proof}

\begin{lemma}
\label{L:speccas:diff}
Let $\B \in \RR^{n \times n}$ be upper triangular and with $n$ distinct diagonal elements. Let $\D = \diag(\B) \in \RR^{n \times n}$. Then for any vector $\q \in \RR^n$ with nonzero elements there exists a unique upper triangular matrix $\Q \in \RR^{n \times n}$ with diagonal $\q$ and such that $\B = \Q \D \Q^{-1}$.
\end{lemma}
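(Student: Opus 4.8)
The plan is to reformulate the similarity relation $\B = \Q \D \Q^{-1}$ as the linear (triangular Sylvester-type) equation $\B \Q = \Q \D$ in the unknown matrix $\Q$, and to solve it entrywise, exploiting the triangular structure together with the distinctness of the diagonal entries $b_i = B_{ii}$. Writing out the $(i,j)$ entry of $\B\Q = \Q\D$ for $1 \le i \le j \le n$ (the entries with $i > j$ give the trivial identity $0 = 0$, since both $\B$ and $\Q$ are upper triangular) and using that $\B$ is upper triangular, the equation reads $\sum_{k=i}^{j} B_{ik} Q_{kj} = b_j Q_{ij}$. For $i = j$ this is $b_i Q_{ii} = b_i Q_{ii}$, an identity; this is precisely why the diagonal of $\Q$ may be prescribed freely, so we set $Q_{ii} := q_i$. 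For $i < j$, isolating the $k = i$ term and rearranging yields
\[
	(b_i - b_j)\, Q_{ij} = - \sum_{k=i+1}^{j} B_{ik}\, Q_{kj}.
\]
Since the $b_i$ are pairwise distinct, $b_i - b_j \neq 0$, so $Q_{ij}$ is uniquely expressed in terms of the entries $Q_{kj}$ with $i < k \le j$ lying in the same column.

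Next I would fix a column index $j$ and argue by downward induction on $i$, from $i = j$ (where $Q_{jj} = q_j$ is already fixed) down to $i = 1$: at each stage the right-hand side of the displayed equation involves only the already-determined entries $Q_{i+1,j}, \ldots, Q_{j,j}$, so $Q_{ij}$ exists and is uniquely determined. Running over all columns $j \in \{1, \ldots, n\}$ produces a unique upper triangular matrix $\Q$ with diagonal $\q$ that solves $\B\Q = \Q\D$. One then checks that this $\Q$, with the prescribed diagonal, satisfies all the equations (the diagonal ones being vacuous and the off-diagonal ones being exactly the recursion just solved). Finally, since $\Q$ is upper triangular with $\det \Q = \prod_{i} q_i \neq 0$, it is invertible, and $\B\Q = \Q\D$ becomes $\B = \Q\D\Q^{-1}$, as required. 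Uniqueness in the stated form follows either directly from the entrywise recursion, or from the observation that the difference of two such solutions is strictly upper triangular and satisfies the same homogeneous recursion, hence vanishes.

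I do not expect a genuine obstacle here; the proof is essentially bookkeeping. The only points requiring care are the direction of the recursion — within each column one solves from the diagonal entry upward along the column, not left-to-right along rows — and the remark that the diagonal equations are identically satisfied, which is simultaneously what makes the prescription of $\q$ admissible and, in combination with the distinctness of the $b_i$, what forces uniqueness of the off-diagonal part.
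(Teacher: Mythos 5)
Your proof is correct. The paper itself omits the proof, remarking only that it is similar to that of Proposition~\ref{P:Jordan}, which proceeds by induction on $n$: one partitions $\B$ and $\Q$ into a leading $(n-1)\times(n-1)$ block plus a last column and solves the resulting triangular linear system for that column. Your argument reaches the same conclusion by a direct entrywise recursion on the Sylvester-type equation $\B\Q=\Q\D$: the diagonal equations are vacuous (which is exactly what allows the diagonal $\q$ to be prescribed), and for $i<j$ the relation $(b_i-b_j)Q_{ij}=-\sum_{k=i+1}^{j}B_{ik}Q_{kj}$ determines $Q_{ij}$ uniquely by downward induction on $i$ within each column, the division being licensed by the distinctness of the diagonal entries. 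This is essentially the same bookkeeping, organized column-by-column rather than by induction on the matrix size; your version has the small advantages of isolating precisely where the hypothesis $b_i\neq b_j$ enters (it plays the role that the nonvanishing superdiagonal plays in Proposition~\ref{P:Jordan}) and of delivering existence and uniqueness in a single pass. Your closing observation that $\det\Q=\prod_i q_i\neq 0$, so that $\B\Q=\Q\D$ is equivalent to $\B=\Q\D\Q^{-1}$, correctly completes the argument.
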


The proof of Lemma~\ref{L:speccas:diff} is similar to the one of Proposition~\ref{P:Jordan} and is therefore omitted.

\section{Examples}
\label{S:examples}

\begin{example}
The function $f(t) = \log \lfloor t \rfloor$ is in the class $\Pi$ of order $n = 1$ but not of any higher order.
\end{example}

\begin{example}[(power series)]
\label{Ex:powerseries}
Suppose that $f(x) = \sum_{k=0}^\infty a_k x^{-k \alpha}$ where $\alpha > 0$ and $\sum_{k = 0}^\infty a_k z^k$ is a power series which is convergent in a neighbourhood of $z = 0$. Then for $x > 0$ and integer $n \geq 1$,
\[
	f(xt) - f(t) = \sum_{k=1}^n a_k (x^{-k \alpha} - 1) t^{-k \alpha} + o(t^{-n \alpha}).
\]
The rate vector $\g(t) = (t^{-\alpha}, t^{-2\alpha}, \ldots, t^{-n \alpha})'$ is regularly varying with index matrix 
\[
	\B = \diag(-\alpha, -2\alpha, \ldots, -n \alpha). 
\]
By the first display, the function $f$ belongs to $\GRV(\g)$ with $\g$-index $\c = (- k a_k \alpha)_{k = 1}^n$.
\end{example}

\begin{example}[(Gamma function)]
Let $f = \log \Gamma$ with $\Gamma(t) = \int_0^\infty y^{t-1} e^{-y} \dy$ Euler's Gamma function. From \citet[\S6.1.40--42, \S23.1.1--3]{AS92}, for integer $n \geq 1$,
\[
	\log \Gamma(t) 
	= t \log(t) - t - \frac{1}{2} \log(t) + \frac{1}{2} \log(2\pi)
	+ \sum_{m=1}^n \frac{B_{2m}}{2m(2m-1)}t^{-2m+1} + O(t^{-2n-1}),
\]
with $B_k$ the Bernoulli numbers, given by $z / (e^z-1) = \sum_{k=0}^\infty B_k z^k / k!$. It follows that for fixed $x \in (0, \infty)$,
\begin{multline*}
	\log \Gamma(xt) - \log \Gamma(t)
	= t \log(t) (x-1) + t \{ x \log(x) - x + 1 \} - \frac{1}{2} \log(x) \\
	+ \sum_{m=1}^n t^{-2m + 1} \frac{B_{2m}}{2m(2m-1)} (x^{-2m+1} - 1) + O(t^{-2n-1}).
\end{multline*}
As a consequence, $\log \Gamma \in \GRV((g_1, \ldots, g_{n+2})')$ with
\[
	g_1(t) = t \log(t), \quad g_2(t) = t, 
	\quad g_3(t) = 1, \quad g_{3+m}(t) = t^{-2m+1}
\]
for integer $m \geq 1$. The non-zero elements of the index matrix $\B$ of $\g$ are
\[
	B_{11} = B_{12} = B_{22} = 1,
	\qquad B_{3+m,3+m} = -2m+1
\]
for $m \geq 1$. The $\g$-index $\c$ of $\log \Gamma$ is given by
\[
	c_1 = 1, \quad c_2 = 0, 
	\quad c_3 = - \frac{1}{2}, \quad c_{3+m} = - \frac{B_{2m}}{2m}.
\]
\end{example}

\begin{example}[(no reduction to Jordan block index matrix)]
In the following four cases, the assumption on $\B$ in Proposition~\ref{P:Jordan} is not met, so that reduction to the situation where the index matrix is a Jordan block is impossible.
\begin{flushenumerate}
\item[(a)]
If $\g(t) = (\log t, (\log t)^{1/2}, 1)'$, then $\g \in \RV_\B$ with
\[
	\B = \begin{pmatrix} 0 & 0 & 1 \\ 0 & 0 & 0 \\ 0 & 0 & 0 \end{pmatrix}.
\]
\item[(b)]
If $\g(t) = ((\log t)^{3/2}, (\log t)^{1/2}, 1)'$, then $\g \in \RV_\B$ with
\[
	\B = \begin{pmatrix} 0 & 3/2 & 0 \\ 0 & 0 & 0 \\ 0 & 0 & 0 \end{pmatrix}.
\]
\item[(c)]
If $\g(t) = (1, (\log t)^{-1}, (\log t)^{-2})'$, then $\g \in \RV_\B$ with
\[
	\B = \begin{pmatrix} 0 & 0 & \phantom{-}0 \\ 0 & 0 & -1 \\ 0 & 0 & \phantom{-}0 \end{pmatrix}.
\]
\item[(d)]
Define iterated logarithms by $\log_1 = \log$ and $\log_n = \log \circ \log_{n-1}$ for integer $n \geq 2$. Then the rate vector $\g = (\log_1, \ldots, \log_n)$ belongs to $\RV_{\0}$, where $\0$ stands for the $n$-by-$n$ zero matrix.
\end{flushenumerate}
\end{example}

\begin{example}[(powers of logarithms)]
\begin{flushenumerate}
\item[(a)]
For integer $k \geq 0$, put $\ell_k(x) = (\log x)^k / k!$. Then for integer $n \geq 1$, by \eqref{E:xJ}, we have
\[
	(\ell_n(x), \ldots, \ell_0(x))' = x^{\J_{n+1}} (0, \ldots, 0, 1)'.
\]
It follows that the $(n+1)$-dimensional rate vector $\g = (\ell_n, \ldots, \ell_0)'$ belongs to $\RV_{\J_{n+1}}$. Moreover, $\ell_n \in \Pi((\ell_{n-1}, \ldots, \ell_0)')$ with index $(1, 0, \ldots, 0)$. 

By linear transformation, if $p_1, p_2, \ldots, p_{n+1}$ are polynomials of degrees $n, n-1, \ldots, 0$, respectively, then the vector $\g$ given by $g_i(t) = p_i(\log t)$ ($i \in \{1, \ldots, n+1\}$) is regularly varying with index matrix $\Q \J_{n+1} \Q^{-1}$ for some upper triangular, invertible matrix $\Q$. In particular, if $p$ is a polynomial of degree $n$, then the function $p \circ \log$ belongs to $\Pi((\ell_{n-1}, \ldots, \ell_0)')$.
\item[(b)]
Let $f(t) = (\log t)^p$ with $p \in \RR \setminus \{0, 1, 2, \ldots\}$. For integer $n \geq 1$,
\[
	L_n(t) 
	= D^n (f \circ \exp) \circ \log(t) 
	= \frac{d^n}{dx^n} x^p \biggr|_{x = \log t}
	= (p)_n (\log t)^{p-n}.
\]
Since $(p)_n \neq 0$, Theorem~\ref{T:L} applies, and $f \in \Pi((L_1, \ldots, L_n)')$ with index $\bm{c} = (1, 0, \ldots, 0)$. The same result can be obtained with more effort via a Taylor expansion of $(1 + z)^p$ around $z \to 0$ in $f(tx) = \{\log(t)\}^p \{1 + \log(x)/\log(t)\}^p$.
\end{flushenumerate}
\end{example}

\begin{example}[(iterated logarithms)]
\label{Ex:loglog}
\begin{flushenumerate}
\item[(a)]
Let $f = \log \log$. For integer $n \geq 1$,
\[
	L_n(t)
	= D^n (f \circ \exp) \circ \log(t)
	= \frac{d^n}{dx^n} \log(x) \biggr|_{x = \log t}
	= (-1)^{n-1} (n-1)! (\log t)^{-n}.
\]
Theorem~\ref{T:L} applies, and $f \in \Pi((L_1, \ldots, L_n)')$ with index $\bm{c} = (1, 0, \ldots, 0)$.
\item[(b)]
Let $f(t) = (\log \log t)^2$. For integer $n \geq 1$,
\begin{align*}
	L_n(t) 
	&= D^n (f \circ \exp) \circ \log(t)
	= \frac{d^n}{dx^n} (\log x)^2 \biggr|_{x = \log t} \\
	&= 2 x^{-n} (a_n \log x + b_n) \biggr|_{x = \log t}
	= 2 (\log t)^{-n} (a_n \log \log t + b_n),
\end{align*}
where $a_n = (-1)^{n-1}$ and with $b_n$ recursively given by $b_1 = 0$ and $b_n = a_{n-1} - (n-1) b_{n-1}$ for integer $n \geq 2$. Since $a_n \neq 0$, Theorem~\ref{T:L} applies, and $f \in \Pi((L_1, \ldots, L_n)')$ with index $\bm{c} = (1, 0, \ldots, 0)$.
\item[(c)]
Let $f(t) = \log \log \log t$. For positive integer $n$, after some calculations,
\begin{align*}
	L_n(t) 
	&= D^n (f \circ \exp) \circ \log(t) = \frac{d^n}{dx^n} (\log \log x) \biggr|_{x = \log t} \\
	&= y^{-n} \sum_{j=1}^n a_{nj} (\log x)^{-j} \biggr|_{x = \log t}
	= (\log t)^{-n} \sum_{j=1}^n a_{nj} (\log \log t)^{-j},
\end{align*}
where the triangular array $\{a_{nj} : n = 1, 2, \dots; j = 1, \dots, n\}$ is recursively given by
\begin{align*}
	&a_{n1} = a_{nn} = (-1)^{n-1} (n-1)!, \\
	&a_{n+1,j} = (-n) a_{nj} - (j-1) a_{n,j-1}, \qquad j = 2, \ldots, n.
\end{align*}
Since $a_{n1} \neq 0$, Theorem~\ref{T:L} applies, and $f \in \Pi((L_1, \ldots, L_n)')$ with index $\bm{c} = (1, 0, \ldots, 0)$.
\end{flushenumerate}
\end{example}

\begin{example}[($\Pi$-varying functions with superlogarithmic growth)]
\begin{flushenumerate}
\item[(a)]
Let $f(t) = \exp \{ (\log t)^\alpha \}$ for some $\alpha \in (0, 1)$. We have $f \circ \exp(x) = \exp(x^\alpha)$, and it can be shown by induction that for integer $n \geq 1$,
\[
	\frac{d^n}{dx^n} \exp(x^\alpha) \sim (\alpha x^{\alpha-1})^n \exp(x^\alpha),
	\qquad x \to \infty.
\]
It follows that for integer $n \geq 1$,
\[
	L_n(t) 
	= D^n (f \circ \exp) \circ \log(t) 
	\sim \alpha^n (\log t)^{-n(1-\alpha)} \exp \{ (\log t)^\alpha \}.
\]
Since $L_n$ is slowly varying, Theorem~\ref{T:L} applies, and $f \in \Pi((L_1, \ldots, L_n)')$ with index $\bm{c} = (1, 0, \ldots, 0)$.
\item[(b)]
Let $f(t) = \exp ( \log t / \log \log t )$. It is not hard to see that $t Df(t) / f(t) = t D \log f(t) = o(1)$, so that $f$ is slowly varying, and thus also $Df \in \RV_{-1}$, implying $f \in \Pi$. Writing $u(x) = f \circ \exp(x) = \exp(x / \log x)$, one can show by induction that for integer $n \geq 1$,
\[
	D^n u(x) \sim u(x) (\log x)^{-n}.
\]
As a consequence, for integer $n \geq 1$,
\[
	L_n(t) = D^n (f \circ \exp) \circ \log(t)
	\sim f(t) \{ \log f(t) \}^{-n}
	\sim f(t) (\log \log t)^n (\log t)^{-n}.
\]
Since $L_n$ is slowly varying, Theorem~\ref{T:L} applies, and $f \in \Pi((L_1, \ldots, L_n)')$ with index $\bm{c} = (1, 0, \ldots, 0)$.
\end{flushenumerate}
\end{example}

\begin{example}[(asymptotically equivalent or not?)]
\label{Ex:log.loglog}
\begin{flushenumerate}
\item[(a)]
Let $f(t) = \log(t) \log \log(t)$. Since $f \circ \exp(x) = x \log(x)$ and $D (f \circ \exp) (x) = 1 + \log x$, we find that $L_n(t) = D^n (f \circ \exp) \circ \log(t)$ is given by
\begin{align}
\label{E:log.loglog:Ln:a}
	L_1(t) &= 1 + \log \log t, &
	L_n(t) &= (-1)^n (n-2)! \{ \log(t) \log \log(t) \}^{-n+1}, \qquad n \geq 2.
\end{align}
By Theorem~\ref{T:L}, $f \in \Pi((L_1, \ldots, L_n)')$ with index $\bm{c} = (1, 0, \ldots, 0)$.
\item[(b)]
Let $f$ be the inverse of the function $A(t) = \exp \{ t / \log(t) \}$. It is not hard to see that $f(t) \sim \log(t) \log \log(t)$, the function in item~(a). We have
\[
	q(t) 
	= \frac{1}{D \log A(t)} 
	= \frac{(\log t)^2}{\log t - 1} 
	= 1 + \log t + \frac{1}{\log t - 1}
	\sim \log t.
\]
By induction, one can show that for integer $n \geq 1$,
\[
	D^n q(t) \thicksim (-1)^{n-1} (n-1)! t^{-n}.
\]
Hence we are in the situation of Proposition~\ref{P:Djqi:2} ($\alpha = 0$). Putting $q_1 = q$ and $q_i = q Dq_{i-1}$ for $i \geq 2$, we find that, for $i \geq 2$,
\[
	q_i(t) 
	\sim (-1)^i (i-2)! t^{-i+2} \{q(t)\}^{i-1} t^{-1}
	\sim (-1)^i (i-2)! t^{-i+1} (\log t)^{i-1}.
\]
As a consequence, putting $L_n(t) = D^n (f \circ \exp) \circ \log(t) = q_n \circ f(t)$, we have
\begin{align}
\label{E:log.loglog:Ln:b}
	L_1(t) &\sim \log \log t, &
	L_n(t) &\sim (-1)^n (n-2)! (\log t)^{-n+1}, \qquad n \geq 2.
\end{align}
By Theorem~\ref{T:Gamma}, $f \in \Pi((L_1, \ldots, L_n)')$ with index $\c = (1, 0, \ldots, 0)$. Note that for $n \geq 2$, the functions $L_n$ in \eqref{E:log.loglog:Ln:a} and \eqref{E:log.loglog:Ln:b} are of different asymptotic orders.
\end{flushenumerate}
\end{example}

\begin{example}[(Lambert $W$ function)]
\label{Ex:LambertW}
The \emph{Lambert $W$ function} is defined by the identity $W(z) e^{W(z)} = z$, that is, $W$ is the inverse of the function $A(t) = t e^t$; see for instance \citet{Corless96}. (We obviously restrict attention to the positive branch of the function.) Note that
\begin{align*}
	q(t) &= \frac{A(t)}{DA(t)} = 1 - \frac{1}{1+t}, &
	D^n q(t) &= (-1)^{n-1} n! (1+t)^{-n-1}, \qquad n \geq 1.
\end{align*}
Proposition~\ref{P:Djqi:3} applies with $\alpha = -1$ and $q(\infty) = 1$. We find that for $n \geq 2$,
\[
	q_n(t) \sim (-2)_{n-2} t^{-n+2} (1+t)^{-2} \sim (-1)^n (n-1)! t^{-n}.
\]
By Theorem~\ref{T:Gamma}, $L_1(t) = t DW(t) = q(W(t)) = 1 + o(1)$ and for $n \geq 2$,
\begin{align*}
	L_n(t) 
	&= D^n(W \circ \exp) \circ \log(t) = q_n(W(t)) \\
	& \sim (-1)^n (n-1)! \{W(t)\}^{-n} \sim (-1)^n (n-1)! (\log t)^{-n},
\end{align*}
and $W \in \Pi((L_1, \ldots, L_n)')$ for every $n \geq 1$ with index $\bm{c} = (1, 0, \ldots, 0)$.
\end{example}

\begin{example}[(complementary Gamma function)] 
\label{Ex:compl.gamma}
For $b \in \RR$, $b \neq 0$, let the function $f$ be defined by the equation
\[
	\int_{f(t)}^\infty y^b e^{-y} \dy = \frac{1}{t}
\]
for large enough, positive $t$ (if $b = 0$ then $f = \log$). That is, $f$ is the inverse of the function $A$ given by
\begin{align*}
	A(x) &= \frac{1}{\Gamma(b + 1, x)}, &
	\Gamma(b+1, x) &= \int_x^\infty y^b e^{-y} \dy,
\end{align*}
the reciprocal of the complementary Gamma function. The function $q = A / DA$ is given by
\begin{align*}
	q(t) 
	&= \frac{A(t)}{DA(t)} = - \frac{1/A(t)}{D(1/A)(t)} \\
	&= t^{-b} e^t \int_t^\infty y^b e^{-y} \dy 
	= t \int_1^\infty z^b e^{t-tz} \dz 
	= t \int_0^\infty (1+v)^b e^{-tv} \dv \\
	&= 1 + b \int_0^\infty (1+v)^{b-1} e^{-tv} \dv.
\end{align*}
It follows that for integer $n \geq 1$,
\begin{multline}
\label{E:compl.gamma:q}
	D^n q(t) 
	= b \int_0^\infty (1+v)^{b-1} (-v)^n e^{-tv} \dv \\
	= b t^{-n-1} \int_0^\infty (1+u/t)^{b-1} (-u)^n e^{-u} \du 
	\sim (-1)^n n! b t^{-n-1}.
\end{multline}
Hence Proposition~\ref{P:Djqi:3} applies with $\alpha = -1$ and $q(\infty) = 1$, so that for integer $i \geq 2$,
\begin{align*}
	q_i(t) 
	&\sim (-2)_{i-2} t^{-i+2} Dq(t) \\
	&\sim - (-2)_{i-2} b t^{-i} 
	= (-1)^{i-1} (i-1)! b t^{-i}.
\end{align*}
Since $f(t) \sim \log(t)$, Theorem~\ref{T:Gamma} yields $L_1(t) = t Df(t) = q(f(t)) = 1 + o(1)$ while for $n \geq 2$,
\[
	L_n(t) = D^n (f \circ \exp) \circ \log(t) = q_n(f(t))
	\sim (-1)^{n-1} (n-1)! b (\log t)^{-n},
\]
and $f \in \Pi((L_1, \ldots, L_n)')$ with index $\c = (1, 0, \ldots, 0)$.
\end{example}

\begin{example}[(complementary error function)]
For $b \in \RR$ and $p \in (0, \infty)$, $p \neq 1$, let $f(t)$ be defined by the equation
\[
	\int_{f(t)}^\infty y^{b+p-1} e^{-y^p/p} \dy = \frac{1}{t}
\]
for sufficiently large, positive $t$. The function $f$ is the inverse of the function $A$ given by
\[
	A(x) = 1 \bigg/ \int_x^\infty y^{b+p-1} e^{-y^p/p} \dy, \qquad x > 0,
\]
the reciprocal of (a generalisation of) the complementary error function. It is not hard to see that $f(t) \sim (p \log t)^{1/p}$; by the way, in case $b = 0$ we have exactly $f(t) = (p \log t)^{1/p}$. We have
\[
	q(t) 
	= \frac{A(t)}{DA(t)} = - \frac{1/A(t)}{D(1/A)(t)} 
	= t^{-b-p+1} e^{t^p/p} \int_t^\infty y^{b+p-1} e^{-y^p/p} \dy.
\]
The function $q$ can be written as
\[
	q(t) = t^{1-p} \bar{q}(t^p/p), \qquad
	\bar{q}(x) = x^{-\bar{b}} e^x \int_x^\infty y^{\bar{b}} e^{-y} \dy, \qquad
	\bar{b} = b/p.
\]
The function $\bar{q}$ has the same structure as the function $q$ in Example~\ref{Ex:compl.gamma}. Using \eqref{E:compl.gamma:q} in that example together with Leibniz' product rule, one can show that for integer $n \geq 1$,
\begin{align*}
	q(t) &\sim t^{1-p}, &
	D^n q(t) &\sim D^n t^{1-p} = (1-p)_n t^{1-p-n}.
\end{align*}
By Proposition~\ref{P:Djqi:1} with $\alpha = 1-p$ and Proposition~\ref{P:Djqi:3} with $\alpha = 1-p$ and $q(\infty) = 0$, we find that for integer $n \geq 1$,
\[
	q_n(t) \sim t^{1 - np} \prod_{k=1}^{n-1} (1 - kp)
\]
and thus
\[
	L_n(t) = D^n(f \circ \exp) \circ \log(t) = q_n(f(t))
	\sim (p \log t)^{1/p-n} \prod_{k=1}^{n-1} (1 - kp).
\]
If $p$ is not of the form $1/k$ for some integer $k \in \{2, \ldots, n-1\}$, then $L_n$ is eventually of constant sign and $|L_n|$ is regularly varying, so that by Theorem~\ref{T:L}, $f \in \Pi((L_1, \ldots, L_n)')$ with index $\c = (1, 0, \ldots, 0)$.
\end{example}



\bibliographystyle{chicago}
\bibliography{Biblio}

\affiliationone{%
   Edward Omey\\
   Mathematics and Statistics, Hogeschool-Universiteit Brussel\\
	 Affiliated Researcher ETEW, Katholieke Universiteit Leuven\\
	 Stormstraat 2\\ 
	 B-1000 Brussels\\
	 Belgium\\
   \email{edward.omey@hubrussel.be}}
\affiliationtwo{%
   Johan Segers\\
   Institut de statistique, Universit\'e catholique de Louvain\\
	 Voie du Roman Pays 20\\ 
	 B-1348 Louvain-la-Neuve\\ 
	 Belgium
   \email{johan.segers@uclouvain.be}}
\end{document}